\patchcmd{\section}{\scshape}{\bfseries}{}{}
\renewcommand{\@secnumfont}{\bfseries}
\DeclareRobustCommand{\SkipTocEntry}[5]{}
\newcommand*{\qedex}{\hfill\ensuremath{\lozenge}}
\newtheorem{introtheorem}{Theorem}
\theoremstyle{definition}
\newtheorem{introexample}[introtheorem]{Example}
\newtheorem*{introexamplecont}{Example B (continued)}
\theoremstyle{plain}
\newtheorem{theorem}{Theorem}[section]
\newtheorem{proposition}[theorem]{Proposition}
\newtheorem{lemma}[theorem]{Lemma}
\newtheorem{corollary}[theorem]{Corollary}
\theoremstyle{definition}
\newtheorem{definition}[theorem]{Definition}
\newtheorem{example}[theorem]{Example}
\newtheorem{remark}[theorem]{Remark}
\newtheorem{question}[theorem]{Question}
\newtheorem{nd}[theorem]{Notations/Definitions}
\def\cprime{'}
\renewcommand{\bar}{\overline}
\renewcommand{\hat}{\widehat}
\renewcommand{\geq}{\geqslant}
\renewcommand{\leq}{\leqslant}
\renewcommand{\Re}{\mathsf{Re}}
\renewcommand{\Im}{\mathsf{Im}}
\DeclareMathOperator{\Z}{\mathbf{Z}}
\DeclareMathOperator{\N}{\mathbf{N}}
\DeclareMathOperator{\Q}{\mathbf{Q}}
\DeclareMathOperator{\R}{\mathbf{R}}
\DeclareMathOperator{\C}{\mathbf{C}}
\DeclareMathOperator{\F}{\mathbf{F}}
\DeclareMathOperator{\PP}{\mathbf{P}}
\DeclareMathOperator{\End}{\mathrm{End}}
\DeclareMathOperator{\im}{\mathrm{im}}
\DeclareMathOperator{\Fr}{\mathrm{Fr}}
\DeclareMathOperator{\ord}{\mathrm{ord}}
\newcommand{\acts}{\,\rotatebox[origin=c]{-90}{$\circlearrowright$}\,}
\begin{document}

\date{1 March 2018 (version 2.0)} 
\title{Dynamics on abelian varieties in positive characteristic}
\author[J.~Byszewski]{Jakub Byszewski}
\address{\normalfont Wydzia\l{} Matematyki i Informatyki Uniwersytetu Jagiello\'nskiego, ul.\ S.\ \L ojasiewicza 6,
30-348 Krak\'ow, Polska}
\email{jakub.byszewski@uj.edu.pl}
\author[G.~Cornelissen]{Gunther Cornelissen}
\address{\normalfont Mathematisch Instituut, Universiteit Utrecht, Postbus 80.010, 3508 TA Utrecht, Nederland} 
\email{g.cornelissen@uu.nl}

\thanks{We thank Fryderyk Falniowski, Marc Houben, Jakub Konieczny, Dominik Kwietniak, Frans Oort, Ze\'ev Rudnick and Tom Ward for feedback on previous versions, Bartosz Naskr\k{e}cki and Jeroen Sijsling for pointing us to the LMFDB, Jan-Willem van Ittersum for crucial corrections in Sagemath code, and Damaris Schindler for help with identifying main and error terms in the final section. JB gratefully acknowledges the support of National Science Center, Poland under grant no.\ 2016/\-23/\-D/ST1/\-01124.}

\subjclass[2010]{37P55, 14K02, 37C30. (11B37, 11N45, 14G17, 37C25.)}
\keywords{\normalfont Abelian variety, endomorphism, inseparability, group scheme, fixed points, Artin--Mazur zeta function, dynamical zeta function, recurrence sequence, holonomic sequence, natural boundary, functional equation, prime orbit distribution}

\begin{abstract} \noindent We study periodic points for endomorphisms $\sigma$ of abelian varieties $A$ over algebraically closed fields of positive characteristic $p$. We show that the dynamical zeta function $\zeta_\sigma$ of $\sigma$ is either rational or transcendental, the first case happening precisely when $\sigma^n-1$ is a separable isogeny for all $n$. We call this condition \emph{very inseparability} and show it is equivalent to the action of $\sigma$ on the local $p$-torsion group scheme being nilpotent. 

The ``false'' zeta function $D_\sigma$, in which the number of fixed points of $\sigma^n$ is replaced by the degree of $\sigma^n-1$, is always a rational function. Let $1/\Lambda$ denote its largest real pole and assume no other pole or zero has the same absolute value. Then, using a general dichotomy result for power series proven by Royals and Ward in the appendix, we find that $\zeta_\sigma(z)$ has a natural boundary at $|z|=1/\Lambda$ when $\sigma$ is not very inseparable. 

We introduce and study  \emph{tame} dynamics, ignoring orbits whose order is divisible by $p$. We construct a tame zeta function $\zeta^*_{\sigma}$ that is always algebraic, and such that $\zeta_\sigma$ factors into an infinite product of tame zeta functions. We briefly discuss functional equations.  

Finally, we study the length distribution of orbits and tame orbits. Orbits of very inseparable endomorphisms distribute like those of Axiom A systems with entropy $\log \Lambda$, but the orbit length distribution of not very inseparable endomorphisms is more erratic and similar to $S$-integer dynamical systems. We provide an expression for the prime orbit counting function in which the error term displays a power saving depending on the largest real part of a zero of $D_\sigma(\Lambda^{-s})$. 
 \end{abstract}

\maketitle

\vspace*{-1cm}
\addtocontents{toc}{\SkipTocEntry}
\tableofcontents

\section*{Introduction} 
The study of the orbit structure of a dynamical system starts by considering periodic points, which, as advocated by Smale in \cite[Section 1.4]{Smale} and Artin--Mazur \cite{AM}, can be approached by considering \emph{dynamical zeta functions}. More precisely, let $S$ denote a set (typically, a topological space, differentiable manifold, or an algebraic variety), 
let $f \colon S \rightarrow S$ be a map on a set $S$ (typically, a homeomorphism, a diffeomorphism, or a regular map), and denote by $f_n$ the number of fixed points of the $n$-th iterate  $f^{n}=f\circ f \circ \dots \circ f \, (n \mbox{ times})$, i.e., the number of \emph{distinct} solutions in $S$ of the equation $f^{n}(x)=x$. Let us say that $f$  is \emph{confined} if $f_n$ is finite for all $n$, and use the notation $f \acts S$ to indicate that $f$ satisfies this assumption. For such $f$, the basic question is to find patterns in the sequence $(f_n)_{n \geq 1}$: Does it grow in some controlled way? Does it satisfy a recurrence relation, so that finitely many $f_n$ suffice to determine all? These questions are recast in terms of the (full) dynamical zeta function, defined as
$ \zeta_f(z) \coloneqq \exp ( \sum\limits f_n z^n/n ). $ 
Typical questions are: 
\begin{enumerate} 
\item[\textup{(Q1)}] Is $\zeta_f$ (generically) a rational function? (Smale \cite[Problem 4.5]{Smale}); 
\item[\textup{(Q2)}] Is $\zeta_f$ algebraic as soon as it has a nonzero radius of convergence? (Artin and Mazur \cite[Question 2 on p.~84]{AM}). 
\end{enumerate}
Answers to these questions vary widely depending on the situation considered; we quote some results that provide context for our study.
The dynamical zeta function $\zeta_f(z)$ is rational when  $f$ is an endomorphism of a real torus (\cite[Thm.\ 1]{Baake}); $f$ is a
rational function of degree $ \geq 2$ on $\PP^1(\C)$ (Hinkkanen  \cite[Thm.~1]{Hinkkanen}); or $f$ is the Frobenius map on a variety $X$ defined over a finite field $\F_q$, so that $f_n$ is the number of $\F_{q^n}$-rational points on $X$  and $\zeta_f(z)$ is the Weil zeta function of $X$ (Dwork \cite{Dwork} and Grothendieck \cite[Cor.~5.2]{Gr}).
Our original starting point for this work was Andrew Bridy's automaton-theoretic proof that $\zeta_f(z)$ is transcendental for separable dynamically affine maps on $\PP^1(\bar \F_p)$, e.g., for the power map $x \mapsto x^m$ where $m$ is coprime to $p$ (\cite[Thm.\ 1]{Bridy}, \cite[Thm.\ 1.2 \& 1.3]{BridyBordeaux}). Finally, we mention that $\zeta_f(z)$ has natural boundary (namely, it does not extend analytically beyond the disk of convergence)  for some explicit automorphisms of  solenoids, e.g.,  the map dual to doubling on $\Z[1/6]$ (Bell, Miles, and Ward \cite{BMW}).

In this paper, we deal with these questions in a rather ``rigid'' algebraic situation, when $S=A(K)$ is the set of $K$-points on an abelian variety over an algebraically closed field of characteristic $p>0$, and $f=\sigma$ is a confined endomorphism $\sigma \in \End(A)$ (reserving the notation $f$ for the general case). 
It is plain that $\zeta_\sigma$ has  nonzero radius of convergence (Proposition \ref{conv}). We provide an \emph{exact dichotomy} for rationality of zeta functions in terms of an arithmetical property of $\sigma \acts A$.  Call $\sigma$ \emph{very inseparable} if  $\sigma^n-1$ is a separable isogeny for all $n \geq 1$. The terminology at first may appear confusing, but notice that the multiplication-by-$m$ map for an integer $m$ is very inseparable precisely when $p{\mid}m$, i.e., when it is an inseparable isogeny or zero. For another example, if $A$ is defined over a finite field, the corresponding (inseparable) Frobenius is very inseparable. 

\begin{introtheorem}[= Theorem \ref{insepz} \& Theorem \ref{sepveryinsep}] \label{B}
Suppose that $\sigma \colon A \rightarrow A$ is a confined  endomorphism of an abelian variety $A$ over an algebraically closed field $K$ of characteristic $p>0$. Then $\sigma$ is very inseparable if and only if it acts nilpotently on the local $p$-torsion subgroup scheme $A[p]^0$. Furthermore, the following dichotomy holds: 
\begin{enumerate}
\item If $\sigma$ is very inseparable, then $(\sigma_n)$ is linear recurrent, and $\zeta_\sigma(z)$ is rational.
\item If $\sigma$ is not very inseparable, then $(\sigma_n)$ is non-holonomic (cf.\ Definition \ref{hol} below), and $\zeta_\sigma(z)$ is transcendental.
\end{enumerate}
\end{introtheorem}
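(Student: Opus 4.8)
The plan is to reduce all three assertions to the behaviour of the linear operator $d\sigma$ on $\mathrm{Lie}(A)$. Since $\sigma$ is confined, each $\sigma^n-1$ has finite kernel and is therefore an isogeny, with $\sigma_n=\#\ker(\sigma^n-1)(K)=\deg(\sigma^n-1)/\deg_i(\sigma^n-1)$ its separable degree. Because $\mathrm{Lie}\bigl(\ker(\sigma^n-1)\bigr)=\ker\bigl((d\sigma)^n-1\bigr)$ inside $\mathrm{Lie}(A)$, the isogeny $\sigma^n-1$ is separable if and only if $(d\sigma)^n-1$ is invertible, i.e.\ no eigenvalue of $d\sigma$ is an $n$-th root of unity; so $\sigma$ is very inseparable exactly when no eigenvalue of $d\sigma$ is a root of unity.

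For the equivalence with nilpotence on $A[p]^0$, I would use that $\sigma$ is integral over $\Z$ (it lies in $\End(A)$, which is finitely generated as a $\Z$-module), hence so is its image $d\sigma$ under the ring homomorphism $d\colon\End(A)\to\End_K(\mathrm{Lie}(A))$; thus the eigenvalues of $d\sigma$ lie in $\bar\F_p$ and are each $0$ or a root of unity. Combined with the previous paragraph, $\sigma$ is very inseparable $\iff$ $d\sigma$ is nilpotent on $\mathrm{Lie}(A)$. Now $\mathrm{Lie}(A)=\mathrm{Lie}(A[p]^0)$ (the étale part of $A[p]$ has trivial Lie algebra and $[p]$ has zero differential), so nilpotence of $\sigma$ on the group scheme $A[p]^0$ forces $d\sigma$ nilpotent; conversely, if $(d\sigma)^N=0$ then $\sigma^N$ has zero differential, hence factors through the relative Frobenius $F_A\colon A\to A^{(p)}$, and iterating shows $\sigma^{mN}$ factors through the $m$-fold Frobenius for every $m$. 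Since $A[p]^0$ is infinitesimal, a high enough Frobenius power kills it, so $\sigma^{mN}|_{A[p]^0}=0$. This gives the first assertion.

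The very inseparable case of the dichotomy is then immediate: $\deg_i(\sigma^n-1)=1$ for all $n$, so $\sigma_n=\deg(\sigma^n-1)=\pm\prod_{i}(\alpha_i^n-1)$ with $\alpha_1,\dots,\alpha_{2g}$ the Tate eigenvalues of $\sigma$; this is an exponential polynomial in $n$, so $(\sigma_n)$ is linear recurrent and $\zeta_\sigma=D_\sigma=\prod_{S\subseteq\{1,\dots,2g\}}(1-\beta_S z)^{\pm1}$, $\beta_S=\prod_{i\in S}\alpha_i$, is rational — this is Theorem~\ref{insepz}, using only the (classical) rationality of $D_\sigma$.

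The hard case is $\sigma$ not very inseparable. I would write $\sigma_n=\deg(\sigma^n-1)\,p^{-v(n)}$, where $p^{v(n)}=\deg_i(\sigma^n-1)$ is the order of $\ker(\sigma^n-1)\cap A[p^\infty]^0$, and compute $v(n)=v_p\bigl(\det(\tau^n-1)\bigr)=\sum_j v_p(\gamma_j^n-1)$, where $\tau$ is the action of $\sigma$ on the Dieudonn\'e module of the connected $p$-divisible group $A[p^\infty]^0$ and the $\gamma_j$ are its (integral $p$-adic) eigenvalues. By lifting-the-exponent, a $p$-adic unit $\gamma_j$ whose reduction has order $d_j$ contributes $0$ unless $d_j\mid n$, and then contributes $v_p(\gamma_j^{d_j}-1)+v_p(n)$ (with a bounded correction at $p=2$), while non-units contribute $0$; since $\sigma$ is not very inseparable at least one $\gamma_j$ is a unit, so on a suitable progression $d\Z$ one has $v(n)=b\,v_p(n)+O(1)$ with $b\ge1$, while $v$ is bounded on $\{n:p\nmid n\}$. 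Thus $\sigma_n$ coincides with the linear-recurrent sequence $\deg(\sigma^n-1)$ on a density-one set but is depressed by a factor $\asymp n^{-b}$ along the lacunary set $\{dp^k\}_k$. To turn this into non-holonomicity I would split $\sum_n\sigma_n z^n$ according to the residue of $n$ modulo $d$ and the value $v_p(n)=k$, obtaining a rational function plus series $\sum_k p^{-bk}R_k(\beta_S^{p^k}z^{p^k})$ with $R_k$ rational having simple poles on the unit circle; for the dominant $\beta_S$ these accumulate densely on $|z|=1/\Lambda$ at the points $\zeta/\beta_S$ with $\zeta$ a $p$-power root of unity, and a residue estimate (the $k$-th term dominating the tail $\sum_{k'>k}$ by a factor $p^{-(b+1)}$) shows that infinitely many of them are genuine singularities — impossible for a D-finite series. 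Hence $(\sigma_n)$ is non-holonomic; and $\zeta_\sigma$ is then transcendental, since algebraicity of $\zeta_\sigma$ would make $z\,\zeta_\sigma'/\zeta_\sigma=\sum_{n\ge1}\sigma_n z^n$ algebraic and hence D-finite. I expect the main obstacle to be exactly this last point: showing that the poles contributed by the $p$-power roots of unity do not all cancel in the sum over $k$. This is where the general power-series dichotomy of Royals and Ward in the appendix is meant to be applied (and where, for the sharper statement that $|z|=1/\Lambda$ is a natural boundary rather than merely that $\zeta_\sigma$ is transcendental, one additionally invokes the separation hypothesis on the poles and zeros of $D_\sigma$).
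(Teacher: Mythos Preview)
Your treatment of the equivalence ``very inseparable $\Leftrightarrow$ nilpotent on $A[p]^0$'' is correct and in some ways cleaner than the paper's. The paper argues inside the finite $\F_p$-algebra $R=\mathrm{im}(\End(A)\to\End(A[p]^0))$ and proves a general lemma: in such a ring, $r^n-1$ is a unit for all $n$ iff $r$ is nilpotent (via Wedderburn--Artin and finite fields). You instead work on $\mathrm{Lie}(A)$, use that $d\sigma$ is integral over $\F_p$ so its eigenvalues lie in $\bar\F_p$, and close the loop via Frobenius factorization---this last step is essentially the paper's Proposition~\ref{further}(\ref{vifrob}), but derived more directly. Both approaches are fine; yours avoids the structure theory of finite rings, the paper's avoids invoking Frobenius factorization.

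Your computation of $v(n)=v_p(\det(\tau^n-1))$ via the Dieudonn\'e module of $A[p^\infty]^0$ is plausible but glossed over; the paper takes a different and more elementary route (Proposition~\ref{rs}), computing $|\deg(\sigma^n-1)|_p$ and $|\sigma_n|_p$ separately via commutative algebra in $\Z[\sigma]$-modules, without ever invoking the Dieudonn\'e module of the $p$-divisible group. Either way one arrives at $\deg_i(\sigma^n-1)=r_n|n|_p^{s_n}$ with $(r_n),(s_n)$ periodic.

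The genuine gap is in the not very inseparable case. You propose to deduce non-holonomicity from an accumulation of singularities of $\sum\sigma_n z^n$ on $|z|=1/\Lambda$, and you invoke Royals--Ward for the non-cancellation. But this is precisely the route the paper can only carry through under the extra hypothesis of a \emph{unique} dominant root (Theorem~\ref{NBsimple}): Royals--Ward gives a natural boundary for each $f(\lambda_i z)$ separately, and when several $\lambda_i$ share the maximal modulus, nothing prevents their singularities from cancelling in the sum $\sum m_i f(\lambda_i z)$---the paper leaves this open as a question. Your parenthetical suggests you believe the unique-dominant-root hypothesis is needed only for the natural boundary and not for non-holonomicity, but your argument for non-holonomicity \emph{is} the natural-boundary argument; you have no separate route. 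The paper's proof of non-holonomicity in full generality (Theorem~\ref{insepz}) is completely different and does not use Royals--Ward at all: it proves a holonomic variant of the Hadamard quotient theorem (Proposition~\ref{thisisalreadybetter}) saying that if $a_n=b_nc_n$ with $(a_n)$ linear recurrent, $(b_n)$ holonomic, and $c_n=r_n|n|_p^{s_n}$ periodic-times-valuation, then $(c_n)$ is bounded; combined with the easy fact that $\deg_i(\sigma^n-1)$ is unbounded when $\sigma$ is not very inseparable, this immediately rules out holonomicity of $(\sigma_n)$. You are missing this ingredient.
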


Since the local $p$-torsion group scheme has trivial group of $K$-points, in the given characterisation of very inseparability it is essential to use the scheme structure of $A[p]^0$. When $A$ is ordinary---which happens along a Zariski dense subspace in the moduli space of abelian varieties---very inseparable endomorphisms form a proper ideal in the endomorphism ring. Thus, in relation to question (Q1) above, in our case rationality is \emph{not} generic at all. 

The proofs proceed as follows:  the number $\sigma_n$ is the quotient of the degree of $\sigma^n-1$ by its inseparability degree. We use arithmetical properties of the endomorphism ring of $A$ and the action of its elements on the $p$-divisible subgroup to study the structure of  these degrees as a function of $n$, showing that their $\ell$-valuations are of the form ``(periodic sequence) $\times$ (periodic power of $|n|_\ell$)'' (Propositions \ref{lrab} and \ref{rs}). The emerging picture is that the degree is a very regular function of $n$ essentially controlled by linear algebra/cohomology, but to study the inseparability degree, one needs to use geometry. The crucial tool is a general commutative algebra lemma (Lemma \ref{commalgpowers}). We find that for some positive integers $q, \varpi$,  \begin{equation} \label{intrornsn} \begin{array}{l}  \displaystyle{d_n\coloneqq\deg(\sigma^n-1) = \sum_{i=1}^r m_i \lambda_i^n} \mbox{ for some } m_i \in \Z \mbox{ and distinct } \lambda_i \in \C^*;\mbox{ and} \\  \displaystyle{\deg_i(\sigma^n-1) = r_n |n|_p^{s_n}} \mbox{ for } \varpi\mbox{-periodic sequences } r_n \in \Q^*, s_n \in \Z_{\leq 0}. \end{array} \end{equation} 
Note in particular that this implies that the \emph{degree zeta function} $$D_\sigma(z)\coloneqq\exp ( \sum\limits d_n z^n/n ) = \prod_{i=1}^r (1-\lambda_i z)^{-m_i}$$ (called the ``false zeta function'' by Smale  \cite[p.\ 768]{Smale}) is rational. 
In Theorem \ref{thisisalreadybetter}, we then prove an adaptation of the Hadamard quotient theorem in which one of the series displays such periodic behaviour, but the other is merely assumed holonomic. From this, we can already deduce the rationality or transcendence of $\zeta_\sigma$. In contrast to Bridy's result, we make no reference to the theory of automata. 

\begin{introexample}
We present as a warm up example the case where $E$ is an ordinary elliptic curve over $\F_3$ and let $\sigma = [2]$ be the doubling map and $\tau=[3]$ the tripling map, where everything can be computed explicitly. Although the example lacks some of the features of the general case, we hope this will help the reader to grasp the basic ideas. 
For this example, some facts follow from general theory in Bridy \cite{BridyBordeaux}; and, since $\zeta_\sigma(z)$ equals the dynamical zeta function induced by doubling on the direct product of the circle and the solenoid dual to $\Z[1/6]$ (\cite{BMW}), some properties could be deduced from the existing literature, which we will not do. 

First of all,  $\deg(\sigma^n-1)=(2^n-1)^2 = 4^n - 2 \cdot 2^n + 1$ and $\deg(\tau^n-1) = (3^n-1)^2 = 9^n - 2\cdot  3^n + 1$. The corresponding degree zeta functions are 
$$ D_\sigma(z) = \frac{(1- 2z)^2}{(1-4z)(1-z)} \mbox{ and } D_\tau(z) =  \frac{(1- 3z)^2}{(1-9z)(1-z)}. $$
From the definition, $\sigma$ is not very inseparable but $\tau$ is. In fact, $\tau_n  = \deg(3^n-1)$ and $\zeta_\tau=D_\tau$ but, since we are on an ordinary elliptic curve (where $E[p^m]$ is of order $p^m$), we find 
\begin{align*} & \sigma_n = (2^n-1)^2 |2^n-1|_3 = (2^n-1)^2 r^{-1}_n |n|_3^{-s_n} \\ & \mbox{with } 
 \varpi = 2; r_{2k} = 3, s_{2k} = -1; r_{2k+1} = 1, s_{2k+1} = 0. \end{align*}
In our first proof of transcendence of $\zeta_\sigma(z)$, we use the fact that $\sigma_{2n}$ differs from a linear recurrence by a factor $|n|_3$ to argue that it is not holonomic.  

Since we are on an ordinary curve, the local $3$-torsion group scheme is $E[3]^0 = \mu_3$, which has $\End(E[3]^0) = \F_3$ in which the only nilpotent element is the zero element. Thus, we can detect very inseparability of $\sigma$ or $\tau$ by their image under $\End(E) \rightarrow \End(E[3]^0) = \F_3$ being zero, and indeed, $\tau=[3]$ map to zero, but $\sigma=[2]$ does not. 
\qedex
\end{introexample}

In some cases, we prove a stronger result. Let $\Lambda$ denote a dominant root of the linear recurrence (\ref{intrornsn}) satisfied by $\deg(\sigma^n-1)$, i.e., $\Lambda \in \{ \lambda_i\}$ has $|\Lambda| = \max |\lambda_i|$. In Proposition \ref{lame}, we prove some properties of $\Lambda$, e.g., that $\Lambda>1$ is real and $1/\Lambda$ is a pole of $\zeta_\sigma$.  
\begin{introtheorem}[= Theorem \ref{NBsimple}] \label{introNB} If $\sigma \colon A \rightarrow A$ is a confined, not very inseparable endomorphism of an abelian variety $A$ over an algebraically closed field $K$ of characteristic $p>0$ such that $\Lambda$ is the unique dominant root, then the dynamical zeta function $\zeta_\sigma(z)$ has a natural boundary along $|z|=1/\Lambda$.
\end{introtheorem}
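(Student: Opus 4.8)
The plan is to rewrite $\zeta_\sigma$ as an infinite product of the algebraic tame zeta functions, locate the singularities contributed by each factor, and then invoke the general power-series dichotomy of the appendix, whose ``rational'' alternative is excluded by Theorem~\ref{B}.

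I would start from the two facts recorded in~(\ref{intrornsn}): $d_n = \deg(\sigma^n-1) = \sum_{i=1}^r m_i\lambda_i^n$ is a $\Z$-linear recurrence whose unique dominant root is, by hypothesis, $\Lambda$; and $\deg_i(\sigma^n-1) = r_n|n|_p^{s_n}$ with $r_n\in\Q^*$, $s_n\in\Z_{\leq 0}$ both $\varpi$-periodic, so that $\sigma_n = d_n/(r_n|n|_p^{s_n})$. Proposition~\ref{lame} gives that $\Lambda>1$ is real, that $\zeta_\sigma$ has radius of convergence exactly $1/\Lambda$, and that $z=1/\Lambda$ is a pole of $\zeta_\sigma$; so at least one point of $|z|=1/\Lambda$ is singular, and the job is to propagate this to every arc. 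Note also that $s_n\not\equiv 0$: were the $p$-adic twist absent, then on each residue class $n\equiv a\ (\mathrm{mod}\ \varpi)$ the sequence $\sigma_n = d_n/r_a$ would be linear recurrent, hence so would $(\sigma_n)$ (an interlacing of linear recurrences), contradicting the non-holonomicity in Theorem~\ref{B}.

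The structural step is the factorisation of $\zeta_\sigma$ obtained by grouping iterates by the $p$-adic valuation of their period. Writing $n = p^k m$ with $p\nmid m$ and using $\sigma_{p^km} = (\sigma^{p^k})_m$,
\[
\log\zeta_\sigma(z) = \sum_{k\geq 0}\frac{1}{p^k}\sum_{\substack{m\geq 1\\ p\nmid m}}\frac{(\sigma^{p^k})_m}{m}\,(z^{p^k})^m = \sum_{k\geq 0}\frac{1}{p^k}\log\zeta^*_{\sigma^{p^k}}(z^{p^k}),
\]
so $\zeta_\sigma(z) = \prod_{k\geq 0}\zeta^*_{\sigma^{p^k}}(z^{p^k})^{1/p^k}$. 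Each $\zeta^*_{\sigma^{p^k}}$ is algebraic, and since $(\sigma^{p^k})_m = \sigma_{p^km}$ grows like a $\varpi$-periodic multiple of $(\Lambda^{p^k})^m$, its Taylor coefficients are nonnegative and, by a Pringsheim argument, its dominant singularity sits at the real point $w = \Lambda^{-p^k}$. Substituting $w = z^{p^k}$ spreads this singularity over the $p^k$-th roots $\Lambda^{-1}\mu$, $\mu^{p^k}=1$, so the singular sets of the factors accumulate on $\{\Lambda^{-1}e^{2\pi\mathrm{i}a/p^k} : k\geq 0,\ 0\leq a<p^k\}$, a dense subset of $|z|=1/\Lambda$.

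Finally I would feed the pair $\bigl(\zeta_\sigma,(\sigma_n)\bigr)$ into the appendix dichotomy, checking its hypotheses: $\zeta_\sigma$ has nonnegative integer Taylor coefficients, and $(\sigma_n)$ has the mixed shape ``linear recurrence with unique dominant root $\Lambda$, twisted by the $p$-adic factor $|n|_p^{s_n}$''. The dichotomy then asserts that $\zeta_\sigma$ is either rational or has a natural boundary along $|z|=1/\Lambda$; by Theorem~\ref{B} the sequence $(\sigma_n)$ is not holonomic, so $\zeta_\sigma$ is transcendental and in particular not rational, which leaves the natural-boundary case. The real obstacle is exactly the content the appendix is supplying, namely that the singularities of the tame factors are not cancelled when the infinite product is reassembled, i.e.\ that the $p$-adic perturbation obstructs analytic \emph{continuation} and not merely rationality; it is here that the positivity of $(\sigma_n)$ and the uniqueness of the dominant root $\Lambda$ are used in an essential way. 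A lesser nuisance is the periodic weight $r_n$, which makes $d_n/r_n$ only a periodically weighted combination of the $\lambda_i^n$, so some bookkeeping is needed to certify that $\Lambda$ remains the honest growth rate of each $(\sigma^{p^k})_m$ and that the correctly normalised sequence is what reaches the appendix.
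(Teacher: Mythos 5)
Your overall plan — factor $\zeta_\sigma$ into the infinite product of algebraic tame zeta functions, locate each factor's singularities via Pringsheim, and then invoke the appendix to rule out cancellation — contains a genuine gap at precisely the step you flag as ``the real obstacle''. Theorem~\ref{appendix:theorem} does \emph{not} say anything about infinite products of algebraic functions; it is a statement about a single power series of the shape $f(z)=\sum_n |a_n|_S z^n$ for an integer sequence $(a_n)$ with prescribed $\ell$-adic valuation patterns, and its conclusion is a natural boundary on $|z|=1$. It cannot be fed $\zeta_\sigma$ or $(\sigma_n)$ directly, and it does not assert a dichotomy for $\zeta_\sigma$ with boundary at $1/\Lambda$. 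In particular it says nothing about whether the dense singular sets of your factors $\zeta^*_{\sigma^{p^k}}(z^{p^k})^{1/p^k}$ can or cannot be cancelled in the product, so the multiplicative decomposition of Proposition~\ref{p}, while correct, leads nowhere without an independent non-cancellation argument that you have not supplied. (Your secondary worry about the periodic weights $r_n$ is actually handled inside the appendix theorem by including in $S$ all primes dividing the $r_n$, but that is moot given the main problem.)

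The paper's route is additive, not multiplicative, and this is what makes the appendix applicable. One works with the logarithmic derivative $Z_\sigma(z)=\sum_n\sigma_n z^n$ (which has the same boundary as $\zeta_\sigma$ by~\cite[Lemma~1]{BMW}) and writes
\[
Z_\sigma(z)=\sum_{i=1}^r m_i\, f(\lambda_i z),\qquad f(z)=\sum_{n\geq 1}\frac{z^n}{\deg_{\mathrm i}(\sigma^n-1)},
\]
using $\sigma_n=\deg(\sigma^n-1)/\deg_{\mathrm i}(\sigma^n-1)$ and $\deg(\sigma^n-1)=\sum m_i\lambda_i^n$. The appendix applies to $f$, with $a_n=\deg_{\mathrm i}(\sigma^n-1)=r_n|n|_p^{s_n}$ and $S$ chosen so that $|a_n|_S=a_n^{-1}$; since $\sigma$ is not very inseparable, $(a_n)$ is unbounded (Lemma~\ref{unboundedinsepdeg}), so $f$ has natural boundary on $|z|=1$, hence $f(\lambda_i z)$ on $|z|=1/|\lambda_i|$. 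The unique-dominant-root hypothesis then does real work: all terms with $|\lambda_i|<\Lambda$ are holomorphic past the circle $|z|=1/\Lambda$, so the dense singularities of the single dominant term $m_\Lambda f(\Lambda z)$ cannot be cancelled. Your product decomposition gives every factor the same dominant growth rate $\Lambda^{p^k}$ after rescaling, so it never lets you isolate a single boundary-generating summand in this way and never explains where the uniqueness of $\Lambda$ enters. You should replace your final step by this additive expansion of $Z_\sigma$ and the accompanying isolation argument.
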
 
This result implies non-holonomicity and hence transcendence for such functions; our proof of Theorem \ref{introNB} is independent of that of Theorem \ref{B}. The existence of a natural boundary follows from the fact that the logarithmic derivative of $\zeta_\sigma$ can be expressed through certain ``adelically perturbed'' series that satisfy Mahler-type functional equations in the sense of \cite{MahlerEq}, and hence have accumulating poles (proven in the appendix by Royals and Ward). 
From the theorem we see, in connection with question (Q2) above, that a ``generic'' $\zeta_\sigma$ is far from algebraic (not even holonomic), despite having a positive radius of convergence. 

\begin{introexamplecont}
The dominant roots are $\Lambda_\sigma=4$ and $\Lambda_\tau=9$, which are simple. Since $\zeta_\tau$ is rational, it extends meromorphically to $\C$. We prove that $\zeta_\sigma(z)$ has a natural boundary at $|z|=1/4$, as follows. It suffices to prove this for the function $Z(z)=z \zeta'_\sigma(z)/\zeta_\sigma(z) = \sum \sigma_n z^n$, which we can expand as $$ Z(z) = \sum_{2 {\nmid} n} (2^n-1)^2 z^n + \frac{1}{3} \sum_{2{\mid} n} |n|_3 (2^n-1)^2 z^n; $$ if we write $f(t) = \sum |n|_3 t^n$, then 
$$ Z(z) = \frac{z(1+28z^2+16z^4)}{(1-16z^2)(1-4z^2)(1-z^2)} + \frac{1}{3} \left( f(16z^2)-2f(4z^2)+f(z^2) \right). $$
It suffices to prove that $f(t)$ has a natural boundary at $|t|=1$, and this follows from the fact that $f$ satisfies the functional equation $$ f(z) = \frac{z^2+z}{1-z^3} + \frac{1}{3} f(z^3), $$
and hence acquires singularities at the dense set in the unit circle consisting of all third power roots of unity.
\qedex
\end{introexamplecont}

Section \ref{secvi} constitutes a purely arithmetic geometric study of the notion of very inseparability. We prove that very inseparable isogenies are inseparable and that an isogeny $\sigma\colon E \to E$ of an elliptic curve $E$ is very inseparable if and only if it is inseparable. We give examples where very inseparability is not the same as inseparability even for simple abelian varieties. We study very inseparability using the description of $A[p]^0$ through Dieudonn\'e modules, from which it follows that very inseparable endomorphisms are precisely those of which a power factors through the Frobenius morphism. 

\begin{introexample}
Let $E$ denote an ordinary elliptic curve over a field of characteristic $3$ and set $A=E \times E$; then the map $[2] \times [3]$ is inseparable but not very inseparable, since there exist $n$ for which $2^n-1$ is divisible by $3$. In this case, $\End(A[3]^0)$ is the two-by-two matrix algebra over $\F_3$, which contains non-invertible non-nilpotent elements, and under $\End(A) \rightarrow \End(A[3]^0) = M_2(\F_3)$, $[2] \times [3]$ is mapped to the matrix $\mathrm{diag}(2,0)$, which is such an element. \qedex
\end{introexample}

We then introduce the \emph{tame zeta function} $\zeta^*_{\sigma}$, defined as \begin{equation} \zeta^*_{\sigma}(z) \coloneqq \exp \left( \sum_{p \nmid n} {\sigma_n}{} \frac{z^n}{n} \right), \end{equation} 
summing only over $n$ that are not divisible by $p$. The full zeta function $\zeta_{\sigma}$ is an infinite product of tame zeta functions of $p$-power iterates of $\sigma$ (Proposition \ref{p}). 
Thus, one ``understands'' the full zeta function by understanding those tame zeta functions. Our main result in this direction says that the tame zeta function  belongs to a cyclic extension of the field of rational functions:

\begin{introtheorem}[= Theorem \ref{tamehast}] For any (very inseparable or not) $\sigma \acts A$, a positive integer power of the tame zeta function $\zeta^*_\sigma$ is rational. 
\end{introtheorem}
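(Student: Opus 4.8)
The plan is to reduce the statement to the already-established rationality of the \emph{degree} zeta function $D_\sigma(z)=\prod_{i=1}^r(1-\lambda_i z)^{-m_i}$ by means of a roots-of-unity filter, and then to control the (a priori cyclotomic) exponents that appear. The starting point is that in the tame range $p\nmid n$ the arithmetic of $\sigma_n$ collapses: since $|n|_p=1$, formula \eqref{intrornsn} gives $\sigma_n=d_n/(r_n|n|_p^{s_n})=d_n/r_n$, i.e.\ a linear recurrent sequence divided by the $\varpi$-periodic sequence $r_n=\deg_i(\sigma^n-1)$. Put $u_n:=r_n^{-1}$ for $p\nmid n$ and $u_n:=0$ for $p\mid n$, a sequence of non-negative rationals which is periodic modulo a suitable $N$ with $p\mid N$; then $\log\zeta^*_\sigma(z)=\sum_{n\geq1}u_n d_n z^n/n$. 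Expanding $u_n=\tfrac1N\sum_{j=0}^{N-1}\hat u_j\zeta_N^{jn}$ by discrete Fourier transform ($\zeta_N=e^{2\pi i/N}$, $\hat u_j=\sum_l u_l\zeta_N^{-jl}$) and using $\sum_n d_n w^n/n=\log D_\sigma(w)$, one obtains the closed form
\[
\zeta^*_\sigma(z)\;=\;\prod_{j=0}^{N-1} D_\sigma\!\big(\zeta_N^{\,j} z\big)^{\hat u_j/N}.
\]
If each exponent $\hat u_j/N$ is a \emph{rational} number, we are done: taking for $m$ a common denominator, $\zeta^*_\sigma(z)^m=\prod_j D_\sigma(\zeta_N^{\,j}z)^{m\hat u_j/N}$ is a finite product of integral powers of the rational functions $D_\sigma(\zeta_N^{\,j}z)$, hence lies in $\C(z)$; and since $\zeta^*_\sigma\in 1+z\Q[[z]]$, this power in fact lies in $\Q(z)$. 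As $\zeta^*_\sigma$ is then a root of $X^m-\zeta^*_\sigma(z)^m\in\Q(z)[X]$, it generates a radical — over $\C(z)$, cyclic Kummer — extension of the rational function field, as announced.

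Thus the whole problem is to prove $\hat u_j\in\Q$ for every $j$. Because the inverse transform is again a transform over $\Q(\zeta_N)$ and $\mathrm{Gal}(\Q(\zeta_N)/\Q)=(\Z/N)^\times$ acts by $\zeta_N\mapsto\zeta_N^c$, carrying $\hat u_j$ to $\hat u_{cj}$, rationality of all $\hat u_j$ is equivalent to the statement that $u_n$ depends only on $\gcd(n,N)$; granting this, each $\hat u_j$ becomes a $\Q$-linear combination of Ramanujan sums and so lies in $\Q$. The vanishing locus $\{p\mid n\}$ of $u$ is $\gcd$-determined since $p\mid N$, so everything comes down to the geometric claim
\[
(\star)\qquad \deg_i(\sigma^n-1)\ \text{depends only on}\ \gcd(n,\varpi)\quad\text{when}\ p\nmid n,
\]
a refinement of the mere periodicity recorded in \eqref{intrornsn}. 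I expect $(\star)$ to be the main obstacle: a generic periodic sequence has an honestly cyclotomic transform, and it is only the special multiplicative shape of the $p$-adic valuations occurring in $\deg_i$ that rescues rationality.

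To prove $(\star)$ I would revisit the computation underlying Propositions \ref{lrab} and \ref{rs}. The inseparability degree $\deg_i(\sigma^n-1)$ is the order of the connected part of $\ker(\sigma^n-1)$, which equals $\ker(\sigma^n-1)$ restricted to the local $p$-divisible group $A[p^\infty]^0$; on the associated Dieudonn\'e module $M$ (free over $W(K)$, with $\sigma$ acting $W(K)$-linearly) this kernel has order $p^{v_p(\det_W((\sigma^n-1)\mid M))}$, so $\deg_i(\sigma^n-1)=\prod_k p^{v_p(\beta_k^n-1)}$, where $\beta_1,\dots,\beta_h$ are the eigenvalues of $\sigma$ on $M$ — a sub-multiset of the algebraic integers $\alpha_1,\dots,\alpha_{2g}$ (the roots of the characteristic polynomial of $\sigma$), so that $v_p$ may be computed at a fixed place of $\bar\Q$ above $p$. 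Being eigenvalues of a $W(K)$-linear operator on a free module, the $\beta_k$ are $p$-adic integers; those with $v_p(\beta_k)>0$ contribute $1$, while a $p$-adic unit $\beta_k$ reduces at that place to a root of unity in $\bar\F_p^\times$ of order $d_k$ coprime to $p$, and a lifting-the-exponent argument shows that for $p\nmid n$ one has $v_p(\beta_k^n-1)=v_p(\beta_k^{d_k}-1)$ if $d_k\mid n$ and $0$ otherwise. Hence for $p\nmid n$,
\[
\deg_i(\sigma^n-1)\;=\;\prod_{k\,:\,d_k\mid n} p^{\,v_p(\beta_k^{d_k}-1)},
\]
which depends only on the set $\{k:d_k\mid n\}$, hence only on $\gcd$ of $n$ with the least common multiple of the $d_k$; since that least common multiple divides a suitable period $\varpi$ (and the same lifting-the-exponent input produces the extra factor $|n|_p^{s_n}$ for general $n$, reproving \eqref{intrornsn}), this is exactly $(\star)$. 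With $(\star)$ in hand, the remaining ingredients — the Fourier identity, the Ramanujan-sum rationality, clearing denominators, and identifying the field extension — are routine, so the single load-bearing point is the passage from periodicity of $r_n$ to its dependence on $\gcd(n,\varpi)$ alone.
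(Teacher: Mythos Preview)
Your proof is correct and takes a genuinely different route from the paper's.

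The paper's argument (Theorem~\ref{tamehast}) also starts from $\sigma_n=d_n/r_n$ for $p\nmid n$, but instead of a roots-of-unity filter it uses M\"obius inversion over the divisors of the period $\omega$: writing $1/r_n=\sum_{d\mid\gcd(n,\omega)}d\alpha_d$ with rational $\alpha_d$ (which is possible precisely because of $(\star)$), one obtains
\[
\zeta^*_\sigma(z)=\prod_{d\mid\omega}\Bigl(D_{\sigma^d}(z^d)\big/\sqrt[p]{D_{\sigma^{pd}}(z^{pd})}\Bigr)^{\alpha_d},
\]
a product of degree zeta functions of \emph{iterates} $\sigma^d$ evaluated at $z^d$, rather than your product of the single $D_\sigma$ at cyclotomic twists $\zeta_N^j z$. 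The M\"obius version stays over $\Q$ throughout and makes the exponent $t_\sigma$ fairly explicit (this is exploited in Proposition~\ref{stranget}); your Fourier version is very clean and reduces rationality of the exponents to a Galois-invariance statement, at the cost of an extra appeal to $\Q[\![z]\!]\cap\C(z)=\Q(z)$ at the end.

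Regarding $(\star)$: this is exactly the ``furthermore'' clause of Proposition~\ref{rs}, so you could simply cite it. The paper proves it by localising the $\Z[\sigma]$-module $A(K)_{\mathrm{tor}}$ and applying the commutative-algebra Lemma~\ref{commalgpowers} at each maximal ideal above $p$. Your alternative---computing $\deg_{\mathrm i}(\sigma^n-1)$ as $p$ to the $p$-valuation of $\det(\sigma^n-1\mid D(A[p^\infty]^0))$, factoring over the eigenvalues $\beta_k$, and running lifting-the-exponent---is also valid; the inputs you need (that the $\beta_k$ lie among the algebraic integers $\xi_i$ since the characteristic polynomial on the local summand divides the integral one on $H^1_{\mathrm{cris}}$, and that order equals $p^{\text{length}}$ on Dieudonn\'e modules) are standard, though you pass over them quickly. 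This gives a more crystalline explanation of the same gcd-periodicity and is a nice complement to the paper's module-theoretic argument.
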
 

The minimal such integral power $t_\sigma>0$ seems to be an interesting arithmetical invariant of $\sigma \acts A$; for example, on an ordinary elliptic curve $E$, one can  choose $t_\sigma$ to be a $p$-th power for  $\sigma \acts E$, but for a certain endomorphism of a supersingular elliptic curve, $t_\sigma=p^2(p+1)$ (cf.\ Proposition \ref{stranget}).

\begin{introexamplecont}
The tame zeta function for $\sigma$ is, by direct computation,  \begin{align*} \zeta_\sigma^*(z) & = \exp \left( \frac{1}{3} \sum_{\substack{3 {\nmid} n \\ 2 {\mid} n}} (2^n-1)^2 \frac{z^n}{n} + \sum_{\substack{3 {\nmid} n \\ 2 {\nmid} n}} (2^n-1)^2 \frac{z^n}{n} \right) \\ & =  \sqrt[9]{\frac{F_2(z)^9 F_{64}(z^6)}{F_8(z^3)^3 F_4(z^2)^3}}, 
\mbox{ where }F_a(z)\coloneqq\frac{(1-az)^2}{(1-a^2z)(1-z)},\end{align*} and hence $t_\sigma=9$. Note that even for the very inseparable $\tau$,  $\zeta^*_\tau(z) = D_\tau(z)/\sqrt[3]{D_{\tau^3}(z^3)}$ is not rational, and $t_\tau=3$. \qedex
\end{introexamplecont}

In Section \ref{secFE}, we investigate functional equations for $\zeta_\sigma$ and $\zeta^*_\sigma$ under $z \mapsto 1/(\deg(\sigma)z)$. For very inseparable $\sigma$, there is such a functional equation (which can also be understood cohomologically), but not for $\zeta_\sigma$ having a natural boundary. On the other hand, we show that all tame zeta functions satisfy a functional equation when continued to their Riemann surface (see Theorem \ref{thmFE}). 

In Section \ref{orbits}, we study the distribution of prime orbits for $\sigma \acts A$. Let $P_\ell$ denote the number of prime orbits of length $\ell$ for $\sigma$. In case of a unique dominant root, we deduce a sharp asymptotics for $P_\ell$ of the form 
\begin{equation} \label{pellintro}
P_{\ell} = \frac{\Lambda^{\ell}  }{\ell r_{\ell} |\ell|_p^{s_{\ell}}} + O(\Lambda^{ \Theta \ell })\mbox{ where }
\Theta \coloneqq \max \{ \Re(s) : D_\sigma(\Lambda^{-s})=0 \}. 
\end{equation}

We average further like in the Prime Number Theorem (PNT). Define the \emph{prime orbit counting function} 
$\pi_\sigma(X)$ and the \emph{tame prime orbit counting function} $\pi^*_\sigma(X)$ by 
$$\pi_\sigma(X)\coloneqq\sum\limits_{\ell \leq X} P_\ell \mbox{ and } \pi^*_\sigma(X)\coloneqq\sum\limits_{\substack{\ell \leq X \\ p {\nmid} \ell}} P_\ell. $$ 
Again, whether or not $\sigma$ is very inseparable is related to the limit behaviour of these functions. 

\begin{introtheorem}[= Theorem \ref{asymptoticsforpnt} and Theorem \ref{PNT-tame}] \label{thmd} If $\sigma \acts A$ has a unique dominant root $\Lambda>1$, then, with $\varpi$ as in \textup{(\ref{intrornsn})} and for $X$ taking integer values, we have:
\begin{enumerate}
\item If $\sigma$ is very inseparable, $ \displaystyle{ \lim_{X \rightarrow +\infty} X \pi_\sigma(X) / \Lambda^X}$ exists and equals  ${\Lambda}/{(\Lambda-1).} $
\item If $\sigma$ is not very inseparable, then $X \pi_\sigma(X) / \Lambda^X $ is bounded away from zero and infinity, its set of accumulation points is a union of a Cantor set and finitely many points (in particular, it is uncountable), and every accumulation point is a limit along a sequence of integers $X$ for which $(X,X)$ converges in the topological group $$\{(a,x)\in {\Z}/{\varpi \Z} \times \Z_p : a \equiv x \bmod{ |\varpi|_p^{-1}}\}. $$
\item For any $k \in \{0,\dots, p\varpi-1\}$, the limit 
$ \lim\limits_{\substack{X \rightarrow +\infty \\ X \equiv k \mathrm{\, mod\, }  p\varpi}} {X \pi^*_\sigma(X)}/{\Lambda^{X}}   =: \rho_k $
exists. 
\end{enumerate}
\end{introtheorem}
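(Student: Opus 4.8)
The plan is to derive all three statements from the asymptotics \eqref{pellintro} for $P_\ell$ by a Tauberian passage from $(P_\ell)$ to its partial sums. Because $\Lambda$ is the \emph{unique} dominant root, every zero of $D_\sigma(\Lambda^{-s})$ has real part $\log|\lambda_i|/\log\Lambda<1$ (the root $\Lambda$ itself being a pole, not a zero, by Proposition \ref{lame}), so $\Theta<1$; summing the error term in \eqref{pellintro} geometrically gives $\pi_\sigma(X)=\sum_{\ell\le X}\frac{\Lambda^\ell}{\ell r_\ell|\ell|_p^{s_\ell}}+O(\Lambda^{\Theta X})$, and likewise for $\pi^*_\sigma$. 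Substituting $\ell=X-j$, and using that $\tfrac{X}{X-j}\to1$ for fixed $j$ while $\tfrac{X}{X-j}\Lambda^{-j}$ is uniformly dominated (split at $j\le\sqrt X$; the tail $\sum_{j>\sqrt X}$ contributes $O(X\Lambda^{-\sqrt X})$), one obtains
\[
\frac{X\pi_\sigma(X)}{\Lambda^X}=\Phi_\sigma(X)+o(1),\qquad \Phi_\sigma(X):=\sum_{j\ge0}\frac{\Lambda^{-j}}{r_{X-j}}\,|X-j|_p^{-s_{X-j}},
\]
and similarly $\tfrac{X\pi^*_\sigma(X)}{\Lambda^X}=\sum_{j\ge0,\ p\nmid X-j}\tfrac{\Lambda^{-j}}{r_{X-j}}+o(1)$ (on the latter range $|X-j|_p=1$). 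All series converge absolutely since $0\le|\,\cdot\,|_p^{-s_\bullet}\le1$, the $r_\bullet$ are $\varpi$-periodic and positive, and $\Lambda>1$; here $r_\bullet,s_\bullet$ are read off the $\varpi$-periodic sequences of \eqref{intrornsn}, extended to $\Z$.

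For (i), very inseparability forces $\sigma^n-1$ separable for all $n$, i.e.\ $r_n\equiv1$ and $s_n\equiv0$; then $\Phi_\sigma(X)=\sum_{j\ge0}\Lambda^{-j}=\Lambda/(\Lambda-1)$, which gives the claimed limit. For (iii), both the condition $p\nmid X-j$ and the value $r_{X-j}$ depend on $X$ only modulo $p\varpi$; hence if $X\equiv k\pmod{p\varpi}$ then $\tfrac{X\pi^*_\sigma(X)}{\Lambda^X}=\sum_{j\ge0,\ j\not\equiv k\,(p)}\tfrac{\Lambda^{-j}}{r_{k-j}}+o(1)$, and the main term is the fixed convergent series $\rho_k$; thus $\lim_{X\to\infty,\ X\equiv k\,(p\varpi)}X\pi^*_\sigma(X)/\Lambda^X=\rho_k$.

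For (ii), the key observation is that $\Phi_\sigma(X)$ is the value at the integer $X$ of a continuous function on the profinite group $G=\{(a,x)\in\Z/\varpi\Z\times\Z_p:\ a\equiv x\bmod|\varpi|_p^{-1}\}$: since $r_{X-j},s_{X-j}$ depend on $X-j\bmod\varpi$ and $|X-j|_p$ on $X-j\in\Z_p$, all of them factor through $X\mapsto(X\bmod\varpi,X)\in G$, and $\Phi_\sigma(X)=\Phi\big((X\bmod\varpi,X)\big)$ where $\Phi(a,x):=\sum_{j\ge0}\frac{\Lambda^{-j}}{r_{a-j}}|x-j|_p^{-s_{a-j}}$ defines a map $G\to\R$. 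This $\Phi$ is continuous (uniform geometric majorant; $t\mapsto t^{-s}$ is continuous on $[0,1]$ for $s\le0$) and strictly positive (a vanishing term forces $x=j$, possible for at most one $j$), so compactness of $G$ yields that $X\pi_\sigma(X)/\Lambda^X$ is bounded away from $0$ and $\infty$. Moreover $(1,1)$ generates a dense subgroup of $G$ (the integers are dense in $\Z_p$, and the prime-to-$p$ part $m:=\varpi|\varpi|_p$ being coprime to $p$, one realises every class of $\Z/m\Z$ compatibly), so $\{(X\bmod\varpi,X):X\ge1\}$ is dense; since $G$ is perfect, the set of accumulation points of $X\pi_\sigma(X)/\Lambda^X$ is exactly $\Phi(G)$, and each such value is a limit over integers $X$ for which $(X,X)$ converges in $G$ — which is the last assertion of (ii).

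It then remains to show that $\Phi(G)$ is a Cantor set together with finitely many points. Here $G\cong\Z/m\Z\times\Z_p$, a finite disjoint union of Cantor sets, hence a Cantor set, so $\Phi(G)$ is a compact subset of $\R$. The structure comes from the $p$-adic self-similarity of $\Phi$: writing $x=x_0+px'$ and $j=j_0+pj'$ and separating $x_0=j_0$ (where $v_p(x-j)=1+v_p(x'-j')$) from $x_0\neq j_0$ gives $\Phi(a,x)=C(a,x_0)+\Lambda^{-x_0}\Psi_{a,x_0}(x')$, with $\Psi_{a,x_0}$ again of the same shape but with ratio $\Lambda^{p}$ and still positive periodic coefficients; iterating realises $\Phi(G)$ as an attractor-type set built from uniformly bounded pieces whose diameters contract along any $x$ whose $p$-adic digits are not eventually $0$. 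One then checks — exactly as in the orbit-counting analysis of $S$-integer dynamical systems — that $\Phi(G)$ contains no interval and that its perfect kernel is a Cantor set (nonempty because $\sigma$ not being very inseparable makes some $s_n<0$, forcing genuine branching at a $\varpi$-periodic, hence infinite, set of $p$-adic scales), the remaining isolated points being the finitely many values of $\Phi$ on those cylinders along which all the relevant $s_{a-j}$ vanish. This last step — pinning down the precise ``Cantor plus finitely many points'' shape of $\Phi(G)$ rather than merely its uncountability — is the main obstacle, and is where the analogy with non-hyperbolic $S$-integer dynamics does the real work.
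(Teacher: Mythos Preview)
Your derivation of the asymptotic $\tfrac{X\pi_\sigma(X)}{\Lambda^X}=\Phi_\sigma(X)+o(1)$ and the identification of $\Phi_\sigma$ as the restriction to integers of a continuous map $\Phi\colon G_\sigma\to\R$ match the paper's Proposition~\ref{fluct} and Formula~\eqref{paczkachusteczekeq2}; your treatment of parts (i) and (iii), and of the ``bounded away from $0$ and $\infty$'' and ``limits along $G_\sigma$-convergent sequences'' clauses of (ii), is correct and essentially identical to the paper.

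The gap is in your last paragraph. You correctly reduce the remaining claim to the structure of $\Phi(G_\sigma)$, but your argument that this set is a Cantor set plus finitely many points is not a proof. The self-similarity decomposition $\Phi(a,x)=C(a,x_0)+\Lambda^{-x_0}\Psi_{a,x_0}(x')$ does not by itself force total disconnectedness: a continuous image of a Cantor set can be any compact metric space, including an interval, so ``attractor-type structure'' and ``genuine branching'' are not enough without a separation condition. Deferring to ``exactly as in the orbit-counting analysis of $S$-integer dynamical systems'' is not a substitute here, since the present setup (with the periodic weights $r_n$ and exponents $s_n$) is strictly more general.

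The paper's argument is different and direct: it proves that on every sufficiently small ball of $G_\sigma$, the map $\Phi$ is either \emph{constant} or \emph{injective} (hence a homeomorphism onto a Cantor set in $\R$). The dichotomy comes from an explicit estimate. For $x,y$ with $|x-y|_p=p^{-k}$ and common residue $\varpi_0\bmod\varpi$, write $\Phi(\varpi_0,x)-\Phi(\varpi_0,y)=\sum_\ell a_\ell$; one checks that the nonzero $a_\ell$ occur only at indices $\ell$ with $\ell\equiv x$ or $\ell\equiv y\pmod{p^{k+1}}$ and $s_{\varpi_0-\ell}\neq 0$, hence are spaced at least $p^k$ apart. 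If $\ell_0$ is the first such index, then $|a_{\ell_0}|\ge\tfrac{1}{2r_{\varpi_0-\ell_0}}p^{ks_{\varpi_0-\ell_0}}\Lambda^{-\ell_0}$ while the tail is at most $\Lambda^{-\ell_0-p^k}/(1-\Lambda^{-p^k})$; the first bound decays only polynomially in $p^k$ (since $r_\bullet,s_\bullet$ take finitely many values) and the second super-exponentially, so the first dominates for all $k$ larger than a constant $K_0$ depending only on $\sigma$. Thus $\Phi$ is injective on balls of radius $p^{-K_0}$ unless all $a_\ell$ vanish there, in which case it is constant. Covering $G_\sigma$ by finitely many such balls yields $\Phi(G_\sigma)$ as a finite union of Cantor sets and points, hence a Cantor set together with finitely many points; Lemma~\ref{unboundedinsepdeg} guarantees that the injective case actually occurs when $\sigma$ is not very inseparable. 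This local injectivity estimate is the missing key step.
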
 

An expression for $\rho_k$ in terms of arithmetic invariants can be found in Formula (\ref{rhok}). We also present an analogue of Mertens' second theorem (Proposition \ref{mertens}) on the asymptotics of $$\mathrm{Mer}(\sigma)\coloneqq\sum_{\ell \leq X} P_{\ell}/\Lambda^\ell$$ in $X$. It turns out that, in contrast to the PNT analogue, such type of averaged asymptotics is insensitive to the endomorphism being very inseparable or not.

\begin{introexamplecont} Including a subscript for $\sigma$ or $\tau$ in the notation, 
M\"obius inversion relates $ P_{\sigma,\ell}$ to the values of $\sigma_\ell$, and hence of $\lambda_i, r_n,s_n$; we find  for the very inseparable $\tau$ that $P_{\tau, \ell} = 9^{\ell}/{\ell} + O(3^{\ell }),$ which we can sum to the analogue of the prime number theorem $ \pi_\tau(X) \sim 9/8 \cdot 9^X/X$. 
The situation is different for the not very inseparable $\sigma$, where 
 \begin{equation} \label{exex} P_{\sigma, \ell}  = \frac{4^{\ell}  }{\ell} \cdot \left\{  \begin{array}{ll} |3 \ell|_3 & \mbox{ if $\ell$ is even} \\ 1  & \mbox{ if  $\ell$ is odd} \end{array} \right\} + O(2^{\ell }),  \end{equation} 
and $\pi_\sigma(X)X/4^X$ has uncountably many limit points in the interval $[1/12,4/3]$ (following the line of thought set out in \cite{Everest-et-al}). 

We find as main term in $\mathrm{Mer}(\tau)$ the $X$-th harmonic number $\sum_{\ell \leq X} 1/\ell$, and, taking into account the constant term from summing error terms in (\ref{pellintro}), we get 
$ \mathrm{Mer}(\tau) \sim \log X +c$ for some $c \in \R$. On the other hand, a more tedious computation gives $\mathrm{Mer}(\sigma) \sim 5/8 \log X + c'$ for some $c' \in \R$. 
\begin{figure}[t]
\begin{center}
\includegraphics[width=8cm]{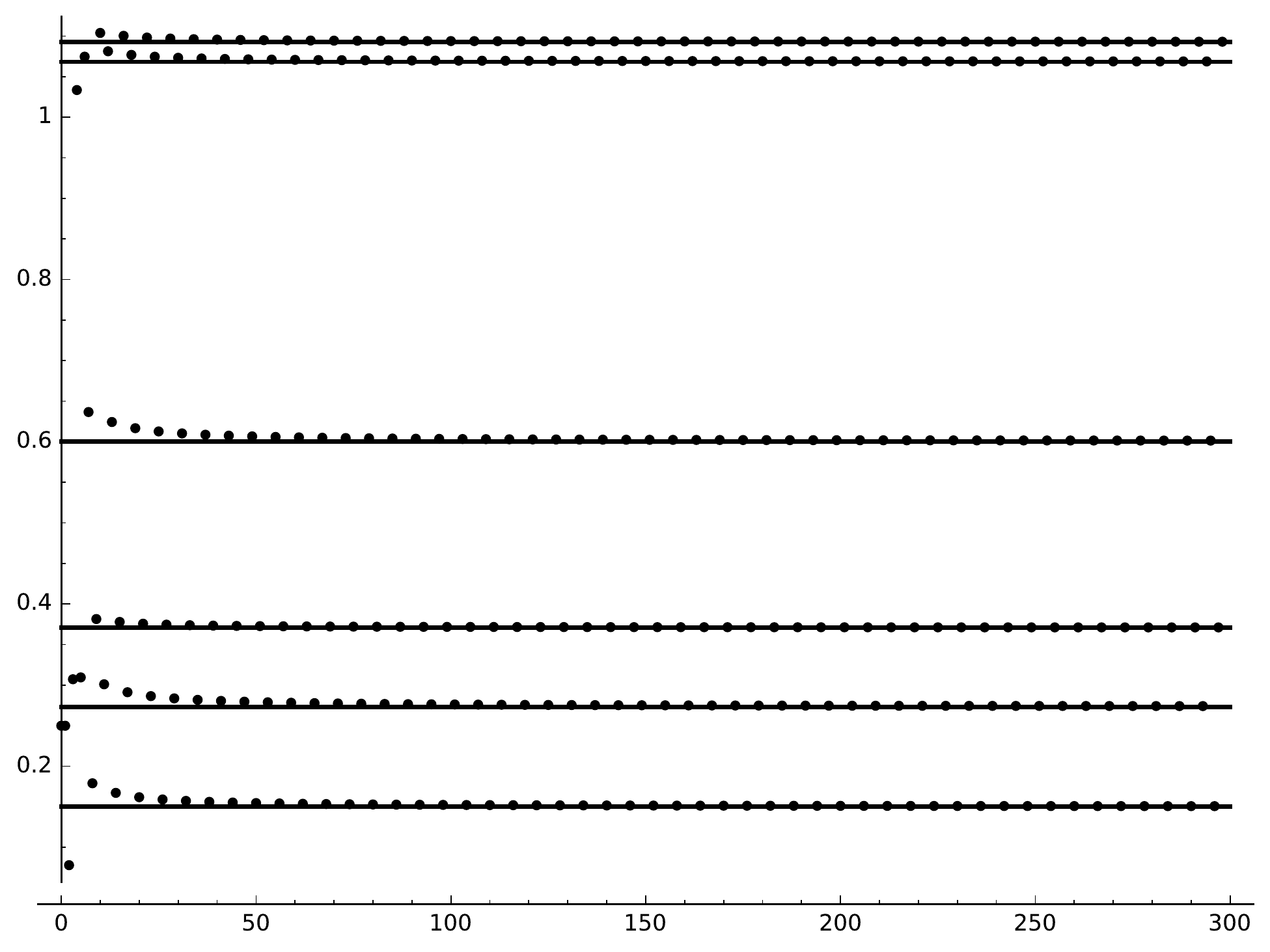}
\caption{Plot of $X \mapsto X \pi^*_\sigma(X)/4^X$, where $\sigma$ is doubling on an ordinary elliptic curve in characteristic 3 (dots) and the six limit values as computed from Formula \eqref{rhok} (horizontal solid lines)}
\label{figord}
\end{center}
\end{figure}

Concerning the tame case, Figure \ref{figord} shows a graph (computed in SageMath  \cite{sagemath}) of the function $\pi_\sigma^*(X)X/4^X$, in which one sees six different accumulation points.  The values $\rho_k$ can be computed in closed form as rational numbers by noticing that if we sum Equation (\ref{exex}) only over $\ell$ not divisible by $3$, we can split it into a finite sum over different values of $\ell$ modulo $6$. We show the computed values in Table \ref{rhotable}, which match the asymptotics in the graph.\footnote{An amusing observation is the similarity between Figure \ref{figord} and the final image in the notorious Fermi--Pasta--Ulam--Tsingou paper  (see the very suggestive Figures 4.3 and 4.5 in the modern account \cite{FPU}): the time averaged fraction of the energy per Fourier mode in the epynomous particle system seems to converge to distinct values, whereas mixing would imply convergence to a unique value; by work of Izrailev--Chirikov the latter seems to happen at higher energy densities. This suggests an analogy (not in any  way mathematically precise) between ``very inseparable'' and ``ergodic/mixing/high energy density''.}\qedex
\begin{table}[t]
\begin{tabular}{c|c|l}
$k$ mod $6$ & $\rho_k \cdot 2^{-2} \cdot 3^3 \cdot 5\cdot  7 \cdot 13$ & $\rho_k$ (numerical) \\
\hline 
$0$ & $839$ & $0.27317867317867$ \\
$1$ & $17 \cdot 193$ & $1.06829466829467$ \\
$2$ & $2^2 \cdot 461$ & $0.60040700040700$\\
$3$ & $461$ & $0.15010175010175$ \\
$4$ & $17 \cdot 67$ & $0.37085877085877$ \\
$5$ & $2^2 \cdot 839$ & $1.09271469271469$ \\
\end{tabular}
\caption{Exact and numerical values of the six limit values in Figure \ref{figord}}
\label{rhotable}
\end{table}
\end{introexamplecont}

We briefly discuss convergence rates in the above theorem (compare, e.g., \cite{Sharp}) in relation to analogues of the Riemann Hypothesis (see Proposition \ref{RH-prop}): there is a function $M(X)$ determined by the combinatorial information $(p,\Lambda,\varpi,(r_n),(s_n))$ associated to $\sigma \acts A$ as in Equation (\ref{intrornsn}), such that for integer values $X$, we have $$\pi_\sigma(X) = M(X) + O(\Lambda^{\Theta X})$$ where the ``power saving'' $\Theta$ is determined by the real part of zeros of the degree zeta function $D_\sigma(\Lambda^{-s})$. Said more colloquially, the main term reflects the growth rate (analogue of entropy) and inseparability, whereas the error term is insensitive to inseparability and determined purely by the action of $\sigma$ on the total cohomology. 

\begin{introexamplecont}
If we collect the main terms using the function, for $k\in\{0,1\}$, $$F_k(\Lambda, X) = \sum_{\substack{ \ell \leq X \\ \ell \equiv k \mathrm{\, mod\, } 2}}{ \Lambda^\ell/\ell}$$ we arrive at the following analogue of the Riemann Hypothesis for $\sigma$:  
$$ \pi_\sigma(X) =  M(X)+O(2^X) \mbox{ with } M(X)\coloneqq \frac{1}{3} F_0(4,X) + F_1(4,X)  -  \sum_{i=1}^{\lfloor \log_3(X) \rfloor}  \frac{2}{9^i} F_{0}\left(4^{3^i},\left\lfloor \frac{X}{3^i} \right\rfloor \right).$$ See Figure \ref{figRH} (computed in SageMath  \cite{sagemath}) for an illustration. \qedex

\begin{figure}[t]
\begin{center}
\includegraphics[width=10cm]{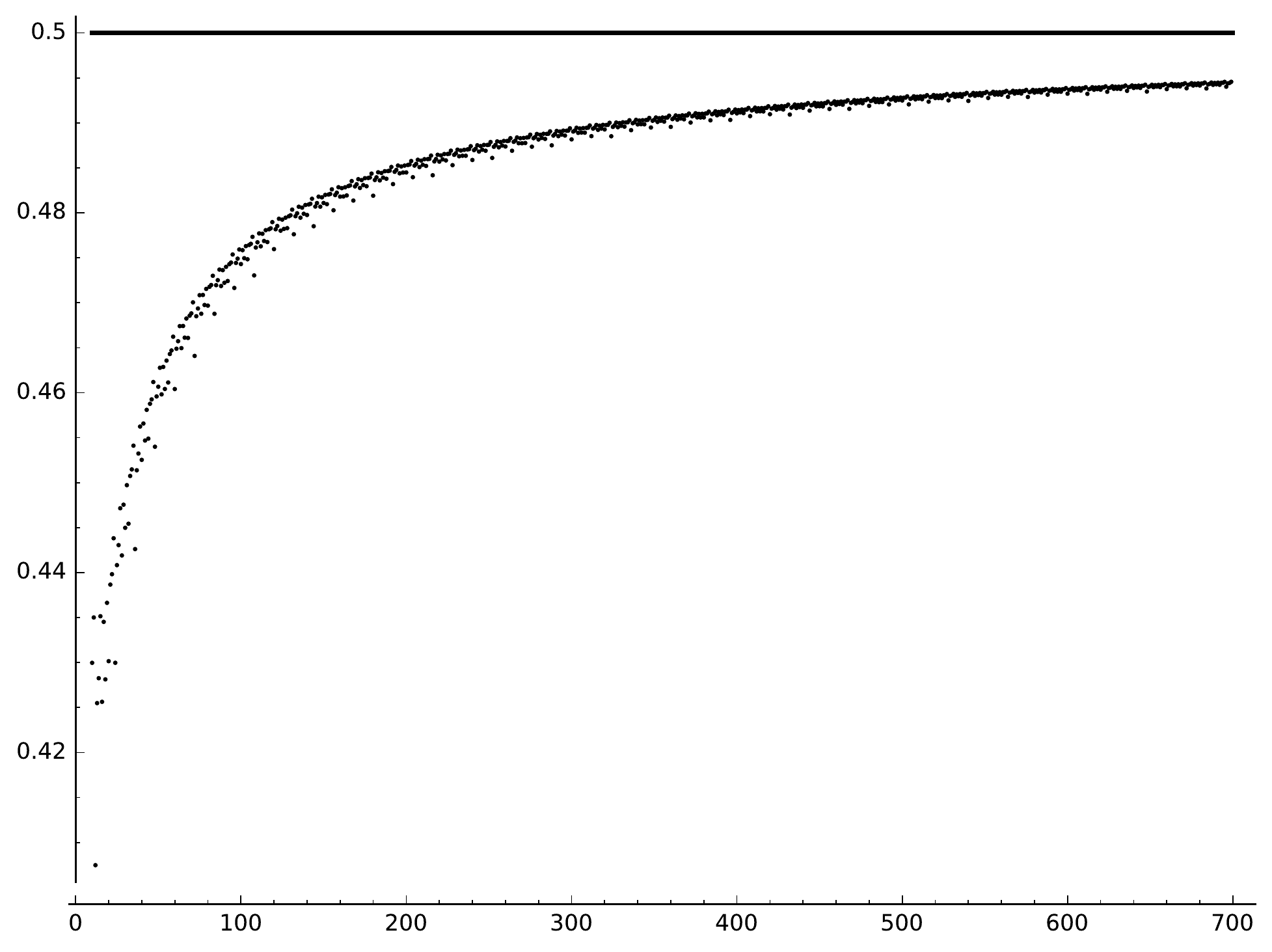}
\caption{Plot of $X \mapsto \log_4 \left| \pi_\sigma(X)-M(X) \right|/X$ (dots) for integer $X \in [10,700]$ and the solid line $\Theta=1/2$, where $\sigma$ is doubling on an ordinary elliptic curve in characteristic 3}
\label{figRH}
\end{center}
\end{figure}
\end{introexamplecont}

\begin{introexample}
All our results apply to the situation where $A$ is an abelian variety defined over a finite field $\F_q$ and $\sigma$ is the Frobenius of $\F_q$, which is very inseparable. This implies known results about curves ${C}/{\F_q}$ when applied to the Jacobian $A=\mathrm{Jac}(C)$ of $C$, such as rationality of the zeta function and analogues of PNT (compare \cite[Thm.\ 5.12]{Rosen}).  
\end{introexample}

We finish this introduction by discussing some open problems and possible future research directions. In the near future, we hope to treat the case of linear algebraic groups, which will require different techniques. Our methods in this paper rest on the presence of a group structure preserved by the map. What happens in absence of a group structure is momentarily unclear to us, but we believe that the study of the tame zeta function in such a more general setup merits consideration. We will consider this for dynamically affine maps on $\PP^1$ in the sense of \cite{BridyBordeaux} (not equal to, but still ``close to'' a group) in future work. It would be interesting to study direct relations between our results and that of compact group endomorphisms and $S$-integer dynamical systems---we briefly touch upon this at the end of Section \ref{caa}.

\section{Generalities} 

\subsection*{Rationality and holonomicity} We start by recalling some basic facts about recurrence sequences. 

\begin{definition} \label{hol} A power series $f=\sum\limits_{n\geq 1} a_n z^n \in \C[\![z]\!]$ is \emph{holonomic} (or \emph{D-finite}) if it satisfies a linear differential equation over $\C(z)$, i.e., if there exist polynomials $q_0,\ldots,q_d\in\C[z]$, not all zero, such that \begin{equation} \label{diffeq} q_0(z) f(z) + q_1(z) f'(z) +\ldots +q_d(z) f^{(d)}(z)=0.\end{equation} A sequence $(a_n)_{n\geq 1}$ is called \emph{holonomic} if its associated generating function $f=\sum\limits_{n\geq 1} a_n z^n \in \C[\![z]\!]$ is holonomic.
\end{definition} 

In the following lemma, we collect some well-known equivalences between properties of a sequence and its generating series: 
\begin{lemma}\label{basicpowerserieslemmata} Let  $(a_n)_{n\geq 1}$ be a sequence of complex numbers.
\begin{enumerate} 
\item\label{basicpowerserieslemmata1}
The following conditions are equivalent: \begin{enumerate} \item The sequence $(a_n)_{n \geq 1}$ satisfies a linear recurrence. \item The power series  $\sum\limits_{n\geq 1} a_n z^n$ is in $\C(z)$. \item There exist complex numbers $\lambda_i$ and polynomials $q_i \in \C[z]$, $1\leq i \leq s$, such that we have $a_n=\sum\limits_{i=1}^s q_i(n)\lambda_i^n \mbox{ for $n$ large enough}.$ 
\end{enumerate}
\item\label{basicpowerserieslemmata5} 
The following conditions are equivalent:  \begin{enumerate} \item The power series  $f(z)=\exp\left(\displaystyle \sum\limits_{n\geq 1} \frac{a_n}{n}  z^n\right)$ is in $\C(z)$. \item There exist integers $m_i$ and complex numbers $\lambda_i$, $1\leq i\leq s$, such that the sequence $a_n$ can be written as $a_n = \sum\limits_{i=1}^s m_i \lambda_i^n$ for all $n\geq 1$.\end{enumerate} Furthermore, if all $a_n$ are in $\Q$, then $f(z)$ is in $\Q(z)$.
\item\label{basicpowerserieslemmata3} 
The following conditions are equivalent: \begin{enumerate} \item The sequence $(a_n)_{n\geq 1}$ is holonomic. \item There exist polynomials $q_0,\ldots,q_d \in \C[z]$, not all zero, such that for all $n\geq 1$ we have $q_0(n)a_n+\ldots+q_d(n)a_{n+d}=0$.\end{enumerate}
Furthermore, if a power series $f(z)\in \C[\![z]\!]$ is algebraic over $\C(z)$, then it is holonomic. 
\end{enumerate}\end{lemma}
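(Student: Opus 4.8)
The plan is to handle the three parts in turn, each by a short and well-known generating-function computation; the only points that require genuine attention are the bookkeeping of indices and, in part~(ii), a descent of the field of coefficients. In part~(i), for (a)$\Rightarrow$(b) I would multiply $\sum_n a_n z^n$ by the reversed characteristic polynomial of the recurrence: if $a_{n+k}=c_{k-1}a_{n+k-1}+\dots+c_0 a_n$ for $n\geq N$, then $(1-c_{k-1}z-\dots-c_0z^k)\sum_n a_n z^n$ is a polynomial, so $f$ is rational; conversely, writing a rational $f$ as $P/Q$ with $Q(0)\neq 0$ and comparing coefficients in $Qf=P$ produces a recurrence valid for $n>\deg P$. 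The equivalence (b)$\Leftrightarrow$(c) is the partial-fraction decomposition combined with the expansion $(1-\lambda z)^{-j}=\sum_n\binom{n+j-1}{j-1}\lambda^n z^n$, noting that $n\mapsto\binom{n+j-1}{j-1}$ is a polynomial of degree $j-1$; the polynomial summand of $f$ affects only finitely many $a_n$, which is why (c) must be phrased ``for $n$ large enough''.

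For part~(ii), the implication (b)$\Rightarrow$(a) follows from $\sum_{n\geq 1}(\lambda z)^n/n=-\log(1-\lambda z)$, so that $f(z)=\prod_{i=1}^s(1-\lambda_i z)^{-m_i}$, which lies in $\C(z)$ exactly because each $m_i\in\Z$. For (a)$\Rightarrow$(b) I would use $f(0)=1$ to factor $f=\prod_j(1-z/\alpha_j)^{e_j}$, where $\alpha_j\neq 0$ and the $e_j\in\Z$ are the orders of the zeros and poles of $f$; then $\sum_n a_n z^n=z f'(z)/f(z)=-\sum_j e_j\sum_{n\geq 1}\alpha_j^{-n}z^n$, whence $a_n=\sum_j(-e_j)\alpha_j^{-n}$, of the required shape with integer coefficients. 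For the last clause, I would observe that $f=\exp(\sum a_n z^n/n)$ has coefficients that are rational polynomial expressions in the $a_n/n$, so $f\in\Q[\![z]\!]$ as soon as all $a_n\in\Q$; and a power series with rational coefficients that is rational over $\C$ is already rational over $\Q$ --- either by Kronecker's vanishing-Hankel-determinant criterion, or because the linear system equivalent to $Qf=P$ in the unknown coefficients of $P$ and $Q$ is defined over $\Q$ and has a complex solution, hence a rational one.

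For part~(iii), I would pass through the Euler operator $\theta=z\,\mathrm{d}/\mathrm{d}z$, which acts diagonally by $\theta z^n=n z^n$. Using $\mathrm{d}/\mathrm{d}z=z^{-1}\theta$, the identity $(\mathrm{d}/\mathrm{d}z)^k=z^{-k}\theta(\theta-1)\cdots(\theta-k+1)$, and $\theta z=z(\theta+1)$, one rewrites any linear differential equation over $\C(z)$, after clearing denominators and multiplying by a suitable power of $z$, in the form $Lf=0$ with $L=\sum_j z^j P_j(\theta)$ for polynomials $P_j$ not all zero. Extracting the coefficient of $z^m$ turns $Lf=0$ into the polynomial recurrence $\sum_j P_j(m-j)\,a_{m-j}=0$, and this passage reverses; that gives (a)$\Leftrightarrow$(b) after the routine step of multiplying the $q_i$ by linear factors so that the recurrence holds for every $n\geq 1$ and not merely eventually. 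For ``algebraic $\Rightarrow$ holonomic'', if $P(z,y)$ is the minimal polynomial of $f$ over $\C(z)$, of $y$-degree $m$, then $f'=-P_z(z,f)/P_y(z,f)$ lies in $V\coloneqq\sum_{k=0}^{m-1}\C(z)f^k$, so $V$ is an $m$-dimensional $\C(z)$-vector space stable under differentiation; hence $f,f',\dots,f^{(m)}$ are $\C(z)$-linearly dependent, which is precisely a holonomic equation for $f$.

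I do not expect a serious conceptual obstacle anywhere: the work is entirely in the index accounting of parts~(i) and~(iii) and in the scalar descent of part~(ii). If one step deserves to be singled out as the hard part, it is the $\Q$-rationality assertion in~(ii), since that is the only place where one must argue that a property verified over $\C$ descends to the prime field.
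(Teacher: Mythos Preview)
Your proposal is correct. The paper's own proof consists entirely of citations: part~(i) to Stanley's \emph{Enumerative Combinatorics I} (Thm.~4.1.1 and Prop.~4.2.2), part~(ii) to an exercise in the same book together with the identity $\C(z)\cap\Q(\!(z)\!)=\Q(z)$ from Milne's lecture notes, and part~(iii) to Stanley's paper on differentiably finite power series. Your direct arguments are precisely the standard ones underlying those references --- the reversed characteristic polynomial and partial fractions for~(i), the logarithmic derivative and zero/pole factorisation for~(ii), and the Euler-operator translation between $D$-finiteness of a series and $P$-recursiveness of its coefficients for~(iii) --- so the content is the same, only unpacked rather than cited. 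Your treatment of the $\Q$-rationality clause via Hankel determinants or the linear-system descent is equivalent to the paper's one-line appeal to $\C(z)\cap\Q(\!(z)\!)=\Q(z)$.
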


\begin{proof}
Statement (\ref{basicpowerserieslemmata1}) follows from \cite[Thm.\ 4.1.1 \& Prop.\ 4.2.2]{StanleyEC1}.  Statement (\ref{basicpowerserieslemmata5}) is \cite[Ex. 4.8]{StanleyEC1}; the final claim holds since $\C(z)\cap \Q(\!(z)\!)=\Q(z)$ (see, e.g., \cite[Lemma 27.9]{MilneLEC}). Statement (\ref{basicpowerserieslemmata3}) is \cite[Thm.\ 1.5 \& 2.1]{Stanley}.
\end{proof} 

\subsection*{Initial reduction from rational maps to confined endomorphisms} Let $A$ denote an abelian variety over an algebraically closed field $K$. Rational maps on abelian varieties are automatically regular \cite[I.3.2]{MilneAV}, and are always compositions of an endomorphism and a translation \cite[I.3.7]{MilneAV}. We say that a regular map $\sigma \colon A \rightarrow A$ is \emph{confined} if the set of fixed points of $\sigma^n$ is finite for all $n$, which we assume from now on. We use the notations from the introduction: $\sigma_n$ is the number of fixed points of $\sigma^n$ and $\zeta_\sigma$ is the Artin--Mazur dynamical zeta function of $\sigma$. 

If $\sigma$ is an endomorphism of $A$, confinedness is equivalent to the finiteness of the kernel $\ker (\sigma^n-1)$ for all $n$, or the fact that all $\sigma^n-1$ are isogenies \cite[I.7.1]{MilneAV}. For arbitrary maps, the following allows us to restrict ourselves to the study of zeta funtions of confined endomorphisms (where case (\ref{id1}) can effectively occur, for example when $\sigma$ is a translation by a non-torsion point):

\begin{proposition} Let  $\sigma \colon A \to A$ be a confined regular map and write $\sigma = \tau_b \psi$, where $\tau_b$ is a translation by $b\in A(K)$ and $\psi$ is an endomorphism of $A$. Then either 
\begin{enumerate}
\item \label{id1} $\sigma_n=0$ for all $n$ and hence $\zeta_{\sigma}(z)=1$; or else 
\item $\psi$ is confined and $\zeta_{\sigma}(z)=\zeta_{\psi}(z)$.
\end{enumerate} \end{proposition}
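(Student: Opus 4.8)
The plan is to describe the fixed-point sets of the iterates of $\sigma$ explicitly and then read off $\zeta_\sigma$ from the fixed-point data of $\psi$. Writing $\sigma(x)=\psi(x)+b$, a straightforward induction on $n$, using $c_{n+1}=\psi(c_{n})+b$, gives
$$
\sigma^{n}(x)=\psi^{n}(x)+c_{n},\qquad c_{n}:=\sum_{i=0}^{n-1}\psi^{i}(b).
$$
Hence $x$ is a fixed point of $\sigma^{n}$ precisely when $(\psi^{n}-1)(x)=-c_{n}$, so the set of fixed points of $\sigma^{n}$ is empty when $-c_{n}\notin\im(\psi^{n}-1)$ and is a coset of $\ker(\psi^{n}-1)(K)$ otherwise. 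In particular, for each $n$ either $\sigma_{n}=0$, or $\psi^{n}-1$ has finite kernel — i.e.\ is an isogeny by \cite[I.7.1]{MilneAV} — in which case $\sigma_{n}=\#\ker(\psi^{n}-1)(K)=\psi_{n}$.

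If $\sigma_{n}=0$ for every $n$, then $\zeta_{\sigma}(z)=\exp(0)=1$ and we are in the first alternative. Otherwise there is some $n_{0}$ admitting a fixed point $x_{0}$ of $\sigma^{n_{0}}$. Iterating $\sigma^{n_{0}}$ shows that $x_{0}$ is also fixed by $\sigma^{kn_{0}}$ for every $k\geq1$, so the fixed-point set of $\sigma^{kn_{0}}$ is a \emph{nonempty} coset of $\ker(\psi^{kn_{0}}-1)(K)$; since $\sigma$ is confined this set is finite, hence $\psi^{kn_{0}}-1$ is an isogeny for every $k\geq1$.

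The heart of the argument is to upgrade this to: $\psi^{m}-1$ is an isogeny for \emph{every} $m\geq1$, i.e.\ $\psi$ is confined. Some care is needed here, since $\psi^{m}-1$ being an isogeny for a single $m$ does \emph{not} force it for all $m$ (e.g.\ for an automorphism $\psi$ of order $4$ the maps $\psi-1,\psi^{2}-1,\psi^{3}-1$ are isogenies while $\psi^{4}-1=0$ is not). I would argue via the eigenvalues $\alpha_{1},\dots,\alpha_{r}$ of $\psi$ acting on $V_{\ell}A$ for a prime $\ell\neq p$ (equivalently, the roots of the characteristic polynomial of $\psi$): the endomorphism $\psi^{m}-1$ is an isogeny exactly when it acts injectively on $V_{\ell}A$, i.e.\ exactly when $\alpha_{i}^{m}\neq1$ for all $i$. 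If some $\alpha_{i}$ were a root of unity, of order $d$, then $\alpha_{i}^{dn_{0}}=1$, so $\psi^{dn_{0}}-1$ would fail to be an isogeny, contradicting the previous paragraph (apply it with $k=d$). Hence no $\alpha_{i}$ is a root of unity, so $\alpha_{i}^{m}\neq1$ for all $i$ and all $m\geq1$; equivalently $\psi^{m}-1$ is an isogeny for all $m$, so $\psi$ is confined.

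Finally, once $\psi$ is confined, each $\psi^{n}-1$ is an isogeny, hence surjective, so $-c_{n}\in\im(\psi^{n}-1)$ automatically; the first paragraph then gives $\sigma_{n}=\psi_{n}$ for every $n$, so $\zeta_{\sigma}=\zeta_{\psi}$, which is the second alternative. I expect the upgrade step of the third paragraph to be the main obstacle: the hypothesis only says that each \emph{individual} fixed-point set of $\sigma$ is finite, and the work lies in converting this into confinedness of the linear part $\psi$; once that is in place, the rest is bookkeeping about cosets.
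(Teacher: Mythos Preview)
Your argument is correct. The setup (computing $\sigma^n(x)=\psi^n(x)+c_n$, identifying fixed-point sets as cosets of $\ker(\psi^n-1)$, and deducing that $\sigma_n>0$ forces $\psi^n-1$ to be an isogeny by confinedness of $\sigma$) matches the paper exactly, as does the final step once $\psi$ is known to be confined.

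The genuine difference is in the ``upgrade'' step. The paper argues purely algebraically: from $\psi^{km}-1$ being an isogeny for all $k$ (with $m=n_0$ fixed), it observes that $\psi^k-1$ divides $\psi^{km}-1$ in $\End(A)$ (since $X^k-1\mid X^{km}-1$ in $\Z[X]$), so $\ker(\psi^k-1)\subseteq\ker(\psi^{km}-1)$ is finite and $\psi^k-1$ is an isogeny. You instead pass to eigenvalues on $V_\ell A$ and use that if some $\alpha_i$ were a root of unity of order $d$ then $\psi^{dn_0}-1$ would fail to be an isogeny. Both are short; the paper's divisibility trick is the more elementary of the two (no need to invoke Tate modules or the characterisation of isogenies via injectivity on $V_\ell A$), while your approach has the virtue of making the obstruction---roots of unity among the eigenvalues---explicit, which is exactly the criterion that reappears later in the paper (Proposition~\ref{lame}\eqref{noroot1}). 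Your cautionary example of an order-$4$ automorphism is apt and shows you understood where the content lies.
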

\begin{proof} Iterates of $\sigma$ are of the form $$\sigma^{n}=\tau_{b^{(n)}}\psi^n,\mbox{ where }b^{(n)}=\sum_{i=0}^{n-1} \psi^i(b).$$ Thus, $\sigma_n=\psi_n$ if $b^{(n)}\in \im(\psi^n-1)$ and $\sigma_n=0$ otherwise. 
If $\sigma_n=0$ for all $n$, then $\zeta_{\sigma}(z)=1$. Otherwise, for some $m\geq 1$ we have $\sigma_m>0$ and thus $b^{(m)} \in \im(\psi^m-1)$, $\sigma_m=\psi_m$, and $\psi^m-1$ is an isogeny. It follows that for all $k\geq 1$ we have $b^{(km)}=\sum_{i=0}^{k-1} \psi^{im}(b^{(m)})$ and hence $b^{(km)} \in \im(\psi^{km}-1)$, $\sigma_{km}=\psi_{km}$, and $\psi^{km}-1$ is an isogeny. Since $\psi^k-1$ is a factor of $\psi^{km}-1$, we conclude that $\psi$ is a confined endomorphism, and hence $\psi^k-1$ is surjective. In particular, $b^{(k)} \in \im(\psi^k-1)$, so $\sigma_n=\psi_n$ for all $n$, and hence $\zeta_{\sigma}(z)=\zeta_{\psi}(z)$.\end{proof}

We make the following standing assumptions from now on, that we will not repeat in formulations of results. Only in Section \ref{secvi} shall we temporarily drop the assumption of confinedness, since this will make exposition smoother (this will be clearly indicated).  
\begin{center}
 \fbox{ \begin{minipage}{0.87 \textwidth} \textbf{Standing assumptions.} \ $K$ is an algebraically closed field of characteristic $p>0$; A is an abelian variety over $K$ of dimension $g$; $\sigma \colon A \rightarrow A$ is a confined endomorphism.
 \end{minipage}}
 \end{center}

\section{Periodic patterns in (in)separability degrees}

For now, we will consider $\zeta_\sigma$ as a \emph{formal power series}
$$ \zeta_\sigma(z) \coloneqq \exp \left( \sum_{n \geq 1} \sigma_n \frac{z^n}{n} \right), $$
and postpone the discussion of complex analytic aspects to  Section \ref{caa}. 
Let $\deg_{\mathrm{i}} (\tau)$ denote the inseparability degree of an isogeny $\tau \in \End(A)$ (a pure $p$-th power). We then have the basic equation \begin{equation} \label{sdi} \sigma_n = \frac{\deg(\sigma^n-1)}{\deg_{\mathrm{i}}(\sigma^n-1)}. \end{equation} 
The strategy is to first consider the ``false'' (in the terminology of Smale \cite{Smale}) zeta function with $\sigma_n$ replaced by the degree of $\sigma^n-1$. This turns out to be a rational function. We then turn to study the inseparability degree, which is determined by the $p$-valuations of the other two sequences. 

We start with a general lemma in commutative algebra that is our crucial tool for controlling the valuations of certain elements of sequences:

\begin{lemma}\label{commalgpowers}
Let $S$ denote a local  ring with maximal ideal $\mathfrak m$ and residue field $k$ of characteristic $p>0$ such that the ring $S/pS$ is artinian. For $\sigma\in S$ and a positive integer $n$,  let $I_n\coloneqq(\sigma^n-1)S$. Let $\overline\sigma$ denote the image of $\sigma$ in $k$. 
\begin{enumerate}
\item\label{commalgpowersi} If $\sigma \in \mathfrak m$, then $I_n=S$ for all $n$. 
\item\label{commalgpowersii} If $\sigma \in S^*$, let $e$ be the order of $\bar \sigma$ in $k^*$. Then: \begin{enumerate}
\item[\textup{(a)}] if $e {\nmid} n$, then $I_n = S$ (this happens in particular if $e=\infty$);

\item[\textup{(b)}] if $e {\mid} n$ and $p {\nmid} m$, then $I_{mn} = I_n$;
\item[\textup{(c)}] there exists an integer $n_0$ such that for all $n$ with $e {\mid} n$ and $\ord_p(n)>n_0$, we have $I_{pn} = p I_n$. 
\end{enumerate}
\end{enumerate}
\end{lemma}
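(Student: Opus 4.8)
The statements \textup{(i)}, \textup{(ii)(a)}, and \textup{(ii)(b)} all come down to recognising units in the local ring $S$ (an element is a unit iff it is not in $\mathfrak m$). For \textup{(i)}: if $\sigma\in\mathfrak m$ then $\sigma^n\in\mathfrak m$, so $\sigma^n-1\equiv-1\pmod{\mathfrak m}$ is a unit and $I_n=S$. For \textup{(ii)(a)}: if $e\nmid n$ (in particular if $e=\infty$) then $\bar\sigma^n\neq1$ in $k$, so $\sigma^n-1\notin\mathfrak m$ is a unit. For \textup{(ii)(b)}: factor $\sigma^{mn}-1=(\sigma^n-1)\bigl(1+\sigma^n+\dots+\sigma^{(m-1)n}\bigr)$; since $e\mid n$ the second factor reduces mod $\mathfrak m$ to $m\cdot 1\neq0$ in $k$ (as $p\nmid m$ and $\operatorname{char}k=p$), hence is a unit, and $I_{mn}=I_n$.

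The content is \textup{(ii)(c)}. The plan is to first use \textup{(b)} to reduce to the descending chain $J_a:=I_{ep^a}$, $a\ge 0$: if $e\mid n$, write $n/e=p^am'$ with $p\nmid m'$, so that \textup{(b)} gives $I_n=I_{ep^a}=J_a$ and $I_{pn}=J_{a+1}$; thus it suffices to prove $J_{a+1}=pJ_a$ for all sufficiently large $a$. Next I would make $\theta:=\sigma^e$ close to $1$ $p$-adically. Since $S/pS$ is artinian local its maximal ideal is nilpotent, say $\mathfrak m^N\subseteq pS$; as $e$ is the order of $\bar\sigma$ we have $w:=\sigma^e-1\in\mathfrak m$, hence $w^N\in pS$, and working in $S/pS$ (where $p=0$), the identity $(1+\bar w)^{p^{k}}=1+\bar w^{p^{k}}$ shows $\sigma^{ep^{k}}\equiv1\pmod{pS}$ once $p^{k}\ge N$. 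Replacing $e$ by a suitable $p$-power multiple (still a multiple of the original $e$, so that \textup{(b)} remains valid) I may assume $\pi:=\theta-1\in pS$; and when $p=2$, replacing $e$ once more by $pe$ gives $\theta^2-1=(\theta-1)\bigl(2+(\theta-1)\bigr)\in pS\cdot pS$, so that I may further assume $\pi\in p^2S$ in that case.

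The heart is then the purely local claim: if $\theta\in S^*$ with $\pi=\theta-1\in pS$ (and $\pi\in p^2S$ when $p=2$), then $(\theta^p-1)S=p(\theta-1)S$. To prove it I would write $\theta^p-1=\pi\Phi$ with $\Phi=1+\theta+\dots+\theta^{p-1}=p+\sum_{i=1}^{p-1}(\theta^i-1)$, and note that $\theta^i-1=\pi(1+\theta+\dots+\theta^{i-1})\equiv i\pi\pmod{p^2S}$ (using $\pi\in pS$). For odd $p$ this gives $\sum_{i=1}^{p-1}(\theta^i-1)\equiv\pi\cdot\tfrac{p(p-1)}{2}\equiv0\pmod{p^2S}$, so $\Phi\in p+p^2S$, hence $\Phi=p\cdot(\text{unit})$; for $p=2$ one has directly $\Phi=2+\pi=2\cdot(\text{unit})$ since $\pi\in p^2S$. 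Thus $\theta^p-1=p\cdot(\text{unit})\cdot\pi$, whence $(\theta^p-1)S=p\,\pi S$; moreover $\theta^p-1\in\pi S\cdot pS$, so $\theta^p$ again satisfies the running hypothesis. Applying the claim inductively to $\theta_a:=\theta^{p^a}$ yields $J_{a+1}=(\theta_a^p-1)S=p(\theta_a-1)S=pJ_a$ for every $a\ge0$; and I finish by choosing $n_0$ large enough that $e\mid n$ and $\ord_p(n)>n_0$ force the enlarged exponent to divide $n$, so that the first reduction applies and $I_{pn}=pI_n$.

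The main obstacle is the prime $p=2$ in the local claim: for odd $p$ the binomial sum $\sum_{i=1}^{p-1}i=p(p-1)/2$ contributes a free extra factor of $p$, but for $p=2$ it does not, and one genuinely needs $\theta-1\in p^2S$ rather than merely $pS$ — this is why the extra enlargement of $e$ is built into the reduction step. The remaining care is purely bookkeeping: tracking which $p$-power multiple of the original $e$ one has passed to, and fixing $n_0$ accordingly.
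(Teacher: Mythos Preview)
Your proof is correct. Parts (i), (ii)(a), and (ii)(b) match the paper essentially verbatim. For (ii)(c), both you and the paper use the artinian hypothesis to find $N$ with $\mathfrak m^N\subseteq pS$ and then iterate to push $\sigma^n-1$ deep enough, but the endgames differ slightly. You factor $\theta^p-1=(\theta-1)\Phi$ with $\Phi=1+\theta+\cdots+\theta^{p-1}$ and show $\Phi=p\cdot(\text{unit})$ once $\theta-1\in pS$ (needing the stronger $\theta-1\in p^2S$ when $p=2$), which forces a case split on the prime. The paper instead writes $(1+\varepsilon)^p-1=p\varepsilon v+\varepsilon^p$ with $v\in S^*$ and observes that once $\varepsilon\in p\mathfrak m$ (not merely $pS$) one has $\varepsilon^p\in p\varepsilon\mathfrak m$, whence $\sigma^{pn}-1=p\varepsilon w$ for a unit $w$. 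The paper's choice of $p\mathfrak m$ rather than $pS$ is precisely what makes the argument uniform in $p$: the extra factor of $\mathfrak m$ absorbs $\varepsilon^{p-1}$ even when $p=2$. Your route is equally valid but a touch longer due to the case split; the bookkeeping with the enlarged exponent and the choice of $n_0$ is handled correctly.
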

\begin{proof} Part \eqref{commalgpowersi} is clear, so assume $\sigma \in S^*$. 
If $e {\nmid} n$, then $\sigma^n-1$ is invertible in $S$, since $\bar \sigma^n-1 \neq 0$ in $k$ and hence $I_n=S$.

If $e {\mid} n$, we can assume without loss of generality that $e=1$ (replacing $\sigma$ by $\sigma^e$).
Write $\sigma^n = 1 +\varepsilon$ for $\varepsilon \in \mathfrak m$. Then for $m$ coprime to $p$, we immediately find 
\begin{equation*} \sigma^{mn}-1 =  \varepsilon u  \end{equation*}
for a unit $u \in S^*$, and hence $I_{mn}=I_n$, which proves (b). 
On the other hand, 
\begin{equation} \label{Kocyk} \sigma^{pn}-1 = p \varepsilon v + \varepsilon^p  \end{equation}
for some unit $v \in S^*$. This shows that
 $\sigma^{pn}-1 = \varepsilon (pv+\varepsilon^{p-1}) \subseteq \varepsilon \mathfrak m$, which already implies that  we get 
\begin{equation} \label{Przewalski} I_{pn} \subseteq I_n \mathfrak m \mbox{ for all } n. \end{equation}
Since $S/pS$ is artinian, there exists an integer $n_0$ such that $\mathfrak m^{n_0} \subseteq pS$. By iterating (\ref{Przewalski}) $n_0+1$ times, we have 
$$ I_n \subseteq p \mathfrak m \mbox{ for all $n$ with } \mathrm{ord}_{p} (n) > n_0.$$
Assuming now that $\mathrm{ord}_{p} (n) > n_0$, we have $\varepsilon \in p \mathfrak m$, so $\varepsilon^p \in p \varepsilon \mathfrak m$. Hence we conclude from (\ref{Kocyk}) that $\sigma^{pn}-1 = p \varepsilon w$ for some unit $w \in S^*$, and hence $I_{pn} = p I_n$. 
\end{proof}

\subsection*{The degree zeta function} We start by considering the following zeta function with $\sigma_n$ replaced by the degree of $\sigma^n-1$. 

\begin{definition} \label{dzf} The \emph{degree zeta function} is defined as the formal power series $$ D_\sigma(z)\coloneqq \exp\left(\sum_{n \geq 1}\frac{\deg (\sigma^n-1)}{n} z^n\right).$$
\end{definition}

\begin{proposition} \label{lrab} \mbox{ } \begin{enumerate} \item\label{lrab.i} $D_\sigma(z) \in \Q(z)$.
\item\label{lrab.ii} Let $\ell$ be a prime (which might or might not be equal to $p$). Then the sequence of $\ell$-adic valuations $(\vert\deg(\sigma^n-1)|_{\ell})_{n \geq 1}$ is of the form  $$|\deg(\sigma^n-1)|_{\ell}=r_n \cdot |n|_{\ell}^{s_n}$$ for some periodic sequences $(r_n)$ and $(s_n)$ with $r_n \in \Q^*$ and $s_n \in \N$.  Furthermore,  there is an integer $\omega$ such that we have $$r_{n}=r_{\gcd(n,\omega)} \quad \mbox{for } \ell{\nmid}n.$$
\end{enumerate}
\end{proposition}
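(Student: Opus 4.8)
The plan is to reduce the whole statement to the eigenvalues of $\sigma$, and then to feed the unit eigenvalues into Lemma~\ref{commalgpowers}. First I would write down the standard identity
\[
 d_n:=\deg(\sigma^n-1)=\prod_{i=1}^{2g}(1-\alpha_i^n),
\]
where $\alpha_1,\dots,\alpha_{2g}$ are the eigenvalues of $\sigma$, i.e.\ the roots with multiplicity of its characteristic polynomial $P_\sigma\in\Z[T]$ (monic of degree $2g$, with $P_{\sigma^n}(T)=\prod_i(T-\alpha_i^n)$ and $d_n=P_{\sigma^n}(1)$; see \cite{MilneAV}). The $\alpha_i$ are algebraic integers, and confinedness (all $\sigma^n-1$ are isogenies, so $d_n\neq 0$) forces $\alpha_i^n\neq 1$ for every $n\geq 1$; hence $d_n\in\Z_{\geq 1}$. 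For part~(\ref{lrab.i}) I would expand the product as $d_n=\sum_{I\subseteq\{1,\dots,2g\}}(-1)^{|I|}\big(\prod_{i\in I}\alpha_i\big)^{\!n}$, collect terms by the value of $\prod_{i\in I}\alpha_i$, and discard those equal to $0$ (which vanish for $n\geq 1$); this exhibits $d_n=\sum_j m_j\lambda_j^n$ with $m_j\in\Z$ and distinct $\lambda_j\in\C^*$, so by Lemma~\ref{basicpowerserieslemmata}\,(\ref{basicpowerserieslemmata5}) --- concretely, $D_\sigma(z)=\prod_j(1-\lambda_j z)^{-m_j}$ --- we get $D_\sigma(z)\in\Q(z)$ since all $d_n\in\Z$.

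For part~(\ref{lrab.ii}) I would fix the prime $\ell$ and an embedding $\bar\Q\hookrightarrow\bar\Q_\ell$, and let $v$ be the valuation on $\bar\Q_\ell$ extending $\ord_\ell$ with $v(\ell)=1$. Since $d_n\in\Z_{\geq 1}$, multiplicativity gives $v(d_n)=\sum_{i=1}^{2g}v(1-\alpha_i^n)\in\Z_{\geq 0}$, so it suffices to control each summand. As $\alpha_i$ is an algebraic integer, $v(\alpha_i)\geq 0$; if $v(\alpha_i)>0$ then $v(\alpha_i^n)>0$ and hence $v(1-\alpha_i^n)=0$ for all $n$, so only the \emph{unit} eigenvalues matter. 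For a unit eigenvalue I would apply Lemma~\ref{commalgpowers}\,(\ref{commalgpowersii}) to $\alpha_i$ inside $S=\mathcal{O}_L$, where $L=\Q_\ell(\alpha_1,\dots,\alpha_{2g})$ is a finite extension of $\Q_\ell$ (so $S$ is a complete discrete valuation ring with finite residue field of characteristic $\ell$, whence $S/\ell S$ is artinian): writing $e_i$ for the multiplicative order of the reduction of $\alpha_i$ in the residue field (a divisor of $\ell^{f}-1$, in particular prime to $\ell$), the lemma says that $v(1-\alpha_i^n)=0$ when $e_i\nmid n$; that, for $e_i\mid n$, the ideal $(\alpha_i^n-1)S$ --- hence $v(1-\alpha_i^n)$ --- depends only on $\ord_\ell n$; and that there is an integer $n_{0,i}$ with $(\alpha_i^{\ell n}-1)S=\ell(\alpha_i^n-1)S$, i.e.\ $v(1-\alpha_i^{\ell n})=v(1-\alpha_i^n)+1$, whenever $e_i\mid n$ and $\ord_\ell n>n_{0,i}$. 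In particular there is a constant $c_i$ with $v(1-\alpha_i^n)=c_i+\ord_\ell n$ for all $n$ with $e_i\mid n$ and $\ord_\ell n>n_{0,i}$.

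To assemble, put $E=\operatorname{lcm}_i e_i$ over the unit eigenvalues (so $\ell\nmid E$) and $N_0=\max_i n_{0,i}$. Summing, $v(d_n)=\sum_{i:\,v(\alpha_i)=0,\ e_i\mid n}v(1-\alpha_i^n)$ depends only on the pair $(n\bmod E,\ \ord_\ell n)$, and for $\ord_\ell n>N_0$ it equals $A(n\bmod E)+B(n\bmod E)\cdot\ord_\ell n$, where $B(n\bmod E)=\#\{i:v(\alpha_i)=0,\ e_i\mid n\}\geq 0$ and $A(n\bmod E)=\sum_{i:\,e_i\mid n}c_i$ depend only on $n\bmod E$. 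I would then set $s_n:=B(n\bmod E)$, $a_n:=v(d_n)-s_n\ord_\ell n\in\Z$, and $r_n:=\ell^{-a_n}\in\Q^*$; then $|d_n|_\ell=\ell^{-v(d_n)}=r_n|n|_\ell^{s_n}$, with $s_n\in\N$ periodic of period dividing $E$. The one genuinely delicate point --- the main obstacle, such as it is --- is to see that $(a_n)$ is periodic: for $\ord_\ell n>N_0$ one has $a_n=A(n\bmod E)$; for $\ord_\ell n\leq N_0$ both $\ord_\ell n$ and $n\bmod E$, hence $v(d_n)$ and hence $a_n$, are determined by $n$ modulo $\ell^{N_0+1}E$; so $(a_n)$ has period dividing $\ell^{N_0+1}E$. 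Finally, when $\ell\nmid n$ we have $\ord_\ell n=0$, so $a_n=v(d_n)=\sum_{i:\,e_i\mid n}v(1-\alpha_i^{e_i})$ depends only on which $e_i$ divide $n$, that is, only on $\gcd(n,E)$ (as $e_i\mid E$); hence $r_n=r_{\gcd(n,E)}$, and $\omega=E$ does the job. I should note that the only genuinely $\ell$-adic input --- the ``lifting the exponent'' step $v(1-\alpha_i^{\ell n})=v(1-\alpha_i^n)+1$ --- is already packaged in Lemma~\ref{commalgpowers}, and that the identity $d_n=\prod_i(1-\alpha_i^n)$ must be used also for $\ell=p$, where one cannot read the $\alpha_i$ off a $p$-adic Tate module, which is why I invoke $P_\sigma$ directly.
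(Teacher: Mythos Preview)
Your proof is correct and follows essentially the same route as the paper: both derive the product formula $\deg(\sigma^n-1)=\prod_{i=1}^{2g}(\xi_i^n-1)$ for the eigenvalues of $\sigma$, expand it for part~(\ref{lrab.i}), and for part~(\ref{lrab.ii}) work in (the ring of integers of) a finite extension of $\Q_\ell$ containing all the eigenvalues, apply Lemma~\ref{commalgpowers} to each unit eigenvalue, and then multiply the contributions together, taking $\omega=\mathrm{lcm}_i\,e_i$. The one notable difference is that the paper initially allows for the case $|\xi_i|_\ell>1$, obtaining $|\deg(\sigma^n-1)|_\ell=\rho^n r_n|n|_\ell^{s_n}$ with $\rho=\prod_i\max(|\xi_i|_\ell,1)$, and only afterwards deduces $\rho=1$ from the fact that $\deg(\sigma^n-1)\in\Z$; you instead invoke at the outset that the characteristic polynomial $P_\sigma$ is monic in $\Z[T]$, so the $\alpha_i$ are algebraic integers and the case $v(\alpha_i)<0$ never arises. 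This is a harmless (and mildly cleaner) shortcut, and it also lets you see immediately that each $r_n$ is an integral power of $\ell$.
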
 
\begin{proof}

By \cite[Cor.\ 3.6]{Grieve}, the degree of $\sigma$ and the sequence $\deg (\sigma^n-1)$ can be computed as $$\deg \sigma= \prod_{i=1}^k \mathrm{Nrd}_{R_i/{\Q}}(\alpha_i)^{\nu_i}, \quad \deg (\sigma^n-1)= \prod_{i=1}^k \mathrm{Nrd}_{R_i/{\Q}}(\alpha_i^n-1)^{\nu_i},$$ where $R_i$ are finite dimensional simple algebras over $\Q$, $\alpha_i$ are elements of $R_i$, $\mathrm{Nrd}_{R_i/{\Q}}$ is the reduced norm, and $\nu_i$ are positive integers. These formul\ae\ come from replacing the variety $A$ by an isogenous one that is a finite product of simple abelian varieties and applying the well-known results on the structure of endomorphism algebras of simple abelian varieties. 

After tensoring with $\bar{\Q}$, the algebras $R_i$ become isomorphic to a finite product of matrix algebras over $\bar{\Q}$. For matrix algebras the notion of reduced norm coincides with the notion of determinant, and since the determinant of a matrix is equal to the product of its eigenvalues, we obtain formul\ae\ of the form \begin{equation} \label{zepsutyzamek} \deg (\sigma)= \prod_{i=1}^q \xi_i, \quad \deg (\sigma^n-1)= \prod_{i=1}^q (\xi_i^n-1),\end{equation} with 
$\xi_i \in\bar{\Q}$ (with possible repetitions to take care of multiplicities) and $q=2g$ (since $\deg$ is a polynomial function of degree $2g$). Multiplying out the terms in this expression, we finally obtain a formula of the form \begin{equation} \label{formidable} \deg (\sigma^n-1)= \sum_{i=1}^r m_i \lambda_i^n,\end{equation} for some $m_i \in \Z$  and $\lambda_i \in \bar{\Q}$.  
Now (\ref{lrab.i}) follows from \ref{basicpowerserieslemmata}.(\ref{basicpowerserieslemmata5}).

In order to prove (\ref{lrab.ii}), we will use Formula \eqref{zepsutyzamek}. Consider a finite extension $L$ of the field of ${\ell}$-adic numbers $\Q_{\ell}$ obtained by adjoining all $\xi_i$ with $1\leq i \leq q$. There is a unique extension of the valuation $|\cdot|_{\ell}$ to $L$ that we continue to denote by the same symbol. Then we have $$|\deg (\sigma^n-1)|_{\ell}= \prod_{i=1}^q |\xi_i^n-1|_{\ell}.$$ 

We now claim that for $\xi \in L$, we have \begin{equation} \label{pastadozebow}|\xi^n-1|_{\ell} =\left\{ \begin{array}{ll}  |\xi|^n_{\ell} &  \mbox{ if $|\xi|_{\ell}>1$, }\\ r^{\xi}_n |n|_{\ell}^{s_n^{\xi}} & \mbox{ if $|\xi|_{\ell}=1$,} \\ 1 & \mbox{ if $|\xi|_{\ell}<1$,}\end{array}\right.\end{equation} where $(r^{\xi}_n)_{n}$ and $(s^{\xi}_n)_n$ are certain periodic sequences, $r_n^{\xi} \in \R^*$, $s_n^{\xi} \in \{0,1\}$. 
The first and the last line of the claim are immediate, and the second one follows from applying Lemma \ref{commalgpowers} to the ring of integers $S=\mathcal O_L$ with $\sigma=\xi$, as follows: set $a_n = |\xi^n-1|^{-1}_{\ell}$ and let $e_{\xi}$ be the order of $\xi$ in the residue field of $S$ (note that $e_{\xi}$ is not divisible by $\ell$). Then by Lemma \ref{commalgpowers} there exists an integer $N$ such that $a_n=1$ if $e_{\xi} {\nmid} n$; $a_{mn}=a_n$ if $e_{\xi} {\mid} n$ and $\ell {\nmid} m$; and $a_{\ell n} = \ell a_n$ if $e_{\xi} {\mid} n$ and $\ord_{\ell}(n) \geq N$. Therefore, it suffices to set $(r^\xi_n,s^\xi_n)=(1,0)$ for $e_{\xi} {\nmid} n$; $(r^\xi_n,s^\xi_n)=(a^{-1}_{e_{\xi} l^{\nu}},0)$ for $e_{\xi}{\mid} n$ and $\nu\coloneqq\ord_{\ell}(n)<N$; and $(r^\xi_n,s^\xi_n)=(a^{-1}_{e_{\xi} \ell^N} \ell^{N},1)$ for $e_{\xi} {\mid} n$ and $\ord_{\ell}(n)\geq N$. Note that for $\ell{\nmid } n$ we have $$r^{\xi}_n=\left\{ \begin{array}{ll}  1 &  \mbox{ if $e_{\xi}{\nmid} n$, }\\ a_{e_{\xi}}^{-1} & \mbox{ if $e_{\xi} {\mid} n$.} \end{array}\right. $$

Multiplying together formul{\ae} \eqref{pastadozebow} for $\xi = \xi_1,\ldots,\xi_q$, we obtain  $$|\deg (\sigma^n-1)|_{\ell}= \rho^n r_n |n|_{\ell}^{s_n},$$
where $$\rho = \prod_{i=1}^q \max(|\xi_i|_{\ell},1) \geq 1$$ and $(r_n)$ and $(s_n)$ are periodic sequences, $r_n \in \R^*$, $s_n\in \N$. We claim that $\rho=1$ (that is, there is no $i$ such that $|\xi_i|_{\ell} >1$). Indeed, we know that $\deg(\sigma^n-1)$ is an integer, and hence $\rho^n r_n |n|_{\ell}^{s_n} \leq 1$ for all $n$. Thus, taking  $n\to \infty$, ${\ell}{\nmid} n$, we get $\rho=1$ and $r_n \in \Q^*$. This finishes the proof of the formula for $|\deg (\sigma^n-1)|_{\ell}$. Furthermore, we have $$r_n =\prod_{ e_{\xi_i}{\mid}n} a_{e_{\xi_i}}^{-1} \quad \mbox{for $\ell{\nmid}n$},$$ and hence the final formula holds with $\omega=\mathrm{lcm}(e_{\xi_1},\ldots,e_{\xi_q})$. \qedhere
\end{proof}

\begin{remark} \label{remcohom}
We present an alternative, cohomological description of the degree zeta function $D_\sigma(z)$.  Fix a prime $\ell \neq p$ and let $\mathrm{H}^i\coloneqq \mathrm{H}^i_{\mathrm{\acute{e}t}}(A, \Q_\ell) = \bigwedge^i (V_{\ell} A)^{\vee}$ denote the $i$-th $\ell$-adic cohomology group of $A$, ($V_{\ell} A = T_\ell A\otimes_{\Z_{\ell}} \Q_{\ell}$, $T_\ell A$ is the Tate module and ${}^\vee$ denotes the dual); then 
\begin{equation} \label{cohom} D_\sigma(z) = \prod_{i=1}^{2g} \det(1-\sigma^* z \vert \mathrm{H}^i)^{(-1)^{i+1}}. \end{equation} 
This follows in the same way as for the Weil zeta function: let $\Gamma_{\sigma^n} \subseteq A \times A$ denote the graph of $\sigma^n$ and $\Delta \subseteq A \times A$ is the diagonal \cite[25.6]{MilneLEC}. The Lefschetz fixed point theorem \cite[25.1]{MilneLEC} implies that 
$$(\Gamma_{\sigma^n} \cdot \Delta) = \sum_{i=0}^{2g} (-1)^i \mathrm{tr}(\sigma^n \vert\mathrm{H}^i). $$
Now $\Gamma_{\sigma^n}$ intersects $\Delta$ precisely along the (finite flat) group torsion group scheme $A[\sigma^n-1]$, and hence the intersection number $(\Gamma_{\sigma^n} \cdot \Delta)$ is the order of this group scheme, which is $\deg(\sigma^n-1)$. 
Then the standard determinant-trace identity \cite[27.5]{MilneLEC} implies the result (\ref{cohom}).

The characteristic polynomial of $\sigma_*$ acting on $\mathrm{H}^1$ has integer coefficients independent of the choice of $\ell$ and its set of roots is precisely the set of algebraic numbers $\xi_i$ from the proof of Proposition \ref{lrab} (with multiplicities), see, e.g., \cite[IV.19, Thm.\ 3 \& 4]{Mumford}. 
\end{remark}

\begin{example}
Suppose $A$ is an abelian variety over a finite field $\F_q$ and $\sigma$ is the $q$-Frobenius. Then $\sigma^n-1$ is separable for all $n$, so $\sigma_n = \deg(\sigma^n-1)$ for all $n$, and $\zeta_\sigma(z) = D_\sigma(z)$ is exactly the Weil zeta function of $A{/}{\F_q}$. Thus, we recover the rationality of that function for abelian varieties; note that this is an ``easy'' case: by cutting $A$ with suitable hyperplanes, we are reduced to the case of (Jacobians of) curves, hence essentially to the Riemann--Roch theorem for global function fields proven by F.K.~Schmidt in 1927.
\end{example}

\subsection*{The inseparability degree} Similarly to Proposition \ref{lrab}, we can control the regularity in the sequence of inseparability degrees, with some more (geometric) work; this is relevant in the light of Formula (\ref{sdi}). We start with a decomposition lemma in commutative algebra:

\begin{lemma}\label{commalgdecomp} Let $R$ be a (commutative) ring  and let $M$ be an $R$-module such that for every $m\in M$ the ring $R/\mathrm{ann}(m)$ is artinian. Let $\mathfrak{m}$ be a maximal ideal of $R$. Then the localisation $M_{\mathfrak{m}}$ is equal to $$M_{\mathfrak{m}}=M[\mathfrak{m}^{\infty}]\coloneqq\{m\in M : \mathfrak{m}^k m=0 \text{ for some } k\geq 1\}$$ and $$M=\bigoplus_{\mathfrak{m}} M_{\mathfrak{m}},$$ the direct sum being taken over all maximal ideals $\mathfrak m$ of $R$.
\end{lemma}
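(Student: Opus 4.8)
The goal is to prove Lemma \ref{commalgdecomp}, which is a standard primary-decomposition-type statement: under the hypothesis that every cyclic submodule of $M$ has artinian annihilator quotient, $M_{\mathfrak m}$ coincides with the $\mathfrak m$-power-torsion submodule $M[\mathfrak m^\infty]$, and $M$ is the internal direct sum of these over all maximal ideals. I would first record the key consequence of the artinian hypothesis: if $R/\mathrm{ann}(m)$ is artinian, then it is a finite product of local artinian rings, so it has only finitely many maximal ideals, all of which are both maximal and minimal; concretely, $\mathrm{ann}(m) = \mathfrak q_1 \cap \dots \cap \mathfrak q_t$ with the $\mathfrak q_j$ pairwise comaximal and $\mathfrak m_j^{k} \subseteq \mathfrak q_j$ for some $k$, where $\mathfrak m_j = \sqrt{\mathfrak q_j}$ are the distinct maximal ideals. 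By CRT, $R/\mathrm{ann}(m) \cong \prod_j R/\mathfrak q_j$, and correspondingly $Rm \cong \bigoplus_j (Rm)[\mathfrak m_j^\infty]$. This handles the cyclic case.

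**Step 2: identify the localisation.** For a fixed maximal ideal $\mathfrak m$, I would show $M_{\mathfrak m} = M[\mathfrak m^\infty]$ by checking it on cyclic submodules and passing to the colimit (localisation and the functor $M \mapsto M[\mathfrak m^\infty]$ both commute with filtered colimits, and $M = \varinjlim Rm$ over its cyclic submodules). For a cyclic module $Rm$: by the CRT decomposition above, $(Rm)_{\mathfrak m}$ kills all the factors $R/\mathfrak q_j$ with $\mathfrak m_j \neq \mathfrak m$ (an element of $R \setminus \mathfrak m$ is a unit there, being in a product where $\mathfrak m$ doesn't appear) and leaves the factor with $\mathfrak m_j = \mathfrak m$ (if present) unchanged, since that factor is already $\mathfrak m$-primary hence local with maximal ideal $\mathfrak m/\mathfrak q$, on which elements of $R \setminus \mathfrak m$ act invertibly. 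So $(Rm)_{\mathfrak m} = (Rm)[\mathfrak m^\infty]$, and taking colimits gives the first assertion.

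**Step 3: the direct sum.** Given Step 2, one must show $M = \bigoplus_{\mathfrak m} M[\mathfrak m^\infty]$. Again reduce to the cyclic case by colimits: for $Rm$, the CRT isomorphism $Rm \cong \bigoplus_j (Rm)[\mathfrak m_j^\infty]$ is exactly the required finite direct sum decomposition (the summands for maximal ideals not among $\mathfrak m_1,\dots,\mathfrak m_t$ vanish since a suitable element outside them annihilates $m$). Filtered colimits of direct-sum decompositions indexed compatibly over the same index set (here, all maximal ideals of $R$) give a direct sum decomposition of the colimit; concretely, $\sum_{\mathfrak m} M[\mathfrak m^\infty] = M$ because every $m$ lies in the finite sum coming from its own cyclic submodule, and the sum is direct because any relation involves only finitely many cyclic submodules, inside which directness already holds (a common refinement of the finitely many cyclic modules involved is still cyclic — or rather finitely generated, so one replaces "cyclic" by "finitely generated", which still satisfies the hypothesis since $\mathrm{ann}$ of a finite sum is the intersection of the $\mathrm{ann}$'s, still artinian quotient).

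**Main obstacle.** The only delicate point is making the colimit argument airtight, in particular ensuring the hypothesis is inherited by the finitely generated submodules one needs (it is: $\mathrm{ann}(\sum Rm_i) = \bigcap \mathrm{ann}(m_i)$, and a finite intersection of ideals with artinian quotient has artinian quotient, being a submodule of the finite product $\prod R/\mathrm{ann}(m_i)$). Everything else is the structure theory of artinian rings (finite product of local artinian rings) plus CRT, which is routine. I expect no real difficulty, but some care is needed to phrase the passage from cyclic to arbitrary $M$ cleanly rather than by ad hoc element-chasing.
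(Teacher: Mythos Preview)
Your proposal is correct and follows essentially the same route as the paper's proof: both reduce to the finitely generated case via the structure theory of artinian rings (a finite product of local artinian rings, giving the primary decomposition), and then pass to the general module by writing $M$ as a filtered colimit of its finitely generated submodules. The paper organises the passage to the general case via the canonical map $\Phi\colon M\to\prod_{\mathfrak m}M_{\mathfrak m}$ rather than speaking of colimits, and it works with finitely generated submodules from the outset rather than starting cyclic and then upgrading, but these are purely presentational differences.
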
 

\begin{proof}
Assume first that the module $M$ is finitely generated, say, with generators  $m_1,\ldots,m_s$. Set $I=\mathrm{ann}(M)$. Then $M$ is of finite length as a surjective image of the module $\bigoplus\limits_{i=1}^s R/\mathrm{ann}(m_i)$ and hence the ring $R/I$ is artinian, since it can be regarded as a submodule of $M^s$ via the  embedding $r\mapsto (rm_1,\ldots,rm_s)$. 
 Therefore, the ideal $I$ is contained in only finitely many maximal ideals $\mathfrak{m}_1,\ldots,\mathfrak{m}_s$ of $R$, and for the remaining maximal ideals $\mathfrak m$ of $R$ we have $M_{\mathfrak m}=0$. The artinian ring $R/I$ decomposes as the product \begin{equation} \label{toomuch} R/I \simeq \prod_{i=1}^s R_{\mathfrak{m}_i}/I R_{\mathfrak{m}_i}.\end{equation} Since $I=\mathrm{ann}(M)$, we have $M\otimes_R R/I  \simeq M$ and $M \otimes_R  R_{\mathfrak{m}_i}/I R_{\mathfrak{m}_i} \simeq M_{\mathfrak{m}_i}$. Thus, tensoring (\ref{toomuch}) with $M$, we obtain an isomorphism $$M \to M_{\mathfrak{m}_1}\oplus\ldots\oplus M_{\mathfrak{m}_s}.$$ Since the modules $M_{\mathfrak{m}_i}$ are also of finite length, we  see that each $M_{\mathfrak{m}_i}$ is annihilated by some power of the maximal ideal $\mathfrak{m}_i$.

We now turn to the case of an arbitrary module $M$. Consider the canonical map $$\Phi\colon M \to \prod\limits_{\mathfrak{m}} M_{\mathfrak{m}},$$ the product being taken over all maximal ideals $\mathfrak m$ of $R$. Restricting $\Phi$ to finitely generated submodules $N\subseteq M$, and using the (already established) claim for finitely generated modules, we conclude that the image of $\Phi$ is in fact contained in $\bigoplus\limits_{\mathfrak{m}} M_{\mathfrak{m}}$ and that the induced map $$\Phi \colon M \to \bigoplus_{\mathfrak{m}} M_{\mathfrak{m}}$$ (that we continue to denote by the same letter) is an isomorphism. For a maximal ideal $\mathfrak{n}$ of $R$, multiplication by elements outside of $\mathfrak{n}$ is bijective on $M_{\mathfrak n}$. Therefore, restricting $\Phi$ to $M[\mathfrak{m}^{\infty}]$ shows that $M[\mathfrak{m}^{\infty}]=M_{\mathfrak m}[\mathfrak{m}^{\infty}]$. Finally, we conclude from the case of finitely generated modules that every element in $M_{\mathfrak{m}}$ is anihilated by some power of the maximal ideal $\mathfrak{m}$. Thus, $M[\mathfrak{m}^{\infty}]=M_{\mathfrak{m}}$. \end{proof}

\begin{proposition} \label{rs}
The inseparability degree of $\sigma^n-1$ satisfies
\begin{equation} \label{defrn} \deg_{\mathrm{i}} (\sigma^n-1) = r_n \cdot |n|_p^{s_n} \end{equation}  for periodic sequences $(r_n)$ and $(s_n)$ with $r_n \in \Q^*$ and $s_n \in \Z$, $s_n\leq 0$. Furthermore, there is an integer $\omega$
 such that we have $$r_{n}=r_{\gcd(n,\omega)} \quad \mbox{for } p{\nmid}n.$$
\end{proposition}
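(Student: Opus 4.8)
The plan is to \emph{linearise}: to identify $\deg_{\mathrm i}(\sigma^n-1)$ with the $p$-adic valuation of a determinant on a $\sigma$-stable lattice, namely the Dieudonn\'e module of the local $p$-divisible group $A[p^\infty]^0$, and then to repeat the $p$-adic eigenvalue analysis carried out in the proof of Proposition~\ref{lrab}, only now for the prime $\ell = p$, with Lemma~\ref{commalgpowers} playing the same role as there.

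First I would establish the geometric reduction. Over the perfect field $K$, every finite group scheme splits canonically as \'etale~$\times$~connected, so $\deg_{\mathrm i}(\sigma^n-1)$ is the order of the connected component $(\ker(\sigma^n-1))^0$. Using the functorial (hence $\sigma$-compatible) connected--\'etale decomposition $A[p^\infty] = A[p^\infty]^{\mathrm{\acute{e}t}} \times A[p^\infty]^0$, this connected component equals the kernel of $\sigma^n-1$ acting on the connected $p$-divisible group $A[p^\infty]^0$, which is $\sigma$-stable because it is characteristic; by confinedness $\sigma^n-1$ acts on it as an isogeny. Let $M \coloneqq \mathbb D(A[p^\infty]^0)$ be its (contravariant) Dieudonn\'e module, a free $W(K)$-module of finite rank on which $\sigma$ acts $W(K)$-linearly. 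Exactness of $\mathbb D$ applied to the isogeny $\sigma^n-1$ identifies $\mathbb D\bigl((\ker(\sigma^n-1))^0\bigr)$ with $M/(\sigma^n-1)M$, and since the order of a finite $p$-power group scheme is $p$ raised to the $W(K)$-length of its Dieudonn\'e module,
\[
  \deg_{\mathrm i}(\sigma^n-1) = p^{\,\mathrm{length}_{W(K)}\, M/(\sigma^n-1)M} = p^{\,v_p(\det(\sigma^n-1\mid M))}.
\]

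Then I would run the valuation analysis exactly as in Proposition~\ref{lrab}. Pick a finite extension $L$ of $W(K)[1/p]$ splitting the characteristic polynomial of $\sigma$ on $M$; its ring of integers $\mathcal O_L$ is a complete DVR with residue field $K$, so $\mathcal O_L/p\mathcal O_L$ is artinian. Since $\sigma$ preserves the lattice $M$, its eigenvalues $\mu_1,\dots,\mu_h$ lie in $\mathcal O_L$, and for the valuation $v$ on $L$ normalised by $v(p) = 1$ one has $v_p(\det(\sigma^n-1\mid M)) = \sum_j v(\mu_j^n-1)$; Lemma~\ref{commalgdecomp}, applied to the polynomial ring $\bar L[x]$ acting on $M \otimes_{W(K)} \bar L$ via $\sigma$, makes the passage to generalised eigenspaces clean. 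Each residue $\bar\mu_j \in K$ lies in $\bar{\F}_p$, so either $v(\mu_j) > 0$ and $v(\mu_j^n-1) = 0$ for all $n$, or $\mu_j$ is a unit and $\bar\mu_j$ has finite order $e_j$ coprime to $p$ in $\bar{\F}_p^*$ (while $\mu_j$ itself is no root of unity, as each $\sigma^m-1$ is an isogeny). For the unit eigenvalues, Lemma~\ref{commalgpowers}(ii) with $S = \mathcal O_L$ and $\sigma = \mu_j$ gives, just as before: $v(\mu_j^n-1) = 0$ if $e_j \nmid n$; $v(\mu_j^{mn}-1) = v(\mu_j^n-1)$ if $e_j \mid n$ and $p \nmid m$; and $v(\mu_j^{pn}-1) = v(\mu_j^n-1)+1$ once $e_j \mid n$ and $\ord_p(n)$ passes a threshold $N_j$. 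Set $N \coloneqq \max_j N_j$ and $\omega \coloneqq \mathrm{lcm}(e_j)$ over the unit eigenvalues, write $n = p^k m$ with $p \nmid m$, and put $J(n) \coloneqq \{\,j : \mu_j \in \mathcal O_L^*,\ e_j \mid n\,\}$ — which, because $e_j \mid n \iff e_j \mid m$ for $p \nmid e_j$, depends only on $n \bmod \omega$. Summing the formulas above shows that $v_p(\det(\sigma^n-1\mid M))$ equals a function of $n \bmod \omega$ plus $\ord_p(n)\cdot\#J(n)$ when $\ord_p(n) \ge N$, and depends only on $n \bmod p^N\omega$ when $\ord_p(n) < N$. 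Hence $\deg_{\mathrm i}(\sigma^n-1) = r_n\,|n|_p^{s_n}$, where $s_n \coloneqq -\#J(n)$ and $r_n \coloneqq \deg_{\mathrm i}(\sigma^n-1)\cdot|n|_p^{-s_n}$ when $\ord_p(n) \ge N$, and $s_n \coloneqq 0$, $r_n \coloneqq \deg_{\mathrm i}(\sigma^n-1)$ when $\ord_p(n) < N$: both sequences are $p^N\omega$-periodic, $r_n$ is a power of $p$ (with integer, possibly negative, exponent) hence in $\Q^*$, and $s_n \in \Z_{\le 0}$. For $p \nmid n$ one has $\ord_p(n) = 0 < N$ and $r_n = p^{\sum_{e_j \mid n} v(\mu_j^{e_j}-1)}$, which depends only on $\gcd(n,\omega)$, so $r_n = r_{\gcd(n,\omega)}$.

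The main obstacle I anticipate is the geometric reduction in the second paragraph: once $\deg_{\mathrm i}(\sigma^n-1)$ has been turned into $v_p(\det(\sigma^n-1\mid M))$ on a $\sigma$-stable lattice, the rest is bookkeeping identical to Proposition~\ref{lrab}. The subtle points there are that the connected--\'etale splitting is available over the perfect field $K$, that $A[p^\infty]^0$ is stable under the (not necessarily invertible) endomorphism $\sigma$, and that the Dieudonn\'e functor is exact on the isogeny $\sigma^n-1$ so that cokernels compute orders of kernels. A lesser point needing attention is the periodicity bookkeeping in the last step, which hinges on the elementary equivalence $e_j \mid n \iff e_j \mid m$ for $p\nmid e_j$ keeping $J(n)$, and thus the slope $s_n$, periodic in $n$.
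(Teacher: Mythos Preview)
Your proof is correct and takes a genuinely different route from the paper's. The paper computes $\deg_{\mathrm i}(\sigma^n-1)$ indirectly as $|\sigma_n|_p / |\deg(\sigma^n-1)|_p$: the denominator is controlled by Proposition~\ref{lrab}(\ref{lrab.ii}), and for the numerator the paper studies the torsion group $A(K)_{\mathrm{tor}}$ as a module over the commutative ring $R = \Z[\sigma]$, decomposes it over the maximal ideals of $R$ via Lemma~\ref{commalgdecomp} (this is where that lemma is actually needed), and then applies Lemma~\ref{commalgpowers} to each localisation $R_{\mathfrak m}$ with $p \in \mathfrak m$. Your approach instead linearises directly on the Dieudonn\'e module of $A[p^\infty]^0$, reading off $\deg_{\mathrm i}(\sigma^n-1)$ as the $p$-valuation of a determinant and running the eigenvalue analysis exactly as in Proposition~\ref{lrab}(\ref{lrab.ii}) with $\ell = p$. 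The paper's route is more elementary in that it avoids $p$-divisible groups and Dieudonn\'e theory (which only enter later, in Section~\ref{secvi}) and works purely with $K$-points; yours is more direct, computes the inseparability degree in one step rather than as a quotient, and makes the parallel with Proposition~\ref{lrab} transparent. Two small remarks: your invocation of Lemma~\ref{commalgdecomp} for the passage to generalised eigenspaces is harmless but unnecessary, since $\det(\sigma^n-1\mid M) = \prod_j(\mu_j^n-1)$ is elementary linear algebra; and the reason the residues $\bar\mu_j$ land in $\bar\F_p$ rather than merely in $K$ is that $\sigma$, lying in the finite $\Z$-module $\End(A)$, satisfies a monic polynomial over $\Z$, so the $\mu_j$ are algebraic integers---you assert this but the justification is worth making explicit.
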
 

\begin{proof}
The strategy of the proof is as follows: 
since  $\deg_{\mathrm{i}}(\sigma^n-1)$ is a power of $p$, it is sufficient to compute $|\deg(\sigma^n-1)|_p$ and $|\sigma_n|_p$. The former number has been already computed in Proposition \ref{lrab}.(\ref{lrab.ii}); for the latter, we study the $p$-primary torsion of $A$ as an $R$-module, where, not to have to worry about noncommutative arithmetic, we work with the ring $R=\Z[\sigma] \subseteq \End(A)$. Note that $R$ need not be a Dedekind domain. 
Let $X\coloneqq A(K)_{\mathrm{tor}}$ denote the sub{group}  of torsion {points} of $A(K)$. It has a natural structure of an $R$-module, and as an abelian group is divisible; in fact, 
$$X \simeq \left({\Z}\left[\frac{1}{p^{\infty}}\right]/{\Z}\right)^{f} \oplus \bigoplus_{q\neq p}  \left({\Z}\left[\frac{1}{q^{\infty}}\right]/{\Z}\right)^{2g},$$ where   $f$ is the $p$-rank of $A$, and $${\Z}\left[\frac{1}{q^{\infty}}\right] = \bigcup_{k\geq 1} {\Z}\left[\frac{1}{q^k}\right].$$ As $R$ acts on $X$, the localisation $R_{\mathfrak m}$ acts on $X_{\mathfrak m}$ for each maximal ideal $\mathfrak m$ of $R$. Since $X$ is torsion as an abelian group, the conditions of Lemma \ref{commalgdecomp} are satisfied, and hence we have $X_{\mathfrak m} =X[\mathfrak{m}^{\infty}]$ and $$X = \bigoplus_{\mathfrak m} X_{\mathfrak m},$$ the sum being taken over all maximal ideals $\mathfrak m$ of $R$.
 For an element $\tau \in R$, we have $$X[\tau] =  \bigoplus_{\mathfrak m} X_{\mathfrak m}[\tau].$$ Since $X_{\mathfrak m} =X[\mathfrak{m}^{\infty}]$, for any prime number $q$ we have $X_{\mathfrak m}[q^{\infty}]=0$ if $q\not\in \mathfrak m$ and $X_{\mathfrak m}[q^{\infty}]=X_{\mathfrak m}$ if $q\in \mathfrak m$, and hence we get $$X[q^{\infty}]=\bigoplus_{q\in \mathfrak m} X_{\mathfrak m}.$$ Thus the groups  $X_{\mathfrak m}$ for $q \in \mathfrak m$ are $q$-power torsion. It follows that for $\tau \in R$, $\tau\neq 0$, we can compute \begin{equation} \label{letsdothisintermsofvaluations} |X[\tau]|_q = \prod_{q \in \mathfrak m} |X_{\mathfrak m}[\tau]|_q.\end{equation}

Since $X$ is a divisible abelian group, the groups $X_{\mathfrak m}$, being quotients of $X$, are also divisible. Thus, the surjectivity of $p \colon X_{\mathfrak m} \rightarrow X_{\mathfrak m}$ implies that there is a  short exact sequence 
\begin{equation} \label{superconductors} \begin{tikzcd}
        0 \arrow{r} & X_{\mathfrak m}[p] \arrow{r}& X_{\mathfrak m}[p \tau] \arrow{r}{p}  & X_{\mathfrak m}[\tau] \arrow{r}  & 0.  
    \end{tikzcd}
    \end{equation}

Let $\sigma$ be an element of $R$, let $e_{\mathfrak{m}}$ denote the order of $\bar{\sigma}$ in $(R_{\mathfrak m}/\mathfrak{m} R_{\mathfrak m})^*$ for maximal ideals $\mathfrak m$ of $R$ with $p\in \mathfrak m$ and $\sigma \notin \mathfrak{m}$. Note that $e_{\mathfrak m}$ is then coprime with $p$. Applying (\ref{superconductors}) to $\tau = \sigma^n - 1$ and using Lemma \ref{commalgpowers}, we get  
\begin{equation*} |X_{\mathfrak m}[\sigma^{mn}-1]|_p = \left\{ \begin{array}{ll} 1 & \mbox{ for $\sigma \in \mathfrak m$,}\\ 1 & \mbox{ for $\sigma \notin \mathfrak m$ and $e_{\mathfrak m}{\nmid}mn$,}\\ |X_{\mathfrak m}[\sigma^n -1]|_p &  \mbox{ for $\sigma \notin \mathfrak m$, $p {\nmid} m$ and $e_{\mathfrak m}\vert n$, } \\ |X_{\mathfrak m}[\sigma^n -1]|_p \cdot |X_{\mathfrak m}[p]|_p & \mbox{ for $\sigma \notin \mathfrak m$, $m=p$, $e_{\mathfrak m} \vert n $, and $\ord_p (n)\gg 0 $.} \end{array} \right. \end{equation*}
Arguing in the same way as in the proof of Proposition \ref{lrab}, we conclude that there exist periodic sequences $(r^{\mathfrak m}_n)_n$ and $(s^{\mathfrak m}_n)_n$ with $r^{\mathfrak m}_n \in \Q^*$ and $s^{\mathfrak m}_n \in \N$ such that 
\begin{equation} \label{valuationprimaryparts} |X_{\mathfrak m}[\sigma^n-1]|_p = r^{\mathfrak m}_n |n|_p^{s^{\mathfrak m}_n} \quad \mbox{for $n\geq 1$}.\end{equation} Furthermore, $r^{\mathfrak m}_n=1$ and $s^{\mathfrak m}_n=0$ for all $n$ if $\sigma \in \mathfrak m$, and $$r^{\mathfrak m}_n= r^{\mathfrak m}_{\gcd(n,e_{\mathfrak m})} \quad \mbox{for $\sigma \notin \mathfrak m$ and $p{\nmid}n$}.$$ Applying \eqref{letsdothisintermsofvaluations} to $\tau=\sigma^n-1$ and $q=p$, we get the equality $$|\sigma_n|_p = \prod_{p\in \mathfrak m} |X_{\mathfrak m}[\sigma^n-1]|_p.$$ Taking the product of the Formul{\ae} \eqref{valuationprimaryparts} over all maximal  ideals ${\mathfrak m}$ of $R$ with $p\in{\mathfrak m}$, we obtain periodic sequences $(r'_n)_n$ and $(s'_n)_n$ with $r'_n \in \Q^*$ and $s'_n \in \N$ such that $$|\sigma_n|_p = r'_n |n|_p^{s'_n}$$ and $$r'_n=r'_{\gcd(n,\omega')} \quad \mbox{for $p{\nmid} n$,}$$ where $$\omega'=\mathrm{lcm}\{e_{\mathfrak m}\mid \sigma \notin \mathfrak m\}.$$ Writing $$\deg_{\mathrm{i}}(\sigma^n-1)=\frac{\deg(\sigma^n-1)}{\sigma_n}=\frac{|\sigma_n|_p}{|\deg(\sigma^n-1)|_p}$$ and using Proposition \ref{lrab}.\eqref{lrab.ii}, we get sequences $(r_n)$ and $(s_n)$ satisfying having all stated properties except that it might be that $s_n>0$ for some $n$. However, since $\deg_{\mathrm{i}}(\sigma^n-1)$ is an integer, letting $\varpi$ be the common period of $(r_n)$ and $(s_n)$, we automatically get $s_n\leq 0$ for all $n$ such that the arithmetic sequence $n+\varpi\N$ contains terms divisible by arbitrarily high powers of $p$. For all the remaining $n$ we have $\ord_p(n) < \ord_p(\varpi)$, and thus whenever $s_n >0$, we replace $s_n$ by $0$ and $r_n$ by $r_n |n|_p^{s_n}$, obtaining the claim.\qedhere
\end{proof}

\section{A holonomic version of the Hadamard quotient theorem} \label{sechol}

The next proposition is our basic tool from the theory of recurrent sequences. It bears some resemblance to the Hadamard quotient theorem (which is used in its proof), and to conjectural generalisations of it as proposed by Bellagh and B\'ezivin \cite[``Question'' in Section 1]{BellaghBezivin} (using holonomicity instead of linear recurrence) and Dimitrov \cite[Conjecture in 1.1]{Dimitrov} (using algebraicity instead of linear recurrence). In our special case, the proof relies on the quotient sequence having a specific form. 

\begin{proposition}\label{thisisalreadybetter}  Let $(a_n)_{n \geq 1}$, $(b_n)_{n\geq 1}$,  $(c_n)_{n\geq 1}$ be sequences of nonzero complex numbers such that $$a_n=b_n c_n$$ for all $n$. Assume that:
\begin{enumerate}
\item $(a_n)_{n\geq 1}$ satisfies a linear recurrence;
\item $(b_n)_{n\geq 1}$ is holonomic;
\item  $(c_n)_{n\geq 1}$ is of the form $c_n =r_n |n|_p^{s_n}$ for a prime $p$ and periodic sequences $(r_n)_{n\geq 1}$, $(s_n)_{n\geq 1}$ with $r_n\in\Q^*$, $s_n \in \Z$. 
\end{enumerate}
Then the sequence $(c_n)_{n\geq 1}$ is bounded.\end{proposition}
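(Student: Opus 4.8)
The plan is to argue by contradiction: if $(c_n)$ is unbounded, I localise at one arithmetic progression, use holonomicity of $(b_n)$ to exhibit a ``$p$-adically twisted'' linear recurrence as an honest Hadamard quotient of two linear recurrences, invoke the Hadamard quotient theorem to conclude that it \emph{is} a linear recurrence, and then rule this out on density grounds.

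First I would dispose of the soft parts. Since $(r_n)$ is periodic it is bounded, so it suffices to bound $(|n|_p^{s_n})$. Let $\varpi$ be a common period of $(r_n)$ and $(s_n)$, and split $n$ by residue class modulo $\varpi$. On a class with $s_n\ge 0$ one has $|n|_p^{s_n}\le 1$; on a class on which $p^{\ord_p(\varpi)}\nmid n$, the quantity $\ord_p(n)$ is constant, so $|n|_p^{s_n}$ takes finitely many values. Hence, if $(c_n)$ is unbounded, there is a class $c\bmod\varpi$ on which $s_n\equiv -t$ with $t>0$ and $\ord_p(n)$ is unbounded; one checks that then $p^{a}\mid c$, where $p^{a}$ is the exact power of $p$ dividing $\varpi$. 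Writing $n=p^{a}(\ell m+c_0)$ with $\ell=\varpi/p^{a}$ (so $p\nmid\ell$) and restricting all three sequences to this class — subsequences of a linear recurrence along an arithmetic progression are linear recurrences, and sections of a holonomic sequence are holonomic — I arrive at sequences with $A_m=B_mC_m$, where $(A_m)$ is a linear recurrence with no zero term, $(B_m)$ is holonomic, and $C_m=\gamma\,p^{t\,\ord_p(\ell m+c_0)}$ for a fixed nonzero rational $\gamma$; moreover $(C_m)$ is unbounded, since $\ell m+c_0\equiv 0\bmod p^{N}$ is solvable for every $N$. I must derive a contradiction.

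Now I use holonomicity. Fix a polynomial recurrence $\sum_{i=0}^d P_i(m)B_{m+i}=0$ (valid for all $m$) with $P_0\not\equiv 0$, and restrict to $\mathcal A=\{m:\ord_p(\ell m+c_0)\ge V\}$ with $V>\ord_p(d!)$. For $m\in\mathcal A$ and $1\le i\le d$ one has $\ord_p(\ell(m+i)+c_0)=\ord_p(i)$, so $B_{m+i}=A_{m+i}\gamma^{-1}p^{-t\,\ord_p(i)}$ is ``tame''; feeding these and $B_m=A_mC_m^{-1}$ into the recurrence and clearing the power of $p$ yields, for all $m\in\mathcal A$,
\[
P_0(m)A_m \;=\; -\,p^{t\,\ord_p(\ell m+c_0)}\sum_{i=1}^d p^{-t\,\ord_p(i)}\,P_i(m)\,A_{m+i}.
\]
Parametrising $\mathcal A$ by $m=m_0+p^Vk$ (a single residue class mod $p^V$) one has $\ord_p(\ell m+c_0)=V+\ord_p(\ell k+e_0)$, and both sides become linear recurrences in $k$: each $A_{m_0+j+p^Vk}$ is a subsequence of $(A_n)$ along an arithmetic progression, and multiplication by the polynomials $P_i(m_0+p^Vk)$ preserves the class of linear recurrences. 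Writing this as $U_k=W_k\,p^{t\,\ord_p(\ell k+e_0)}$ with $U_k,W_k$ linear recurrences, the factor $p^{t\,\ord_p(\cdot)}$ never vanishes, so $W_k=0$ forces $U_k=P_0(m_0+p^Vk)A_{m_0+p^Vk}=0$; since $(A_m)$ has no zero term and $P_0\not\equiv 0$, this happens for only finitely many $k$. Hence $W_k\ne 0$ for all large $k$ and $p^{t\,\ord_p(\ell k+e_0)}=U_k/W_k\in\Z$ for all $k$, so the Hadamard quotient theorem applies and shows that $(p^{t\,\ord_p(\ell k+e_0)})_k$ is eventually a linear recurrence.

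Finally, this is impossible. The sequence $(p^{t\,\ord_p(\ell k+e_0)})_k$ is unbounded, yet $\{k:p^{t\,\ord_p(\ell k+e_0)}>T\}$ is contained in one residue class modulo $p^{N}$ with $N=\lceil t^{-1}\log_p T\rceil$, hence has density $O(p^{-N})\to 0$ as $T\to\infty$. But an unbounded linear recurrence $(D_k)$ has $\{k:|D_k|>T\}$ of positive lower density, bounded below by a constant independent of $T$: by Skolem--Mahler--Lech its zero set is a finite union of arithmetic progressions together with a finite set, so on some progression it is a nonzero unbounded linear recurrence, and on that progression, separating the roots of maximal modulus (respectively, when all roots lie on the unit circle, the polynomial coefficients of maximal degree) exhibits $|D_k|$ as (a power of the maximal modulus, or a power of $k$) times an almost periodic sequence that is bounded away from $0$ on a set of positive density. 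This contradiction proves $(c_n)$ bounded. I expect the main obstacle to be the middle step: reducing the holonomic recurrence so that $P_0\not\equiv0$, verifying that the denominator $W_k$ vanishes only finitely often, and bookkeeping the various arithmetic progressions so that the Hadamard quotient theorem and the final density lemma apply verbatim.
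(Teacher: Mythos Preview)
Your proof is correct and follows the same architecture as the paper's: localise to an arithmetic progression on which $(c_n)$ is unbounded, use the holonomic recurrence on that progression, restrict further so that the shifted values $c_{n+1},\dots,c_{n+d}$ become constants, rewrite the identity as a Hadamard quotient of two linear recurrences, and invoke the Hadamard quotient theorem to conclude that a sequence of the form $\bigl(p^{t\,\ord_p(\cdot)}\bigr)$ is itself linear recurrent. The only substantive difference is the endgame. The paper, having obtained a linear recurrence $\sum_{j=0}^{e}\gamma_j\,|J+\Pi(n+j)|_p^{s}=0$ with $\gamma_0\neq 0$, chooses a further congruence on $n$ so that $|J+\Pi(n+j)|_p$ is \emph{fixed} for $j=1,\dots,e$, whence the recurrence would pin down $|J+\Pi n|_p^{s}$ as well; it then exhibits $p$ values of $n$ in this congruence class among which $|J+\Pi n|_p$ genuinely varies, an immediate contradiction. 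Your route instead argues by density: the set $\{k:p^{t\,\ord_p(\ell k+e_0)}>T\}$ has density $O(p^{-N})\to 0$, whereas an unbounded linear recurrence exceeds any threshold on a set of positive lower density bounded away from zero. That last claim is true (a standard second-moment argument on the leading exponential sum $Q(k)=\sum c_i(\lambda_i/\Lambda)^k$ gives $\tfrac1N\sum_{k\le N}|Q(k)|^2\to\sum|c_i|^2>0$, so $\{k:|Q(k)|>\epsilon\}$ has positive lower density), but it is less elementary than the paper's two-line finish and your sketch of it could use one more sentence of justification.
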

\begin{proof} Note that $c_n \neq 0$ for all $n$. Since the sequence $(b_n)_{n\geq 1}$ given by $b_n = a_n/c_n$ is holonomic, by Lemma \ref{basicpowerserieslemmata}.(\ref{basicpowerserieslemmata3}) there exist polynomials $q_0,\ldots,q_d \in \C[z]$ such that \begin{equation} \label{dfinrel} q_0(n) \frac{a_n}{c_n}=-\sum_{i=1}^d q_i(n+i) \frac{a_{n+i}}{c_{n+i}} \quad\text{for } n\geq 1.\end{equation} We may further assume that $q_0\neq 0$ (otherwise, replace for $i=1,\ldots,d$ the polynomials $q_i$ by $(z-1)q_i$ and shift the relation by one). Suppose $c_n=r_n |n|_p^{s_n}$ is not bounded and let $\varpi$ be the common period of both $(r_n)$ and $(s_n)$. The unboundedness of $(c_n)_{n\geq1}$ means that there exists an integer $j\geq 1$ with $s_j<0$ such that there are elements in the arithmetic sequence $\{j+\varpi n\mid n\geq 0\}$ which are divisible by an arbitrarily high power of $p$. Fix such $j$ and write $s\coloneqq s_j$. Let $\nu$ be an integer such that $p^\nu>\max(d,\varpi)$ and let $\Pi=\mathrm{lcm}(\varpi,p^{\nu})$. Note that $\ord_p \Pi =\nu$. 
By the assumption on $\{j+\varpi n\mid n\geq 0\}$, there exists an integer $J$ such that $J\equiv j \pmod{\varpi}$ and $J\equiv 0 \pmod{p^{\nu}}$. By the definition of the sequence $(c_n)_{n\geq 1}$, for $n \equiv J \pmod{\Pi}$ the values $c_{n+1},\ldots,c_{n+d}$ are uniquely determined (i.e., do not depend on $n$). Substituting such $n$ to the equation \eqref{dfinrel}, we obtain a formula of the form $$\frac{a'_n}{|n|_p^{s}} = b'_n\quad \text{ for } n\equiv J\pmod{\Pi},$$ where $$a'_n=q_0(n)\frac{a_n}{r_j} \quad \mbox{and} \quad b'_n=-\sum_{i=1}^d q_i(n+i) \frac{a_{n+i}}{c_{n+i}} $$ are linear recurrence sequences along the arithmetic sequence  $n\equiv J \pmod{\Pi}$ (here we use the fact that the values $c_{n+1},\ldots, c_{n+d}$ do not depend on $n$, and that linear recurrence sequences form an algebra). Note that the values of $(a'_n)_{n\geq 1}$ are nonzero for sufficiently large $n$, and hence so are $(b'_n)_{n\geq 1}$. By Lemma \ref{basicpowerserieslemmata}.(\ref{basicpowerserieslemmata1}), a subsequence of a linear recurrence sequence along an arithmetic sequence is a linear recurrence sequence. Since the sequence $$|n|_p^{s} = \frac{a'_n}{b'_n}$$ takes values in a finitely generated ring (namely $\Z[1/p]$), we conclude from the Hadamard quotient theorem (van der Poorten \cite[Th\'eor\`eme]{vdPoorten}, \cite{Rumely}) that the sequence $(|J+\Pi n|_p^{s})_{n \geq 0}$ satisfies a linear recurrence, say  \begin{equation}\label{nowigoforcoffee} \gamma_0 |J+\Pi n|_p^s + \gamma_1 |J+\Pi (n+1)|_p^s +\ldots + \gamma_e |J+\Pi (n+e)|_p^s=0 \quad \text{for } n \text{ large enough},\end{equation} where $\gamma_0,\ldots,\gamma_e \in \C$, $\gamma_0\neq 0$. Let $\mu$ be an integer such that $p^{\mu}>\Pi d$. Since  $\nu= \ord_p(\Pi) \leq \ord_p(J)$, we can find an integer $\Pi'>0$ such that $\Pi\Pi'\equiv -J \pmod{p^{\mu}}$. Then for $n\equiv \Pi' \pmod{p^{\mu-\nu}}$ the values of $$|J+\Pi (n+1)|_p^s,\ldots, |J+\Pi (n+e)|_p^s$$ are independent of $n$ (actually, $|J+\Pi (n+j)|_p^s = p^{- \nu s} |j|_p^s$ for $j=1,\dots,e$), and hence by \eqref{nowigoforcoffee} so is the value of $\gamma_0 |J+\Pi n|_p^s$ for $n$ sufficiently large. Substituting $n=\Pi'+ip^{\mu-\nu}$ with $i=0,\ldots,p-1$, we get a contradiction, since there is  exactly one value of $i$ for which $|J+\Pi(\Pi'+ip^{\mu-\nu})|_p^s<p^{- \mu s}$.
\end{proof}

\section{Rationality properties of dynamical zeta functions}

We prove a general rational/transcendental dichotomy  in terms of the following arithmetical property: 

\begin{definition}
An endomorphism $\sigma \in \End(A)$ is called \emph{very inseparable} if $\sigma^n-1$ is a separable isogeny for all $n$. 
\end{definition}

Note that  the zero map is very inseparable. The notion ``very inseparable'' makes sense for arbitrary (not necessarily confined) endomorphisms, but such very inseparable endomorphisms are then automatically confined. We will study the geometric meaning of very inseparability in greater detail in Section \ref{secvi}; here we content ourselves with discussing the case of elliptic curves.

\begin{example} \label{remE}
In case $A=E$ is an elliptic curve, things simplify greatly (compare \cite[Section 5]{BridyBordeaux}): there exists a (nonarchimedean) absolute value $|\cdot|$ on the ring $\End(E)$ such that $\deg_{\mathrm{i}}(\tau)=|\tau|^{-1}$ for $\tau \in \End(E)$. It is immediate that inseparable isogenies together with the zero map form an ideal in $\End(E)$ and that an inseparable isogeny $\sigma$ (i.e., $|\sigma|<1$) is very inseparable (i.e., $|\sigma^n-1|=1$ for all $n$). Neither of these statements  is true in general for higher dimensional abelian varieties.  
\end{example}

\begin{theorem}\label{insepz} 
\mbox{ } 
\begin{enumerate}
\item \label{insepzrat} If $\sigma$ is very inseparable, then $\zeta_\sigma(z) \in \Q(z)$ is rational.
\item \label{insepztran} If $\sigma$ is not very inseparable, the sequence $(\sigma_n)$ is not holonomic, and $\zeta_\sigma(z)$ is transcendental over $\C(z)$.
\end{enumerate}
\end{theorem}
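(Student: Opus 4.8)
The plan is to derive both statements from the structural results of Section~2 together with Proposition~\ref{thisisalreadybetter}, the only genuinely geometric ingredient being part~(c) of Lemma~\ref{commalgpowers}.(\ref{commalgpowersii}). Part~(\ref{insepzrat}) is immediate: if $\sigma$ is very inseparable then $\sigma^n-1$ is separable for every $n$, so $\deg_{\mathrm{i}}(\sigma^n-1)=1$, and hence $\sigma_n=\deg(\sigma^n-1)$ by~(\ref{sdi}); therefore $\zeta_\sigma=D_\sigma\in\Q(z)$ by Proposition~\ref{lrab}.(\ref{lrab.i}), and in addition $(\sigma_n)$ is linear recurrent by~(\ref{formidable}).

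For part~(\ref{insepztran}) I would argue the contrapositive. First observe that both conclusions to be negated---non-holonomicity of $(\sigma_n)$ and transcendence of $\zeta_\sigma$---follow once we know: \emph{if $(\sigma_n)$ is holonomic then $\sigma$ is very inseparable}. Indeed, if $\zeta_\sigma$ were algebraic over $\C(z)$ then $z\,\zeta_\sigma'(z)/\zeta_\sigma(z)=\sum_{n\geq1}\sigma_nz^n$ would be algebraic (algebraic power series are closed under differentiation and the field operations), hence holonomic by the last sentence of Lemma~\ref{basicpowerserieslemmata}.(\ref{basicpowerserieslemmata3}), so $(\sigma_n)$ would be holonomic. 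So assume $(\sigma_n)$ is holonomic, and put $a_n=\deg(\sigma^n-1)$, $b_n=\sigma_n$, $c_n=\deg_{\mathrm{i}}(\sigma^n-1)$, three sequences of positive integers with $a_n=b_nc_n$. By~(\ref{formidable}) and Lemma~\ref{basicpowerserieslemmata}.(\ref{basicpowerserieslemmata5}) the sequence $(a_n)$ is linear recurrent; $(b_n)$ is holonomic by assumption; and by Proposition~\ref{rs}, $c_n=r_n|n|_p^{s_n}$ with $(r_n),(s_n)$ periodic, $r_n\in\Q^*$ and $s_n\in\Z$. Hence Proposition~\ref{thisisalreadybetter} applies and gives that $(c_n)=(\deg_{\mathrm{i}}(\sigma^n-1))$ is bounded.

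It remains---and this is the crux---to upgrade boundedness to the assertion $\deg_{\mathrm{i}}(\sigma^n-1)=1$ for all $n$. Suppose on the contrary that $\deg_{\mathrm{i}}(\sigma^{n_0}-1)>1$ for some $n_0$; I claim $(\deg_{\mathrm{i}}(\sigma^n-1))_n$ is then unbounded. Since $K$ is perfect, the connected--\'etale sequence of the $p$-divisible group $A[p^\infty]$ splits, and one checks that the connected part of $\ker(\sigma^n-1)$---which has order $\deg_{\mathrm{i}}(\sigma^n-1)$---coincides with $\ker(\sigma^n-1)\cap G$ for $G\coloneqq A[p^\infty]^0$, so that $\deg_{\mathrm{i}}(\sigma^n-1)=\deg(\phi^n-1)$, where $\phi\in\End(G)$ is induced by $\sigma$. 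Passing to the Dieudonn\'e module $M\coloneqq M(G)$, free of finite rank over the Witt vector ring $W(K)$, on which $\sigma$ acts $W(K)$-linearly as some $F$, one obtains $\deg_{\mathrm{i}}(\sigma^n-1)=p^{\,\ord_p\det(F^n-1)}=\prod_i|\mu_i^n-1|_p^{-1}$, where the $\mu_i$ run over the eigenvalues of $F$. These $\mu_i$ are roots of the characteristic polynomial of $\sigma$, which is monic over $\Z$, hence algebraic integers with $|\mu_i|_p\leq1$; in particular $|\mu_i^n-1|_p\leq1$ for each $i$ and $n$. The assumption $\deg_{\mathrm{i}}(\sigma^{n_0}-1)>1$ forces $|\mu_i^{n_0}-1|_p<1$ for some $i$, whence $|\mu_i|_p=1$ and the residue $\bar\mu_i$ is a root of unity of finite order $e\mid n_0$ prime to $p$. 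Applying the last clause of Lemma~\ref{commalgpowers}.(\ref{commalgpowersii}) to the ring of integers of a finite extension of $\Q_p$ containing $\mu_i$, with the unit $\mu_i$ in the role of $\sigma$, we get $|\mu_i^{pn}-1|_p=p^{-1}|\mu_i^n-1|_p$ whenever $e\mid n$ and $\ord_p(n)$ is large; iterating along $n=ep^k$ yields $\deg_{\mathrm{i}}(\sigma^{ep^k}-1)\geq|\mu_i^{ep^k}-1|_p^{-1}\to\infty$, contradicting boundedness. Hence $\sigma$ is very inseparable, which completes the contrapositive. (The same computation also identifies very inseparability with nilpotence of $F$ modulo $p$, i.e.\ with Theorem~\ref{sepveryinsep}.)

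The main obstacle is precisely this boundedness-to-triviality step: purely $\ell$-adic ($\ell\neq p$) linear algebra cannot detect it, and one must bring in the arithmetic-geometric fact---which is exactly part~(c) of Lemma~\ref{commalgpowers}.(\ref{commalgpowersii}), carried over to the Dieudonn\'e module of $A[p^\infty]^0$---that inseparability of $\sigma^{n_0}-1$, once it occurs, persists and grows without bound along the $p$-power multiples of $n_0$. The remaining ingredients are standard manipulations with recurrence sequences.
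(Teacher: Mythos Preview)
Your proof is correct and follows the paper's architecture: part~(\ref{insepzrat}) is identical, and for part~(\ref{insepztran}) you set up the same triple $a_n=b_nc_n$, invoke Proposition~\ref{thisisalreadybetter} to get $(c_n)$ bounded, and then argue that ``not very inseparable'' forces $(c_n)$ unbounded. The only real difference lies in this last step. The paper handles it (Lemma~\ref{unboundedinsepdeg}) by a five-line computation entirely inside $\End(A)$: writing $\sigma^{n_0}=1+\psi$ with $\psi$ inseparable, one expands $(1+\psi)^p-1=\psi(\psi^{p-1}+p\chi)$ and notes that the second factor is again inseparable because multiplication by $p$ kills the tangent space; hence $\deg_{\mathrm i}(\sigma^{n_0p}-1)>\deg_{\mathrm i}(\sigma^{n_0}-1)$, and one iterates. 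No Dieudonn\'e theory is needed. Your route via the Dieudonn\'e module of $A[p^\infty]^0$ and the $p$-adic valuations of the eigenvalues $\mu_i$ is also correct---the $\mu_i$ are indeed algebraic integers because $P_\sigma(\sigma)=0$ in $\End(A)$ forces $P_\sigma(F)=0$---but it deploys heavier machinery than the situation requires. What your approach buys is a direct bridge to Theorem~\ref{sepveryinsep}: the same computation shows that very inseparability is equivalent to all $\mu_i$ lying in the maximal ideal, i.e.\ to nilpotence of $F$ modulo $p$, which is exactly the Dieudonn\'e-module formulation of nilpotence on $A[p]^0$. The paper keeps these two concerns separate, proving unboundedness elementarily here and postponing the Dieudonn\'e picture to Section~\ref{secvi}.
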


\begin{proof}
Suppose we are in case (\ref{insepzrat}), so $\sigma^n-1$ is separable for all $n$. Since $\sigma_n = \deg (\sigma^n-1)$, Proposition \ref{lrab}.(\ref{lrab.i}) implies that $\zeta_\sigma(z)$ is a rational function of $z$. 

In case (\ref{insepztran}), set  $a_n=\deg(\sigma^n-1)$, $b_n=\sigma_n$, and $c_n=\deg_{\mathrm{i}} (\sigma^n-1)$. By Proposition \ref{lrab}.(\ref{lrab.i}), $(a_n)$ is linear recurrent. By Proposition \ref{rs}, 
$ c_n = r_n |n|_p^{s_n} $ for periodic $r_n \in \Q^*$ and $s_n \in \Z$. Assume, by contradiction, that $b_n$ is holonomic, i.e., that the sequence $(b_n)$ is holonomic. The sequences $(a_n), (b_n)$, and $(c_n)$ then satisfy all the conditions of Proposition \ref{rs}, and we conclude that the sequence $(c_n)$ 
 is bounded. However, the following  proves that $(c_n)$ is unbounded: 

\begin{lemma} \label{unboundedinsepdeg}  If $\sigma$ is not very inseparable, then the sequence $\deg_{\rm i}(\sigma^n-1)$ is unbounded. \end{lemma} 

\begin{proof} By assumption, there exists $n_0$ for which $\sigma^{n_0}-1$ is inseparable. Write $\sigma^{n_0} = 1+\psi$ with $\psi$ inseparable; then $$\sigma^{n_0 p}-1 = (1+\psi)^p-1 = \psi(\psi^{p-1} + p \chi)$$ for some endomorphism $\chi \colon A \to A$. Since $p$ has identically zero differential, the map $\psi^{p-1} + p \chi$ is inseparable, and hence $$ \deg_{\mathrm{i}} (\sigma^{n_0 p}-1) \geq 1 + \deg_{\mathrm{i}}(\psi) = 1 + \deg_{\mathrm{i}} (\sigma^{n_0}-1),$$ and the result follows by iteration.  
\end{proof} 

 To show the transcendence of $\zeta_\sigma(z)$ over $\C(z)$, suppose it is algebraic. Then so would be $$ z \frac{\zeta'_\sigma(z)}{\ \zeta_\sigma(z)} = z(\log(\zeta_\sigma(z)))' = \sum \sigma_n z^n.$$ This contradicts the fact that $\sigma_n$ is not holonomic. 
\end{proof}

\begin{corollary}
At most one of the functions $$ \zeta_\sigma(z) = \exp \left( \sum_{n \geq 1} \sigma_n \frac{z^n}{n} \right) \mbox{ and } 
\frac{1}{ \zeta_\sigma(z)} = \exp \left( \sum_{n \geq 1} -\sigma_n \frac{z^n}{n} \right) $$
is holonomic.
\end{corollary}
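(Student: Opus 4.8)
The plan is to derive the statement from Theorem~\ref{insepz}.(\ref{insepztran}) together with a rigidity property of D-finite power series. Suppose, towards a contradiction, that both $\zeta_\sigma(z)$ and $1/\zeta_\sigma(z)=\exp\bigl(-\sum_{n\geq 1}\sigma_n z^n/n\bigr)$ are holonomic, and put $f=\zeta_\sigma$. The key input is the theorem of Harris and Sibuya: if a power series $f$ and its reciprocal $1/f$ both satisfy linear homogeneous differential equations over $\C(z)$, then the logarithmic derivative $f'/f$ is algebraic over $\C(z)$.

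Granting this, $f'/f$ is algebraic over $\C(z)$, hence so is $z\,f'(z)/f(z)=z\bigl(\log\zeta_\sigma(z)\bigr)'=\sum_{n\geq 1}\sigma_n z^n$. By the last assertion of Lemma~\ref{basicpowerserieslemmata}.(\ref{basicpowerserieslemmata3}), an algebraic power series is holonomic, so $\sum_{n\geq 1}\sigma_n z^n$ is holonomic; equivalently, $(\sigma_n)_{n\geq 1}$ is a holonomic sequence. When $\sigma$ is not very inseparable this contradicts Theorem~\ref{insepz}.(\ref{insepztran}), which asserts that $(\sigma_n)$ is not holonomic, so in that case at most one of $\zeta_\sigma$, $1/\zeta_\sigma$ is holonomic. (When $\sigma$ is very inseparable both are in fact rational by Theorem~\ref{insepz}.(\ref{insepzrat}), so it is the not-very-inseparable case that carries content, and the corollary is really a sharpening of Theorem~\ref{insepz}.(\ref{insepztran}): it rules out $\zeta_\sigma$ being a holonomic transcendental function with holonomic reciprocal.)

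The only non-elementary ingredient, and the step I expect to require the most care, is the Harris--Sibuya theorem: one must quote it in the form valid for formal D-finite power series rather than merely for convergent analytic solutions, and verify that the hypotheses match. If one prefers to avoid the citation, a self-contained argument along the same lines (passing to $u=f'/f=-(1/f)'/(1/f)$ and using that D-finiteness of $f$ and of $1/f$ each constrain $u$) can be extracted from the proof of that theorem. Everything else is routine bookkeeping: moving between $\zeta_\sigma$, its logarithmic derivative, the generating series $\sum_{n\geq 1}\sigma_n z^n$, and the holonomicity dictionary recorded in Lemma~\ref{basicpowerserieslemmata}.
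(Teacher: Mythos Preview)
Your argument is correct, but it takes a detour through a heavier result than the paper needs. The paper's proof simply invokes the closure of holonomic power series under derivatives and products (Stanley, \emph{Differentiably finite power series}, Thm.~2.3): if both $\zeta_\sigma$ and $1/\zeta_\sigma$ are holonomic, then so is $\zeta'_\sigma$, and hence so is the product $z\cdot \zeta'_\sigma \cdot (1/\zeta_\sigma)=\sum_{n\geq 1}\sigma_n z^n$, contradicting Theorem~\ref{insepz}.(\ref{insepztran}). No appeal to Harris--Sibuya is required.

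The Harris--Sibuya theorem you cite yields the stronger conclusion that $f'/f$ is \emph{algebraic}, not merely holonomic, but you then only use holonomicity, so the extra strength is discarded. The paper does mention Harris--Sibuya, but only in a remark \emph{after} the corollary, to explain why one cannot sharpen the conclusion to ``neither is holonomic.'' In short: both routes work, but the paper's is a one-liner from standard closure properties, whereas yours imports a nontrivial theorem and then throws away most of what it gives you.
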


\begin{proof} 
Assume that both these functions are holonomic. Since the class of holonomic functions is closed under taking the derivative and the product \cite[Thm.\ 2.3]{Stanley}, we conclude that $ z \frac{\zeta'_\sigma(z)}{\ \zeta_\sigma(z)} $ is holonomic, contradicting Theorem \ref{insepz}.(\ref{insepztran}). 
\end{proof} 

\begin{remark}
It is not true that the multiplicative inverse of a holonomic function is necessarily holonomic. Harris and Shibuya \cite{HS} proved that this happens precisely if the logarithmic derivative of the function is algebraic. We do not know whether $\zeta_\sigma(z)$ is holonomic for not very inseparable $\sigma$, but Theorem \ref{NBsimple} will show that $\zeta_\sigma(z)$ is not holonomic for a large class of maps. 
\end{remark}

\begin{remark}  If $\sigma$ is not assumed to be confined, we could change the definition of $\sigma_n$ by considering $\sigma_n$ to be the number of fixed points of $\sigma^n$ whenever it is finite, and $0$ otherwise. This is in the spirit of \cite{AM}, where only isolated fixed points of diffeomorphisms of manifolds were considered. In this case, we could still prove a variant of Theorem \ref{insepz} saying that if $\sigma$ is a (not-necessarily confined) endomorphism of $A$ such that there exist $n$ such that $\sigma^n-1$ is an isogeny of arbitrarily high inseparability degree, then $(\sigma_n)$ is not holonomic; one needs to use the fact that (the proof of) Proposition \ref{thisisalreadybetter} holds even if we do not insist that $a_n$, $b_n$ be nonzero and instead demand that $c_n=1$ if $a_n=0$.  Note, however, that without the assumption that $\sigma$ is confined, $\zeta_{\sigma}(z)$ could be an algebraic but not rational function. For example,  let $E$ be a supersingular elliptic curve over a field of characteristic $2$, let $A=E\times E$, and $\sigma=[2] \times [-1]$. Then $$\zeta_\sigma(z) = \frac{1-2z}{1+2z} \sqrt{ \frac{(1+z)(1+4z)}{(1-z)(1-4z)} }.$$
  \end{remark} 

\section{Complex analytic aspects}  \label{caa}

We now turn to questions of convergence and analytic continuation.

\subsection*{Radius of convergence}   From the proof of Proposition \ref{lrab}, we pick up the formula 
\begin{equation} \label{equalityxiilambdai}    \deg (\sigma^n-1)= \prod_{i=1}^q (\xi_i^n-1) = \sum_{i=1}^r m_i \lambda_i^n, \end{equation} 
where we note for future use that $q=2g$, $\prod_{i=1}^q \xi_i =\deg(\sigma)$, and $\lambda_i$ are of the form $\lambda_i =  \prod_{j  \in I} \xi_j$ for some $I\subseteq \{1,\ldots,q\},$ each occurring with sign $(-1)^{|I|}$. Recall that $\{\lambda_i\}$ are called the \emph{roots} of the linear recurrence, and $\lambda_i$ is called a \emph{dominant root} if it is of maximal absolute value amongst the roots. The roots $\{\lambda_i\}$ of the recurrence should not be confused with the roots $\{\xi_i\}$ of the characteristic polynomial of $\sigma$ on $\mathrm{H}^1$ (the dual of the $\ell$-adic Tate module for any choice of $\ell \neq p$). 

The following proposition follows from Formula \eqref{equalityxiilambdai} and the fact that  $\deg (\sigma^n-1)$ takes only positive values.

\begin{proposition} \label{lame} \mbox{ } 
\begin{enumerate}
\item \label{noroot1} The $\xi_i$ are not roots of unity. 
\item \label{dom} The linear recurrent sequence $\deg(\sigma^n-1)$ has a dominant positive real root, denoted $\Lambda$. 

\item \label{lambdaismax} $\Lambda = \displaystyle \prod\limits_{i=1}^q \max \{ |\xi_i|,1\} \geq 1$ is the Mahler measure of the characteristic polynomial of $\sigma$ acting on $\mathrm{H}^1$.
\item \label{lambda>1} $\Lambda=1$ if and only if $\sigma$ is nilpotent.
\item \label{udr} $\deg(\sigma^n-1)$ has a unique dominant  root  if and only if there is no $\xi_i$ with $|\xi_i|=1$. 
\item \label{>1} If $\deg(\sigma^n-1)$ has a unique dominant  root  $\Lambda$, then $\Lambda$ has multiplicity $1$. 
\end{enumerate}
\end{proposition}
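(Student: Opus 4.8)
The plan is to prove the six items of Proposition~\ref{lame} mostly by exploiting the single fact recorded in Formula~\eqref{equalityxiilambdai}: that $\deg(\sigma^n-1) = \prod_{i=1}^q(\xi_i^n-1)$ takes strictly positive integer values for every $n \geq 1$, together with the structural description $\lambda_i = \prod_{j\in I}\xi_j$ with sign $(-1)^{|I|}$. For~\eqref{noroot1}, if some $\xi_i$ were a root of unity of order $m$, then $\xi_i^m-1 = 0$ would force $\deg(\sigma^m-1)=0$, contradicting confinedness (the product is a positive integer); so no $\xi_i$ is a root of unity. For~\eqref{lambdaismax}, I would factor $\xi_i^n-1$ according to whether $|\xi_i|>1$, $=1$, or $<1$: the contribution of the first group to $|\deg(\sigma^n-1)|$ grows like $\bigl(\prod_{|\xi_i|>1}|\xi_i|\bigr)^n$, the last group contributes a factor bounded between two positive constants, and the middle group (by the argument already used in the proof of Proposition~\ref{lrab}, or by Baker-type lower bounds for $|\xi_i^n-1|$ with $\xi_i$ not a root of unity) grows subexponentially. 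Hence the exponential growth rate of $\deg(\sigma^n-1)$ is exactly $\Lambda_0 := \prod_i\max\{|\xi_i|,1\}$, which is the Mahler measure of the characteristic polynomial of $\sigma$ on $\mathrm{H}^1$; since this growth rate must equal $\max_i|\lambda_i|$ (the radius-of-convergence reciprocal of the linear recurrence), and since the leading term of a positive-valued linear recurrence forces a positive real dominant root of multiplicity controlling that growth, one deduces~\eqref{dom}, identifies $\Lambda = \Lambda_0$, and gets~\eqref{lambdaismax}. Statement~\eqref{lambda>1} then follows: $\Lambda = 1$ iff every $|\xi_i|\leq 1$ iff (by~\eqref{noroot1} and the fact that algebraic integers all of whose conjugates lie in the closed unit disc and which are not roots of unity cannot exist — Kronecker's theorem) every $\xi_i=0$, i.e.\ $\sigma$ acts nilpotently on $\mathrm{H}^1$, which for an abelian variety means $\sigma$ is a nilpotent endomorphism.

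For the last two items, which concern the final statement to be proved: \eqref{udr} says $\deg(\sigma^n-1)$ has a \emph{unique} dominant root iff no $\xi_i$ has absolute value $1$. The direction ``no $\xi_i$ on the unit circle $\Rightarrow$ unique dominant root'' I would prove by noting that in this case $\Lambda = \prod_{|\xi_i|>1}|\xi_i|$ is attained by the single root $\lambda = \prod_{|\xi_i|>1}\xi_i$ (take $I = \{i : |\xi_i|>1\}$), and any other root $\lambda_{I'} = \prod_{j\in I'}\xi_j$ with $|\lambda_{I'}| = \Lambda$ would force $I'$ to contain every index with $|\xi_i|>1$ and no index with $|\xi_i|<1$, hence $I' = I$; since there are no indices with $|\xi_i|=1$, this pins down $\lambda$ uniquely. (A mild subtlety: a priori the $\xi_i$ with $|\xi_i|>1$ could multiply together, after the sign-tracking expansion, with a cancelling term — but the dominant monomial $\prod_{|\xi_i|>1}\xi_i$ has a definite sign $(-1)^{|I|}$ and, since the $\xi_i$ are distinct and not roots of unity, one checks it does not cancel against any other term of equal modulus; this is where I expect the argument to need the most care.) Conversely, if some $\xi_{i_0}$ has $|\xi_{i_0}|=1$, then both $I = \{i : |\xi_i|>1\}$ and $I\cup\{i_0\}$ give roots of the same maximal modulus $\Lambda$ that are genuinely distinct (their ratio is $\pm\xi_{i_0}$, not $1$, as $\xi_{i_0}$ is not a root of unity), so the dominant root is not unique.

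Finally, for~\eqref{>1}: if the dominant root $\Lambda$ is unique, I would show it has multiplicity $1$ in the recurrence. By~\eqref{udr} no $\xi_i$ lies on the unit circle, so $\Lambda = \prod_{i\in I}\xi_i$ with $I = \{i : |\xi_i|>1\}$, and in the expansion $\prod_{i=1}^q(\xi_i^n-1) = \sum_{J\subseteq\{1,\dots,q\}}(-1)^{|J|}\bigl(\prod_{j\in J}\xi_j\bigr)^n$ the root $\Lambda^n$ arises only from the single subset $J = I$ (any other $J$ gives $|\prod_{j\in J}\xi_j| < \Lambda$ strictly, by the argument in~\eqref{udr}), so after collecting terms the coefficient of $\Lambda^n$ is exactly $(-1)^{|I|}$, a nonzero constant — i.e.\ $\Lambda$ appears with a constant (degree-zero polynomial) coefficient, hence multiplicity~$1$ in the sense of Lemma~\ref{basicpowerserieslemmata}.(\ref{basicpowerserieslemmata1}). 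The main obstacle throughout is the bookkeeping in~\eqref{udr}: ensuring that when we multiply out the product and collect the $\lambda_i$ with repetitions, the top-modulus term really does survive and is not accidentally cancelled or merged with a same-modulus competitor; this rests on the $\xi_i$ being distinct nonzero algebraic numbers that are not roots of unity, so I would isolate that combinatorial fact as a small lemma and use it for both~\eqref{udr} and~\eqref{>1}.
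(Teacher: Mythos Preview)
Your approach is broadly correct and parallels the paper's in several places, but there is one genuine gap and one cleaner technique you miss.

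For \eqref{udr} in the backward direction, you argue that if some $|\xi_{i_0}| = 1$, then the subsets $I = \{i : |\xi_i|>1\}$ and $I\cup\{i_0\}$ give two distinct products $\lambda_I$ and $\lambda_{I\cup\{i_0\}}$ of modulus $\Lambda$. The values are indeed distinct, but you have not shown they survive as roots with nonzero coefficient: when collecting terms in $\sum_{I'}(-1)^{|I'|}\bigl(\prod_{j\in I'}\xi_j\bigr)^n$, several subsets $I'$ may yield the same product, and their signed contributions could in principle cancel. Your own parenthetical worry about cancellation is well-founded but misplaced --- it is the backward direction that needs care, not the forward one (in the forward direction the top-modulus subset is literally unique, so there is nothing to cancel against).

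The paper sidesteps this entirely by writing down the explicit identity
\[
a_1(n) \;:=\; \sum_{|\lambda_j|=\tilde\Lambda} m_j\lambda_j^n \;=\; (-1)^t\, P^n \prod_{j\in J}(\xi_j^n-1),
\]
where $P = \prod_{|\xi_i|>1}\xi_i$, $t = \#\{i:|\xi_i|<1\}$, and $J = \{i:|\xi_i|=1\}$. This one formula does all the remaining work: \eqref{lambdaismax} follows since the right side is nonzero for $n\geq 1$ (the $\xi_j$ are not roots of unity), so $\tilde\Lambda = \prod_i\max\{|\xi_i|,1\}$ is genuinely achieved as $\max|\lambda_j|$; \eqref{udr} follows since if $J\neq\emptyset$ then setting $n=0$ gives $\sum m_j = 0$, which together with $a_1(1)\neq 0$ forces at least two nonzero $m_j$; and \eqref{>1} follows since if $J=\emptyset$ then $a_1(n) = (-1)^t P^n$, so the coefficient of $\Lambda$ is $\pm 1$, and positivity of $\deg(\sigma^n-1)$ then forces it to be $+1$. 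The $n=0$ trick is the clean device you are missing.

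Two smaller points. Your growth-rate argument for \eqref{lambdaismax} via Baker/Gel'fond-type lower bounds on $|\xi_j^n-1|$ is correct but heavier than needed here --- the paper gets \eqref{lambdaismax} combinatorially from the identity above and defers Gel'fond to the radius-of-convergence proposition. And in \eqref{>1}, ``multiplicity $1$'' in this paper means that the integer coefficient $m_\Lambda$ equals $+1$ (so that $1/\Lambda$ is a simple pole of $D_\sigma$), not merely that the polynomial coefficient has degree zero; your argument stops at $\pm 1$ and needs the positivity step to finish. Finally, for \eqref{dom}, your assertion that ``a positive-valued linear recurrence has a positive real dominant root'' is precisely the Bell--Gerhold theorem and should be cited --- it is exactly what the paper invokes.
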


\begin{proof}

\eqref{noroot1} This is clear since $\sigma$ is confined. 

(\ref{dom}) If not, then $\deg(\sigma^n-1)$ would be negative infinitely often by a result of Bell and Gerhold \cite[Thm.~2]{BellGerhold}.

\eqref{lambdaismax} Denote temporarily $\tilde{\Lambda}=\prod\limits_{i=1}^q \max \{ |\xi_i|,1\}$. We will prove shortly that $\tilde{\Lambda}=\Lambda$. Formula \eqref{equalityxiilambdai} implies that $\Lambda \leq \tilde{\Lambda}$ and 
$$ a_1(n)\coloneqq \sum_{|\lambda_j|=\tilde{\Lambda}} m_j \lambda_j^n $$ equals 
\begin{equation} \label{Jsum} a_1(n)= (-1)^t P^n \prod_{j \in J} (\xi_j^n-1), \end{equation} 
where $t$ is the number of indices $i$ such that $|\xi_i|<1$, $P\coloneqq\prod_{|\xi_i|>1} \xi_i$,  and $J \subseteq  \{1,\ldots,q\}$ denotes the set of indices $i$ such that $|\xi_i|=1$. Since the right hand side of  Formula \eqref{Jsum} is nonzero, we conclude that $\tilde{\Lambda}=\Lambda$. Finally,  by Remark \ref{remcohom}, $\xi_i$ are the roots of the indicated characteristic polynomial.

\eqref{lambda>1} Since none of the $\xi_i$ is a root of unity, and since the set $\{\xi_i\}$ is closed under Galois conjugation, Kronecker's theorem implies that either some $\xi_i$ has absolute value $|\xi_i|>1$, in which case $\Lambda>1$, or else all $\xi_i$ are $0$. The latter is equivalent to $\sigma$ acting nilpotently on $\mathrm{H}^1$, and hence $\sigma$ is nilpotent since $\End(A)$ embeds into (the opposite ring of) $\End (\mathrm{H}^1)$.

\eqref{udr} From Formula \eqref{Jsum} we immediately get that if $J=\emptyset$, then $\deg(\sigma^n-1)$ has a unique dominant root. Conversely, if $J\neq \emptyset$, then substituting $n=0$ into Formula \eqref{Jsum} gives $\sum m_j = 0$, and hence in the formula there are at least two distinct values of $\lambda_j$ occurring, and the dominant root is not unique.

(\ref{>1}) We have already proved that if there is a unique dominant root, then $J = \emptyset$. Thus we read from Formula \eqref{Jsum} that  the multiplicity of $\Lambda$ is $\pm 1$. Since $\deg(\sigma^n-1)$ takes only positive values, the multiplicity is in fact $1$. 
\end{proof} 

\begin{proposition} \label{conv} 
The radius of convergence of the  power series defining $\zeta_\sigma(z)$ is  $1/\Lambda>0$. 
\end{proposition}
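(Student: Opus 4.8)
The plan is to identify the radius of convergence $\rho$ of $\zeta_\sigma$ with $1/\limsup_n\sigma_n^{1/n}$, and then to show this growth rate equals $\Lambda$. For the first part, set $g(z)\coloneqq\log\zeta_\sigma(z)=\sum_{n\geq1}\sigma_n z^n/n$ and let $\rho_g$ be its radius of convergence. Since $\sigma_n\geq0$, the series $g$ has nonnegative coefficients, and hence so does $\zeta_\sigma=\exp(g)=\sum_{k\geq0}g(z)^k/k!$; comparing the coefficient of $z^n$ for $n\geq1$ yields $0\leq\sigma_n/n=[z^n]g\leq[z^n]\zeta_\sigma$, so $\rho\leq\rho_g$. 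Conversely, since $\exp$ is entire, $\zeta_\sigma=\exp\circ g$ is holomorphic on $\{|z|<\rho_g\}$, so $\rho\geq\rho_g$. Hence $\rho=\rho_g=1/\limsup_n(\sigma_n/n)^{1/n}=1/\limsup_n\sigma_n^{1/n}$ (using $n^{1/n}\to1$).

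Next I would pass from $\sigma_n$ to $\deg(\sigma^n-1)$. By \eqref{sdi}, $\sigma_n=\deg(\sigma^n-1)/\deg_{\mathrm{i}}(\sigma^n-1)$, and by Proposition \ref{rs} the denominator equals $r_n|n|_p^{s_n}$ with $(r_n),(s_n)$ periodic, $r_n\in\Q^*$, $s_n\leq0$. Since $\deg_{\mathrm{i}}(\sigma^n-1)$ is also a nonnegative power of $p$, hence $\geq1$, and $|n|_p^{s_n}=p^{-s_n\ord_p(n)}\leq n^{-s_n}$ (as $\ord_p(n)\leq\log_p n$ and $-s_n\geq0$), it lies between $1$ and a fixed polynomial in $n$; therefore $\deg_{\mathrm{i}}(\sigma^n-1)^{1/n}\to1$ and $\limsup_n\sigma_n^{1/n}=\limsup_n\deg(\sigma^n-1)^{1/n}$. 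It remains to show the latter equals $\Lambda$.

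For this last point, recall from Proposition \ref{lrab} and Formula \eqref{equalityxiilambdai} that $\deg(\sigma^n-1)=\sum_{i=1}^r m_i\lambda_i^n$ with the $\lambda_i$ distinct and $m_i\in\Z\setminus\{0\}$, so that $D_\sigma(z)=\prod_{i=1}^r(1-\lambda_iz)^{-m_i}$ and therefore $h(z)\coloneqq\sum_{n\geq1}\frac{\deg(\sigma^n-1)}{n}z^n=\log D_\sigma(z)=-\sum_{i=1}^r m_i\log(1-\lambda_iz)$. From the recurrence, $\deg(\sigma^n-1)\leq(\sum_i|m_i|)\Lambda^n$, so the radius of convergence of $h$ is $\geq1/\Lambda$. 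For the reverse inequality, by Proposition \ref{lame}.(\ref{dom}) the number $\Lambda$ is one of the $\lambda_i$, so $m_\Lambda\neq0$; near $z=1/\Lambda$ the closed form above exhibits $h$ as $-m_\Lambda\log(1-\Lambda z)$ plus a function holomorphic at $1/\Lambda$ (the remaining branch points $1/\lambda_i$ with $\lambda_i\neq\Lambda$ lie elsewhere), hence $h$ is singular at $z=1/\Lambda$ and its radius of convergence is exactly $1/\Lambda$. Combining the three parts, $\rho=1/\Lambda$, which is positive (indeed $\geq1$) by Proposition \ref{lame}.(\ref{lambda>1}).

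The only delicate point is the lower bound $\limsup_n\deg(\sigma^n-1)^{1/n}\geq\Lambda$: a direct lower estimate on the recurrence $\sum_i m_i\lambda_i^n$ is awkward, because this sum need not be comparable to $\Lambda^n$ when several of the $\lambda_i$ attain the maximal modulus $\Lambda$ — which, by Proposition \ref{lame}.(\ref{udr}), happens precisely when $\sigma$ has an eigenvalue of absolute value $1$ acting on $\mathrm{H}^1$. The trick that makes the argument clean is to avoid estimating the recurrence altogether and instead read the radius of convergence off the closed form $\log D_\sigma(z)=-\sum_i m_i\log(1-\lambda_iz)$, which is available precisely because $D_\sigma$ has already been shown to be rational (Proposition \ref{lrab}).
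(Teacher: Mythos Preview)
Your proof is correct and takes a genuinely different, more elementary route than the paper's.

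The paper establishes the lower bound $\limsup_n \sigma_n^{1/n}\geq\Lambda$ by a direct quantitative estimate: writing $\deg(\sigma^n-1)=a_1(n)+a_2(n)$ with $a_1(n)$ collecting the dominant roots, it invokes Gel'fond's theorem (a Baker-type transcendence result) to bound $\prod_{|\xi_j|=1}|\xi_j^n-1|>\Lambda^{-n\varepsilon}$, deducing $|a_1(n)|>\Lambda^{n(1-\varepsilon)}$ and hence $\sigma_n>\Lambda^{n(1-2\varepsilon)}$ for large $n$. This gives more---a pointwise lower bound valid for \emph{all} large $n$---than is needed for the radius of convergence.

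You instead observe that the radius of $h(z)=\log D_\sigma(z)$ can be read off its closed form $-\sum_i m_i\log(1-\lambda_i z)$: since $\Lambda$ occurs among the $\lambda_i$ with $m_\Lambda\neq 0$ (equivalently, $D_\sigma$ has a pole at $1/\Lambda$, Proposition~\ref{proplambda'}(\ref{proplambda'1})), the function $h$ has a genuine logarithmic branch point at $1/\Lambda$ that the other terms, holomorphic there, cannot cancel. This pins the radius to $1/\Lambda$ without any lower bound on individual terms of the recurrence, neatly sidestepping the case of several dominant roots (and the appeal to Gel'fond). The price is that you only get $\limsup_n d_n^{1/n}=\Lambda$, not a bound on each $d_n$; but for the proposition at hand that is exactly what is required.

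One cosmetic slip: in your final sentence ``which is positive (indeed $\geq 1$)'' you presumably mean $\Lambda\geq 1$, not $1/\Lambda\geq 1$.
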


\begin{proof}
Note first that we have a trivial bound $\sigma_n=O(\Lambda^n)$, which implies that the power series $\zeta_{\sigma}(z)$ is majorised by $\exp(\sum_{n\geq 1} C\Lambda^n z^n/n)=(1-\Lambda z)^{-C}$ for some constant $C>0$. Thus the radius of convergence of $\zeta_{\sigma}(z)$ is at least $1/\Lambda$. If $\sigma$ is nilpotent, the maps $\sigma^n-1$ are all invertible, and hence $\sigma_n=1$ and $\zeta_{\sigma}(z)=1/(1-z)$. Assume thus that $\sigma$ is not nilpotent, and hence by Proposition \ref{lame}.\eqref{lambda>1}, $\Lambda>1$.

For the other inequality, we write the linear recurrence sequence $\deg(\sigma^n-1)=\sum_{i=1}^r m_i \lambda_i^n$ as the sum of two linear recurrence sequences $a_1(n)$ and $a_2(n)$, $a_1(n)$ as in Formula (\ref{Jsum}) containing the terms with $\lambda_i$ of absolute value $\tilde{\Lambda}=\Lambda$, and $a_2(n)$ containing the terms where $\lambda_i$ is of strictly smaller absolute value. 

Since all $\xi_j$ with $j\in J$ are algebraic numbers on the unit circle but not roots of unity, a theorem of Gel'fond \cite[Thm.\ 3]{Gelfond} implies that for any $\varepsilon>0$ and $n=n(\varepsilon)$ sufficiently large, $$\prod_{j \in J} \left| \xi_j^n-1 \right|>\Lambda^{-n \varepsilon}$$ and hence 
$|a_1(n)| > \Lambda^{n(1-\varepsilon)}$ for sufficiently large $n$. 
The formula in Proposition \ref{rs}  implies that $\deg_{\rm i}(\sigma^n-1) = O(n^s)$ for some integer $s$, and hence it follows from Formula  (\ref{sdi}) that $\sigma_n>\Lambda^{n(1-2\varepsilon)}$ for sufficiently large $n$. An analogous reasoning as for the upper bound proves that the radius of convergence of $\zeta_{\sigma}(z)$ is at most $1/\Lambda^{1-2\varepsilon}$, implying the claim.
\end{proof}

\begin{remark} The value $\log \Lambda$ describes the growth rate of the number of periodic points and plays the role of entropy as defined in the presence of a topology or a measure. It is the logarithm of the spectral radius of $\sigma$ acting on the total ($\ell$-adic) cohomology of $A$---even in the not very inseparable case---similarly to a result of Friedland's in the context of complex dynamics \cite{Friedland1991}. 
\end{remark} 

\subsection*{The degree zeta function} The degree zeta function  $D_\sigma(z)$ is a rational function, and hence admits a meromorphic continuation to the entire complex plane. Actually, $$D_\sigma(z) = \prod_{i=1}^r (1-\lambda_i z)^{-m_i}, $$
written in terms of the parameters in Equation (\ref{equalityxiilambdai}), immediately  provides the extension. Poles (with multiplicity $m_i$) occur at $1/\lambda_i$ with $m_i>0$; zeros (with multiplicity $m_i$) occur at $1/\lambda_i$ with $m_i<0$. We may describe the behaviour of zeros and poles more precisely.

\begin{proposition}\label{proplambda'} Assume that $\sigma$ is not nilpotent. Let $\Lambda'\coloneqq\max\{|\lambda_i| : |\lambda_i|<\Lambda\}< \Lambda.$ \begin{enumerate}\item\label{proplambda'1} The function $D_{\sigma}(z)$ has a pole at $1/\Lambda$.  \item\label{proplambda'2} The function $D_{\sigma}(z)$ has a zero $z_0$ with $|z_0|=1/\Lambda'$ and is holomorphic in the annulus $1/\Lambda<|z|<1/\Lambda'$. \item\label{proplambda'3} $\Lambda' \geq \sqrt{\Lambda}$.
\end{enumerate}\end{proposition}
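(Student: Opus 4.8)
The plan is to work throughout with the factorisation of $D_\sigma$ underlying Equation (\ref{equalityxiilambdai}). Expanding $\deg(\sigma^n-1)=\prod_{i=1}^q(\xi_i^n-1)$ (with $q=2g$) and putting $\mu_I:=\prod_{i\in I}\xi_i$ for $I\subseteq\{1,\dots,q\}$, one obtains $D_\sigma(z)=\prod_{I\subseteq\{1,\dots,q\}}(1-\mu_Iz)^{-(-1)^{|I|}}$, so that in $D_\sigma(z)=\prod_i(1-\lambda_iz)^{-m_i}$ one has $m_\lambda=\sum_{I:\,\mu_I=\lambda}(-1)^{|I|}$; hence $1/\lambda_i$ is a pole of $D_\sigma$ when $m_i>0$, a zero when $m_i<0$, and $D_\sigma$ is holomorphic and non-vanishing off $\{1/\lambda_i\}$. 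Since $\sigma$ is not nilpotent, $\Lambda>1$ (Proposition \ref{lame}.(\ref{lambda>1})) and $\Lambda$ is one of the $\lambda_i$ (Proposition \ref{lame}.(\ref{dom})). For (\ref{proplambda'1}) it then suffices to show $m_\Lambda>0$, for which I would observe that the Ces\`aro means $\frac1N\sum_{n\leq N}\deg(\sigma^n-1)\Lambda^{-n}$ are $\geq 0$ (each $\deg(\sigma^n-1)>0$) while converging to $m_\Lambda$ (the mean of $(\lambda_i/\Lambda)^n$ tends to $0$ for every other root, $\lambda_i/\Lambda$ lying inside or on the unit circle but $\neq 1$); thus $m_\Lambda\geq 0$, and $m_\Lambda\neq 0$ since $\Lambda$ is a root, so $m_\Lambda>0$. (Equivalently: $D_\sigma$ is rational, cannot vanish at $1/\Lambda$ because $\log D_\sigma(x)=\sum_n\deg(\sigma^n-1)x^n/n\geq 0$ on $[0,1/\Lambda)$, and is not holomorphic there by Pringsheim applied to $zD_\sigma'/D_\sigma=\sum_n\deg(\sigma^n-1)z^n$.)

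For (\ref{proplambda'2}), holomorphy in $1/\Lambda<|z|<1/\Lambda'$ is immediate, since by the definition of $\Lambda'$ no $\lambda_i$ has modulus strictly between $\Lambda'$ and $\Lambda$. For the zero on $|z|=1/\Lambda'$ I would analyse the block $\sum_{|\lambda_i|=\Lambda'}m_i$. Write $I_{>1}=\{i:|\xi_i|>1\}$, $J=\{i:|\xi_i|=1\}$, $a=\min_{i\in I_{>1}}|\xi_i|$ and (when it exists) $b=\max\{|\xi_i|:|\xi_i|<1\}$. The largest modulus $<\Lambda$ attained by any $\mu_I$ is $\max(\Lambda/a,\Lambda b)$: delete from $I_{>1}$ a smallest eigenvalue of modulus $>1$, or adjoin a largest one of modulus $<1$, in each case together with an arbitrary subset of $J$. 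Re-using $a_1(n):=\sum_{|\lambda_i|=\Lambda}m_i\lambda_i^n=(-1)^tP^n\prod_{j\in J}(\xi_j^n-1)$ from the proof of Proposition \ref{lame} (here $P=\prod_{i\in I_{>1}}\xi_i$ and $t=\#\{i:|\xi_i|<1\}$), the same kind of expansion for the modulus-$\Lambda'$ block yields $\sum_{|\lambda_i|=\Lambda'}m_i\lambda_i^n=-a_1(n)\,S(n)$, with $S(n)$ a non-zero sum of exponentials $\xi_j^{\pm n}$ ($\xi_j$ running over the eigenvalues realising $a$, resp.\ $b$). Evaluating at $n=0$: $\sum_{|\lambda_i|=\Lambda'}m_i=-N\,a_1(0)$ for a positive integer $N$, while $a_1(0)=\sum_{|\lambda_i|=\Lambda}m_i$ equals $1$ if $J=\emptyset$ (Proposition \ref{lame}.(\ref{udr}), (\ref{>1})) and $0$ if $J\neq\emptyset$. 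Hence $\sum_{|\lambda_i|=\Lambda'}m_i\leq 0$: it equals $-N<0$ when $J=\emptyset$, and when $J\neq\emptyset$ it is $0$ but the block is non-empty (the product $-a_1(n)S(n)$ of two non-zero linear recurrences is $\not\equiv 0$, which is also how one knows $\Lambda'=\max(\Lambda/a,\Lambda b)$). In either case some $m_i<0$, so $D_\sigma$ has a zero at $1/\lambda_i$ with $|1/\lambda_i|=1/\Lambda'$.

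For (\ref{proplambda'3}), the previous step gives $\Lambda'=\max(\Lambda/a,\Lambda b)\geq\Lambda/a\geq 1$. If $\#I_{>1}\geq 2$, then $\Lambda=\prod_{i\in I_{>1}}|\xi_i|\geq a^2$, so $\Lambda'\geq\Lambda/a\geq\sqrt\Lambda$. If $\#I_{>1}=1$, the unique eigenvalue of modulus $>1$ is forced to equal $\Lambda$ (it is real, else its complex conjugate would be a second one; positive, else the factor $(\xi^n-1)$ would make $\deg(\sigma^n-1)$ sign-oscillating), so $\Lambda/a=1$ and I must bound $\Lambda b$. Put $D=\#\{i:|\xi_i|<1\}$. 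Then $D\geq 1$, because by Kronecker's theorem the remaining $2g-1$ eigenvalues cannot all lie on the unit circle (the characteristic polynomial of $\sigma$ on $\mathrm{H}^1$ is monic over $\Z$ with no root of unity among its roots, by Proposition \ref{lame}.(\ref{noroot1})); and $D\geq 2$, since $D=1$ would make the unique modulus-$<1$ eigenvalue the only real eigenvalue in $(-1,1)$, contradicting the parity constraint — read off from the sign of $\prod_i(\xi_i^n-1)>0$ at $n=1,2$ — that the number of real eigenvalues in $(-1,1)$ is even. Finally $b^D\geq\prod_{|\xi_i|<1}|\xi_i|=\deg\sigma/\Lambda\geq 1/\Lambda$ (the case $\deg\sigma=0$ reduces to the isogeny case, $\Lambda$ and $\Lambda'$ being unchanged upon passing to $A/(\ker\sigma)^{0}$), so $b\geq\Lambda^{-1/D}\geq\Lambda^{-1/2}$ and $\Lambda'\geq\Lambda b\geq\sqrt\Lambda$.

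The hard part, I expect, is this last case $\#I_{>1}=1$ in (\ref{proplambda'3}): the purely combinatorial ``delete an eigenvalue'' mechanism only produces the root $\lambda=1$, so one genuinely has to use the arithmetic of the eigenvalues — Kronecker's theorem and the positivity of $\deg(\sigma^n-1)$ — to force an eigenvalue of modulus $<1$ that is not too small. The other technical nuisance, recurring in (\ref{proplambda'2}) and (\ref{proplambda'3}), is to make sure cancellation in $D_\sigma(z)=\prod_I(1-\mu_Iz)^{-(-1)^{|I|}}$ does not annihilate the whole block of second-largest modulus; this is taken care of by the remark that a product of two non-zero linear recurrence sequences is non-zero.
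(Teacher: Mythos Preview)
Your proof is correct, and for parts (\ref{proplambda'1}) and (\ref{proplambda'2}) it essentially matches the paper's approach. Your Ces\`aro-mean argument for $m_\Lambda>0$ is a pleasant elementary alternative to the paper's appeal to the Bell--Gerhold positivity theorem, and your decomposition $\sum_{|\lambda_i|=\Lambda'}m_i\lambda_i^n=-a_1(n)\,S(n)$ is exactly the paper's Formula~(\ref{Jsum'}).

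For (\ref{proplambda'3}) you take an unnecessary detour. The case $\#I_{>1}=1$ never occurs, and your own ingredients already show it: your parity constraint (the number of real $\xi_i$ in $(-1,1)$ is even, read off from $\deg(\sigma-1),\deg(\sigma^2-1)>0$), combined with the fact that non-real $\xi_i$ with $|\xi_i|<1$ come in conjugate pairs, gives that $t=\#\{i:|\xi_i|<1\}$ is even; since $\#J$ is even (conjugate pairs on the unit circle, no roots of unity) and $q=2g$ is even, one gets $\#I_{>1}=q-\#J-t$ even, hence $\geq 2$. This is precisely the paper's route---the only difference being that the paper deduces ``$t$ even'' from $a_1(n)>0$ for large $n$ (via Gel{\cprime}fond, through the proof of Proposition~\ref{conv}), whereas your $n=1,2$ argument is actually more elementary. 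Your separate treatment of $\#I_{>1}=1$ is therefore superfluous; note also that in that vacuous case your reduction ``pass to $A/(\ker\sigma)^0$'' would in general need to be iterated, since the induced $\bar\sigma$ is not always an isogeny after a single step.
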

\begin{proof}  In order to prove \eqref{proplambda'1}, we need to show that the multiplicity $m$ of $\Lambda$ is positive. If $\Lambda$ is a dominant root, this follows from Proposition \ref{lame}.\eqref{>1}. If $\Lambda$ is not a dominant root and $m<0$, the sequence $\deg(\sigma^n-1)-m\Lambda^n$ is a linear recurrent sequence with positive values and  no dominant positive real root, contradicting \cite[Thm.~2]{BellGerhold}. 

Let us now prove (\ref{proplambda'2}). Let $\rho$ denote the minimal value of $|\xi_i|$ and $|\xi_i|^{-1}$ that is strictly larger than $1$, i.e., \begin{align*} \rho&=\min(\min\{|\xi_i| : |\xi_i|>1\}, \min\{|\xi_i|^{-1} : 0< |\xi_i|<1\});\end{align*} it exists since by Proposition \ref{lame}.\eqref{lambda>1}, $\Lambda>1$. Write the set of indices $ \{1,\ldots,q\} = J^{-}_< \cup J^- \cup J \cup J^+ \cup J^{+}_>, $
where membership $i \in J_\ast^\ast$ is defined by the corresponding condition in the second row of the following table
\[\def\arraystretch{1.2}
\begin{array}{ccccc}  J^-_< & J^- & J & J^+ & J^+_> \\ \hline  |\xi_i|<\rho^{-1} & |\xi_i|=\rho^{-1} & |\xi_i|=1&  |\xi_i|=\rho & |\xi_i|>\rho
\end{array} \]
From Equation \eqref{equalityxiilambdai} we see that there is no $\lambda_j$ with $\Lambda/\rho < |\lambda_j|< \Lambda$ and that the terms $\lambda_j$ with $|\lambda_j|=\Lambda/\rho$ arise as products $\prod_{i \in I} \xi_i$ where $I$ contains $J^+_>$, $I$ is disjoint from $J^-_< $, $I \cap J$ can be anything and \emph{either} $I$ contains all except one $i \in J^+$
\emph{or}  $I$ contains all $i \in J^+$ and exactly one $i \in J^-$. 

Setting as before $P\coloneqq\prod\limits_{i\in J^+\cup J^{+}_>} \xi_i$ and $t = \# (J^{-}_<\cup J^-)$, we get \begin{equation}\label{Jsum'} \sum_{|\lambda_j|=\Lambda/\rho} m_j \lambda_j^n = (-1)^{t-1} P^n \prod_{j \in J} (\xi_j^n-1)\left( \sum_{i\in J^+} \xi_i^{-n} + \sum_{i\in J^-} \xi_i^n\right).\end{equation}
Since the right hand side is not identically zero as a function of $n$, we conclude that $\Lambda'=\Lambda/\rho$. We consider two cases. \begin{description}
\item[\normalfont\emph{Case 1}] $J=\emptyset$. Then by Proposition \ \ref{lame}.\eqref{>1}, $P=\Lambda$ has multiplicity $1$ and hence from Formula \eqref{Jsum} we conclude that $t$ is even. Therefore by Formula \eqref{Jsum'} all $\lambda_i$ with $|\lambda_i|=\Lambda'$ have multiplicity $m_i<0$, and hence correspond to zeros of $D_{\sigma}(z)$. 
\item[\normalfont\emph{Case 2}] $J\neq \emptyset$. Substituting $n=0$ into Formula \eqref{Jsum} shows that the sum of multiplicities $m_i$ of $\lambda_i$ with $|\lambda_i|=\Lambda$ is $0$. By  Formula \eqref{Jsum'}, the same is true  for multiplicities $m_j$ of $\lambda_j$ with $|\lambda_j|=\Lambda'$. Thus there is some $\lambda_i$ with $|\lambda_i|=\Lambda'$ and $m_i<0$.
\end{description}

For the proof of \eqref{proplambda'3}, note that since $\Lambda'=\Lambda/\rho$, 
the stated inequality is equivalent to $\Lambda \geq \rho^2$. Since $\Lambda = \prod\max \{ |\xi_i|,1\}$,  it is enough to prove that there are at least two elements in the (non-empty) set $J^+ \cup J^+_>$. Since $q=2g$ is even, it suffices to prove that both $\# J$ and $t=\#(J^- \cup J^-_<)$ are even. Since $\xi_i$ with $|\xi_i|=1$ occur in complex conjugate pairs, $\#J$ is even, and the corresponding term in \eqref{Jsum} is real positive. In the course of proof of Proposition \ \ref{conv} we have shown that the sum $a_1(n)$ dominates the remaining terms, and hence is positive for large $n$. Hence we find from Formula  \eqref{Jsum} that $P>1$ and $t$ is even. 
\end{proof}

\subsection*{Analytic continuation/natural boundary}

When $\sigma$ is very inseparable, $\zeta_\sigma(z)$ coincides with the degree zeta function $D_{\sigma}(z)$ and hence is a rational function. One may wonder whether a P\'olya--Carlson dichotomy holds for the functions $\zeta_\sigma(z)$, meaning that, when they are not rational as above, they admit a natural boundary as complex function (and hence they are non-holonomic; in this context also called ``transcendentally transcendental''). 

  We confirm this for a large class of such maps, providing at the same time another proof of their transcendence (and even non-holonomicity). The crucial tool is Theorem \ref{appendix:theorem} that 
  Royals and Ward prove in Appendix \ref{ap} of this paper. 
\begin{theorem} \label{NBsimple}
Suppose that $\sigma$ is  not very inseparable and that $\Lambda$ is the unique dominant root. Then the function $\zeta_\sigma(z)$ has the circle $|z|= {1}/{\Lambda}$ as its natural boundary. In particular, $\zeta_\sigma(z)$ is not holonomic. 
\end{theorem}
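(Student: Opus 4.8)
The plan is to reduce the claim about $\zeta_\sigma(z)$ to a claim about its logarithmic derivative, and then to the ``adelically perturbed'' series $f_s(z) = \sum_{n\geq 1} |n|_p^s z^n$ and the results of Royals and Ward in the appendix. First I would note that it suffices to show that $Z(z) \coloneqq z\zeta_\sigma'(z)/\zeta_\sigma(z) = \sum_{n\geq 1}\sigma_n z^n$ has $|z| = 1/\Lambda$ as a natural boundary, since $\zeta_\sigma(z) = \exp(\int_0^z Z(w)\,dw/w)$ and a holonomic (in particular, meromorphically continuable past any boundary point) $\zeta_\sigma$ would force $Z$ to continue meromorphically there as well; conversely a natural boundary for $Z$ precludes holonomicity of $\zeta_\sigma$ by the same logarithmic-derivative argument used in Theorem \ref{insepz}. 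So the whole problem becomes: understand the power series $Z(z) = \sum \sigma_n z^n$ analytically near $|z| = 1/\Lambda$.

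Next I would use Equation (\ref{sdi}) together with Propositions \ref{lrab} and \ref{rs} to write $\sigma_n = d_n/(r_n |n|_p^{s_n})$ where $d_n = \deg(\sigma^n-1) = \sum_i m_i\lambda_i^n$ is linear recurrent with unique dominant root $\Lambda$, and $(r_n), (s_n)$ are $\varpi$-periodic with $r_n\in\Q^*$, $s_n\leq 0$. Splitting the sum over residue classes $n \equiv a \pmod \varpi$, on each class $\sigma_n = c_a \cdot d_n \cdot |n|_p^{-s_a}$ with $-s_a \geq 0$ a fixed nonnegative integer and $c_a$ a fixed rational constant; moreover $d_n$ restricted to an arithmetic progression is again linear recurrent, so up to a rational function (coming from the linear-recurrent part, which contributes only finitely many poles all of modulus $\geq 1/\Lambda$) the series $Z(z)$ is a finite $\Q$-linear combination of series of the shape $\sum_{n} |n|_p^{t} \mu^n z^n = g_t(\mu z)$, where $g_t(w) = \sum_{n\geq 1} |n|_p^t w^n$ for integers $t\geq 0$ and $\mu$ runs through the roots $\lambda_i$ (after absorbing the progression into a substitution $z\mapsto z^\varpi$ times monomials, exactly as in the worked Example B). Crucially, since $\sigma$ is not very inseparable, Lemma \ref{unboundedinsepdeg} guarantees $\deg_{\mathrm i}(\sigma^n-1)$ is unbounded, which forces at least one exponent $t = -s_a$ appearing with the dominant root $\mu = \Lambda$ (on a progression where $d_n$ genuinely carries a $\Lambda^n$ term) to be strictly positive — this is what makes $Z$ genuinely different from a rational function and is the source of the boundary.

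Then I would invoke the appendix: Theorem \ref{appendix:theorem} (the Royals--Ward dichotomy) tells us that each $g_t(w)$ either is rational or has $|w| = 1$ as a natural boundary, and in the present situation the relevant $g_t$ with $t\geq 1$ satisfies a Mahler-type functional equation $g_t(w) = \sum_{j=0}^{p-1} (w^{p}\text{-free part}) + p^{-t} g_t(w^p)$ analogous to the one displayed for $f(z)$ in Example B, which forces singularities accumulating on the full set of $p$-power roots of unity, hence a natural boundary at $|w| = 1$. Transporting through $w = \Lambda z$ (or $w = \Lambda^\varpi z^\varpi$ times monomial factors), this places a dense set of singularities on $|z| = 1/\Lambda$. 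The one subtlety is that the various $g_t(\lambda_i z)$ could conceivably cancel; but since $|\lambda_i| < \Lambda$ for all $i \neq$ (the dominant one), the singularities coming from $g_t(\Lambda z)$ with $t\geq 1$ sit strictly closer to the origin than those of any other $g_{t'}(\lambda_i z)$, so there is no room for cancellation on $|z| = 1/\Lambda$, and likewise the finitely many rational-function poles lie on or inside that circle and cannot cancel a dense singular set. Therefore $Z(z)$, and hence $\zeta_\sigma(z)$, has $|z| = 1/\Lambda$ as a natural boundary, and in particular $\zeta_\sigma$ is not holonomic.

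The main obstacle I anticipate is the bookkeeping of the residue-class decomposition and the substitutions $z\mapsto z^\varpi$: one must check carefully that after splitting modulo $\varpi$ (and modulo $p\varpi$, to separate the $|n|_p$-behaviour cleanly as in Proposition \ref{rs}) the pieces genuinely assemble into finitely many copies of $g_t$ evaluated at $\lambda_i$-scaled arguments, that the ``unique dominant root'' hypothesis really does isolate the $\Lambda$-scaled copies from all others in modulus, and — the crux — that non-very-inseparability (via Lemma \ref{unboundedinsepdeg} and the structure of $(s_n)$ in Proposition \ref{rs}) forces a strictly positive exponent $t$ to appear attached to the dominant root, so that an honest $g_t$ with $t \geq 1$ (not merely a rational $g_0$) enters the picture. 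Everything after that is a direct application of the appendix's theorem and the non-cancellation argument via distinct moduli of singularities.
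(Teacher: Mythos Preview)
Your proposal is correct and follows essentially the same route as the paper: reduce to $Z(z)=\sum\sigma_n z^n$, express it through adelically perturbed series, invoke the Royals--Ward appendix, and use the unique-dominant-root hypothesis to preclude cancellation on $|z|=1/\Lambda$. The only difference is organizational: rather than splitting by residue classes modulo~$\varpi$ and reducing to the simpler building blocks $g_t(w)=\sum_n|n|_p^t w^n$, the paper writes $Z_\sigma(z)=\sum_i m_i f(\lambda_i z)$ for a \emph{single} function $f(z)=\sum_n |a_n|_S z^n$, where $a_n=\deg_{\mathrm i}(\sigma^n-1)=r_n|n|_p^{s_n}$ and $S$ is the finite set consisting of $p$ together with all primes appearing in the $r_n$, so that $|a_n|_S=a_n^{-1}$; it then applies Theorem~\ref{appendix:theorem} once. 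The periodic parameters $(n_\ell,c_{\ell,k},e_{\ell,k})$ in that theorem are designed precisely to absorb the residue-class structure you unpack by hand, so the bookkeeping obstacle you anticipate (especially the case $p\mid\varpi$) simply disappears. The ``infinitely many values'' hypothesis of Theorem~\ref{appendix:theorem} is supplied directly by Lemma~\ref{unboundedinsepdeg}, exactly as you say.
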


\begin{proof}
We start by the observation that $\zeta_\sigma(z)$ has the same natural boundary as $Z_\sigma(z)\coloneqq\sum \sigma_n z^n$ if the latter function has natural boundary \cite[Lemma 1]{BMW}. Next, we find an expression 
$$ Z_\sigma(z) = \sum_{i=1}^r m_i \sum_{n \geq 1} r_n^{-1} |n|_p^{-s_n} (\lambda_i z)^n, $$
where $m_i, \lambda_i$ are as in (\ref{formidable}) and $r_n, s_n$ are as in Proposition \ref{rs}. 
We now apply Theorem \ref{appendix:theorem}: in the notation of that theorem, we choose $S$ to be the set of primes containing $p$ and all primes $\ell$ for which $|r_n|_\ell \neq 1$ for some $n$. By periodicity of $(r_n)$, the set $S$ is finite. Let $a_n\coloneqq \deg_{\rm i}(\sigma^n-1)=r_n |n|_p^{s_n}$. Suppose $\varpi$ is  a common period for $(r_n)$ and $(s_n)$. For $\ell \in S$, set $n_{\ell}=\varpi$, $c_{\ell, k} = |r_k|_\ell$; for $\ell \neq p$, set $e_{\ell,k}=0$, and set  $e_{p,k} = -s_k$. Then $|a_n|_{S} = a_n^{-1}$, and hence we can write 
$$Z_\sigma(z) =  \sum_{i=1}^r m_i f(\lambda_i z), $$
where $f$ is the function associated to $(a_n)$ as in Theorem \ref{appendix:theorem}. Since $\sigma$ is not very inseparable, by Remark \ref{unboundedinsepdeg} the sequence $(a_n)$ takes infinitely many values. We find that the term $f(\lambda_i z)$ has a natural boundary along $|z|=\frac{1}{|\lambda_i|}$. If $\Lambda$ is the \emph{unique} $\lambda_i$ of maximal absolute value, then the dense singularities along this circle cannot be cancelled by other terms, and we conclude that $Z_\sigma(z)$ has a natural boundary along $|z|=1/\Lambda$, and the same holds for $\zeta_\sigma(z)$. 
Since a holonomic function has only finitely many singularities (corresponding to the zeros of $q_0(z)$ if the series function satisfies Equation \eqref{diffeq}, compare \cite[Thm.\ 1]{Flajolet}), $\zeta_\sigma(z)$ cannot be holonomic. 
\end{proof} 

\begin{question} Is $|z|= {1}/{\Lambda}$ a natural boundary for $\zeta_\sigma(z)$ for any not very inseparable $\sigma$ (even without the assumption of a unique dominant root)? \end{question} 

\subsection*{Metrisable group endomorphisms with the same zeta function.} Given the analogy between our results and some properties of metrizable group endomorphisms, one may ask for the following more formal relationship: 

\begin{question} Can one associate to an action of $\sigma \acts A$ an endomorphism of a compact metrisable abelian group $\tau \acts G$ with the same Artin--Mazur zeta function, i.e., $\zeta_\sigma = \zeta_\tau$? \end{question} 

The analogue of this question over the complex numbers is trivial, as one may take $G=A(\C)$. The degree zeta function $D_\sigma(z)$ artificially equals the Artin--Mazur zeta function of an endomorphism $\tau$ of a $2g$-dimensional real torus whose matrix has the same characteristic polynomial as that of $\sigma$ acting on $T_\ell(A)$ for any $\ell \neq p$ (e.g., the companion matrix). This implies that for a very inseparable $\sigma \acts A$, indeed, $\zeta_\sigma(z)=\zeta_{\tau}(z)$. 

Even in the not very inseparable case, it is sometimes possible to construct such $\tau \acts G$, like we did for the example in the introduction.

In general, it would be natural to consider the induced action of $\sigma$ on the torsion subgroup $A(K)_{\mathrm{tor}}$ (dual of the total Tate module $\prod T_\ell(A)$). This provides the correct contribution $|\sigma_n|_{\ell}$ at all primes $\ell \neq p$; for such $\ell$, the size of the cokernel of $\sigma^n-1$ acting on $T_\ell(A)$ is precisely $|\sigma_n|^{-1}_\ell$. However, at $\ell=p$, we found no such natural group in general, and it seems that $|\sigma_n|_p$ is genuinely determined by the geometry of the $p$-torsion subgroup scheme.

\section{Geometric characterisation of very inseparable endomorphisms} \label{secvi}

In this section, we analyse the condition of very inseparability from a geometric point of view as well as its relation to inseparability. For this, it is advantageous to \emph{temporarily drop the assumption of confinedness} and consider a general $\sigma \in \End(A)$. 

\subsection*{Elementary properties} We start by listing properties of very inseparability that follow more or less directly from the definition. For this, we first write out a very basic property: 
\begin{lemma} \label{iap} Whether $\sigma \in \End(A)$ is a separable isogeny or not is determined by its action on the finite commutative group scheme $A[p]$, i.e., by its image under the map
$ \End(A) \to \End(A[p]). $
\end{lemma}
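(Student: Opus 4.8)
The plan is to recall that for an isogeny $\sigma$ of an abelian variety, separability is equivalent to $\ker(\sigma)$ being an étale group scheme, equivalently to $\ker(\sigma)$ having trivial intersection with the connected component $\ker(\sigma)^0$, equivalently to the differential $d\sigma$ being an isomorphism on tangent spaces at the origin. So the first step is to reduce the question of separability to a statement about the induced map on the tangent space $T_0 A$ (or, dually, on $\Omega^1_{A,0}$, i.e.\ on the cotangent space, which is the space of invariant differentials). Separability of $\sigma$ holds if and only if $d\sigma$ is invertible on $T_0A$; if $\sigma$ is not an isogeny at all, its kernel is positive-dimensional and it is in particular not a separable isogeny, so we may as well phrase everything uniformly in terms of $d\sigma$ being bijective.

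Next I would observe that the action of $\sigma$ on $T_0 A$ factors through $\End(A[p])$. The point is that $T_0 A \cong T_0 (A[p])$: multiplication by $p$ annihilates $T_0 A$ (since $p$ has zero differential in characteristic $p$), so the tangent space at the origin only ``sees'' the $p$-torsion, and in fact $T_0(A[p]) = T_0(A[p]^0) = T_0 A$ because $A[p]$ and $A$ have the same formal/infinitesimal neighbourhood of the identity up to level $p$ — more precisely, the closed immersion $A[p] \hookrightarrow A$ induces an isomorphism on tangent spaces at $0$, as $A[p]$ is defined by the vanishing of the coordinates of the $p$-torsion which contains the first infinitesimal neighbourhood. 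Hence the endomorphism $d\sigma$ of $T_0 A$ coincides with the endomorphism of $T_0(A[p])$ induced by the image of $\sigma$ in $\End(A[p])$, and its invertibility — hence the separability of $\sigma$ — depends only on that image.

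Putting the two steps together: $\sigma$ is a separable isogeny $\iff$ $d\sigma \colon T_0A \to T_0A$ is bijective $\iff$ the image of $\sigma$ under $\End(A) \to \End(A[p])$ induces a bijection on $T_0(A[p])$, and the latter condition is manifestly determined by that image. I expect the only genuine subtlety to be the clean identification $T_0 A \cong T_0(A[p])$ and the bookkeeping that $d\sigma$ on the left matches the differential of the induced endomorphism of the group scheme $A[p]$ on the right; everything else is standard Lie-theoretic/scheme-theoretic formalism for group schemes in positive characteristic (one can also phrase it via the co-Lie complex or via Dieudonné theory, which will in any case be invoked later in the section). I would keep the write-up short, citing the characterisation of separability via the differential and the infinitesimal lifting property of $A[p] \hookrightarrow A$.
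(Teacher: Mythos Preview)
Your proposal is correct and follows essentially the same line as the paper: both arguments reduce to the criterion that $\sigma$ is a separable isogeny if and only if $d\sigma$ is bijective on $T_0A$, and then observe that $d\sigma$ depends only on the image of $\sigma$ in $\End(A[p])$. The only cosmetic difference is in how this last point is justified: you invoke the identification $T_0A \cong T_0(A[p])$ directly (via $d[p]=0$), whereas the paper notes that two endomorphisms with the same image in $\End(A[p])$ differ by an element of $p\End(A)$, which has zero differential.
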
 

\begin{proof} If two endomorphisms $\sigma,\tau \colon A \to A$ induce the same map on $A[p]$, then $\sigma -\tau $ vanishes on the group scheme $A[p]$, and hence it factors through the map $[p]\colon A \to A$. Thus $\sigma-\tau =p\nu$ for some $\nu \colon A \to A$, and hence the map $ \End(A)/p\End(A) \hookrightarrow \End(A[p])$ is injective. Since an endomorphism $A\to A$ is a separable isogeny if and only if it induces an isomorphism on the tangent space, and since every map of the form $p \nu$ induces the zero map on the tangent space, we conclude that $\sigma$ is a separable isogeny if and only if $\tau$ is a separable isogeny.
\end{proof} 

\begin{proposition} \label{vi-elem} Let $\sigma \in \End(A)$. 
\begin{enumerate}
\item \label{vi-fin} The endomorphism $\sigma$ is very inseparable if and only if $\sigma^n-1$ is a separable isogeny for all $n \leq  p^{4g^2}$. 
\item \label{vi-prod} If $A = A_1 \times A_2$ with $A_1, A_2$ abelian varieties and $\sigma=\sigma_1\times \sigma_2$ is a product morphism with $\sigma_i \in \End(A_i)$, then $\sigma$ is very inseparable if and only if $\sigma_1$ and $\sigma_2$ are both very inseparable. 
\item \label{vi-int} Multiplication $[m] \colon A \rightarrow A$ by an integer $m$ is very inseparable if and only if $m$ is divisible by $p$. 
\item \label{vi-ec} An endomorphism of an elliptic curve is very inseparable if and only if it is either an  inseparable isogeny or zero. 
\item\label{vineqi} If $E$ is an elliptic curve over a field of characteristic $3$, then the isogeny $\sigma\coloneqq [2]\times[3]$ on $A\coloneqq E \times E$ is inseparable but not very inseparable. 
\end{enumerate}
\end{proposition}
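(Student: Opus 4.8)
The plan is to dispatch the five items by combining the elementary descriptions already established with Lemma~\ref{iap}, which reduces every question about separability of an isogeny to its image in $\End(A[p])$.

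\textbf{Plan for each item.} For \eqref{vi-fin}, the idea is that very inseparability is a condition on the image of $\sigma$ in the finite ring $\End(A[p])$: by Lemma~\ref{iap}, $\sigma^n-1$ is a separable isogeny if and only if its image $\bar\sigma^n-1$ in $\End(A[p])$ is invertible (equivalently, not a zero divisor, since the ring is finite). The ring $\End(A[p])$ is an $\F_p$-algebra of dimension at most $(2g)^2=4g^2$, hence has at most $p^{4g^2}$ elements; so the sequence $(\bar\sigma^n)_{n\ge1}$ of powers is eventually periodic with preperiod and period summing to at most $p^{4g^2}$. First I would observe that if $\bar\sigma^m-1$ fails to be a unit for some $m$, one can take $m$ minimal, and then argue that $m\le p^{4g^2}$: either $\bar\sigma^m=1$ already happens for some $m\le p^{4g^2}$ forcing the obstruction there, or the powers $\bar\sigma,\bar\sigma^2,\dots$ are all distinct up to $p^{4g^2}$ so that any repetition, hence any chance of $\bar\sigma^m=1$ or of $\bar\sigma^m-1$ being a zero divisor, must already be visible in that range. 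The clean way is: the value of the Boolean predicate ``$\bar\sigma^n-1$ is a unit'' depends only on $\bar\sigma^n\in\End(A[p])$, which takes at most $p^{4g^2}$ values, and $\bar\sigma^n$ repeats within the first $p^{4g^2}+1$ terms; so if the predicate is ever false it is false for some $n\le p^{4g^2}$. For \eqref{vi-prod}, since $A[p]=A_1[p]\times A_2[p]$ and $\sigma^n-1=(\sigma_1^n-1)\times(\sigma_2^n-1)$, the isogeny $\sigma^n-1$ is separable if and only if both factors are (an isogeny of a product is separable iff each component is, e.g.\ via the tangent-space criterion in the proof of Lemma~\ref{iap}); take the ``for all $n$'' to conclude. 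For \eqref{vi-int}: if $p\mid m$, then $m^n-1\equiv-1\pmod p$ and $[m^n-1]$ acts on $A[p]$ as $[-1]$, which is an isomorphism, hence $[m]^n-[1]=[m^n-1]$ is a separable isogeny for all $n$; conversely if $p\nmid m$, pick $n$ with $m^n\equiv1\pmod p$ (possible since $m$ is a unit mod $p$), so $[m^n-1]$ is divisible by $[p]$ hence has zero differential and is not separable. For \eqref{vi-ec}, I would invoke Example~\ref{remE}: on $E$ there is a nonarchimedean absolute value $|\cdot|$ on $\End(E)$ with $\deg_{\mathrm i}(\tau)=|\tau|^{-1}$; if $\sigma$ is inseparable ($|\sigma|<1$) then $|\sigma^n-1|=\max(|\sigma^n|,1)=1$ for all $n$ by the ultrametric inequality, so every $\sigma^n-1$ is separable; if $\sigma=0$ it is trivially very inseparable; and if $\sigma$ is a separable isogeny ($|\sigma|=1$) then, as in \eqref{vi-int}, its image $\bar\sigma\in\End(E[p])$ has finite multiplicative order, so some $\bar\sigma^n=1$, giving $\sigma^n-1$ inseparable. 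Finally \eqref{vineqi} is the special case $E_1=E_2=E$, $\sigma_1=[2]$, $\sigma_2=[3]$ of \eqref{vi-prod} and \eqref{vi-int}: $[3]$ is very inseparable on $E$ since $3=p$, but $[2]$ is not (as $2^2=4\equiv1\pmod 3$, so $2^2-1=3$ is divisible by $p$, making $[2]^2-1$ inseparable); yet $\sigma$ itself is inseparable because $\deg_{\mathrm i}(\sigma)=\deg_{\mathrm i}([2]\times[3])=\deg_{\mathrm i}([3])=3>1$ (the $[3]$-factor contributes inseparability).

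\textbf{Main obstacle.} The only genuinely non-formal point is the bound $p^{4g^2}$ in \eqref{vi-fin}, where I need the precise statement that the predicate ``$\sigma^n-1$ is a separable isogeny'' factors through the finite set $\End(A[p])$ via $\sigma^n\mapsto\bar\sigma^n$, together with the elementary pigeonhole on the orbit of $\bar\sigma$ under multiplication; one must be slightly careful that the relevant repetition is of $\bar\sigma^n$ (not of the predicate value), and that a repetition $\bar\sigma^a=\bar\sigma^b$ with $a<b\le p^{4g^2}+1$ already produces, after translating by the preperiod, a witness $n\le p^{4g^2}$ to any failure of separability—this is where a short careful argument, rather than a routine calculation, is required. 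Everything else reduces to Lemma~\ref{iap}, the tangent-space criterion for separability, and the absolute-value bookkeeping of Example~\ref{remE}.
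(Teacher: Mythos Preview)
Your approach matches the paper's almost exactly, but there is one genuine error in your argument for \eqref{vi-fin}. You assert that ``the ring $\End(A[p])$ is an $\F_p$-algebra of dimension at most $(2g)^2=4g^2$, hence has at most $p^{4g^2}$ elements.'' This is false: as Example~\ref{ESS} shows, for a supersingular elliptic curve over an algebraically closed (hence infinite) field $K$, the ring $\End(E[p]^0)$ contains matrices $\left(\begin{smallmatrix} a^p & b\\ 0 & a\end{smallmatrix}\right)$ with $b\in K$, so it is infinite. What \emph{is} finite is the image of $\End(A)$ in $\End(A[p])$: the proof of Lemma~\ref{iap} shows this image is $\End(A)/p\End(A)$, and since $\End(A)$ is a free $\Z$-module of rank at most $4g^2$, the quotient has at most $p^{4g^2}$ elements. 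This is precisely the set in which the powers $\bar\sigma^n$ live, so your pigeonhole argument goes through unchanged once you work in $\End(A)/p\End(A)$ rather than $\End(A[p])$. The paper does exactly this.

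Your worry in the ``main obstacle'' paragraph is unnecessary: once you have a set of size $N=p^{4g^2}$, among $\bar\sigma^1,\dots,\bar\sigma^{N+1}$ there is a repetition $\bar\sigma^a=\bar\sigma^b$ with $a<b\le N+1$, and every value of $\bar\sigma^n$ ($n\ge 1$) already appears among $\bar\sigma^1,\dots,\bar\sigma^{b-1}$; since $b-1\le N$, any failure of the predicate occurs for some $n\le N$. The remaining items \eqref{vi-prod}--\eqref{vineqi} are handled just as in the paper (your treatment of the converse direction in \eqref{vi-ec} via the residue field of the valuation from Example~\ref{remE} is a bit more explicit than the paper's, which simply defers to that example, but it is correct).
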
 

\begin{proof} 
To prove (\ref{vi-fin}), observe that by Lemma \ref{iap}, it suffices to look at the images of $\sigma^n-1$ in the ring $\End(A) / p \End(A)$. Since $\End{A}$ is finite free of rank at most $4g^2$, this ring is finite of cardinality $\leq p^{4g^2}$, and hence the sequence of images of $\sigma^n-1$ is ultimately periodic (i.e., periodic except for a finite number of $n$) with all possible values already occuring for $n \leq p^{4g^2}.$ 

Property (\ref{vi-prod}) is immediate from the definition. 

Since an endomorphism of an abelian variety is a separable isogeny if and only if its differential is surjective, to prove (\ref{vi-int}), observe that  the differential of the multiplication by $m^n-1$ map is still given by multiplication by $m^n-1$ and hence is surjective if and only if it is nonzero, i.e., when $p$ does not divide  $m^n-1$. The latter happens for all $n\geq 1$ if and only if $p {\mid} m$.

Statement (\ref{vi-ec}) was already discussed in Remark \ref{remE}.

Property \eqref{vineqi} follows immediately from \eqref{vi-prod} and \eqref{vi-int}.  
\end{proof} 

\subsection*{Using the local group scheme $A[p]^0$} The category of finite commutative group schemes over $K$ is abelian and decomposes as the product of the category of finite \'etale and the category of finite local group schemes (see, e.g., \cite[A \S 4]{Goren}). The group scheme $A[p]$ decomposes canonically as the product of the \'etale part $A[p]_{\mathrm{\acute{e}t}}$ and the local part $A[p]^0$. We now provide a geometric characterisation of (very) inseparability using the local $p$-torsion subgroup scheme, as in Theorem \ref{B} in the introduction.

\begin{theorem} \label{sepveryinsep} Let $\sigma \in \End(A)$. 
 \begin{enumerate} \item \label{it'sgoingtorain1} $\sigma$ is a separable isogeny if and only if it induces an isomorphism on $A[p]^0$. \item \label{it'sgoingtorain2} $\sigma$ is very inseparable if and only if it induces a nilpotent map on $A[p]^0$.\end{enumerate}
\end{theorem}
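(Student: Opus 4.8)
The plan is to reduce the statement about very inseparability to statements about separability of powers of $\sigma$ and then apply part \eqref{it'sgoingtorain1} together with some linear algebra over the Artinian ring $\End(A[p]^0)$.

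First I would recall that $A[p]$ decomposes canonically as $A[p]_{\text{\'et}} \times A[p]^0$, and that this decomposition is respected by every endomorphism of $A$, so we get a ring homomorphism $\End(A) \to \End(A[p]^0)$; write $\bar\sigma$ for the image of $\sigma$. Here $\End(A[p]^0)$ is a finite (hence Artinian) ring. By part \eqref{it'sgoingtorain1}, for each $n$ the map $\sigma^n - 1$ is a separable isogeny if and only if $\bar\sigma^n - 1$ is an automorphism of $A[p]^0$, equivalently a unit in $R \coloneqq \End(A[p]^0)$. So the statement to prove becomes: $\bar\sigma^n - 1 \in R^*$ for all $n \geq 1$ if and only if $\bar\sigma$ is nilpotent.

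The easy direction: if $\bar\sigma$ is nilpotent, say $\bar\sigma^N = 0$, then $\bar\sigma$ lies in the Jacobson radical of $R$ (being nilpotent it generates a nil ideal, but more simply $\bar\sigma^n$ is nilpotent for every $n$, hence $\bar\sigma^n - 1$ is a unit with inverse $-\sum_{k\geq 0}\bar\sigma^{nk}$, a finite sum). For the converse, suppose $\bar\sigma$ is \emph{not} nilpotent. I would pass to the semisimplification or simply argue as follows: consider the finite commutative(?) --- actually $R$ need not be commutative, so the cleanest route is to consider the subring $\Z[\bar\sigma] \subseteq R$, which is a finite commutative ring. If $\bar\sigma$ is not nilpotent in $R$ it is not nilpotent in $\Z[\bar\sigma]$, so $\Z[\bar\sigma]$ has a quotient field $\F_{p^d}$ (a finite field of characteristic $p$, since $p = 0$ in $R$) in which the image $\alpha$ of $\bar\sigma$ is nonzero. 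Then $\alpha$ has some finite multiplicative order $e$ in $\F_{p^d}^*$, so $\alpha^e = 1$, whence $\bar\sigma^e - 1$ maps to $0$ in $\F_{p^d}$ and therefore is \emph{not} a unit in $\Z[\bar\sigma]$, hence not a unit in $R$ (a unit of $R$ lying in the subring $\Z[\bar\sigma]$ need not be a unit there, so I must be slightly more careful: instead observe $\bar\sigma^e - 1$ lies in a proper ideal of $R$, namely the kernel of $R \to \End$ of a suitable composition factor, or use that $\bar\sigma^e-1$ is a two-sided zero-divisor on $A[p]^0$). The clean formulation: nonnilpotence of $\bar\sigma$ gives a simple $R$-module $V$ (a composition factor of $A[p]^0$) on which $\bar\sigma$ acts invertibly, hence as a nonzero element of the finite field $\End_R(V)$... then $\bar\sigma^e$ acts as the identity on $V$ for $e$ the order, so $\bar\sigma^e - 1$ kills $V$ and is not an isomorphism of $A[p]^0$, so $\sigma^e - 1$ is inseparable. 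This contradicts very inseparability.

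The main obstacle I anticipate is the noncommutativity of $R = \End(A[p]^0)$: one cannot simply factor through a residue field. The right tool is to use that $A[p]^0$ is a finite-length module over the Artinian ring $R$ (or over $\End(A)$), so it has a composition series, and $\bar\sigma$ nonnilpotent forces $\bar\sigma$ to act invertibly on at least one composition factor (if it acted nilpotently, i.e., in the radical, on every factor, a standard argument shows $\bar\sigma$ itself is nilpotent). On that simple factor $\bar\sigma$ acts as an element of a finite division ring, hence a finite field by Wedderburn, of some finite multiplicative order $e$, and then $\bar\sigma^e - 1$ fails to be an automorphism. I would cite the decomposition \cite[A \S 4]{Goren} for the functoriality of $A[p] = A[p]_{\text{\'et}} \times A[p]^0$ and part \eqref{it'sgoingtorain1} for the separability criterion, and otherwise the argument is elementary module theory over an Artinian ring.
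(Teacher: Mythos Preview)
Your proposal has two genuine gaps.

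First, you do not prove part \eqref{it'sgoingtorain1} at all; you simply invoke it. The paper's proof of \eqref{it'sgoingtorain1} is not a formality, especially the implication ``$\sigma$ not a separable isogeny $\Rightarrow$ $\sigma[p]^0$ not an isomorphism'': one must treat separately the case where $\sigma$ is not an isogeny (then the reduced identity component $B$ of $\ker\sigma$ is a nontrivial abelian subvariety, and $B[p]^0\neq 0$ sits in $\ker\sigma[p]^0$) and the case where $\sigma$ is an inseparable isogeny (where one filters $(\ker\sigma)[p^t]^0$ to force $(\ker\sigma)[p]^0\neq 0$).

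Second, for \eqref{it'sgoingtorain2} you assert that $R=\End(A[p]^0)$ is a \emph{finite} ring. This is false: already for a supersingular elliptic curve, $\End(A[p]^0)$ contains a copy of $K$ (see Example~\ref{ESS}). The paper works instead in the image of $\End(A)\to\End(A[p]^0)$, which \emph{is} a finite $\F_p$-algebra because $\End(A)$ is a finitely generated $\Z$-module and $p$ lies in the kernel; one then has to check that being a unit in $\End(A[p]^0)$ is equivalent to being a unit in this subring, which the paper does via the minimal polynomial over $\F_p$ (nonzero constant term gives a unit in the subring; zero constant term gives a two-sided zero-divisor, hence a non-unit in the big ring). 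You flag this subring-unit issue but do not resolve it. There is also a slip in your composition-factor argument: on a simple $R$-module $V$ the element $\bar\sigma\in R$ acts through the image of $R$ in $\End(V)$, which is a matrix algebra over a finite field, \emph{not} through the Schur field $\End_R(V)$, and non-nilpotent on $V$ does not mean invertible on $V$. Once these points are repaired, your route and the paper's Wedderburn--Artin reduction to matrices over finite fields coincide, and one finishes with the same eigenvalue-of-finite-order argument.
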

\begin{proof}
Under the splitting $A[p]=A[p]_{\mathrm{\acute{e}t}} \times A[p]^0$, the morphism $\sigma[p]$ induced by $\sigma$ on $A[p]$ splits as a product morphism $\sigma[p]= \sigma[p]_{\mathrm{\acute{e}t}} \times \sigma[p]^0$. Therefore, we have \begin{equation} \label{splitet} \ker \sigma[p] = \ker \sigma[p]_{\mathrm{\acute{e}t}} \times \ker\sigma[p]^0.\end{equation} An isogeny $\sigma$ is separable if and only if $\ker \sigma$ is \'etale. 

We turn to the proof of (\ref{it'sgoingtorain1}). In one direction, first assume that $\sigma$ is a separable isogeny. Then $\ker \sigma$ is \'etale, and hence so is its subgroup scheme $\ker \sigma[p]$. From the decomposition (\ref{splitet}), we conclude that $\ker\sigma[p]^0$ is both \'etale and local, hence trivial. Since $A[p]^0$ is a finite group scheme, the map $\sigma[p]^0$ is an isomorphism.

For the other direction, assume first that $\sigma$ is \emph{not an isogeny.} Let $B$ be the reduced connected component of $0$ of $\ker \sigma$. Then $B$ is an abelian subvariety, $B[p]^0$ is a nontrivial group scheme (because multiplication by $p$ on $B$ is not \'etale) and is contained in the kernel of $\sigma[p]^0$ and hence $\sigma[p]^0$ is not an isomorphism. 

Secondly, assume that $\sigma$ is an \emph{inseparable isogeny.} Then $\ker \sigma$ is not \'etale. We have $\ker \sigma \subseteq A[n]$ for $n=\deg \sigma$. Writing $n=p^t u$ with $u$ coprime with $p$, we get a decomposition $\ker \sigma =\ker \sigma [p^t]\times \ker \sigma[u]$. The group scheme $\ker \sigma [u]$ is \'etale (as a subgroup scheme of $A[u]$), and hence $\ker \sigma[p^t]$ cannot be \'etale, which means that $\ker \sigma[p^t]^0$ is nontrivial. For each integer $r$, we have an exact sequence \begin{equation*}  \begin{tikzcd}
        0 \arrow{r} & \ker \sigma [p^{r-1}]^0 \arrow{r}& \ker \sigma [p^{r}]^0\arrow{r}{p^{r-1}} & \ker \sigma [p]^0.  
    \end{tikzcd}
    \end{equation*}
Applying this inductively for $r=t, t-1,\ldots, 2$, we conclude that $\ker \sigma[p]^0$ is nontrivial, and hence the morphism $\sigma[p]^0$ is not an isomorphism. This proves (i). 

For the proof of (ii), consider the natural homomorphism $\varphi\colon \End(A) \to \End(A[p]^0)$. Since $\End(A)$ is a finite $\Z$-algebra, and since $p \in \ker \varphi$, the ring $R\coloneqq\mathrm{im}(\varphi)$ is a finite $\F_p$-algebra. By 
part (i), the map $\sigma^n -1$ is a separable isogeny if and only if its image $\varphi(\sigma^n-1)$ is a unit in $\End(A[p]^0)$. We claim that $\varphi(\sigma^n-1)$ is then a unit in $R$; in fact, the ring $R$ is a finite $\F_p$-algebra, and hence there exists a monic polynomial $f\in \F_p[t]$, $f=t^d+a_{d-1}t^{d-1}+\ldots +a_0$ of lowest degree such that $f(\sigma^n-1)=0$. If the constant term $a_0$ of $f$ is different than zero, then we easily see that $\sigma^n-1$ is invertible in $R$, its inverse being $-a_0^{-1} \sum_{i=0}^{d-1} (\sigma^n-1)^i$.  If on the other hand $a_0 =0$, then $\sigma^n-1$ is a two-sided zero-divisor in $R$, hence in $\End(A[p]^0)$, and therefore cannot be a unit in $\End(A[p]^0)$. Thus, our claim is now reduced to the proof of the following lemma.\let\qed\relax\end{proof}

\begin{lemma} Let $R$ be a finite (not necessarily commutative) $\F_p$-algebra and let $r\in R$. Then the following conditions are equivalent: \begin{enumerate} \item For all positive integers $r^n-1$ is invertible. \item The element $r$ is nilpotent.\end{enumerate}
\end{lemma}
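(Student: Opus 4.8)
The plan is to prove the equivalence by treating the two implications separately, with the nontrivial direction being (i)$\Rightarrow$(ii), i.e., that the invertibility of $r^n-1$ for all $n\geq 1$ forces $r$ to be nilpotent. The implication (ii)$\Rightarrow$(i) is easy: if $r$ is nilpotent, say $r^N=0$, then $r^n$ is nilpotent for every $n\geq 1$, so $(r^n-1)$ has inverse $-(1+r^n+r^{2n}+\cdots+r^{(N-1)n})$, a finite sum. (Note $R$ need not be commutative, but this manipulation only uses powers of the single element $r$, which commute, so it is valid.)

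For the hard direction, first I would reduce to a commutative problem. Let $B=\F_p[r]\subseteq R$ be the (commutative, finite) $\F_p$-subalgebra generated by $r$; it suffices to show $r$ is nilpotent in $B$. Since $B$ is a finite commutative ring, it is Artinian, hence a finite product of local Artinian rings $B=\prod_{i} B_i$, and $r=(r_i)_i$ with $r_i\in B_i$. The element $r$ is nilpotent iff each $r_i$ lies in the maximal ideal $\mathfrak{m}_i$ of $B_i$ (since in a local Artinian ring the nilradical equals the maximal ideal). So suppose for contradiction that some component $r_i$ is a unit in the local ring $B_i$. Then the image $\bar r_i$ in the residue field $k_i=B_i/\mathfrak{m}_i$, a finite field $\F_{p^{f_i}}$, is nonzero, hence $\bar r_i^{\,n}=1$ for $n=p^{f_i}-1=|k_i^*|$. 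For that particular $n$, the element $r_i^n-1$ lies in $\mathfrak{m}_i$, so it is \emph{not} a unit in $B_i$, hence $r^n-1=(r_j^n-1)_j$ is not a unit in $B$, and therefore not a unit in $R$ (a unit of $R$ lying in the subring $B$ need not be a unit of $B$ in general — so here I should instead argue directly: $r_i^n-1\in\mathfrak{m}_i$ means $r_i^n-1$ is a zero divisor or nilpotent in $B_i$, in fact nilpotent, so $r^n-1$ is a two-sided zero divisor in $R$ and cannot be invertible). This contradicts hypothesis (i).

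Let me streamline that last point to avoid the subtlety about units of subrings: working in $B=\F_p[r]$, pick the idempotent $\varepsilon_i\in B$ corresponding to the factor $B_i$. If $r_i$ is a unit in $B_i$, set $n=p^{f_i}-1$; then $r_i^n-1\in\mathfrak m_i$ is nilpotent in $B_i$, so $\varepsilon_i(r^n-1)$ is nilpotent in $B$ and nonzero-annihilating: more precisely $(r^n-1)\varepsilon_i$ is nilpotent in $R$, while $\varepsilon_i\neq 0$; multiplying a putative inverse of $r^n-1$ by $\varepsilon_i$ on both sides shows $\varepsilon_i$ would be nilpotent, contradiction. Hence every $r_i\in\mathfrak m_i$, so $r$ is nilpotent.

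The main obstacle is purely bookkeeping: making the reduction from the possibly noncommutative $R$ to the commutative finite algebra $\F_p[r]$ cleanly, and being careful that ``$r^n-1$ invertible in $R$'' genuinely fails once $r^n-1$ becomes a zero divisor in the commutative subalgebra — a two-sided zero divisor in $R$ can never be a unit, which is the clean way to close the argument. No deep input is needed; the only ``idea'' is that in the finite field $\F_p[r]/\mathfrak m$ the nonzero element $\bar r$ satisfies $\bar r^{|k^*|}=1$, which is exactly the obstruction to $r^n-1$ being invertible for that $n$.
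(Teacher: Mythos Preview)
Your proof is correct and takes a genuinely different, more elementary route than the paper. The paper reduces modulo the Jacobson radical of $R$ to a semisimple ring, then invokes the Wedderburn--Artin theorem and Wedderburn's little theorem (finite division rings are commutative) to reduce to matrix rings $\mathrm{M}_n(\F_q)$, where an eigenvalue argument finishes: a non-nilpotent matrix has a nonzero eigenvalue $\lambda$ in some finite extension, and then $N^n-1$ is singular for $n$ with $\lambda^n=1$. Your approach sidesteps all noncommutative structure theory by passing immediately to the commutative subalgebra $B=\F_p[r]$ and decomposing it as a product of local Artinian rings; the key observation is the same (a nonzero element of a finite field has finite multiplicative order), but you reach it with less machinery. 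Your care about the subtlety that invertibility in $R$ need not descend to $B$ is well placed, and the idempotent argument you give handles it cleanly: since $(r^n-1)\varepsilon_i$ is nilpotent in the commutative ring $B$, some power $(r^n-1)^k$ annihilates the nonzero element $\varepsilon_i$, and a two-sided inverse of $r^n-1$ in $R$ (which necessarily commutes with $r^n-1$) would then force $\varepsilon_i=0$. The paper's route is more structural; yours is shorter and self-contained.
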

\begin{proof}
Let $J$ denote the Jacobson radical of $R$. The ring $R$ is artinian and hence the ring $\bar{R}=R/{J}$ is semisimple \cite[4.14]{Lam}. For an element $s\in R$, denote the image of $s$ in $\bar{R}$ by $\bar{s}$. Then $s$ is invertible in $R$ if and only if $\bar{s}$ is invertible in $\bar{R}$ \cite[4.18]{Lam} and $s$ is nilpotent if and only if $\bar{s}$ is nilpotent (this follows from the fact that the Jacobson radical of an artinian ring is nilpotent, see \cite[4.12]{Lam}). Thus we have reduced the claim to the case of a semisimple ring $\bar{R}$. 

By the Wedderburn--Artin theorem \cite[3.5]{Lam}, a semisimple ring is a product of matrix rings over division rings which in our case need to be finite, and hence by another theorem of Wedderburn \cite[13.1]{Lam} are commutative. Thus we can decompose the ring $\bar{R}$ as a product of matrix rings over finite fields $$\bar{R} \simeq \prod_{i=1}^s \mathrm{M}_{n_i}(\F_{q_i}).$$ Clearly, each of the properties in the statement of the lemma can be considered separately for each term in this product, and we are reduced to proving that a matrix $N$ over a finite field has the property that $N^n -1$ is invertible for all $n\geq 1$ if and only if $N$ is nilpotent. 

If $N$ is nilpotent, then all the matrices $N^n-1$ are  invertible, since in any ring the sum of a unit and a nilpotent that commute with each other is a unit. Conversely, if $N$ is not nilpotent, then $N$ has some eigenvalue $\lambda \neq 0$, perhaps in a larger (but still finite) field. Let $n\geq 1$ be such that $\lambda^n=1$ (such $n$ always exists  in a \emph{finite} field). Then the matrix $N^n-1$ is not invertible. 
\end{proof}

We have some immediate corollaries (where \ref{follows}.(\ref{iap0}) refines Lemma \ref{iap}): 

\begin{corollary} \label{follows} Let $\sigma \in \End(A)$. 
\begin{enumerate} 
\item \label{iap0} Whether $\sigma$ is a separable isogeny or not, or very inseparable or not, is determined by its action on $A[p]^0$, i.e., on its image under the map
\begin{equation*} \label{into0} \End(A) \rightarrow \End(A[p]^0). \end{equation*} 
\item \label{viisi} Very inseparable isogenies are inseparable. 
\item \label{invis} There exists a simple abelian surface with a confined isogeny that is inseparable but not very inseparable and for which inseparable isogenies together with the zero map do not form an ideal.
\end{enumerate}
\end{corollary}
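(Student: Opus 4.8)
The plan is to derive all three parts from Theorem~\ref{sepveryinsep}. Part~(\ref{iap0}) is immediate: the two conditions occurring there --- that the map $\sigma[p]^0$ induced on $A[p]^0$ be an isomorphism, respectively that it be nilpotent --- depend only on the image of $\sigma$ under the ring homomorphism $\End(A)\to\End(A[p]^0)$. This refines Lemma~\ref{iap} because the splitting $A[p]=A[p]^0\times A[p]_{\mathrm{\acute{e}t}}$, together with the absence of nonzero morphisms between a local and an \'etale group scheme, yields $\End(A[p])=\End(A[p]^0)\times\End(A[p]_{\mathrm{\acute{e}t}})$, so one has merely discarded the \'etale factor. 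For part~(\ref{viisi}): let $\sigma$ be a very inseparable isogeny. Since $[p]\colon A\to A$ is inseparable, $A[p]$ is not \'etale, hence $A[p]^0$ is a nonzero finite group scheme. By Theorem~\ref{sepveryinsep}.(\ref{it'sgoingtorain2}), $\sigma[p]^0$ is nilpotent, so it is not an automorphism (a nilpotent endomorphism $f$ with $f^k=0$ of a nonzero object cannot be invertible, else $\mathrm{id}=0$); by Theorem~\ref{sepveryinsep}.(\ref{it'sgoingtorain1}), $\sigma$ is therefore not a separable isogeny, and being an isogeny it is inseparable.

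The content of part~(\ref{invis}) is an existence statement, which I would establish by an explicit construction via complex multiplication. Fix, for the ambient prime $p$, a quartic CM field $L$ with no imaginary quadratic subfield in which $p$ splits completely, say $p\mathcal{O}_L=\mathfrak{p}_1\bar{\mathfrak{p}}_1\mathfrak{p}_2\bar{\mathfrak{p}}_2$. Such an $L$ exists for every $p$ by a routine argument --- e.g.\ take $L=F(\sqrt\alpha)$ for a real quadratic field $F$ in which $p$ splits and $\alpha\in F$ totally negative, chosen with nonzero square residues at the two primes of $F$ above $p$ and generic enough that $L/\Q$ is not biquadratic. Because $L$ has no proper CM subfield, every CM type on $L$ is primitive, so the CM abelian surface $A$ over $K=\bar\F_p$ with $\End(A)=\mathcal{O}_L$ and CM type $\Phi=\{\mathfrak{p}_1,\mathfrak{p}_2\}$ is simple, and because $p$ splits completely, $A$ is ordinary. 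By the structure theory of the $p$-divisible group of an ordinary CM abelian variety, $A[p^\infty]^0\cong\mu_{p^\infty}^2$ and $\mathcal{O}_L$ acts on $A[p]^0\cong\mu_p^2$ through the quotient $\mathcal{O}_L\twoheadrightarrow\mathcal{O}_L/\mathfrak{p}_1\times\mathcal{O}_L/\mathfrak{p}_2\cong\F_p\times\F_p$; hence the image of $\End(A)=\mathcal{O}_L$ in $\End(A[p]^0)=\End(\mu_p^2)=\mathrm{M}_2(\F_p)$ is precisely the diagonal subring $\F_p\times\F_p$.

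Given this, I would choose by the Chinese Remainder Theorem elements $\sigma,\tau\in\mathcal{O}_L$ with $\sigma\equiv 1\pmod{\mathfrak{p}_1}$, $\sigma\equiv 0\pmod{\mathfrak{p}_2}$, and $\tau\equiv 0\pmod{\mathfrak{p}_1}$, $\tau\equiv 1\pmod{\mathfrak{p}_2}$, so that $\sigma$ and $\tau$ act on $A[p]^0$ as the complementary idempotents $(1,0)$ and $(0,1)$. Each is a nonzero element of the domain $\mathcal{O}_L=\End(A)$, hence an isogeny, and its action on $A[p]^0$ is neither invertible nor nilpotent, so by Theorem~\ref{sepveryinsep} it is an inseparable isogeny that is not very inseparable. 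Moreover $\sigma$ is confined: being inseparable it has $p\mid\deg\sigma$, so $\deg(\sigma^n)=(\deg\sigma)^n>1$ for all $n\geq 1$, whence $\sigma^n\neq\mathrm{id}$, $\sigma^n-1\neq 0$, and $\sigma^n-1$ is an isogeny. Finally $\sigma+\tau$ acts on $A[p]^0$ as the identity $(1,1)$ and is nonzero, hence a \emph{separable} isogeny by Theorem~\ref{sepveryinsep}.(\ref{it'sgoingtorain1}); so the sum of the two inseparable isogenies $\sigma,\tau$ is neither inseparable nor zero. Thus the inseparable isogenies together with the zero map are not closed under addition, and in particular do not form an ideal of $\End(A)$ --- which is also the last assertion in part~(\ref{invis}).

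I expect the one real difficulty to lie in the middle paragraph: justifying the precise shape of the $p$-divisible group of the ordinary CM surface $A$ and the resulting identification of the image of $\End(A)$ with the diagonal $\F_p\times\F_p$ inside $\mathrm{M}_2(\F_p)$. Once that is pinned down, parts~(\ref{iap0})--(\ref{invis}) are all short formal consequences of Theorem~\ref{sepveryinsep}.
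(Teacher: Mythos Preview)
Parts~(\ref{iap0}) and~(\ref{viisi}) are handled exactly as in the paper: both follow at once from Theorem~\ref{sepveryinsep}, with your extra care about $A[p]^0\neq 0$ being a welcome but inessential addition.

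For part~(\ref{invis}) you take a genuinely different route. The paper exhibits a completely explicit \emph{supersingular} surface: the isogeny class over $\F_5$ with Frobenius polynomial $x^4+25$, endomorphism ring $\mathcal{O}_L=\Z[i,\pi]$ with $L=\Q(i,\sqrt{10})$, and $\sigma=i-2$. One checks directly that $\bar\sigma\in\mathcal{O}_L/5\mathcal{O}_L$ satisfies $\bar\sigma^2=\bar\sigma$ and is neither $0$ nor $1$, so $\sigma$ is inseparable but not very inseparable; with $\sigma'=-i-2$ one gets $\sigma+\sigma'=-4$ separable. Your construction instead produces an \emph{ordinary} CM surface for an arbitrary prime $p$, using that when $p$ splits completely in a quartic CM field $L$ with no imaginary quadratic subfield, the image of $\mathcal{O}_L$ in $\End(A[p]^0)\cong\mathrm{M}_2(\F_p)$ is the diagonal $\F_p\times\F_p$, and then picks complementary idempotents.

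Your approach is correct in outline but leans on substantially more machinery than the paper's bare-hands example: the existence of $A$ over $\bar\F_p$ with $\End(A)=\mathcal{O}_L$ requires reduction of CM abelian varieties (or Honda--Tate), ordinariness needs the Shimura--Taniyama formula, and---more seriously---your justification of simplicity (``every CM type on $L$ is primitive, so $A$ is simple'') is the argument over $\C$, not over $\bar\F_p$; you still need to rule out $\bar A\sim E^2$, which follows because any embedding $L\hookrightarrow\mathrm{M}_2(K)$ would force the center $K\subseteq L$, contradicting the absence of an imaginary quadratic subfield. The identification of the image with the diagonal $\F_p\times\F_p$ also deserves a line via the $\mathcal{O}_L\otimes\Z_p$-module structure of $A[p^\infty]$. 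What your approach buys is uniformity in $p$; what the paper's buys is that everything is checkable by hand from a four-line computation in $\mathcal{O}_L/5\mathcal{O}_L$, with no appeal to CM theory.
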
 

\begin{proof} 
Statement (\ref{iap0}) is immediate from Theorem \ref{sepveryinsep}. 
Statement (\ref{viisi}) follows from Theorem \ref{sepveryinsep}, since nilpotents are not invertible.
Concerning (\ref{invis}), the following is an example of a simple abelian variety $A$ and an inseparable but not very inseparable  isogeny $\sigma$ (all computational data used can be found at \cite{lmfdb-curve}). Consider the isogeny class of supersingular abelian surfaces over $\F_5$ of $p$-rank $0$ with characteristic polynomial of the Frobenius $\pi$ equal to $x^4+25=0$. The splitting field $L\coloneqq\Q(\pi)=\Q(i,\sqrt{10})$ has no real embeddings, hence by Waterhouse \cite[Thm.\ 6.1]{Waterhouse} there exists a simple abelian surface $A$ with endomorphism ring ${\mathcal O}_L=\Z[i,\pi]$ (the ring of integers in $L$, containing both $\pi$ and $5/\pi=-i \pi$). 
Consider $\sigma=i-2=\frac{\pi^2}{5} - 2$, with characteristic polynomial $\sigma^2+4\sigma+5=0$. The endomorphism $\sigma$ is a confined isogeny since on a simple abelian variety these are exactly the endomorphisms that are neither zero nor roots of unity. Denoting the reduction of $\sigma$ modulo $5$ by $\bar \sigma$, we find that \begin{equation} \label{red} \bar \sigma^2 = \bar \sigma. \end{equation} Note that $A[p]=A[p]^0$ and hence there is an injective map $\mathcal{O}_L/5\mathcal{O}_L \hookrightarrow \End(A[p]^0)$. Now $\sigma$ is separable if and only if $\bar \sigma$ is an isomorphism on $A[p]^0$, which, by (\ref{red}), happens exactly if $\bar \sigma = 1$. But then $\sigma = 5 \psi + 1$ for some $\psi \in \mathcal{O}_L$, which does not hold. Hence $\sigma$ is inseparable. 
On the other hand, $\sigma$ is very inseparable if and only if $\bar \sigma$ is nilpotent on $A[p]^0$, which, by (\ref{red}), happens exactly if $\bar \sigma = 0$. This means that $\sigma = 5 \psi$ for some $\psi \in \mathcal{O}_L$, which does not hold either. Hence $\sigma$ is not very inseparable. 

Let $\sigma'=-i-2$. We similarly prove that $\sigma'$ is inseparable, and yet the map  $\sigma+\sigma'=-4$ is a separable isogeny. Hence the set of inseparable isogenies together with the zero map is not closed under addition.
\end{proof} 

\subsection*{Using Dieudonn\'e modules} 

The structure of the endomorphism ring of the local group scheme $A[p]^0$ can  be computed explicitly using the theory of Dieudonn\'e modules, and we will use this to deduce some more results on very inseparability. 

The group schemes $A[p]$ and $A[p]^0$ are objects in the category $\mathcal{C}_K$
  of finite commutative group schemes over $K$ annihilated by $p$. By covariant Dieudonn\'e theory \cite[A \S 5]{Goren} there is an equivalence of categories
$$ D \colon \mbox{$\mathcal{C}_K$} \rightarrow \mbox{{\sf Finite length left $\mathbf{E}$-modules}}, $$
where $\mathbf{E}=K[F,V]$ denotes the non-commutative ring of polynomials  with relations \begin{equation*} \label{relE} FV=VF=0, F\lambda=\lambda^p F \mbox{ and }V\lambda^p = \lambda V \mbox{ for }\lambda \in K.\end{equation*} 
We may  consider being a very inseparable endomorphism or a separable isogeny as a property of the image of an endomorphism under the map
$ \End(A) \rightarrow \End_{\mathbf{E}} (D(A[p]^0)). $

\begin{example} \label{ESS} If $A$ is an ordinary elliptic curve, then $A[p]^0 \cong \mu_p$, so $\End(A[p]^0) = \F_p$. If $A$ is a supersingular elliptic curve, the local group scheme $A[p]^0$ 
        is the unique non-split self-dual extension  of $\alpha_p$ by $\alpha_p$. 
The Dieudonn\'e module is $D(A[p]^0)=\mathbf{E}/\mathbf{E}(V+F)$ \cite[A.5.4]{Goren} and 
a computation \cite[A.5.8]{Goren} gives a ring isomorphism  \begin{align*} \label{endap0} \End(A[p]^0) \cong \End_{\mathbf{E}} (\mathbf{E}/\mathbf{E}(V+F)) & \cong 
\left\{\begin{pmatrix} a^p & b\\ 0 & a\end{pmatrix} : a \in \F_{p^2}, b\in K\right\}. \end{align*}       
From these computations, one also sees directly that non-invertible elements are nilpotent in $\End(A[p]^0)$ in both the ordinary and the supersingular case, giving an alternative proof of \ref{vi-elem}.(\ref{vi-ec}). \end{example}        

\begin{proposition} \mbox{ } \label{further} Let $\sigma \in \End(A)$ and set $\mathbf{D}\coloneqq D(A[p])^0)$. 
\begin{enumerate}
\item\label{modV} $\sigma$ is a separable isogeny (respectively, very inseparable endomorphism) if and only if its image in $\End_{K[F]} (\mathbf{D}/V\mathbf{D})$ is invertible  (respectively, nilpotent). 
\item \label{vifrob} $\sigma$ is very inseparable if and only if a power of $\sigma$ factors through the $p$-Frobenius map $\Fr \colon A \mapsto A^{(p)}$. 
\item\label{ideal} If $\End(A)$ is commutative, the set of very inseparable endomorphisms forms an ideal in $\End(A)$.
\item \label{notideal} There exists an abelian variety for which the set of very inseparable endomorphisms  is not closed under either addition or multiplication (in particular, it is not an ideal).
\item \label{ord} Let $A$ denote a simple ordinary abelian variety defined over a finite field $\F_q \subseteq K$ with (commutative) endomorphism ring $\mathcal{O}\coloneqq \End(A)$ and Frobenius endomorphism $\pi$. Set $R\coloneqq\Z[\pi, q/\pi]$. Then $R \subseteq \mathcal{O}$ and if $p {\nmid} [\mathcal{O}{:}R]$, then any isogeny of $A$ is very inseparable if and only if it is inseparable. This is in particular true if $q=p\geq 5$.
\end{enumerate}
\end{proposition}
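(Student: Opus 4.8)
The plan is to establish the five parts in turn, mostly formally from Theorem \ref{sepveryinsep} and the Dieudonn\'e dictionary, with the arithmetic of the endomorphism ring at $p$ being the only serious point.

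\emph{Part \eqref{modV}.} I would transport Theorem \ref{sepveryinsep} across the Dieudonn\'e equivalence $D$ and then descend from $\mathbf D$ to $\mathbf D/V\mathbf D$. Since $D$ is an equivalence of abelian categories, $\sigma$ is an isomorphism (resp.\ nilpotent) on $A[p]^0$ iff its image in $\End_{\mathbf E}(\mathbf D)$ is invertible (resp.\ nilpotent), so by Theorem \ref{sepveryinsep} it remains to show that for $\sigma\in\End_{\mathbf E}(\mathbf D)$, invertibility (resp.\ nilpotence) on $\mathbf D$ is equivalent to the same property on $\mathbf D/V\mathbf D$. One direction is trivial. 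For the converse one uses that $A[p]^0$ is connected, hence $V$ is nilpotent on $\mathbf D$ (the Dieudonn\'e translation of connectedness, cf.\ \cite[A \S 5]{Goren}); as $\sigma$ commutes with $V$ it respects the finite filtration $\mathbf D\supseteq V\mathbf D\supseteq V^2\mathbf D\supseteq\cdots$, and multiplication by $V^i$ gives a $\sigma$-equivariant surjection $\mathbf D/V\mathbf D\twoheadrightarrow V^i\mathbf D/V^{i+1}\mathbf D$; thus nilpotence of $\sigma$ on $\mathbf D/V\mathbf D$ forces nilpotence on each graded piece, hence on $\mathbf D$, while surjectivity of $\sigma$ on $\mathbf D/V\mathbf D$ gives $\sigma\mathbf D+V\mathbf D=\mathbf D$, hence $\sigma\mathbf D=\mathbf D$ by Nakayama, hence bijectivity on the finite-length module $\mathbf D$.

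\emph{Parts \eqref{vifrob}, \eqref{ideal} and \eqref{notideal}.} Write $A[F]=\ker(\Fr\colon A\to A^{(p)})$; then "$\sigma^N$ factors through $\Fr$'' is the same as "$\sigma^N$ kills $A[F]$''. Since $A[F]\subseteq A[p]^0$ is connected and $p$-torsion, if $\sigma$ is very inseparable then some $\sigma^N$ vanishes on $A[p]^0\supseteq A[F]$ by Theorem \ref{sepveryinsep}, giving the factorization. Conversely, since $A[p]^0$ is infinitesimal it lies in $A[F^i]=\ker\Fr^i$ for $i\gg0$, and the $\sigma$-stable filtration $A[F^0]\subseteq\cdots\subseteq A[F^i]$ has graded pieces which embed, via $\Fr^j$, $\sigma$-equivariantly into $(A[F])^{(p^j)}$ with $\sigma$ acting as $\sigma^{(p^j)}$; a Frobenius twist of a nilpotent endomorphism being nilpotent, $(\sigma|_{A[F]})^N=0$ forces $\sigma^N$ to kill each graded piece, so $\sigma^{N(i+1)}$ kills $A[F^i]\supseteq A[p]^0$ and $\sigma$ is very inseparable. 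Part \eqref{ideal} is then immediate from Theorem \ref{sepveryinsep}: very inseparable endomorphisms form the preimage of the nilradical of the finite commutative $\F_p$-algebra $\mathrm{im}(\End(A)\to\End(A[p]^0))$, which is an ideal. For \eqref{notideal} I would take $A=E\times E$ for any elliptic curve $E$ and $\tau_1=\left(\begin{smallmatrix}0&1\\0&0\end{smallmatrix}\right)$, $\tau_2=\left(\begin{smallmatrix}0&0\\1&0\end{smallmatrix}\right)$ in $\End(A)=\mathrm M_2(\End E)$: any nilpotent endomorphism $\tau$ is very inseparable since $\tau^n-1$ is then invertible for all $n$ (its inverse is a finite geometric series), but $\tau_1+\tau_2=\left(\begin{smallmatrix}0&1\\1&0\end{smallmatrix}\right)$ squares to $1$ and $\tau_1\tau_2=\left(\begin{smallmatrix}1&0\\0&0\end{smallmatrix}\right)$ is a nontrivial idempotent, and neither is very inseparable (in fact neither is confined: $(\tau_1+\tau_2)^2-1=0$, and $\tau_1\tau_2-1$ has kernel $E\times\{0\}$); so the set of very inseparable endomorphisms is closed under neither addition nor multiplication.

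\emph{Part \eqref{ord} and the main obstacle.} The inclusion $R\subseteq\mathcal O$ holds because $\pi\in\End(A)$ and $q/\pi$ is the dual isogeny $\hat\pi$ (with $\pi\hat\pi=[q]$). Since $A$ is simple, every nonzero endomorphism is an isogeny, so by Theorem \ref{sepveryinsep} the statement "inseparable $\Leftrightarrow$ very inseparable for all isogenies'' is equivalent to: every element of $R':=\mathrm{im}(\mathcal O\to\End(A[p]^0))$ is a unit or nilpotent. The plan is to compute $R'$ from the arithmetic of $\mathcal O$ at $p$: by ordinarity $A[p^\infty]^0$ is of multiplicative type, so the $\End(A)$-action on it factors through the "connected'' localization of $\mathcal O\otimes\Z_p$ (on which $\pi$ is topologically nilpotent and $\hat\pi$ a unit), and the hypothesis $p\nmid[\mathcal O{:}R]$, equivalently $\mathcal O\otimes\Z_p=\Z_p[\pi,\hat\pi]$, pins this localization down and forces $R'$ to have the required local structure; for $q=p\ge5$ one checks this directly, writing $\pi^2-s\pi+p=0$ over the totally real subfield with $s=\pi+\hat\pi$, where ordinarity gives $p\nmid s$ and hence $\mathrm{disc}=s^2-4p$ is a $p$-adic unit (here $p\ge5$ handles the factor $4$), which controls the splitting of $p$ in $\mathcal O$. \textbf{This last step is the main obstacle}: parts \eqref{modV}--\eqref{notideal} are essentially formal consequences of Theorem \ref{sepveryinsep} and the Dieudonn\'e dictionary, whereas part \eqref{ord} rests on the explicit determination of $R'$ and on verifying it behaves like a local ring, which is precisely where the three hypotheses (simple, ordinary, $p\nmid[\mathcal O{:}R]$) have to be combined.
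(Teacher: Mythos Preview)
Your treatment of parts \eqref{modV}--\eqref{notideal} is correct. Part \eqref{modV} is essentially the paper's argument. For the converse in \eqref{vifrob} you argue geometrically via the filtration $A[F^0]\subseteq\cdots\subseteq A[F^i]$ of $A[p]^0$ by Frobenius kernels, whereas the paper passes to Dieudonn\'e modules and invokes part \eqref{modV} directly (using that $\Fr$ corresponds to $V$, so a factorisation $\sigma^n=\tau\circ\Fr$ gives $D(\sigma^n)\mathbf D\subseteq V\mathbf D$). Both are short; yours avoids the Dieudonn\'e dictionary at the cost of tracking Frobenius twists. For \eqref{ideal} you take the preimage of the nilradical, while the paper takes the radical of the ideal of maps factoring through $\Fr$; these are equivalent. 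Your example for \eqref{notideal} is actually a bit sharper than the paper's: you exhibit explicit nilpotent $\tau_1,\tau_2\in\mathrm M_2(\End E)$ whose sum and product fail to be confined, and this works for \emph{any} $E$, while the paper restricts to ordinary $E$ in order to use $\End(A[p]^0)=\mathrm M_2(\F_p)$.

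Part \eqref{ord} is where your proposal remains a sketch, and the paper's route is more direct than the one you outline. Rather than analysing the ``connected localisation'' of $\mathcal O\otimes\Z_p$ abstractly, the paper computes the target of part \eqref{modV} explicitly: in the ordinary case $\mathbf D=(\mathbf E/(V,1-F))^g$, so $\mathbf D/V\mathbf D$ is a $K[F]$-module on which $F$ acts as the identity, and $\End_{K[F]}(\mathbf D/V\mathbf D)=\mathrm M_g(\F_p)$. Since $\pi=\Fr^r$ and $q/\pi=\mathrm{Ver}^r$ act on Dieudonn\'e modules as $V^r$ and $F^r$ respectively, an element $\sigma=\sum_i a_i\pi^i+\sum_j b_j(q/\pi)^j\in R$ maps to the \emph{scalar} $\sum_j b_j\in\F_p$. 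Scalars are nilpotent iff zero iff non-invertible, so inseparable $\Leftrightarrow$ very inseparable holds for $\sigma\in R$; the hypothesis $p\nmid[\mathcal O{:}R]$ then lets one clear denominators by an integer $N$ coprime to $p$ to pass from $\mathcal O$ to $R$. Your discriminant idea for $q=p\geq 5$ is in the right spirit, but as written it only addresses the relative extension $\Q(\pi)/\Q(\pi+\hat\pi)$ and does not by itself control $[\mathcal O{:}R]$ at $p$; the paper simply cites Freeman--Lauter for this last statement.
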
 

\begin{proof}
We first prove (\ref{modV}). The relations in $\mathbf E$ imply that $V \mathbf E$ is a two-sided ideal in $\mathbf E$. In this way, $\sigma$, as an $\mathbf E$-endomorphism of $\mathbf{D}$, gives rise to an endomorphism $\tilde \sigma$ of the $\mathbf E / V \mathbf E = k[F]$-module $\mathbf{D}/V\mathbf{D}$. The first claim is that $\sigma$ is nilpotent if and only if $\tilde \sigma$ is. The interesting direction is where $\tilde \sigma$ is nilpotent, meaning that $\sigma^n(\mathbf{D}) \subseteq V\mathbf{D}$ for some $n$. Since $V$ is nilpotent on $\mathbf{D}$ \cite[A.5]{Goren}, say $V^d\mathbf{D}=0$, we can iterate the equation to get $\sigma^{nd}(\mathbf{D}) \subseteq V^d \mathbf{D} = 0$. Secondly, we claim that $\sigma$ is invertible if and only if $\tilde \sigma$ is so. Again, the interesting direction is when $\tilde \sigma$ is invertible. If we let $\mathbf{D}'$ denote the image of $\sigma \colon \mathbf{D} \rightarrow \mathbf{D}$, then $\mathbf{D}'$ is an $\mathbf E$-submodule of $\mathbf{D}$ and $\mathbf{D} = \mathbf{D}' + V\mathbf{D}$. Iterating this sufficiently many times, we find that 
$$\mathbf{D} = \mathbf{D}' + V\mathbf{D}=  \mathbf{D}' + V\mathbf{D}'+ V^2 \mathbf{D} = \ldots = \mathbf{D}'+V\mathbf{D}'+\dots+V^{d-1}\mathbf{D}' \subseteq \mathbf{D}'.$$ This shows that $\sigma$ is surjective, and, since it is an endomorphism of the underlying finite dimensional vector space, it is then automatically injective. 

In order to prove (\ref{vifrob}), note that the Dieudonn\'e module $D(A^{(p)}[p]^0)$ can be identified with $\mathbf{D}=D(A[p]^0)$ with the $\mathbf{E}$-action twisted by the geometric Frobenius map $\psi\colon K\to K$, $\psi(\lambda)=\lambda^{1/p}$. Under this identification, the map induced by the $p$-Frobenius $\Fr \colon A \to A^{(p)}$ on the Dieudonn\'e modules is the $\psi$-semilinear map $V\colon \mathbf{D} \to \mathbf{D}$ \cite[A.5]{Goren}. Moreover, the map $V$ is nilpotent.

If $\sigma$ is very inseparable, there exists $n$ with $\sigma^n|_{A[p]^0}=0$. Since $A[\Fr] \subseteq A[p]^0$, we have $\sigma^n|_{A[\Fr]}=0$ and hence $\sigma^n$ factors through $\Fr$. Conversely, suppose that $\sigma^n = \tau \circ \Fr$ for some $\tau \colon A^{(p)} \rightarrow A$. Passing to the Dieudonn\'e modules, and using the fact that the map $D(\tau)$ is $\psi^{-1}$-semilinear (and hence commutes with $V$), we see that $D(\sigma^n)\mathbf{D} \subseteq V \mathbf{D}, $ so $D(\sigma)$ is nilpotent modulo $V$.   By part (\ref{modV}), we find that $\sigma$ is very inseparable. 

For the proof of \eqref{ideal}, note that, without any assumptions on the ring $\End(A)$, the set $I$ of maps in $\End(A)$ that factor through the $p$-Frobenius $\Fr$ is a left ideal in $\End(A)$. Therefore by \eqref{vifrob}, if the ring $\End(A)$ is commutative, the set of very inseparable maps in $\End(A)$ coincides with the radical  of $I$, and hence is an ideal. 

For (\ref{notideal}), consider $A=E \times E$ for an ordinary elliptic curve $E$. Then $\End(A) = \mathrm{M}_2(\End(E))$ surjects onto $\End(A[p]^0) = \mathrm{M}_2(\F_p)$ (see Example \ref{ESS}). The set of very inseparable endomorphisms corresponds under this map to matrices whose image in $\mathrm{M}_2(\F_p)$ is  nilpotent,  and it suffices to remark that the set of nilpotent elements in  $\mathrm{M}_2(\F_p)$ is not closed under neither addition nor multiplication. 

For (\ref{ord}), we indeed have $R\subseteq \mathcal{O}$ by \cite[7.4]{Waterhouse}. Let $\sigma \in \mathcal O$ and observe that the coprimality of $[\mathcal O{:}R]$ to $p$ implies that there exists an integer $N$ coprime to $p$ with $N\sigma \in R$. Therefore, it suffices to prove the equivalence of inseparability and very inseparability for elements of $R$. Represent such an element $\sigma \in R$ by
$$ \sum_{i\geq 1} a_i \pi^i + \sum_{j \geq 0} b_j (\pi')^j,$$ with $\pi'=q/\pi$ and $a_i,b_i\in \Z$ (the terms containing both $\pi$ and $\pi'$ may be omitted since they do not change the image of $\sigma$ in $\End(\mathbf{D})$). Since $A$ is defined over $\F_q$ with $q=p^r$,  we have $\pi = \Fr^r$ and $\pi'=\mathrm{Ver}^r$, where $\mathrm{Ver} \colon A^{(p)} \to A$ is the  Verschiebung. On the level of Dieudonn\'e modules, $\Fr$ maps to $V$ and $\mathrm{Ver}$ maps to $F$ \cite[A.5]{Goren}, so $\sigma$ maps to the endomorphism 
$$ \tilde \sigma \coloneqq  \sum b_j F^{rj} \in \End_{K[F]}(\mathbf{D}/V\mathbf{D}). $$
In the ordinary case, the Dieudonn\'e modules of $A[p]$ and $A[p]^0$ are $$D(A[p]) = (\mathbf{E}/(V,1-F) \oplus \mathbf{E}/(F,1-V))^g$$
and $$\mathbf{D} = D(A[p]^0) = (\mathbf{E}/(V,1-F))^g$$ (since this is the subgroup scheme of $D(A[p])$ on which $V$ is nilpotent \cite[A.5]{Goren}). 
Hence $F=1$ in $\End(\mathbf{D}/V\mathbf{D}) = \mathrm{M}_g(\F_p)$, and 
$\tilde \sigma \coloneqq \sum b_j$
is a scalar multiplication; therefore, it is nilpotent if and only if it is zero (i.e., non-invertible).

The final claim follows from a result of Freeman and Lauter \cite[Prop.\ 3.7]{FreemanLauter}. \end{proof} 

 We were unable to answer the following natural questions:
 
 \begin{question}\mbox{}\begin{enumerate} \item Construct a \emph{simple} abelian variety for which very inseparable endomorphisms  do not form an ideal. \item Consider the subset of the moduli space of abelian varieties of given dimension and given degree of polarisation consisting of those abelian varieties $A$ for which inseparable isogenies are very inseparable. Is this locus dense in the moduli space? Recall that, by a result of Norman and Oort, the ordinary locus is dense \cite[Thm.\ 3.1]{NO}. 
 \end{enumerate}\end{question}

\section{The tame zeta function}

We revert to our standard assumptions and define the following general ``tame'' version of the Artin--Mazur zeta function for varieties over fields of positive characteristic (the construction is somewhat reminiscent of that of the Artin--Hasse exponential): 

\begin{definition} 
Let $K$ denote an algebraically closed field of positive characteristic $p>0$, $X/K$ an algebraic variety, and let $f \colon X \rightarrow X$ denote a confined morphism. The \emph{tame zeta function}  $\zeta^*_{f}$  is defined as the formal power series 
\begin{equation} \label{deftame} \zeta^*_{f}(z) \coloneqq \exp \left( \sum_{p \nmid n} {f_n} \frac{z^n}{n} \right), \end{equation} 
summing only over $n$ that are not divisible by $p$.
\end{definition} 

A basic observation is:
\begin{proposition} \label{p} We have identities of formal power series \begin{equation} \label{prod} \zeta_{X,f}(z) = \prod_{i \geq 0} \sqrt[p^i]{\zeta^*_{X,f^{ p^i}} (z^{p^i})}\end{equation} and 
\begin{equation} \label{inv} \zeta^*_{X,f}(z) = \zeta_{X,f}(z)/\sqrt[p]{\zeta_{X,f^{ p}}(z^p)}. \end{equation} 
\end{proposition}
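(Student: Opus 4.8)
The plan is to deduce both identities from a single elementary substitution rule for formal power series, combined with the bookkeeping identity $(f^{p^i})_m = f_{p^i m}$; there is no analysis or geometry involved. Throughout, $X$ and the confined morphism $f$ are fixed, I write $f_n$ for the number of fixed points of $f^n$, and I abbreviate $\zeta_{X,f}, \zeta^*_{X,f}$ by $\zeta_f, \zeta^*_f$. The identity $(f^{p^i})_m = f_{p^i m}$ holds because the fixed-point set of $(f^{p^i})^m = f^{p^i m}$ is that of $f^{p^i m}$.

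First I would record the substitution rule. For any sequence $(a_n)_{n\geq1}$ of complex numbers and any integer $k\geq1$, set $g(z)=\exp\bigl(\sum_{n\geq1}a_nz^n/n\bigr)$. Then $g(z^k)$ has constant term $1$, so over a $\Q$-algebra it has a unique $k$-th root with constant term $1$, namely $\exp\bigl(\tfrac1k\log g(z^k)\bigr)$; since $\log g(z^k)=\sum_{n\geq1}a_nz^{kn}/n$, this yields
$$\sqrt[k]{g(z^k)}=\exp\Bigl(\sum_{n\geq1}a_n\frac{z^{kn}}{kn}\Bigr).$$
All radicals, exponentials and logarithms below are understood in this formal sense. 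Applying the rule with $k=p^i$ and with $a_m=f_{p^im}$ for $p\nmid m$, $a_m=0$ otherwise — so that $\exp\bigl(\sum_m a_mz^m/m\bigr)=\zeta^*_{f^{p^i}}(z)$ — gives
$$\sqrt[p^i]{\zeta^*_{f^{p^i}}(z^{p^i})}=\exp\Bigl(\sum_{p\nmid m}f_{p^im}\frac{z^{p^im}}{p^im}\Bigr).$$

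Next, for \eqref{prod} I would use that every integer $N\geq1$ is uniquely of the form $N=p^im$ with $i=\ord_p(N)\geq0$ and $p\nmid m$; summing the exponents in the last display over all $i\geq0$ reassembles $\sum_{N\geq1}f_Nz^N/N$, whence $\prod_{i\geq0}\sqrt[p^i]{\zeta^*_{f^{p^i}}(z^{p^i})}=\zeta_f(z)$. The infinite product converges in the $z$-adic topology because its $i$-th factor is $1+O(z^{p^i})$. For \eqref{inv}, the substitution rule with $k=p$ and $a_m=f_{pm}$ gives $\sqrt[p]{\zeta_{f^p}(z^p)}=\exp\bigl(\sum_{m\geq1}f_{pm}z^{pm}/(pm)\bigr)=\exp\bigl(\sum_{p\mid N}f_Nz^N/N\bigr)$; dividing $\zeta_f(z)=\exp\bigl(\sum_{N\geq1}f_Nz^N/N\bigr)$ by this leaves exactly $\exp\bigl(\sum_{p\nmid N}f_Nz^N/N\bigr)=\zeta^*_f(z)$. (Alternatively, \eqref{inv} is obtained by isolating the $i=0$ factor of \eqref{prod} and invoking \eqref{prod} once more with $f^p$, $z^p$ in place of $f$, $z$.)

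I do not expect a genuine obstacle here: the whole content is the unique decomposition $N=p^{\ord_p(N)}m$ of the exponent, fed into the substitution rule. The only points meriting a word of care — both handled above — are giving the radicals a precise meaning (unique power-series $k$-th root with constant term $1$, which exists since we work over a $\Q$-algebra and every radicand has constant term $1$) and the $z$-adic convergence of the infinite product in \eqref{prod}.
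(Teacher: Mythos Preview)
Your proof is correct and follows essentially the same route as the paper: both arguments rest on the unique decomposition $N=p^{\ord_p(N)}m$ with $p\nmid m$ together with $(f^{p^i})_m=f_{p^im}$, and both identify $\sqrt[p^i]{\zeta^*_{f^{p^i}}(z^{p^i})}$ with $\exp\bigl(\sum_{p\nmid m}f_{p^im}z^{p^im}/(p^im)\bigr)$. Your version is a bit more explicit about the meaning of the formal radicals and the $z$-adic convergence of the product, but there is no substantive difference.
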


\begin{proof}
For the first identity (\ref{prod}), we do a formal computation, splitting the sum over $n$ into parts where $n$ is exactly divisible by a given power $p^i$ of $p$ (denoted $p^i {\mid \! \mid} n$): 
\begin{align*}
\zeta_{X,f}(z) &=  \exp \left( \sum_{i \geq 0} \sum_{p^i {\mid \! \mid} n} \frac{f_n}{n} z^n \right) = \exp \left( \sum_{i \geq 0} \sum_{p \nmid m} \frac{f_{p^i m}}{p^i m} z^{p^i m} \right) \\
 &= \exp \left( \sum_{i \geq 0} \frac{1}{p^i}\sum_{p \nmid m} \frac{(f^{ p^i})_m}{m} \left(z^{p^i}\right)^m \right)   = \prod_{i \geq 0} \exp \left( \frac{1}{p^i}  \log \left(  {\zeta^*_{f^{ p^i}} (z^{p^i})}  \right)  \right). 
 \end{align*}
 For the second identity (\ref{inv}), we compute as follows:  \begin{align*}
\zeta^*_{X,f}(z) &=  \exp \left( \sum_{n \geq 1} \frac{f_n}{n} z^n  - \sum_{k \geq 1} \frac{f_{pk}}{pk} z^{pk}  \right) =  \exp \left( \sum_{n \geq 1} \frac{f_n}{n} z^n \right) \Big/ \exp \left(  \frac{1}{p} \sum_{k \geq 1} \frac{(f^p)_{k}}{k} z^{pk}  \right). 
\qedhere
 \end{align*}
 \end{proof}
 
 \begin{theorem} \label{tamehast} For $\sigma \acts A$, there exists an integer $t>0$ (depending on $\sigma$) such that $\left(\zeta_\sigma^*\right)^t$ is a rational function. In particular, $\zeta^*_\sigma$ is  algebraic. 
 \end{theorem}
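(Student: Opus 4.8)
The plan is to express $\zeta^*_\sigma$ in terms of the degree zeta functions of iterates of $\sigma$ together with the arithmetic factors $r_n,s_n$ that control inseparability, and then exploit the periodicity of those factors. Recall from Equation~\eqref{sdi} that $\sigma_n = \deg(\sigma^n-1)/\deg_{\mathrm i}(\sigma^n-1)$, and from Proposition~\ref{rs} that $\deg_{\mathrm i}(\sigma^n-1) = r_n |n|_p^{s_n}$ with $(r_n)$, $(s_n)$ periodic of some common period $\varpi$ (which we may take to be coprime-to-$p$ times a power of $p$, or simply a fixed integer). The first step is to observe that for $p \nmid n$ we have $|n|_p = 1$, so $\sigma_n = \deg(\sigma^n-1)/r_n$, and hence
$$ \log \zeta^*_\sigma(z) = \sum_{p \nmid n} \frac{\deg(\sigma^n-1)}{r_n} \frac{z^n}{n}. $$
Now split this sum according to the residue class of $n$ modulo $\varpi$ (restricted to classes coprime to $p$): on each such class $n \equiv c \pmod{\varpi}$, the factor $r_n$ is the constant $r_c \in \Q^*$, so that contribution is $r_c^{-1} \sum_{n \equiv c (\varpi)} \deg(\sigma^n-1) z^n/n$.

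The second step is to recognize each inner sum as a "twisted" piece of a degree zeta function. Using roots of unity to pick out arithmetic progressions, write $\mathbf 1_{n\equiv c(\varpi)} = \tfrac{1}{\varpi}\sum_{\chi} \chi^{-c}\chi(n)$ where $\chi$ ranges over the $\varpi$-th roots of unity (acting as $\chi(n)=\chi^n$); then $\sum_{n\equiv c(\varpi)} \deg(\sigma^n-1)z^n/n = \tfrac1\varpi\sum_\chi \chi^{-c}\log D_\sigma(\chi z)$ where $D_\sigma(\chi z)$ makes sense since $D_\sigma$ is rational by Proposition~\ref{lrab}.\eqref{lrab.i}. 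Since $D_\sigma(\chi z) = \prod_i (1-\lambda_i \chi z)^{-m_i}$ is again rational, every $\log D_\sigma(\chi z)$ is a $\Q$-linear combination of terms $\log(1-\mu z)$. Removing the progressions divisible by $p$ amounts to subtracting the analogous sum over $n$ that \emph{are} divisible by $p$; writing $n = pk$ there, $\deg(\sigma^n-1) = \deg((\sigma^p)^k - 1)$, so this is captured by $\tfrac1p \log D_{\sigma^p}(z^p)$ and its twists. Assembling, one gets a formula
$$ \zeta^*_\sigma(z) = \prod_{\text{finite}} \bigl(\text{rational function}\bigr)^{\text{rational exponent}}, $$
where the exponents lie in $\tfrac1{\varpi}\Z$ scaled by the $r_c^{-1} \in \Q^*$ — i.e.\ all exponents lie in $\tfrac1N\Z$ for a single integer $N$ (a common denominator of $\varpi$, $p$, and the denominators of the $r_c$). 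Raising to the $N$-th power clears all fractional exponents, so $(\zeta^*_\sigma)^N$ is a product of integer powers of rational functions, hence rational; this gives the theorem with $t = N$, and algebraicity of $\zeta^*_\sigma$ follows since it is an $N$-th root of a rational function.

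I expect the main obstacle to be purely bookkeeping: one must be careful that the roots-of-unity decomposition is compatible with the fact that $D_\sigma$ has rational (not merely complex) coefficients, so that the twisted logarithms recombine into something with controlled exponents; and one must verify that the denominators appearing — from $\varpi$, from $p$ in Proposition~\ref{p}, and from the rational numbers $r_c$ — are genuinely bounded by a single integer independent of $n$. The periodicity of $(r_n)$ (so that only finitely many values $r_c$ occur) is exactly what makes this work, and it is already supplied by Proposition~\ref{rs}. A cleaner route avoiding explicit roots of unity is to note that the operation "restrict a power series $\sum a_n z^n$ to an arithmetic progression of $n$" preserves the class of rational functions (it is a Hadamard-type operation realizable by a finite $\Q$-linear combination of substitutions $z \mapsto \zeta z$), and that $\log \zeta^*_\sigma$, after multiplying by $N$ to clear the $r_c^{-1}$, is an $N$-fold such combination of $\log D_\sigma$ and $\tfrac1p\log D_{\sigma^p}(z^p)$; exponentiating then yields that $(\zeta^*_\sigma)^N$ has logarithmic derivative a rational function with the right integrality, and since $\zeta^*_\sigma(0)=1$ this forces $(\zeta^*_\sigma)^N \in \Q(z)$ by Lemma~\ref{basicpowerserieslemmata}.\eqref{basicpowerserieslemmata5} (applied to the relevant integer-coefficient recurrence). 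Either way, the content is that periodicity $+$ rationality of $D_\sigma$ $+$ the product formula of Proposition~\ref{p} combine to pin down $\zeta^*_\sigma$ up to a bounded root.
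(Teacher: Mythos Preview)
Your argument has a real gap at the point where you assert that ``the exponents lie in $\tfrac{1}{\varpi}\Z$ scaled by the $r_c^{-1}$.'' The roots-of-unity decomposition introduces \emph{cyclotomic}, not rational, exponents: after your manipulation the exponent attached to $D_\sigma(\chi z)$ is $\tfrac{1}{\varpi}\sum_{c}r_c^{-1}\chi^{-c}$, an element of $\Q(\zeta_\varpi)$ that need not be rational. More concretely, the step ``restrict $\sum a_n z^n/n$ to a residue class $n\equiv c\pmod\varpi$, then exponentiate'' does \emph{not} send roots of rational functions to roots of rational functions, even when $\exp(\sum a_n z^n/n)$ is rational. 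Take $a_n\equiv 1$, $\varpi=3$, $c=1$: by Lemma~\ref{basicpowerserieslemmata}\eqref{basicpowerserieslemmata5} the function $g(z)=\exp\bigl(\sum_{n\equiv 1(3)}z^n/n\bigr)$ has $g^N$ rational iff $N\cdot\mathbf 1_{n\equiv 1(3)}=\sum_j m_j\mu_j^n$ with integer $m_j$; but the unique such representation uses the cube roots of unity with coefficients $N/3,\,N\omega/3,\,N\omega^2/3$, never all integers. Your ``cleaner route'' has the same defect: restricting a rational power series to an arithmetic progression preserves rationality of $\sum a_n z^n$, but you need this for $\exp(\sum a_n z^n/n)$, which is a different (and false) statement.

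What rescues the situation---and what the paper's proof uses---is the extra structure in Proposition~\ref{rs} that you invoked only partially: the identity $r_n=r_{\gcd(n,\omega)}$ for $p\nmid n$, not merely periodicity. M\"obius inversion over divisors then yields $1/r_n=\sum_{d\mid\gcd(n,\omega)}d\alpha_d$ with rational $\alpha_d$, so the sum over $n$ regroups into sums over \emph{multiples of $d$} rather than residue classes. These are well-behaved: $\sum_{d\mid n}\deg(\sigma^n-1)z^n/n=\log D_{\sigma^d}(z^d)$ is honestly the logarithm of a rational function, and one arrives at $\zeta^*_\sigma=\prod_{d\mid\omega}\bigl(D_{\sigma^d}(z^d)\big/\sqrt[p]{D_{\sigma^{pd}}(z^{pd})}\bigr)^{\alpha_d}$ with rational exponents. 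In your Fourier language, the gcd condition is exactly what forces the transform $\sum_c r_c^{-1}\mathbf 1_{p\nmid c}\,\chi^{-c}$ to be Galois-invariant and hence rational; that is the missing ingredient.
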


\begin{proof} 
 Proposition \ref{rs} implies that for $p {\nmid} n$  the inseparability degree $\deg_\mathrm{i}(\sigma^n-1)=r_n$ is periodic of period $\omega$ with $r_n=r_{\gcd(n,\omega)}$. Let $\mu$ denote the M\"obius function. For $n{\mid} \omega$, define rational numbers $\alpha_n$ by  \begin{equation}\label{eqnprftame} \alpha_n =\frac{1}{n} \sum\limits_{e\mid n} \frac{\mu(n/e)}{r_e}.\end{equation} By M\"obius inversion and the equality $r_n=r_{\gcd(n,\omega)}$, we get $$\frac{1}{r_n} = \sum\limits_{d\mid \gcd(n,\omega)} d\alpha _d  \quad \mbox{for all $n\geq 1$}.$$  Therefore, 
\begin{align*} \zeta^*_\sigma(z) &= \exp \left(\sum_{p{\nmid}n} \frac{\deg(\sigma^{n} -1)}{n r_n} z^{n}\right)=\exp \left(\sum_{d{\mid}\omega}\alpha_d\sum_{p{\nmid}m} \frac{\deg(\sigma^{dm} -1)}{m} z^{dm}\right)\\ &= \prod_{d{\mid}\omega} \left( \exp\left( \sum_{p{\nmid}m} \frac{\deg(\sigma^{dm} -1)}{m} z^{dm}\right) \right)^{\alpha_d}. \end{align*} 
Using the notation of Proposition \ref{lrab}.(\ref{lrab.i}), we can rewrite this as
\begin{equation} \label{zetaD} \zeta^*_\sigma(z) =  \prod_{d{\mid}\omega} \left( D_{\sigma^d}(z^d) / \sqrt[p]{D_{\sigma^{pd}}(z^{pd}}) \right)^{\alpha_d} \end{equation} and hence the result follows from the rationality of the degree zeta functions.
\end{proof} 

The minimal exponent $t_\sigma>0$ for which $\zeta_\sigma^*(z) \in \Q(z)$ is an invariant of the dynamical system $\sigma \acts A$. We briefly discuss the arithmetic significance of such $t_\sigma$, by considering both ordinary and supersingular elliptic curves. 

\begin{proposition} \label{stranget} Let $E$ denote an elliptic curve, $\sigma \in \End(E)$, and let $t_\sigma$ be the minimal positive integer for which $\zeta_\sigma^*(z)^{t_\sigma} \in \Q(z)$. 
\begin{enumerate}
\item \label{1} If $E$ is ordinary, $t_\sigma$ is a pure $p$-th power.
\item \label{2} There exists a (supersingular) $E$ and $\sigma \acts E$ for which $t_\sigma$ is not a pure $p$-th power. 
\end{enumerate}
\end{proposition}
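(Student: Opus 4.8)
The plan is to analyze the formula \eqref{zetaD} expressing $\zeta^*_\sigma$ as a product of $\alpha_d$-th powers of the rational functions $D_{\sigma^d}(z^d)/\sqrt[p]{D_{\sigma^{pd}}(z^{pd})}$, and to pin down the denominators of the exponents $\alpha_d$ and of the $p$-th root that appears. For part \eqref{1}, suppose $E$ is ordinary. By Example \ref{ESS}, $\End(E[p]^0) = \F_p$, so the reduction map $\End(E) \to \F_p$ sends $\sigma$ either to $0$ (in which case $\sigma$ is very inseparable, $\sigma^n-1$ is separable for all $n$, and $\deg_{\mathrm i}(\sigma^n-1) = 1$ so $r_n = 1$ for all $n$) or to a nonzero element of $\F_p^*$ (in which case $\sigma$ restricted to $E[p]^0$ is an isomorphism). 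In the first case $\omega = 1$, $\alpha_1 = 1$, and $\zeta^*_\sigma(z) = D_\sigma(z)/\sqrt[p]{D_{\sigma^p}(z^p)}$, whose $p$-th power is rational, so $t_\sigma \mid p$. In the second case, using Example \ref{remE} (there is a nonarchimedean absolute value $|\cdot|$ on $\End(E)$ with $\deg_{\mathrm i}(\tau) = |\tau|^{-1}$), for $p \nmid n$ we have $\deg_{\mathrm i}(\sigma^n-1) = |\sigma^n - 1|^{-1}$; I would check that $\sigma^n - 1$ is a unit in $\End(E)$ for $p\nmid n$ whenever $\bar\sigma\in\F_p^*$ — indeed its image in $\End(E[p]^0)=\F_p$ is $\bar\sigma^n - 1$, which is a unit unless $e \mid n$ where $e$ is the order of $\bar\sigma$ in $\F_p^*$; when $e\mid n$ one still has $\sigma^n-1$ separable (image a unit) hence $r_n=1$ — so in fact $r_n = 1$ for all $n$ with $p\nmid n$ in the ordinary case regardless. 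Thus again $\omega$ may be taken to be $1$, all $\alpha_d$ except $\alpha_1=1$ vanish, and $t_\sigma \mid p$; since $\zeta^*_\sigma$ is visibly not rational in general (the $p$-th root does not disappear, as shown in the introductory example), $t_\sigma = p$, a pure $p$-th power.

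For part \eqref{2}, the strategy is to exhibit the supersingular example promised in the introduction and to compute $t_\sigma$ explicitly. I would take a supersingular elliptic curve $E$ and an endomorphism $\sigma$ whose reduction $\bar\sigma \in \End(E[p]^0)$ lies in the nilpotent cone but is chosen so that the periodic sequence $(r_n)_{p\nmid n}$ of inseparability degrees has period $\omega = p+1$ (this is forced by the structure of $\End(E[p]^0) \cong \{\begin{pmatrix} a^p & b \\ 0 & a\end{pmatrix}\}$ from Example \ref{ESS}: the relevant orders of units in the residue field $\F_{p^2}$ are divisors of $p^2-1 = (p-1)(p+1)$, and one can arrange the order $e_{\mathfrak m}$ of the relevant unit to be exactly $p+1$). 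Then in \eqref{zetaD} the exponents $\alpha_d$ for $d \mid (p+1)$ have denominators dividing $\mathrm{lcm}(d) = p+1$ (from \eqref{eqnprftame}, $\alpha_n$ has denominator dividing $n$), and the outermost $p$-th root contributes a factor $p$. Combining, $t_\sigma$ divides $p^2(p+1)$: one factor $p$ from each $\sqrt[p]{D_{\sigma^{pd}}(z^{pd})}$ (but these can be collected, contributing only $p$, unless raising to the $\alpha_d$-power reintroduces a $p$ in the denominator — here is where the $p^2$ rather than $p$ appears, since $\alpha_d$ can have $p$ in its denominator once $\sigma^{pd}$ is involved), and a factor $p+1$ from the $\alpha_d$. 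I would then verify the minimality $t_\sigma = p^2(p+1)$ by showing no smaller power kills all the denominators, i.e., by exhibiting that the $(p+1)$-primary part and the $p^2$-part genuinely occur — this amounts to checking that the relevant $D_{\sigma^d}$ are not themselves perfect powers or otherwise degenerate, which one can do by looking at their poles (the dominant root $\Lambda$ and the roots $\xi_i$ from Proposition \ref{lrab}).

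The main obstacle will be the minimality claim in part \eqref{2}: showing $t_\sigma$ is not merely a divisor of $p^2(p+1)$ but equals it. For this I expect to need an explicit supersingular curve and endomorphism — e.g. one from the LMFDB, analogous to the surface used in Corollary \ref{follows}.\eqref{invis} — together with a concrete computation of $\deg(\sigma^n-1) = \mathrm{Nrd}(\sigma^n-1)$ as a linear recurrence and of the periodic inseparability pattern $(r_n, s_n)$ via the action on the Dieudonné module $\mathbf{D} = D(E[p]^0) = \mathbf{E}/\mathbf{E}(V+F)$. The delicate point is disentangling which denominators in \eqref{zetaD} actually survive after all cancellations among the various $D_{\sigma^d}(z^d)$: a priori the product could collapse. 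I would handle this by passing to logarithmic derivatives, where everything becomes a $\Q$-linear combination of geometric series $\sum (\lambda z)^n$, and reading off the exact denominator of the coefficient of a well-chosen monomial $z^n$ with $n$ in a prescribed residue class modulo $p(p+1)$, using that the $\lambda_i$ are multiplicatively independent enough (their absolute values distinguish them, via Proposition \ref{lame}) that no cancellation can occur. The choice of the specific supersingular $\sigma$ with $t_\sigma = p^2(p+1)$ — made concrete in the statement of Proposition \ref{stranget} in the introduction — is what makes this tractable.
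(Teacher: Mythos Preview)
Your argument for part~(\ref{1}) contains a genuine error. You claim that in the ordinary non-very-inseparable case, $r_n = \deg_{\mathrm i}(\sigma^n-1) = 1$ for all $n$ coprime to $p$, reasoning that ``when $e\mid n$ one still has $\sigma^n-1$ separable (image a unit).'' But this is exactly backwards: if $e$ is the order of $\bar\sigma$ in $\F_p^* = \End(E[p]^0)^*$, then for $e\mid n$ the image $\bar\sigma^n - 1 = 0$ is \emph{not} a unit, so by Theorem~\ref{sepveryinsep} the map $\sigma^n-1$ is \emph{inseparable} and $r_n > 1$. Your conclusion $t_\sigma \mid p$ is then false in general---indeed the worked example in the introduction (doubling on an ordinary curve in characteristic $3$) has $t_\sigma = 9 = p^2$. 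The paper's proof instead shows that for $p\nmid n$ one has $r_n = 1$ when $s\nmid n$ and $r_n$ a fixed $p$-power when $s\mid n$, where $s$ is the order of $\bar\sigma$; the crucial arithmetic input you are missing is that $s \mid (p-1)$ (because for an \emph{ordinary} elliptic curve $p$ splits in the CM field, so the residue field of the relevant valuation is $\F_p$, not $\F_{p^2}$). One then computes $\alpha_s = (1-M)/(Ms)$ with $M = p^{-r}$, and the fact that $s \mid p-1$ is precisely what forces the denominator $Msp$ to be a pure $p$-power.

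Your sketch for part~(\ref{2}) also has problems. You write that $\bar\sigma$ should lie ``in the nilpotent cone,'' but by Theorem~\ref{sepveryinsep} that would make $\sigma$ very inseparable and $\zeta^*_\sigma$ rational---the opposite of what is needed. The paper works instead with the quaternion order $\mathcal O = \End(E)$ and its completion at $p$, whose residue field is $\F_{p^2}$; one chooses $\sigma$ whose image generates $\F_{p^2}^*$ (so the period is $\omega = p^2-1$, not $p+1$) and such that $v(\sigma^{p^2-1}-1)=1$. The existence of such $\sigma$ requires a short argument (perturb by an element of valuation~$1$). A direct M\"obius computation then gives $\alpha_{p^2-1} = -1/(p(p+1))$, and combined with the outer $p$-th root in~\eqref{zetaD} this produces the denominator $p^2(p+1)$. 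Your proposed period $p+1$ would give $\alpha_{p+1} = -(p-1)/(p(p+1))$, whose denominator is $p(p+1)/\gcd(p-1,p+1)$; this may still work for some $p$ but is not the clean construction, and in any case you have not shown such $\sigma$ exists.
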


\begin{proof} If $\sigma$ is an endomorphism of an ordinary elliptic curve, then there is a valuation $|\cdot|$ on the quotient field $L$ of the endomorphism ring that extends the $p$-valuation and such that $\deg_{\mathrm i} \sigma = |\sigma|$ (cf.\ Remark \ref{remE}). If $\sigma$ is very inseparable, $\zeta_{\sigma}^*(z)$ is rational, and the claim is clear. Otherwise, let $s$ be the minimal positive integer for which $M\coloneqq|\sigma^s-1|<1$. We find that for integers $n$ not divisible by $p$, 
\begin{equation} \label{dio} r_n=\deg_{\mathrm{i}}(\sigma^n-1)=\left\{ \begin{array}{ll}  1 &  \mbox{ if $s {\nmid} n$, }\\  M & \mbox{ if $s \vert n$.}\end{array}\right. \end{equation} 
Substituting this into Formula \eqref{eqnprftame}, we get $\omega=s$. If $s=1$, we have $\alpha_1=1/M$, and if $s>1$, we find 
 \begin{equation} \alpha_n=\left\{ \begin{array}{ll}  1 &  \mbox{ if $n=1$,}\\  0 & \mbox{ if $n\vert s$, $1<n <s$,}\\ {(1-M)}/{(Ms)} & \mbox{ if $n=s$.}\end{array}\right. \end{equation} Since $p$ splits in $L$ \cite[\S 2.10]{Deuring}, the valuation $|\cdot|$ has residue field $\F_p$, and hence $s {\mid} (p-1)$. From Formula \eqref{zetaD}, it follows that $\zeta_\sigma^*(z)$ is a product of rational functions to powers $1/p$ and $(1-M)/(Mps)$ (and $1/(Mp)$ if $s=1$). Now with $M=p^{-r}$ for some $r \geq 1$, we find that $(1-M)/(Mps) = (p^r-1)/p^{r+1}s$, which has denominator a power of $p$, since $s$ divides $p-1$. This proves (\ref{1}). 

For (\ref{2}) consider a supersingular elliptic curve $A=E$. We have already seen in Remark \ref{remE} that the inseparability degree of an isogeny is detected by a valuation on the quaternion algebra $\End(E) \otimes \Q$, on which we now briefly elaborate. The ring $\mathcal{O} = \End (E)$ is a maximal order in a quaternion algebra, and its completion $\mathcal{O}_p=\End(E) \otimes_{\Z} \Z_p$ is an order in the unique 
quaternion division algebra $D$ over $\Q_p$ \cite{Deuring}. There exists a valuation $v\colon D \to \Z$ on $D$ with the property that $\mathcal{O}_p =\{x\in D : v(x)\geq 0\}$. Let $\mathfrak{p} =\{x\in \mathcal{O} : v(x)\geq 1\}$. Then $\mathfrak{p}$ is a two-sided maximal ideal in $\mathcal{O}$ with $p\mathcal{O}_p=\mathfrak{p}^2\mathcal{O}_p$ and we have an isomorphism $\mathcal{O}/\mathfrak{p} \simeq \F_{p^2}.$ The inseparable degree of an isogeny $\sigma \in \mathcal{O}$ is given by the formula $\deg_{\mathrm{i}}(\sigma)=p^{v(\sigma)}$, cf.\ \cite[Prop.\ 5.5]{BridyBordeaux}. 

Let $\sigma\in \mathcal{O}$ be an endomorphism such that its image in $\mathcal{O}/\mathfrak{p} \simeq \F_{p^2}$ generates the multiplicative group of the field and such that $v(\sigma^{p^2-1}-1)=1$. Then for integers $n$ not divisible by $p$ we have 
\begin{equation} \label{die} \deg_{\mathrm{i}}(\sigma^n-1)=\left\{ \begin{array}{ll}  1 &  \mbox{ if $(p^2 -1){\nmid} n$, }\\ p & \mbox{ if $(p^2 -1)\vert n$.}\end{array}\right. \end{equation} 
Let us prove that such $\sigma$  exists: choose elements $\sigma_0,\tau \in \mathcal{O}$ such that the image of $\sigma_0$ in $\mathcal{O}/\mathfrak{p}\simeq \F_{p^2}$ generates the multiplicative group of the field and  $v(\tau)=1$. Then one of the elements $\sigma_0,\sigma_0+\tau$ satisfies the desired conditions. 

Furthermore, the degree is of the form $\deg(\sigma^n-1) = m^n -\lambda^n - (\lambda')^n +1$ for $\lambda,\lambda' \in \bar \Q$ and $m\coloneqq\lambda\lambda' \in \Z$.  
Using the convenient notation $$\mathcal{Z}(z)\coloneqq\frac{\sqrt[p]{1-z^p}}{1-z},$$
a somewhat tedious computation, splitting the terms in $ \log\, \zeta^*_{\sigma}(z)$ to take into account the cases in Formula (\ref{die}), gives that 
 $$\zeta_{\sigma}^*(z)=\frac{g_{1}(z)}{\sqrt[p(p+1)]{g_{p^2-1}(z)}}, \mbox{ where }
g_{i}(z)\coloneqq\frac{\mathcal Z(z^i)\, \mathcal Z((mz)^i)}{ \mathcal Z((\lambda z)^i)\, \mathcal Z((\lambda' z)^i)}.$$
Note that $\mathcal Z(z)$ is itself a $p$-th root of a rational function.
We conclude that $t=p^2(p+1)$ suffices to have $\zeta_{\sigma}^*(z)^t \in \Q(z)$ but $\zeta_{\sigma}^*(z)^t$ is not rational for any choice of $t$ as a pure $p$-th power. 
\end{proof}

\section{Functional equations} \label{secFE}

In this section, we study the existence of functional equations for full and tame zeta functions on abelian varieties. Assume throughout the section that $\sigma$ is an isogeny. Under the transformation $z \mapsto 1/\deg(\sigma)z$, we will find a functional equation for zeta functions of very inseparable endomorphisms, and a ``Riemann surface'' version of a functional equation for the tame zeta function. Since this transformation does not make sense for $\zeta_\sigma$ as a formal power series,  $D_\sigma, \zeta_\sigma$, and $\zeta_\sigma^*$ are therefore considered as genuine functions of a complex variable, and the symbols are understood to refer to their (maximal) analytic continuations. 
\begin{proposition} \label{FED}
The degree zeta function $D_\sigma(z)$ \textup{(}cf.\ \ref{dzf}\textup{)} satisfies a functional equation of the form 
$$ D_\sigma\left(\frac{1}{\deg(\sigma)z}\right)= D_\sigma(z)
.$$

\end{proposition}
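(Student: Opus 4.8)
The plan is to read the functional equation off from the complementation symmetry on the numbers $\xi_i$, which is the elementary avatar of Poincar\'e duality on $A$. Write $d\coloneqq\deg(\sigma)$; since $\sigma$ is an isogeny we have $d\geq 1$, and from $\prod_{i=1}^{2g}\xi_i=d$ (Formula \eqref{equalityxiilambdai}) every $\xi_i$ is nonzero. For $I\subseteq\{1,\dots,2g\}$ set $\xi_I\coloneqq\prod_{j\in I}\xi_j$, so that $\xi_{\emptyset}=1$ and $\xi_{\{1,\dots,2g\}}=d$. Expanding the product in \eqref{equalityxiilambdai} and using that $2g$ is even gives
$$\deg(\sigma^n-1)=\prod_{i=1}^{2g}(\xi_i^n-1)=\sum_{I\subseteq\{1,\dots,2g\}}(-1)^{|I|}\xi_I^n,$$
and hence, as in the derivation of the product expansion of $D_\sigma$ from \eqref{equalityxiilambdai} via Lemma \ref{basicpowerserieslemmata}, the identity of rational functions
$$D_\sigma(z)=\prod_{I\subseteq\{1,\dots,2g\}}\bigl(1-\xi_I z\bigr)^{(-1)^{|I|+1}},$$
the product running over all $2^{2g}$ subsets with multiplicity (repeated factors in numerator and denominator partially cancel, which is harmless below).

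Next I would use the involution $I\mapsto I^{c}\coloneqq\{1,\dots,2g\}\setminus I$ on subsets: it satisfies $|I^{c}|=2g-|I|$, hence $(-1)^{|I^{c}|+1}=(-1)^{|I|+1}$, and $\xi_{I^{c}}=d/\xi_I$. Substituting $z\mapsto 1/(dz)$ and reindexing the product by $I\mapsto I^{c}$ then yields
$$D_\sigma\!\left(\frac{1}{dz}\right)=\prod_{I}\left(1-\frac{\xi_I}{dz}\right)^{(-1)^{|I|+1}}=\prod_{I}\left(1-\frac{\xi_{I^{c}}}{dz}\right)^{(-1)^{|I|+1}}=\prod_{I}\left(1-\frac{1}{\xi_I z}\right)^{(-1)^{|I|+1}}.$$
Writing $1-\frac{1}{\xi_I z}=-\frac{1-\xi_I z}{\xi_I z}$ splits the right-hand side as $D_\sigma(z)$ times the correction factor $C\coloneqq\prod_{I}\left(\frac{-1}{\xi_I z}\right)^{(-1)^{|I|+1}}$.

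The remaining step is to verify $C=1$, which is the only place where a little care is needed, though it is entirely routine. The exponent of $-1$ in $C$ is $\sum_{I}(-1)^{|I|+1}$ and that of $z$ is $-\sum_{I}(-1)^{|I|+1}$; both vanish because $\sum_{I}(-1)^{|I|+1}=-\sum_{i=0}^{2g}\binom{2g}{i}(-1)^{i}=-(1-1)^{2g}=0$. The exponent of a fixed $\xi_j$ in $C$ is $\sum_{I\ni j}(-1)^{|I|}$; writing $I=J\cup\{j\}$ with $J\subseteq\{1,\dots,2g\}\setminus\{j\}$ gives $\sum_{J}(-1)^{|J|+1}=-(1-1)^{2g-1}=0$ (here $2g-1\geq 1$). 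Hence $C=1$ and $D_\sigma\bigl(1/(\deg(\sigma)z)\bigr)=D_\sigma(z)$, as claimed.

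Conceptually there is no genuine obstacle here: the statement is Poincar\'e duality $\mathrm{H}^{i}\cong\mathrm{H}^{2g-i,\vee}(-g)$ together with the fact that $\sigma^{*}$ acts on $\mathrm{H}^{2g}$ as multiplication by $\deg(\sigma)$, and the computation above is just the bookkeeping that makes this explicit. Alternatively one can phrase it through the relation $\det(1-\sigma^{*}z\mid\mathrm{H}^{2g-i})=(-dz)^{\dim\mathrm{H}^{i}}\det(\sigma^{*}\mid\mathrm{H}^{i})^{-1}\det\!\bigl(1-\tfrac{1}{dz}\,\sigma^{*}\mid\mathrm{H}^{i}\bigr)$ for each $i$, but the subset bookkeeping above is the most self-contained route.
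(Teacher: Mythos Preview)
Your proof is correct and follows essentially the same route as the paper's: both exploit the symmetry $\lambda\mapsto\deg(\sigma)/\lambda$ on the roots of the recurrence (your complementation $I\mapsto I^{c}$) and then verify that the resulting monomial correction factor is trivial. Your computation that $C=1$ via the binomial identities is exactly the paper's verification that $\sum_i m_i=0$ (your exponents of $-1$ and $z$) and $\prod_i\lambda_i^{m_i}=1$ (your exponent of each $\xi_j$), carried out more explicitly at the level of subsets rather than the collected multiplicities $m_i$.
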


\begin{proof} We use the notations from Equation (\ref{equalityxiilambdai}). It is clear that the multiset of $\lambda_i$ is stable under the involution $\lambda \mapsto \deg(\sigma)/\lambda$. From this symmetry, we obtain a functional equation for the exponential generating function $D_{\sigma}(z)=\prod_{i=1}^r (1-\lambda_i z)^{-m_i}$ of the form $$D_{\sigma}\left(\frac{1}{\deg(\sigma)z}\right)=(-z)^{\sum_{i=1}^r m_i}\prod_{i=1}^r \lambda_i^{m_i} D_{\sigma}(z).$$ Subsituting $n=0$ into \eqref{equalityxiilambdai}  gives $\sum_{i=1}^r m_i=0$ and a direct computation using the form of $\lambda_i$ and the fact that $q$ is even shows that $\prod_{i=1}^r \lambda_i^{m_i}=1$, which gives the claim.
\end{proof}

\begin{remark}
The functional equation for $D_\sigma(z)$ can be placed in the cohomological framework from Remark \ref{remcohom}: consider the Poincar\'e duality pairing 
$ \langle \cdot, \cdot \rangle \colon \mathrm{H}^i \times \mathrm{H}^{2g-i} \otimes \Q_\ell(g) \rightarrow \Q_\ell, $
under which 
$ \langle \sigma_* x, y \rangle = \langle x, \sigma^* y \rangle, $
with $\sigma_* \sigma^* = [\deg \sigma]$. Hence if $\sigma^*$ has eigenvalues $\alpha_i$ on $\mathrm{H}^i$, then $\sigma_*$ has eigenvalues $\deg(\sigma)/\alpha_i$ on $\mathrm{H}^{2g-i}$, but these sets are the same by duality. In this way the functional equation picks up a factor $z^{\chi(A)}$, where $\chi(A)$ is the $\ell$-adic Euler characteristic of $A$. But here, $\chi(A)=0$ (since the $i$-th $\ell$-adic Betti number of an abelian variety of dimension $g$ is the binomial coefficient $\binom{2g}{i}$). 
\end{remark}

\begin{theorem} \label{thmFE} \mbox{ } 
\begin{enumerate}
\item If $\sigma$ is very inseparable, then $\zeta_\sigma(z)$ extends to a meromorphic function on the entire complex plane and satisfies a functional equation of the form 
$$ \zeta_\sigma\left(\frac{1}{\deg(\sigma)z}\right) = \zeta_\sigma(z).
$$
\item If $\sigma$ satisfies the conditions of Theorem \ref{NBsimple}, then $\zeta_\sigma(z)$ cannot satisfy a functional equation under $z \mapsto 1/\deg(\sigma)z$; actually, the intersection of the domains of $\zeta_\sigma(z)$ and $\zeta_\sigma(1/\deg(\sigma)z)$ is empty. 
\item For any confined $\sigma$, let $X_\sigma$ denote the concrete Riemann surface of the algebraic function $\zeta^*_{\sigma}(z)$ \textup{(}a finite covering of the Riemann sphere\textup{)}. Then there exists an involution $\tau \in \mathrm{Aut} (X_\sigma)$ such that the meromorphic extension $\zeta_\sigma^* \colon X_\sigma \rightarrow \widehat \C$ fits into a commutative diagram of the form 
\begin{equation} \label{fedia}
  \xymatrix{
 X_{\sigma} \ar[d]_{\zeta_\sigma^*} \ar[r]^{\tau} & X_{\sigma} \ar[d]^{\zeta_\sigma^*} \\ 
\hat \C \ar[r]^{\mathrm{id}} & \hat \C .
  }
\end{equation} 
\end{enumerate}
\end{theorem}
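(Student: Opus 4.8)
Parts (i) and (ii) are essentially immediate: for (i), $\zeta_\sigma=D_\sigma$ by Theorem \ref{insepz}, which is rational and satisfies the functional equation of Proposition \ref{FED}; for (ii), Theorem \ref{NBsimple} confines the domain of $\zeta_\sigma$ to $|z|<1/\Lambda$, whereas $\zeta_\sigma(1/(\deg(\sigma)z))$ lives on $|z|>\Lambda/\deg(\sigma)\ge 1/\Lambda$ (the last inequality because $\deg\sigma=\prod|\xi_i|\le\prod\max(|\xi_i|,1)^2=\Lambda^2$), so the two never overlap. I therefore concentrate on (iii). Assume $\sigma$ is an isogeny, set $N:=\deg\sigma\ge 1$, and let $\iota$ denote the Möbius involution $z\mapsto 1/(Nz)$. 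By Theorem \ref{tamehast}, $\zeta^*_\sigma$ is algebraic; let $t:=t_\sigma$ be the least positive integer with $\mathcal R:=(\zeta^*_\sigma)^{t}\in\C(z)$, and recall from the proof of Theorem \ref{tamehast} (Formula \eqref{zetaD}) that $\mathcal R(z)=\prod_{d\mid\omega}D_{\sigma^{d}}(z^{d})^{t\alpha_d}\,D_{\sigma^{pd}}(z^{pd})^{-t\alpha_d/p}$. The plan has three steps: (a) $\mathcal R\circ\iota=\mathcal R$; (b) $X_\sigma$ is the smooth projective model of the curve $w^{t}=\mathcal R(z)$; (c) $(z,w)\mapsto(\iota(z),w)$ is the required involution.

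For step (a): since $\deg(\sigma^{d})=N^{d}$, Proposition \ref{FED} applied to $\sigma^{d}$ gives $D_{\sigma^{d}}(1/(N^{d}v))=D_{\sigma^{d}}(v)$; putting $v=z^{d}$ and using $1/(N^{d}z^{d})=\iota(z)^{d}$ yields $D_{\sigma^{d}}(\iota(z)^{d})=D_{\sigma^{d}}(z^{d})$, and likewise with $d$ replaced by $pd$. Hence each factor of $\mathcal R$, and therefore the divisor of $\mathcal R$, is $\iota$-stable, so $\mathcal R\circ\iota=c\,\mathcal R$ for a constant $c$; applying $\iota$ twice gives $c^{2}=1$. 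To obtain $c=1$ I would evaluate the identity at $z=0$ and $z=\infty$ (which $\iota$ interchanges): the zeros and poles of every $D_{\sigma^{e}}(z^{e})$ lie in $\C^{\ast}$, so $\mathcal R$ is finite and nonzero at $0$ and $\infty$ with $\mathcal R(0)=\zeta^*_\sigma(0)^{t}=1$, whence $c=\mathcal R(\infty)$, and $\mathcal R(\infty)=1$ because each degree zeta function takes the value $1$ at infinity (this is the pair of identities $\sum m_i=0$ and $\prod\lambda_i^{m_i}=1$ used in the proof of Proposition \ref{FED}, i.e.\ the vanishing of $\chi(A)$).

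For step (b): the minimality of $t=t_\sigma$ makes $T^{t}-\mathcal R(z)$ irreducible over $\C(z)$; indeed, by Capelli's theorem a proper factorisation would force $\mathcal R=S^{q}$ for some prime $q\mid t$ with $S\in\C(z)$, or $4\mid t$ and $\mathcal R\in-4\,\C(z)^{4}$, in either case producing $(\zeta^*_\sigma)^{t/q}\in\C(z)$ (respectively $(\zeta^*_\sigma)^{t/4}\in\C(z)$) and contradicting minimality. Hence the function field of $\zeta^*_\sigma$ is $\C(z)[w]/(w^{t}-\mathcal R(z))$, so its concrete Riemann surface $X_\sigma$ is the smooth projective model of $\{w^{t}=\mathcal R(z)\}$, with $\pi$ the $z$-coordinate and $\zeta^*_\sigma$ the $w$-coordinate (for $\zeta^*_\sigma$ rational, $t=1$ and $X_\sigma=\widehat{\C}$). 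For step (c): because $\mathcal R(\iota(z))=\mathcal R(z)$, the map $(z,w)\mapsto(\iota(z),w)$ is an automorphism of the affine curve $w^{t}=\mathcal R(z)$ and extends uniquely to $\tau\in\mathrm{Aut}(X_\sigma)$; it satisfies $\tau^{2}=\mathrm{id}$ (as $\iota^{2}=\mathrm{id}$ and the $w$-coordinate is fixed) and $\tau\ne\mathrm{id}$ (it moves $z$), so $\tau$ is an involution. Finally $\zeta^*_\sigma\circ\tau=w\circ\tau=w=\zeta^*_\sigma$, which is exactly the commutativity of the diagram \eqref{fedia}; in addition $\pi\circ\tau=\iota\circ\pi$, recording that $\tau$ lifts the substitution $z\mapsto 1/(\deg(\sigma)z)$ on the base.

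I expect the delicate point to be the last assertion of step (a): upgrading $\mathcal R\circ\iota=c\,\mathcal R$ to $c=1$, i.e.\ verifying that no sign intervenes. This rests on $\mathcal R(\infty)=1$, hence on the balancedness of the divisors of the degree zeta functions (the cohomological reflection of $\chi(A)=0$), and one has to be slightly careful because the exponents $t\alpha_d$ occurring in the product for $\mathcal R$ need not be integers; everything else reduces to Proposition \ref{FED}, Theorem \ref{tamehast}, and Capelli's irreducibility criterion.
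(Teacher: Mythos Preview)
Your proofs of (i) and (ii) are the same as the paper's (with the identical inequality $\deg\sigma\le\Lambda^2$ for (ii)).

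For (iii) the underlying idea is also the same as the paper's: realise $X_\sigma$ as a cyclic cover of $\widehat{\C}$ via the product formula \eqref{zetaD} and lift the M\"obius involution $\iota$ using Proposition~\ref{FED}. The difference is a trade-off. The paper clears denominators, choosing $N=\mathrm{lcm}(B_d)$ so that the defining function $Q(x)=\prod_{d\mid\omega}(D_{\sigma^d}(x^d)^p/D_{\sigma^{pd}}(x^{pd}))^{\beta_d}$ has \emph{integer} exponents; then $Q\circ\iota=Q$ is immediate and $\tau(x,y)=(\iota(x),y)$ is visibly an involution of $y^N=Q(x)$. The price is that the paper does not check this curve is irreducible, i.e.\ that it really is the concrete Riemann surface $X_\sigma$. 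You instead take the minimal $t=t_\sigma$ and verify irreducibility of $T^t-\mathcal R$ via Capelli, so your curve is genuinely $X_\sigma$; the price is that the exponents in your product for $\mathcal R$ need not be integers, which is exactly the difficulty in step~(a).

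Your divisor argument for $\mathcal R\circ\iota=c\,\mathcal R$ is valid (take logarithmic derivatives: the identity $\mathcal R'/\mathcal R=\sum c_j\,G_j'/G_j$ holds as an equality of \emph{rational} functions, whence the divisor of $\mathcal R$ is the stated $\Q$-combination and is $\iota$-stable). The step you correctly flag as delicate, $c=1$ via ``$\mathcal R(\infty)=1$ because each $D$ is $1$ at $\infty$'', is not fully justified as written: the product formula for $\mathcal R$ is only a germ identity at $z=0$, and evaluating it at $\infty$ after analytic continuation could in principle introduce a root of unity. This is the one soft spot. The easiest fix is to borrow the paper's move at this point: with $M$ chosen so that all $Mc_j\in\Z$, the identity $\mathcal R^M=\prod G_j^{Mc_j}$ is an honest equality of rational functions, from which $\mathcal R(\iota)^M=\mathcal R^M$; combined with your $c^2=1$ and an evaluation at a fixed point $z_0=\pm 1/\sqrt{\deg\sigma}$ of $\iota$ (using that each $\iota$-invariant $G_j$ has \emph{even} order at $z_0$), one obtains $c=1$ provided one checks the resulting parity. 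In short, your argument and the paper's each leave one routine point implicit, and splicing them together closes both.
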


\begin{proof}
If $\sigma$ is very inseparable, then $\zeta_\sigma = D_\sigma$, and the result follows from Proposition \ref{FED}. 

If $\sigma$ satisfies the conditions of Theorem \ref{NBsimple} and $\zeta_\sigma$ has a natural boundary on $|z|=1/\Lambda$, then $\zeta_\sigma(z)$ and $\zeta_\sigma(\frac{1}{\deg(\sigma) z})$ are commonly defined only on $\frac{\Lambda}{\deg(\sigma)} < |z|<\frac{1}{\Lambda}$ which is empty when $\Lambda^2 \geq \deg(\sigma)$. By Proposition \ \ref{lame}.(\ref{lambdaismax}), we have $\Lambda^2 \geq \Lambda \geq \prod |\xi_i| = \deg \sigma, $ so this always holds. 

For the third part of the theorem, consider equation (\ref{zetaD}) that expresses the function $\zeta_\sigma^*$ in terms of degree zeta functions. Write $\alpha_d/p = A_d/B_d$ for coprime integers $A_d, B_d$, let $N$ denote the least common multiple of $B_d$ over all $d{\mid}\omega$ and set $\beta_d\coloneqq N\alpha_d/p \in \Z$. Then $\zeta_\sigma^*$ entends to a function on the Riemann surface $X_\sigma$ corresponding to the projective curve defined by the affine equation
$$ y^N = \prod_{d{\mid}\omega} \left( \frac{D_{\sigma^d}(x^d)^p}{D_{\sigma^{pd}}(x^{pd})} \right)^{\beta_d}$$ given by $\zeta_{\sigma}^*(x,y)=y.$ By the fact that all $D_\sigma$ satisfy the functional equation as in Proposition \ref{FED}, the map $\tau \colon X_\sigma \rightarrow X_\sigma, \tau (x,y) = \left(\frac{1}{\deg(\sigma) x},y\right) $ is an involution of $X_\sigma$ (we use that $\deg(\sigma^r)=\deg(\sigma)^r$ for any integer $r$).   The same functional equations then prove that the diagram (\ref{fedia}) commutes. 
\end{proof}

\section{Prime orbit growth} \label{orbits}

In this section, we consider the prime orbit growth for a confined endomorphism $\sigma \colon A \to A$. We are interested in possible analogues of the Prime Number Theorem (``PNT''), much like Parry and Pollicott proved for Axiom A flows \cite{PP}. In our case, it follows almost immediately from the rationality of their zeta functions that such an analogue holds for very inseparable $\sigma$. In general, however, as we will see, the prime orbit counting function displays infinitely many forms of limiting behaviour. Nevertheless, the (weaker) analogue of Chebyshev's bounds and Mertens' second theorem hold. In accordance with our philosophy, we also consider counting only ``tame'' prime orbits (i.e, of length coprime to $p$), and in this case we see finitely many forms of limiting behaviour, detectable from properties of the $p$-divisible group. Finally, we briefly discuss good main and error terms reflecting analogues of the Riemann Hypothesis. 

\begin{nd}
A \emph{prime orbit} $O$ of length $\ell=:\ell(O)$ of $\sigma\colon A \to A$ is a set $O=\{x,\sigma x, \sigma^2 x,\dots, \sigma^{\ell} x = x\} \subseteq A(K)$ of exact cardinality $\ell$. Letting $P_\ell$ denote the number of prime orbits of length $\ell$ for $\sigma$, the \emph{prime orbit counting function} is 
$ \pi_\sigma(X) \coloneqq \sum_{\ell \leq X} P_\ell. $
\end{nd}
As formal power series, the zeta function of $\sigma$ admits a product expansion
$$ \zeta_\sigma(z) = \prod_O \frac{1}{1-z^{\ell(O)}}, $$
where the product runs over all prime orbits. Since $\sigma_n = \sum_{\ell {\mid} n} \ell P_\ell$, M\"obius inversion implies that 
$P_\ell =  \frac{1}{\ell} \sum_{n \mid \ell} \mu\left(\frac{\ell}{n}\right) \sigma_n. $
Our proofs will exploit the fact that the numbers $\sigma_n$ differ from the linear recurrent sequence $\deg(\sigma^n-1)$ only by a multiplicative factor, the inseparable degree, that grows quite slowly. 

Not to complicate matters, we make the following assumption: 
\begin{center}
 \fbox{ \begin{minipage}{0.85 \textwidth} \textbf{Standing assumption/notations.} \\ The dominant root $\Lambda>1$ 
is unique.\\ The $\varpi$-periodic sequences $(r_n)$ and $(s_n)$, $s_n \leq 0$, are as in Formula \eqref{defrn}. 
\\ All asymptotic formul{\ae} in this section hold for integer values of the parameter.  \end{minipage}}
 \end{center}
By Proposition \ref{lame}.(\ref{>1}), this implies that $\Lambda>1$ is of multiplicity one. 
 We start with a basic proposition describing the asymptotics of $P_\ell$. Interestingly, the error terms are determined by the zeros of the degree zeta function. This appears to be a rather strong result with a very easy proof, dependent on the exponential growth. 
 
 \begin{proposition}\label{AsymforPl}  $\displaystyle{P_{\ell} = \frac{\Lambda^{\ell}  }{\ell r_{\ell} |\ell|_p^{s_{\ell}}} + O(\Lambda^{ \Theta \ell })}$, where 
$ \displaystyle{\Theta \coloneqq \max \{ \Re(s) : D_\sigma(\Lambda^{-s})=0 \} \in [\frac{1}{2},1)}.$ \end{proposition}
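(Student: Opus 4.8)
The plan is to work directly from the M\"obius inversion formula $P_\ell=\frac1\ell\sum_{n\mid\ell}\mu(\ell/n)\sigma_n$ recorded above, isolating the term $n=\ell$ as the main contribution and estimating all the other terms trivially. For the tail I would use the bound $\sigma_n\le\deg(\sigma^n-1)=O(\Lambda^n)$ noted in the proof of Proposition~\ref{conv} (the inseparability degree is at least $1$, and $\deg(\sigma^n-1)$ is a linear recurrence with dominant root of modulus $\Lambda$), together with the elementary fact that every proper divisor of $\ell$ is $\le\ell/2$. Summing the resulting geometric series gives
$$\frac1\ell\sum_{\substack{n\mid\ell\\ n<\ell}}\mu(\ell/n)\sigma_n\;=\;O\!\Big(\tfrac1\ell\sum_{n\le\ell/2}\Lambda^n\Big)\;=\;O(\Lambda^{\ell/2}),$$
which will be absorbed into the claimed error term once we check that $\Theta\ge\tfrac12$.

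Next I would treat the main term $\frac1\ell\sigma_\ell$. Combining~\eqref{sdi} with Proposition~\ref{rs} gives $\sigma_\ell=\deg(\sigma^\ell-1)/(r_\ell|\ell|_p^{s_\ell})$, and~\eqref{equalityxiilambdai} lets me expand $\deg(\sigma^\ell-1)=\sum_i m_i\lambda_i^\ell$. Since the standing assumption $\Lambda>1$ forces $\sigma$ to be non-nilpotent (Proposition~\ref{lame}.(\ref{lambda>1})) and $\Lambda$ to be the unique dominant root, Proposition~\ref{lame}.(\ref{>1}) shows that $\Lambda$ occurs with multiplicity one, i.e.\ with coefficient $m_\Lambda=1$, while every other $\lambda_i$ has modulus at most $\Lambda'<\Lambda$ (notation of Proposition~\ref{proplambda'}). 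Hence $\deg(\sigma^\ell-1)=\Lambda^\ell+O((\Lambda')^\ell)$. Dividing, and using that $(r_n)$ is $\varpi$-periodic with values in $\Q^*_{>0}$ and that $|\ell|_p^{s_\ell}\ge1$ because $s_\ell\le0$, so that $r_\ell|\ell|_p^{s_\ell}$ is bounded below by a positive constant, I get
$$\frac1\ell\sigma_\ell\;=\;\frac{\Lambda^\ell}{\ell\,r_\ell|\ell|_p^{s_\ell}}\;+\;O\!\big((\Lambda')^\ell\big).$$

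Finally I would identify $(\Lambda')^\ell$ with $\Lambda^{\Theta\ell}$ and pin down $\Theta$. The zeros of $D_\sigma(z)=\prod_i(1-\lambda_i z)^{-m_i}$ are precisely the points $1/\lambda_i$ with $m_i<0$, so $D_\sigma(\Lambda^{-s})=0$ exactly when $\Lambda^s=\lambda_i$ for such an $i$, which forces $\Re(s)=\log|\lambda_i|/\log\Lambda$ and hence $\Lambda^\Theta=\max\{|\lambda_i|:m_i<0\}$. As $m_\Lambda=1>0$, this maximum is at most $\Lambda'$; and by Proposition~\ref{proplambda'}.(\ref{proplambda'2}) the function $D_\sigma$ has a zero of modulus $1/\Lambda'$, so the maximum is exactly $\Lambda'$. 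Thus $O((\Lambda')^\ell)=O(\Lambda^{\Theta\ell})$, and since $\Lambda'\ge\sqrt\Lambda$ by Proposition~\ref{proplambda'}.(\ref{proplambda'3}) we get $\Theta\ge\tfrac12$, which both absorbs the $O(\Lambda^{\ell/2})$ tail and, together with $\Lambda'<\Lambda$, yields $\Theta\in[\tfrac12,1)$. I do not expect a genuine obstacle here: the one point deserving care is the uniform lower bound on $r_\ell|\ell|_p^{s_\ell}$, which is exactly what prevents the error term of the main contribution from swelling when we divide; everything else is a routine geometric estimate layered on top of the already-established Propositions~\ref{lame} and~\ref{proplambda'}.
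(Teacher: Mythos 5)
Your proposal follows the paper's proof essentially step for step: M\"obius inversion isolating the $n=\ell$ term, the trivial bound $\sigma_n\leq\deg(\sigma^n-1)=O(\Lambda^n)$ combined with the fact that proper divisors are at most $\ell/2$, and then the expansion $\deg(\sigma^\ell-1)=\Lambda^\ell+O((\Lambda')^\ell)$ divided by the (uniformly bounded below) inseparability degree. The paper is slightly terser in invoking Proposition~\ref{proplambda'} to identify $\Lambda'=\Lambda^\Theta$ and conclude $\Theta\in[\tfrac12,1)$, whereas you spell out the identification via the zeros $1/\lambda_i$ with $m_i<0$; the content is the same.
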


\begin{proof} From Formula \eqref{formidable}, we get $\deg(\sigma^n-1) = \Lambda^n + O(\Lambda^{ \Theta \ell })$ for 
$$\displaystyle{ \Theta \coloneqq \max_{|\lambda_i| \neq \Lambda} \frac{\log | \lambda_i|}{\log(\Lambda)}}.$$ By Proposition \ref{proplambda'}, this equals the largest real part of a zero of $D_\sigma(\Lambda^{-s})$, and $1/2 \leq \Theta < 1$. 
 Hence $$\sigma_{\ell}=\frac{\deg(\sigma^{\ell}-1)}{ \deg_{\mathrm{i}}(\sigma^{\ell}-1)} = \frac{ \Lambda^{\ell} }{ r_{\ell} |\ell|_p^{s_{\ell}}} + O(\Lambda^{ \Theta \ell }).$$
 Expressing the number of prime orbits in terms of the number of fixed points, we get $$P_{\ell}=\frac{1}{\ell} \sum_{n{\mid}\ell} \mu\left(\frac{\ell}{n}\right)\sigma_n=\frac{\sigma_{\ell}}{\ell } + \frac{1}{\ell}\sum_{\substack{n{\mid}\ell \\ n<\ell}} \mu\left(\frac{\ell}{n}\right)\sigma_n.$$ Since $|\mu(\ell/n) \sigma _n| \leq \deg(\sigma^n-1) \leq M \Lambda^n$ for some constant $M$ depending only on $\sigma$, we get $$\left|\sum_{\substack{n{\mid}\ell \\n<\ell}} \mu\left(\frac{\ell}{n}\right)\sigma_n\right| \leq \ell M\Lambda^{\ell/2}, $$ and since $\Theta \geq 1/2$, the claim follows.   
\end{proof}

 The remainder of this section is dedicated to a study of what happens to the asymptotics if we further average in $\ell$, like in the prime number theorem or Mertens' theorem. We will see that between PNT and Mertens' theorem, information about $\sigma$ being very inseparable or not gets lost. 

The next lemma is formulated in a general way and will be applied several times in order to asymptotically replace  factors ``$1/\ell$''  for $\ell \leq X$ by ``$1/X$''. This leads to simplified main terms at the cost of worse error terms (we will discuss another approach leading to a ``complicated main term with good error term'' at the end of the section). 

\begin{lemma}\label{changeltoX} Let $(a_{\ell})$ be a bounded sequence and let $\Lambda>1$ be a real number. Then $$\sum_{\ell \leq X} \frac{a_{\ell}}{\ell}\Lambda^{\ell-X} = \frac{1}{X}\sum_{\ell \leq X} a_{\ell}\Lambda^{\ell-X}+O(1/X^2).$$
\end{lemma}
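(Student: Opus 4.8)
The plan is to bound the difference between the two sides directly, using that the geometric weight $\Lambda^{\ell-X}$ decays much faster than any polynomial in $1/X$. Let $C$ be a uniform bound with $|a_\ell|\le C$ for all $\ell$. Writing $\frac1\ell-\frac1X=\frac{X-\ell}{\ell X}$ and substituting $k=X-\ell$, the quantity to control is
\[
\sum_{\ell\le X} a_\ell\,\Lambda^{\ell-X}\Bigl(\frac1\ell-\frac1X\Bigr)
= \sum_{k=0}^{X-1} a_{X-k}\,\Lambda^{-k}\,\frac{k}{(X-k)X},
\]
so in absolute value it is at most $\dfrac{C}{X}\displaystyle\sum_{k=0}^{X-1}\Lambda^{-k}\,\frac{k}{X-k}$, and it suffices to prove that this inner sum is $O(1/X)$.

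First I would split the inner sum at $k=\lfloor X/2\rfloor$. For the range $0\le k\le X/2$, one has $X-k\ge X/2$, hence $\frac{k}{X-k}\le\frac{2k}{X}$, so this part is bounded by $\frac{2}{X}\sum_{k\ge 0}k\Lambda^{-k}$; the series $\sum_{k\ge 0}k\Lambda^{-k}$ converges because $\Lambda>1$, so this contributes $O(1/X)$. For the range $k>X/2$, I use the crude estimates $\Lambda^{-k}\le\Lambda^{-X/2}$ and $\frac{k}{X-k}\le k\le X$, together with the fact that there are at most $X$ such terms, to see that this part is $O\bigl(X^2\Lambda^{-X/2}\bigr)$, which is $o(X^{-N})$ for every $N$ since $\Lambda>1$; in particular it is $O(1/X)$. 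Adding the two ranges, the inner sum is $O(1/X)$, and therefore the whole expression is $O(1/X^2)$, which is exactly the claim.

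There is essentially no obstacle here: the only thing that needs a word of care is that the exponential factor $\Lambda^{-k}$ dominates all the polynomial factors, which is what makes both the tail $k>X/2$ negligible beyond any power of $1/X$ and the series $\sum_k k\Lambda^{-k}$ in the main range convergent. The hypothesis $\Lambda>1$ is used in both places and is the only input besides the boundedness of $(a_\ell)$.
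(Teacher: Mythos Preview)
Your proof is correct and follows essentially the same approach as the paper: both subtract the two sides, split the resulting sum at the midpoint (your range $0\le k\le X/2$ is the paper's ``top half'' $X/2\le\ell\le X$ after the substitution $k=X-\ell$), bound the near-diagonal part using $\sum_{k\ge0}k\Lambda^{-k}<\infty$, and dispose of the far part by the crude exponential bound $\Lambda^{-X/2}$. The only difference is cosmetic --- the change of variable $k=X-\ell$ and the slightly different bookkeeping of constants.
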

\begin{proof} Write $$\sum_{\ell \leq X} \frac{a_{\ell}}{\ell}\Lambda^{\ell-X} - \frac{1}{X}\sum_{\ell \leq X} a_{\ell}\Lambda^{\ell-X}  = \sum_{\ell \leq X} \frac{a_{\ell}(X-\ell)}{X\ell} \Lambda^{\ell-X}.$$ 
With $M\coloneqq\sup |a_{\ell}| < +\infty$, the ``top half'' of this sum can be bounded as follows: $$\left| \sum_{X/2 \leq \ell \leq X} \frac{a_{\ell}(X-\ell)}{X\ell} \Lambda^{\ell-X} \right| \leq \frac{2M}{X^2} \sum_{i\geq 0} i \Lambda^{-i} = O(1/X^2)$$ while the ``bottom half'' is easily seen to be $O(X\Lambda^{-X/2})$, whence the claim. \end{proof}

\subsection*{(Non-)analogues of PNT and analogues of Chebyshev's estimates} The first application is to the following ``fluctuating'' asymptotics for the prime orbit counting function: 
\begin{proposition}\label{fluct} $ \displaystyle{\frac{X \pi_{\sigma}(X)}{\Lambda^X} = \sum_{\ell \leq X} \frac{1}{r_{\ell} |\ell|_p^{s_{\ell}}} \Lambda^{\ell-X} + O(1/X).}$\end{proposition}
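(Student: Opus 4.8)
The plan is to feed the pointwise asymptotics for $P_\ell$ from Proposition~\ref{AsymforPl} into $\pi_\sigma(X)=\sum_{\ell\le X}P_\ell$, dispose of the resulting error term using the exponential growth, and then invoke Lemma~\ref{changeltoX} to trade the factors $1/\ell$ for $1/X$.

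First I would sum the estimate of Proposition~\ref{AsymforPl}. Since that proposition gives $P_\ell=\dfrac{\Lambda^\ell}{\ell\,r_\ell|\ell|_p^{s_\ell}}+O(\Lambda^{\Theta\ell})$ with $\Theta<1$ and an implied constant uniform in $\ell$ (the constant $M$ in its proof is absolute), summing over $\ell\le X$ and using $\Lambda^{\Theta}>1$ together with geometric summation yields
\[
\pi_\sigma(X)=\sum_{\ell\le X}\frac{\Lambda^\ell}{\ell\,r_\ell|\ell|_p^{s_\ell}}+O(\Lambda^{\Theta X}).
\]
Multiplying by $X/\Lambda^X$, the error term becomes $O\!\bigl(X\Lambda^{-(1-\Theta)X}\bigr)$, which decays faster than any negative power of $X$ and is in particular $O(1/X)$. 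So it remains to understand $\dfrac{X}{\Lambda^X}\sum_{\ell\le X}\dfrac{\Lambda^\ell}{\ell\,r_\ell|\ell|_p^{s_\ell}}=X\sum_{\ell\le X}\dfrac{a_\ell}{\ell}\,\Lambda^{\ell-X}$, where $a_\ell\coloneqq 1/(r_\ell|\ell|_p^{s_\ell})=1/\deg_{\mathrm i}(\sigma^\ell-1)$.

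Next I would check that $(a_\ell)$ is bounded, which is exactly the hypothesis needed to apply Lemma~\ref{changeltoX}: the inseparability degree $\deg_{\mathrm i}(\sigma^\ell-1)$ is a nonnegative power of $p$, hence $\ge 1$, so $0<a_\ell\le 1$. (Equivalently, $r_\ell$ is periodic with values in $\Q^*$, and $|\ell|_p^{s_\ell}\ge 1$ because $s_\ell\le 0$ and $|\ell|_p\le 1$.) Lemma~\ref{changeltoX} then gives $\sum_{\ell\le X}\frac{a_\ell}{\ell}\Lambda^{\ell-X}=\frac1X\sum_{\ell\le X}a_\ell\Lambda^{\ell-X}+O(1/X^2)$; multiplying through by $X$ produces $X\sum_{\ell\le X}\frac{a_\ell}{\ell}\Lambda^{\ell-X}=\sum_{\ell\le X}a_\ell\Lambda^{\ell-X}+O(1/X)$. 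Combining this with the first step gives exactly $\dfrac{X\pi_\sigma(X)}{\Lambda^X}=\sum_{\ell\le X}\dfrac{1}{r_\ell|\ell|_p^{s_\ell}}\Lambda^{\ell-X}+O(1/X)$.

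I expect no genuine obstacle here: the statement is a direct assembly of Proposition~\ref{AsymforPl} and Lemma~\ref{changeltoX}. The only points requiring a moment's attention are the boundedness of $(a_\ell)$ (immediate once one recalls $\deg_{\mathrm i}\ge 1$) and the uniformity of the implied constant in Proposition~\ref{AsymforPl}, both of which are already in hand from the earlier material.
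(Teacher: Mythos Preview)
Your proposal is correct and follows essentially the same approach as the paper: sum the asymptotic from Proposition~\ref{AsymforPl}, note that the error terms form a geometric series dominated by $\Lambda^{\Theta X}$, and then apply Lemma~\ref{changeltoX} to the main term. You are slightly more explicit than the paper in verifying that $(a_\ell)$ is bounded (via $\deg_{\mathrm i}\ge 1$), which is exactly the hypothesis Lemma~\ref{changeltoX} requires.
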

\begin{proof} By Proposition \ref{AsymforPl} we see that $$\frac{X \pi_{\sigma}(X)}{\Lambda^X}  = X \sum_{\ell \leq X} P_{\ell} {\Lambda^{-X}}= X \sum_{\ell \leq X} \left( \frac{1 }{\ell r_{\ell} |\ell|_p^{s_{\ell}}}\Lambda^{\ell-X} + {\Lambda^{-X}}O(\Lambda^{\Theta \ell})\right).$$ The error terms in this sum form a geometric series and hence decrease exponentially. Applying Lemma \ref{changeltoX} to the main term, we find the stated result. 
\end{proof}

The next theorem discusses the analogue of the PNT in our setting; an analogue of Chebyshev's 1852 determination of the order of magnitude of the prime counting function 
holds in general, but the analogue of the PNT holds only for very inseparable endomorphisms. The result for general endomorphisms is similar in spirit to that for the $3$-adic doubling map considered in \cite[Thm.~3]{Stangoe}, $S$-integer dynamical systems in \cite{Everest-et-al} (from which we take the terminology ``detector group''), or to Knieper's theorem \cite[Thm.~B]{Knieper} on the asymptotics of closed geodesics on rank one manifolds of non-positive curvature. 
\begin{theorem}\label{asymptoticsforpnt} \mbox{ } 
\begin{enumerate}
\item \label{Cheb}  The order of magnitude of $\pi_\sigma(X)$ is 
$  \pi_\sigma(X) \asymp {\Lambda^{X}}/{X},$ in the sense that the function $X \pi_\sigma(X)/\Lambda^{X}$ is bounded away from $0$ and $\infty$. 
\item \label{xn} 
Consider  the ``detector'' group $$G_\sigma\coloneqq\{(a,x)\in {\Z}/{\varpi \Z} \times \Z_p : a \equiv x \bmod{ |\varpi|_p^{-1}}\}. $$ If $(X_n)$ is a sequence of integers such that $X_n \to +\infty$ and $(X_n,X_n)$ has a limit in the group $G_\sigma$, then the sequence ${X_n \pi_{\sigma}(X_n)}/{\Lambda^{X_n}}$ converges, and every accumulation point of $X \pi_\sigma(X) / \Lambda^X $ arises in this way. 
\item \label{dilim} \begin{enumerate} 
\item If $\sigma$ is very inseparable, $ \displaystyle{ \lim_{X \rightarrow +\infty} X \pi_\sigma(X) / \Lambda^X}$ exists and equals  ${ \Lambda}/{(\Lambda-1).} $
\item If $\sigma$ is not very inseparable, then the set of accumulation points of  $X \pi_\sigma(X) / \Lambda^X $ is a union of a Cantor set and finitely many points. In particular, it is uncountable. 
\end{enumerate}
\end{enumerate}
\end{theorem}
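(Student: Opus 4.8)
The plan is to reduce all three parts, via Proposition~\ref{fluct}, to the study of the single series $S(X)\coloneqq\sum_{\ell\leq X}c_\ell\Lambda^{\ell-X}$, where $c_\ell\coloneqq1/\deg_{\mathrm{i}}(\sigma^\ell-1)=1/(r_\ell|\ell|_p^{s_\ell})\in(0,1]$, so that $X\pi_\sigma(X)/\Lambda^X=S(X)+O(1/X)$. Part~\eqref{Cheb} is then immediate: for the upper bound, $S(X)\leq\sum_{j\geq0}\Lambda^{-j}=\Lambda/(\Lambda-1)$; for the lower bound I would pick $\ell^\ast\in\{X,X-1\}$ with $p\nmid\ell^\ast$, so that $v_p(\ell^\ast)=0$ and $c_{\ell^\ast}=1/r_{\ell^\ast}\geq1/\max_\ell r_\ell>0$ by periodicity of $(r_\ell)$ (Proposition~\ref{rs}), whence $S(X)\geq c_{\ell^\ast}\Lambda^{-1}$ stays bounded away from $0$.

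For part~\eqref{xn}, reindex by $j=X-\ell$ to write $S(X)=\sum_{j=0}^{X-1}c_{X-j}\Lambda^{-j}$; the point is that $c_m$ depends on $m$ only through its image in $\Z/\varpi\Z\times\Z_p$, since $r_m,s_m$ are $\varpi$-periodic and $|m|_p^{s_m}=p^{-s_m v_p(m)}$. Given integers $X_n\to\infty$, I would pass by compactness to a subsequence along which $(X_n,X_n)\to(a,x)$ in $\Z/\varpi\Z\times\Z_p$; the conditions $X_n\equiv a\pmod\varpi$ and $X_n\to x$ in $\Z_p$ force $(a,x)\in G_\sigma$. For each fixed $j$ the term $c_{X_n-j}$ converges to a limit $\gamma_j(a,x)$ --- equal to $r_{a-j}^{-1}p^{s_{a-j}v_p(x-j)}$ when $x\neq j$ in $\Z_p$, and, in the single exceptional case $x=j$ (possible only for $x\in\Z_{\geq0}$, where $v_p(X_n-j)\to\infty$), equal to $0$ if $s_{a-j}<0$ and to $r_{a-j}^{-1}$ if $s_{a-j}=0$. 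Since $0\leq\gamma_j\leq1$, dominated convergence yields $X_n\pi_\sigma(X_n)/\Lambda^{X_n}\to\Psi(a,x)\coloneqq\sum_{j\geq0}\gamma_j(a,x)\Lambda^{-j}$, and one checks that each $\gamma_j$, hence (by the Weierstrass $M$-test) $\Psi\colon G_\sigma\to\R$, is continuous --- the only delicate point being continuity at $x=j$, which holds because $p^{s_{a-j}v_p(x-j)}\to0$ as $x\to j$ when $s_{a-j}<0$ (and $\gamma_j$ is locally constant there when $s_{a-j}=0$). Conversely every accumulation point of $X\pi_\sigma(X)/\Lambda^X$ arises this way, and since $\{(n\bmod\varpi,n):n\geq N\}$ is dense in $G_\sigma$ for every $N$ (Chinese remainder theorem, using that $G_\sigma$, being homeomorphic to $\Z_p$ times a finite group, has no isolated points), the accumulation set equals $\Psi(G_\sigma)$ exactly.

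Part~\eqref{dilim}(a) follows at once: very inseparability makes every $\sigma^n-1$ separable, so $c_\ell\equiv1$, $\Psi\equiv\sum_{j\geq0}\Lambda^{-j}=\Lambda/(\Lambda-1)$, and the limit exists with this value. For part~\eqref{dilim}(b), Lemma~\ref{unboundedinsepdeg} together with boundedness of $(r_n)$ forces $s_\rho<0$ for some residue $\rho\bmod\varpi$; such $\rho$ necessarily has $p^{v_p(\varpi)}\mid\rho$, since only those classes contain integers of unbounded $p$-adic valuation (for the others one may take $s_n=0$). I would decompose $G_\sigma$ into its finitely many clopen columns $G_a\coloneqq\{a\}\times(a+p^{v_p(\varpi)}\Z_p)\cong\Z_p$. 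Grouping the sum defining $\Psi$ over $j$ by residues $\beta\bmod\varpi$ gives on $G_a$ an expression $\Psi(a,x)=\kappa_a+\sum_{\beta:\,s_{a-\beta}<0}r_{a-\beta}^{-1}h_{a,\beta}(x)$ with $\kappa_a$ constant and $h_{a,\beta}(x)=\sum_{j\equiv\beta\,(\varpi)}\bigl(p^{s_{a-\beta}}\bigr)^{v_p(x-j)}\Lambda^{-j}$; after an invertible affine change of $p$-adic coordinate each $h_{a,\beta}$ becomes a shifted copy of the fractal lacunary series $w\mapsto\sum_{t\geq0}\theta^{v_p(w-t)}q^t$ with $\theta=p^{s_{a-\beta}}\in(0,1/p]$ and $q=\Lambda^{-\varpi}\in(0,1)$. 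On columns where no $\beta$ with $s_{a-\beta}<0$ occurs, $\Psi$ is constant, contributing a single value; on the other columns $\Psi|_{G_a}$ is a nonconstant continuous function, and the claim is that its image is a Cantor set.

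The Cantor-set claim is the crux, and I would attack it via the self-similar structure: on a ball of radius $p^{-N}$ of a column, $\Psi|_{G_a}$ equals a constant plus an affine combination of copies of $\sum_t\theta^{v_p(w-t)}q^t$ at parameter $q^{p^{N-v_p(\varpi)}}$ with contraction ratios $\asymp\theta^{\,N-v_p(\varpi)}$, so oscillations on such balls tend to $0$; iterating, for $N$ large the pieces over the $p$ choices of the next digit become pairwise disjoint (the separating gaps, of order $q^{(\cdots)}(1-\theta-q^{(\cdots)})$, are eventually positive since $q^{p^k}\to0$), which forces $\Psi(G_a)$ to be nowhere dense, while perfectness follows because $\Psi|_{G_a}$ is nonconstant on every ball (descend to a deep level where the parameter is small). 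A finite union of such nowhere-dense perfect compacta in $\R$ is again a Cantor set --- perfect since each point lies in one summand, and totally disconnected since by Baire a finite union of closed nowhere-dense sets has empty interior --- so the ``variable'' columns together contribute a Cantor set and the ``flat'' ones add finitely many points, in particular giving uncountability. I expect the genuine obstacle to be exactly these uniform separation estimates: controlling, across nested $p$-adic balls of one column and across the several columns simultaneously, how $\Psi$ separates and redistributes values so as to obtain total disconnectedness and absence of isolated points at once --- precisely the type of analysis carried out for $S$-integer dynamical systems in~\cite{Everest-et-al}.
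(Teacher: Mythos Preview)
Your treatment of parts \eqref{Cheb}, \eqref{xn}, and \eqref{dilim}(a) is essentially the paper's: reduce to the sum $S(X)=\sum_{j\geq0}c_{X-j}\Lambda^{-j}$ via Proposition~\ref{fluct}, bound it above by a geometric series and below by a single term with $p\nmid\ell$, and read off the limit function $\Psi$ on $G_\sigma$ by dominated convergence (your handling of the exceptional case $x=j\in\Z_{\geq0}$ is even a touch more careful than the paper's).

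For \eqref{dilim}(b) the paper takes a markedly simpler route. Instead of decomposing $\Psi$ on each column into fractal lacunary series and attempting to verify the Cantor axioms for the image via gap estimates, the paper shows directly that \emph{on every sufficiently small ball of $G_\sigma$, the map $\Psi$ is either injective or constant}. The argument: for $x\neq y$ with the same residue $\varpi_0$ and $|x-y|_p=p^{-k}$, write
\[
\Psi(\varpi_0,x)-\Psi(\varpi_0,y)=\sum_{\ell\geq0}a_\ell,\qquad a_\ell=\frac{\Lambda^{-\ell}}{r_{\varpi_0-\ell}}\Bigl(|x-\ell|_p^{-s_{\varpi_0-\ell}}-|y-\ell|_p^{-s_{\varpi_0-\ell}}\Bigr).
\]
One checks that $a_\ell\neq0$ only when $\ell\equiv x$ or $\ell\equiv y\pmod{p^{k+1}}$ and $s_{\varpi_0-\ell}<0$; in particular the nonzero indices differ by multiples of $p^k$. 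If $\ell_0$ is the smallest such index, then $|a_{\ell_0}|\geq\tfrac{1}{2r_{\varpi_0-\ell_0}}p^{ks_{\varpi_0-\ell_0}}\Lambda^{-\ell_0}$, while the tail satisfies $\sum_{\ell>\ell_0}|a_\ell|\leq\Lambda^{-\ell_0}\cdot\Lambda^{-p^k}/(1-\Lambda^{-p^k})$. The first quantity decays only polynomially in $p^k$ and the second doubly exponentially, so for $k$ beyond some $K_0=K_0(\sigma)$ the leading term dominates and $\Psi(\varpi_0,x)\neq\Psi(\varpi_0,y)$. On balls where no such $\ell_0$ exists, every $a_\ell$ vanishes identically and $\Psi$ is constant there.

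The Cantor conclusion is then pure topology: a continuous injection from a Cantor set into a Hausdorff space is a homeomorphism onto its image, so each ``injective'' ball of radius $p^{-K_0}$ contributes a Cantor set, the finitely many ``constant'' balls contribute points, and (as you note) a finite union of Cantor sets in $\R$ is again a Cantor set by Baire. This sidesteps entirely the separation estimates you flag as the obstacle --- you never need to show that images of sibling sub-balls are \emph{disjoint as sets}, only that $\Psi$ separates \emph{points}, and the dominant-term bound on the difference does exactly that. Your self-similar approach could likely be completed, but it aims at a stronger statement (nested image gaps) than is required, and the estimate it would ultimately need is precisely the one above.
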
 

\begin{proof}
For (\ref{Cheb}), we estimate the value of  $X \pi_{\sigma}(X)/\Lambda^X$ in terms of the sum in Proposition \ref{fluct}. The bound from above is trivial; for the bound from below we consider the terms with $\ell=X-1$ and $\ell =X$ and note that for at least one of these indices we have $|\ell|_p=1$. We thus obtain the bounds \begin{equation} \label{Chebbounds} \frac{1}{\Lambda\max(r_{\ell})} \leq \displaystyle{\liminf_{X \to +\infty}} \frac{X \pi_{\sigma}(X)}{\Lambda^X}  \leq \displaystyle{\limsup_{X \to +\infty}} \frac{X \pi_{\sigma}(X)}{\Lambda^X}  \leq \frac{\Lambda}{\Lambda-1}.\end{equation}

To prove (\ref{xn}), the formula in Proposition \ref{fluct} may be rewritten as \begin{equation}\label{paczkachusteczekeq1}\displaystyle{\frac{X \pi_{\sigma}(X)}{\Lambda^X} = \sum_{\ell = 0}^{X-1} \frac{1}{r_{X-\ell} |X-\ell|_p^{s_{X-\ell}}} \Lambda^{-\ell} + O(1/X).}\end{equation} If $(X_n)$ is as indicated, i.e., if $X_n \bmod \varpi$ stabilises (say at the value $\varpi_0 \bmod \varpi$) and $X_n$ converges to some $x$ in $\Z_p$, then individual summands in Formula \eqref{paczkachusteczekeq1} have a well-defined limit while the whole sum is bounded uniformly in $n$ by the convergent series $\sum_{t=0}^{\infty} \Lambda^{-t}$. Thus  \begin{equation}\label{paczkachusteczekeq2} \displaystyle{\lim_{n\to +\infty} \frac{X_n \pi_{\sigma}(X_n)}{\Lambda^{X_n}} = \sum_{\ell = 0}^{\infty} \frac{1}{r_{\varpi_0-\ell} |x-\ell|_p^{s_{\varpi_0-\ell}}} \Lambda^{-\ell}},\end{equation} where $(r_n)$ and $(s_n)$ are prolonged to periodic sequences for $n\in \Z$ in an obvious manner; if $x$ is a positive integer, then the term corresponding to $\ell=x$ should be construed as $\frac{\Lambda^{-\ell}}{r_{\varpi_0-\ell}}$ if $s_{\varpi_0-\ell}=0$, and $0$ otherwise.

We now prove \eqref{dilim}. When $\sigma$ is very inseparable, $\varpi=1, r_n = 1$, $s_n =0$, and Proposition \ref{fluct} implies the result by summing the geometric series $\sum_{k \geq 0} \Lambda^{-k}=1/(1-1/\Lambda)$ in (\ref{paczkachusteczekeq2}). Note that the result also follows by Tauberian methods applied to the rational zeta function $\zeta_\sigma=D_\sigma$.

In the case of general $\sigma$, we consider the map $\varphi\colon G_{\sigma}\to \R$ which associates to an element $(\varpi_0, x)\in G_{\sigma}$ the limit  $$\varphi(\varpi_0,x)=\displaystyle{\lim_{n\to +\infty} \frac{X_n \pi_{\sigma}(X_n)}{\Lambda^{X_n}}}$$ for a sequence $(X_n)$ of integers such that $X_n \to +\infty$ and $X_n$ has the limit $(\varpi_0, x)$ in $G_\sigma$. By Formula \eqref{paczkachusteczekeq2}, this map is continuous. We will show that in some neighbourhood of each point the map $\varphi$ is either constant or a homeomorphism. Note that since $G_{\sigma}$ is compact, the set of accumulation points of $X \pi_\sigma(X) / \Lambda^X $ is equal to the image of $\varphi$.

Choose $\varpi_0\bmod{\varpi}$, two distinct elements $x,y \in \Z_p$ and two sequences of integers $(X_n)$ and $(Y_n)$ which tend to infinity and such that $X_n \bmod \varpi = Y_n \bmod \varpi = \varpi_0$ and $X_n \to x$ and $Y_n \to y$ in $\Z_p$. Then by \eqref{paczkachusteczekeq2} we have \begin{equation}\label{equncmany}  \varphi(\varpi_0,x) - \varphi(\varpi_0,y)= \sum_{\ell = 0}^{\infty} a_{\ell},\end{equation} where $$a_{\ell}= \frac{1}{r_{\varpi_0-\ell}} \left(\frac{1}{|x-\ell|_p^{s_{\varpi_0-\ell}}} - \frac{1}{|y-\ell|_p^{s_{\varpi_0-\ell}}}\right) \Lambda^{-\ell}.$$

Let $k\geq 0$ be such that $|x-y|_p=p^{-k}$. The terms $a_{\ell}$ are nonzero if and only if $\ell \equiv x \pmod{p^{k+1}}$ or $\ell \equiv y \pmod{p^{k+1}}$ and furthermore $s_{\varpi_0-\ell}\neq 0$. Note that this depends only on the values of $x-\varpi_0$ and $y-\varpi_0$ modulo $\mathrm{gcd}(p^{k+1},\varpi)$.
 For $\ell$ with $a_{\ell} \neq 0$, the terms $a_{\ell}$  can be bounded from below:  $$|a_{\ell}| \geq \frac{1}{r_{\varpi_0 -\ell}} \left(p^{k s_{\varpi_0-\ell}}- p^{(k+1) s_{\varpi_0-\ell}}\right)\Lambda^{-\ell} \geq \frac{1}{2r_{\varpi_0 -\ell}} p^{k s_{\varpi_0-\ell}}\Lambda^{-\ell}$$ while clearly $|a_{\ell}| \leq \Lambda^{-\ell}$ for any $\ell$. 

We now consider two cases depending on whether or not there exists $\ell$ such that $a_{\ell}\neq 0$. 
\begin{description}
\item[\normalfont\emph{Case 1}] Assume first that there exists $\ell$ such that $a_{\ell}\neq 0$ and let $\ell_0$ be the smallest such $\ell$. Since any other such $\ell$ differs from $\ell_0$ by a multiple of $p^{k}$, we get  
 $$\left|\sum_{\ell = 0}^{\infty} a_{\ell}\right| \geq  \left(\frac{1}{2r_{\varpi_0 -\ell_0}}p^{k s_{\varpi_0-\ell_0}} - \frac{\Lambda^{-p^k}}{1-\Lambda^{-p^k}}\right)\Lambda^{-\ell_0}.$$
 Since the sequences $(r_{\ell})$ and $(s_{\ell})$ take only finitely many values, the expression on the right is positive for $k$ larger than a constant $K_0$ which depends only on $\sigma$ but not on $x$, $y$, or $\varpi_0$. Therefore from \eqref{equncmany} we conclude that if $|x-y|_p \leq p^{-K_0}$, then 
 $ \varphi(\varpi_0,x) \neq \varphi(\varpi_0,y). $
 
\item[\normalfont \emph{Case 2}] If $a_{\ell}=0$ for all $\ell$, then by Formula \eqref{equncmany}  we have $\varphi(\varpi_0,x)=\varphi(\varpi_0,y)$. Therefore the map $\varphi$ is locally constant in a neighbourhood of $(\varpi_0,x)$.
 \end{description}
 Let $p^{\nu}$ be the largest power of $p$ dividing $\varpi$. Replacing $K_0$ with $\max(K_0,\nu)$ if necessary, we see that the map $\varphi\colon G_{\sigma}\to \R$ restricted to open compact subsets $$B(\varpi_0,x)=\{(\varpi_0, Y)\in G_{\sigma} : |x-y|_p \leq p^{-K_0}\} \subseteq G_\sigma$$ is either injective (corresponding to Case 1) or constant (corresponding to Case 2). Since $G_{\sigma}$ is a disjoint union of finitely many subsets $B(\varpi_0,x)$, and since each $B(\varpi_0,x)$ is topologically a Cantor set, we conclude that the image of $\varphi$ is a union of finitely many (possibly no) Cantor sets and finitely many points.

In order to finish the proof, it is enough to note that if $\sigma$ is very inseparable, then there exists $(\varpi_0,x)\in G_{\sigma}$ for which Case 1 holds, so the image of $\varphi$ contains a Cantor set. Indeed, by Lemma \ref{unboundedinsepdeg} there exists an integer $\varpi_0$ such that $s_{\varpi_0} < 0$. It is then easy to see that  Case 1 holds for this choice of $\varpi_0$ and $x=0$.\end{proof}

\begin{example} If $\sigma$ is the (very inseparable) Frobenius  (relative to $\F_q$) on an abelian variety $A/{\F_q}$ of dimension $g$, then $\Lambda=q^g$ and we find that $\sum_{\ell \leq X} P_\ell \sim q^{g(X+1)}/(X(q^g-1))$, where $P_\ell$ is the number of closed points of $A$ with residue field $\F_{q^\ell}$.

Our warm up example from the introduction illustrates what happens in the not very inseparable case. 
\end{example}

\subsection*{Tame prime orbit counting} 
Now consider the analogous question in the tame case. 
\begin{definition}
The \emph{tame prime orbit counting function} is 
$\displaystyle{ \pi^*_\sigma(X) \coloneqq \sum_{\substack{\ell \leq X\\ p {\nmid} \ell}} P_\ell.} $
\end{definition}

\begin{remark} 
The tame zeta function $\zeta_\sigma^*(z)$ is not exactly equal to the formal Euler product over orbits of length coprime to $p$, but rather (notice the difference with Formula (\ref{prod})): $$\prod_{p {\nmid} \ell(O)} \frac{1}{1-z^{\ell(O)}} = \prod_{i \geq 0} \sqrt[p^i]{\zeta^*_\sigma(z^{p^i})}. $$  
\end{remark}

We find only finitely many possible kinds of limiting behaviour, governed by the values of the periodic sequence $(r_n)$ (the warm up example from the introduction illustrates this).

\begin{theorem} \label{PNT-tame} For any $k \in \{0,\dots, p\varpi-1\}$ the limit
\begin{equation} \label{PNTstar} \lim_{\substack{X \rightarrow +\infty \\ X \equiv k \mathrm{\, mod\, }  p\varpi}} \frac{X \pi^*_\sigma(X)}{\Lambda^{X}}  = \rho_k \end{equation} 
exists (so there is convergence along sequences of values of $X$ that converge in the ``tame detector group'' $G_\sigma^*\coloneqq{\Z}/{p\varpi}$) and is given by 
\begin{equation} \label{rhok} \rho_k = \frac{1}{\Lambda^{p\varpi}-1} \sum_{\substack{1\leq n \leq p\varpi \\ p {\nmid} n} }\frac{\Lambda^{\langle n- k  \rangle}}{r_n}, \end{equation}
where $\langle x \rangle$ denotes the representative for $x$ mod $p\varpi$ in $\{1,\dots, p\varpi\}$. 
\end{theorem}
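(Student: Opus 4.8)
The plan is to follow the same template as the proof of Theorem \ref{asymptoticsforpnt}, noting that the tame restriction $p\nmid\ell$ is precisely what makes the $p$-adic factor disappear, since $|\ell|_p=1$. First I would invoke Proposition \ref{AsymforPl}: for $p\nmid\ell$ we have $P_\ell=\Lambda^\ell/(\ell r_\ell)+O(\Lambda^{\Theta\ell})$ with $\Theta<1$. Summing over $\ell\le X$ with $p\nmid\ell$, the error terms form a geometric progression contributing $O(\Lambda^{\Theta X})$, so
\[ \pi^*_\sigma(X)=\sum_{\substack{\ell\le X\\ p\nmid\ell}}\frac{\Lambda^{\ell}}{\ell r_\ell}+O(\Lambda^{\Theta X}). \]
Multiplying by $X/\Lambda^X$ turns the error into $o(1)$ because $\Theta<1$. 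I then apply Lemma \ref{changeltoX} to the sequence $a_\ell$ defined by $a_\ell=1/r_\ell$ for $p\nmid\ell$ and $a_\ell=0$ otherwise; this sequence is bounded since $(r_n)$ is periodic, hence assumes finitely many positive values. The lemma replaces $1/\ell$ by $1/X$ at the cost of $O(1/X)$, giving
\[ \frac{X\pi^*_\sigma(X)}{\Lambda^X}=\sum_{\substack{\ell\le X\\ p\nmid\ell}}\frac{\Lambda^{\ell-X}}{r_\ell}+O(1/X). \]

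Next I would identify the limit of the main sum along an arithmetic progression. Re-index by $j=X-\ell\in\{0,1,\dots,X-1\}$; extending $(r_n)$ to a $\varpi$-periodic (hence $p\varpi$-periodic) function on $\Z$, the sum becomes $\sum_{j\ge0,\ p\nmid X-j}\Lambda^{-j}/r_{X-j}$ (terms with $j\ge X$ being absent). If $X\equiv k\pmod{p\varpi}$, then for each fixed $j$ the condition $p\nmid X-j$ and the value $r_{X-j}$ depend only on $k-j\bmod p\varpi$; since every term is dominated by $C\Lambda^{-j}$ with $\sum_j\Lambda^{-j}<\infty$, dominated convergence (equivalently, uniform convergence of the partial sums) yields that the limit exists and equals
\[ \rho_k=\sum_{\substack{j\ge0\\ p\nmid k-j}}\frac{\Lambda^{-j}}{r_{k-j}}. \]
This already proves the existence claim \eqref{PNTstar} and shows there are at most $p\varpi$ limiting values, indexed by $k\bmod p\varpi$, i.e.\ by the ``tame detector group'' $\Z/p\varpi$.

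Finally I would repackage this as a finite sum of geometric series to match \eqref{rhok}. Group the indices $j\ge0$ by the class $n\in\{1,\dots,p\varpi\}$ with $n\equiv k-j\pmod{p\varpi}$; the condition $p\nmid k-j$ becomes $p\nmid n$, and for such $n$ the contributing $j$ are $j_0(n),\,j_0(n)+p\varpi,\,j_0(n)+2p\varpi,\dots$, where $j_0(n)\in\{0,\dots,p\varpi-1\}$ is the unique representative of $k-n\bmod p\varpi$. Summing the geometric series,
\[ \sum_{t\ge0}\Lambda^{-(j_0(n)+p\varpi t)}=\frac{\Lambda^{\,p\varpi-j_0(n)}}{\Lambda^{p\varpi}-1}=\frac{\Lambda^{\langle n-k\rangle}}{\Lambda^{p\varpi}-1}, \]
since $p\varpi-j_0(n)$ lies in $\{1,\dots,p\varpi\}$ and is congruent to $n-k$ modulo $p\varpi$, hence equals $\langle n-k\rangle$. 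Substituting back, and using that $r_n$ depends only on $n\bmod\varpi$, gives precisely Formula \eqref{rhok}. The analytic substance is entirely supplied by Proposition \ref{AsymforPl} and Lemma \ref{changeltoX}; the only genuinely delicate points are bookkeeping, namely verifying that $(a_\ell)$ is bounded so Lemma \ref{changeltoX} applies, justifying the term-by-term passage to the limit, and getting the representative on the correct side so that one obtains $\Lambda^{\langle n-k\rangle}$ rather than $\Lambda^{\langle k-n\rangle}$.
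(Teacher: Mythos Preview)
Your proof is correct and follows essentially the same approach as the paper: invoke Proposition~\ref{AsymforPl}, apply Lemma~\ref{changeltoX} to replace $1/\ell$ by $1/X$, and then evaluate the resulting sum by grouping terms according to residue classes modulo $p\varpi$ and summing the geometric series. The only cosmetic difference is that the paper groups the original indices $\ell$ directly by their residue $n$ modulo $p\varpi$ and sums $\sum_s \Lambda^{n+sp\varpi}$, whereas you first re-index by $j=X-\ell$ and use dominated convergence before grouping; both routes lead to the identity $p\varpi\lfloor (k-n)/p\varpi\rfloor + p\varpi + n - k = \langle n-k\rangle$ (equivalently, $p\varpi - j_0(n)=\langle n-k\rangle$).
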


\begin{proof}

By Proposition \ref{AsymforPl} we have $$  \pi^*_\sigma(X)=\sum_{\substack{\ell \leq X\\ p{\nmid} \ell}} \left( \frac{\Lambda^{\ell}  }{\ell r_{\ell} } + O(\Lambda^{\Theta \ell})\right).$$
The error terms in this formula form a geometric progression and hence are $O(\Lambda^{\Theta X})$.  Multiplying by $\Lambda^{-X}$ and applying Lemma \ref{changeltoX}, we get $$\frac{\pi^*_{\sigma}(X)}{\Lambda^X} =\frac{1}{X\Lambda^X} \sum_{\substack{\ell \leq X \\p{\nmid} \ell}} \Lambda^{\ell} \frac{1}{r_{\ell}} + O(1/X^2).$$ 
We split the sum by values of $r_n$, as follows: 
  \begin{align*} \lim_{X \rightarrow + \infty} \frac{X  \pi^*_\sigma(X)}{\Lambda^X} &= \lim_{X \rightarrow + \infty} \frac{1}{\Lambda^X} \left( \sum_{\substack{1\leq n \leq {p\varpi} \\ p {\nmid} n}} \frac{1}{r_n}  \sum_{s=0}^{\left \lfloor \frac{X-n}{{p\varpi}} \right \rfloor} \Lambda^{n+s{p\varpi}}  \right) \\
  &=   \lim_{X \rightarrow + \infty}  \left( \sum_{\substack{1\leq n \leq {p\varpi} \\ p {\nmid} n}}   \frac{\Lambda^{{p\varpi} \left \lfloor \frac{X-n}{{p\varpi}} \right \rfloor+{p\varpi}+n-X}}{r_n(\Lambda^{p\varpi}-1)}
\right). 
\end{align*} The limit does not converge in general, but if we put $X=Y {p\varpi} + k$ for fixed $k$ and $Y \rightarrow +\infty$, we find the indicated result, 
since ${p\varpi} \left\lfloor \frac{k-n}{{p\varpi}} \right\rfloor+{p\varpi} +n- k = \langle n-k\rangle$. 
\end{proof} 

We refer to the example in the introduction for some explicit computations and graphs.

\subsection*{Analogue of Mertens' theorem} The PNT is equivalent to the statement that the reciprocals of the primes up to $X$ sum, up to a constant, to $\log \log X + o(1/\log X)$. Mertens' second theorem is the same statement but with the weaker error term $O(1/\log X)$. It turns out that the analogue of this last theorem in our setting does hold, and very inseparable and not very inseparable endomorphisms behave in the same way. 
\begin{proposition} \label{mertens}
For some $c \in \Q$ and $c'\in \R$ we have  
$ \displaystyle{\sum_{\ell \leq X} P_\ell / \Lambda^\ell = c \log X + c'+ O(1/X).}$
\end{proposition}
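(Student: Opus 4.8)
The plan is to start from the sharp asymptotics of Proposition~\ref{AsymforPl} and reduce the statement to a purely arithmetic sum. Dividing $P_\ell = \frac{\Lambda^\ell}{\ell\, r_\ell|\ell|_p^{s_\ell}} + O(\Lambda^{\Theta\ell})$ by $\Lambda^\ell$ and summing over $\ell\le X$, the error terms form a geometric series with ratio $\Lambda^{\Theta-1}<1$ (since $\Theta<1$), so they contribute a constant plus an exponentially small — hence $O(1/X)$ — tail. Thus it suffices to prove that
$$ \sum_{\ell\le X}\frac{g(\ell)}{\ell} = c\log X + c'' + O(1/X), \qquad g(\ell)\coloneqq \frac{1}{\deg_{\mathrm{i}}(\sigma^\ell-1)} = \frac{1}{r_\ell|\ell|_p^{s_\ell}}, $$
for some $c\in\Q$ and $c''\in\R$; note $g$ takes values in $(0,1]$, and the extra constant picked up in this reduction will only affect $c''$.

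The key point is the structure of the function $\ell\mapsto\deg_{\mathrm{i}}(\sigma^\ell-1)$. By Proposition~\ref{rs}, writing $r_\ell = p^{a_\ell}$ and $s_\ell=-b_\ell$ with $\varpi$-periodic $a_\ell\in\Z$ and $b_\ell\in\Z_{\geq 0}$, one has $\deg_{\mathrm{i}}(\sigma^\ell-1)=p^{a_\ell+b_\ell\ord_p(\ell)}$. I claim that for each value $v$ the set $\{\ell:\deg_{\mathrm{i}}(\sigma^\ell-1)=v\}$ is a (possibly empty) finite union of arithmetic progressions: on a fixed residue class $c\bmod\varpi$ the exponents $a_\ell=a_c$, $b_\ell=b_c$ are constant, so if $b_c=0$ the whole class lies in $\{\deg_{\mathrm{i}}=p^{a_c}\}$, while if $b_c>0$ the equation $\deg_{\mathrm{i}}(\sigma^\ell-1)=v$ forces $\ord_p(\ell)$ to a single value $j_0$, and $\{\ell\equiv c\bmod\varpi,\ \ord_p(\ell)=j_0\}$ is a finite union of residue classes modulo $\mathrm{lcm}(\varpi,p^{j_0+1})$ (intersect the class with $p^{j_0}\mid\ell$, $p^{j_0+1}\nmid\ell$). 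Consequently $\N$ decomposes into finitely many arithmetic progressions $U_1,\dots,U_m$ (those residue classes mod $\varpi$ on which $\ord_p$ is bounded, together with those on which $b_c=0$) and a countable family $\{U_{c,j}\}_{j\geq \ord_p(\varpi)}$ of sets ``$\ell\equiv c\bmod\varpi$ and $\ord_p(\ell)=j$'' (subdividing the remaining classes), on each of which $g$ is \emph{constant}, with $g\equiv g_i$ on $U_i$, $g\equiv p^{-a_c-b_c j}$ on $U_{c,j}$, and the density of $U_{c,j}$ equal to $\tfrac{(p-1)}{p^{j+1}\varpi'}\asymp p^{-j}$ (where $\varpi'$ is the prime-to-$p$ part of $\varpi$).

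Now I sum over the pieces. For a finite union $U$ of arithmetic progressions of modulus $D$, partial summation gives $\sum_{\ell\le X,\,\ell\in U}1/\ell = \delta(U)\log X + \gamma(U) + O(|U\bmod D|/X) + O(D\,|U\bmod D|/X^2)$, the first $1/X$-term having an absolute implied constant. Applied to $U_1,\dots,U_m$ this contributes a finite amount $c_1\log X + c_1' + O(1/X)$. For the family $\{U_{c,j}\}$, where $j$ runs up to $\lfloor\log_p X\rfloor$ and $|U_{c,j}\bmod D_j| = p-1$ with $D_j = p^{j+1}\varpi'$, the weight $g\equiv p^{-a_c-b_cj}$ times the density $\asymp p^{-j}$ decays like $p^{-(b_c+1)j}$ with $b_c+1\geq 2$; hence the $\log X$-coefficient $\sum_{c,j}p^{-a_c-b_c j}\delta(U_{c,j})$ is a convergent sum of geometric series with rational ratios, so it lies in $\Q$, and the error contributions $\sum_{c,j}p^{-a_c-b_cj}\bigl(O((p-1)/X)+O((p-1)p^{j+1}\varpi'/X^2)\bigr)$ sum to $O(1/X)$ (the second piece being $O(\log X/X^2)$ at worst). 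Collecting everything, $\sum_{\ell\le X}g(\ell)/\ell = c\log X + c'' + O(1/X)$ with $c\in\Q$ and $c''\in\R$, and combining with the first paragraph proves the proposition.

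The main obstacle is the middle step: one must not split the sum by the $p$-adic valuation alone — that would only yield the weaker bound $O(\log X/X)$, because the levels $\ord_p(\ell)=j$ with $s_\ell=0$ carry the borderline weight $p^{-j}$ — but rather split by the value of $\deg_{\mathrm{i}}(\sigma^\ell-1)$, so that every resulting piece is an honest finite union of arithmetic progressions with a clean $O(1/X)$ error term, and then check that the geometric decay $g\cdot(\text{density})\asymp p^{-(b_c+1)j}$ makes the countable reassembly converge with error still $O(1/X)$.
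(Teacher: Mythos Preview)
Your proof is correct. Both you and the paper start by reducing to the arithmetic sum $\sum_{\ell\le X}\frac{1}{\ell\,r_\ell|\ell|_p^{s_\ell}}$ and then split by residue class modulo $\varpi$, but the analysis of each class diverges. The paper keeps the weight $|\ell|_p^{-s_j}$ intact and applies Abel summation once: it first shows that the counting sum $A_{s,j}(X)=\sum_{n\le X,\,n\equiv j\,(\varpi)}|n|_p^{s}$ equals $c_{s,j}X+O(1)$ (this is where the paper splits by $\ord_p$), and then partial summation immediately converts this into $B_{s,j}(X)=\sum |n|_p^s/n=c_{s,j}\log X+c'_{s,j}+O(1/X)$. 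You instead push the $\ord_p$-splitting all the way down to the sum $\sum 1/\ell$ itself: on each class with $b_c>0$ you refine by $\ord_p(\ell)=j$, reduce to the uniform estimate $\sum_{\ell\le X,\,\ell\in\text{AP}}1/\ell=\delta\log X+\gamma+O(1/X)$ with absolute implied constant, and then reassemble the countable family using the geometric decay $g\cdot\delta\asymp p^{-(b_c+1)j}$.

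The paper's route is shorter and sidesteps the delicate point you flag in your last paragraph: by first summing the weights (where the $O(1)$ error is harmless) and only then dividing by $\ell$ via Abel summation, one never faces the potential $O(\log X/X)$ loss from accumulating $O(1/X)$ errors over $\asymp\log_p X$ levels. Your route is more elementary in that it only needs the standard harmonic-sum estimate over a single progression, but the price is the extra bookkeeping to check that the truncation of the $\log X$-coefficient and of the constants $\gamma_j$ over $j>\log_p X$ is itself $O(1/X)$, and that the $b_c=0$ classes must \emph{not} be refined. You handle both correctly.
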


\begin{proof} From Proposition \ref{AsymforPl} we find $$\sum_{\ell \leq X} P_{\ell}/\Lambda^{\ell} = \sum_{\ell \leq X}  \left( \frac{1 }{\ell r_{\ell} |\ell|_p^{s_{\ell}}} + O(\Lambda^{(\Theta-1)\ell})\right).$$ The error terms in this formula sum to $c'' + O(\Lambda^{(\Theta-1)X})$ for some $c''\in \R$ and the main terms sum to $$\sum_{j=1}^{\varpi} \frac{1}{r_j}B_{-s_j,j}(X),$$ where for integers $s\geq 0$, $\varpi>0$, and $j$, we set 
$$ B_{s,j}(X)\coloneqq\sum_{\substack{n\leq X\\ n\equiv j \mathrm{\, mod\, } \varpi}} \frac{|n|_p^s}{n}. $$
The proposition follows from 
\begin{equation} \label{bsj} B_{s,j}(X)=c_{s,j}\log X+ c'_{s,j} + O(1/X) \end{equation} for constants $c_{s,j}\in \Q$ and $c'_{s,j}\in \R$.  The case $s=0$ is well-known and we will thus limit ourselves to the case $s>0$. To prove (\ref{bsj}), we first consider the related sum 
$$A_{s,j}(X)=\sum_{\substack{n\leq X \\ n\equiv j \mathrm{\, mod\, } \varpi}} |n|_p^s$$
and we claim that \begin{equation} \label{asj} A_{s,j}(X)=c_{s,j}X+O(1) \mbox{ with }  c_{s,j}\in \Q. \end{equation} Then Abel summation gives
$$B_{s,j}(X)=\frac{A_{s,j}(X)}{X}+\int_1^X\frac{A_{s,j}(t)}{t^2} dt,$$
so (\ref{bsj}) follows, setting $c'_{s,j}=c_{s,j}+\int_1^{\infty} {(A_{s,j}(t)-c_{s,j}t)dt}/{t^2}\in \R$.
To prove (\ref{asj}), observe that the arithmetic sequence $j+ \varpi \N$ might or might not contain terms divisible by arbitrarily high power of $p$ depending on whether $|j|_p \leq |\varpi|_p$ or $|j|_p > |\varpi|_p$. In the latter case the sequence $|n|_p$ for $n\equiv j \pmod \varpi$ is constant, and the asymptotic formula for $A_{s,j}$ is clear. In the former case we write $k$ for the power of $p$ dividing $\varpi$. In the formula defining $A_{s,j}$, we isolate terms with a given value of $|n|_p$. For each integer $q\geq k$ the number of terms $n\equiv j \pmod \varpi$ with $n\leq X$ and $|n|_p=p^{-q}$ is $\frac{p-1}{p^{q-k+1}\varpi} X + O(1)$, the implicit constant being independent of $q$. We thus get the asymptotic formula $$A_{s,j}=\sum_{q\geq k} p^{-sq} \left(\frac{p-1}{p^{q-k+1}\varpi} X + O(1)\right)=c_{s,j}X+O(1)$$ with $c_{s,j} = {(p-1)p^{s(1-k)}}/{((p^{s+1}-1)\varpi)}.$ 
\end{proof}

\subsection*{Error terms in the PNT} \label{errors} We now briefly discuss how to identify good main terms and error terms in the asymptotics for the number of prime orbits. From Proposition \ref{AsymforPl}, it is immediate that 
$$ \pi_\sigma(X) = M(X) + O(\Lambda^{\Theta X })$$ with ``main term'' $$M(X)\coloneqq \sum_{\ell \leq X} \frac{\Lambda^{\ell}  }{\ell r_{\ell} |\ell|_p^{s_{\ell}}}$$ depending only on the data $(p,\Lambda, \varpi, (r_n),(s_n))$ and  the power saving in the error term is dictated by the zeros  of the degree zeta function $D_\sigma$. 

\medskip

\noindent \emph{Finding $\Theta$ geometrically.} Finding $\Theta$ can sometimes be approached geometrically, as follows. Recall that $\xi_i$ are roots of the characteristic polynomial of $\sigma$ acting on $\mathrm{H}^1$ and all $\lambda_i$ are products of such roots (corresponding to the characteristic polynomial of $\sigma$ acting on $\mathrm{H}^i = \wedge^i \mathrm{H}^1$ for various $i$). Suppose that \begin{equation} \label{conda} |\xi_i|^2=a \end{equation} for all $i$ and a fixed integer $a$. Then $\Lambda=a^g$ and $\Theta = 1-1/(2g)$, so we get an error term of the form 
$O(a^{g-1/2})$. By \cite[Chapter 4, Application 2]{Mumford}, condition (\ref{conda}) happens if for some polarisation on $A$ with Rosati involution ${ }'$, we have $\sigma \sigma'=a \mbox{ in }\End(A).$ In Weil's proof of the analogue of the Riemann hypothesis for abelian varieties $A/{\F_q}$, it is shown that this holds for $\sigma$ the $q$-Frobenius with $a=q^g$. 

\medskip

\noindent \emph{Another expression for the main term.} One may express the main term $M(X)$ as follows. For $k \in \{0,\dots,\varpi-1\}$, define 
\begin{equation} \label{fkdef} F_k(\Lambda,X) = \sum_{\substack{ \ell \leq X \\ \ell \equiv k \mathrm{\, mod\, } \varpi}}{ \Lambda^\ell/\ell}; \end{equation} 
then
\begin{equation} \label{maincont} M(X)= \sum_{k=0}^{\varpi-1} r_k^{-1} \left(F_k(\Lambda,X) + \sum_{i \geq 1}  p^{(s_k-1)i} (1-p^{-s_k}) \!\!\!\sum_{\substack{ 0 \leq k' < \varpi \\ p^i k' \equiv k\, \mathrm{mod}\, \varpi}}\!\!\! F_{k'}\left(\Lambda^{p^i},\left\lfloor \frac{X}{p^i} \right\rfloor \right) \right). \end{equation} 
We collect the information in the following proposition. 

\begin{proposition} \label{RH-prop} 
With $M(X)$ the function defined in \textup{(\ref{maincont})} using \textup{(\ref{fkdef})}, depending only on the data $(p,\Lambda, \varpi, (r_n),(s_n))$ (i.e., the growth rate $\Lambda$ and the inseparability degree pattern), we have for integer values of $X$,
$$ \pi_\sigma(X) = M(X) + O(\Lambda^{\Theta X })$$
where 
\[  \pushQED{\qed} 
\Theta =  \{ \Re(s) : s \textrm{\normalfont{ is a zero of }} D_\sigma(\Lambda^{-s}) \}. \qedhere \popQED \]
\end{proposition}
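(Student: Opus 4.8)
The plan is as follows. By Proposition \ref{AsymforPl} we already know that
$$ \pi_\sigma(X) = \sum_{\ell \le X} \frac{\Lambda^{\ell}}{\ell\, r_{\ell}\, |\ell|_p^{s_{\ell}}} + O(\Lambda^{\Theta X}), \qquad \Theta = \max\{\Re(s) : D_\sigma(\Lambda^{-s}) = 0\} \in [\tfrac12,1). $$
So all analytic content — the growth rate $\Lambda$ (Proposition \ref{lame}), the power saving $\Theta$ governed by the zeros of $D_\sigma$ (Proposition \ref{proplambda'}), and the M\"obius-inversion step — is already in place, and the only thing left is the purely formal identity $M(X) = \sum_{\ell \le X} \Lambda^{\ell}/(\ell\, r_{\ell}\, |\ell|_p^{s_{\ell}})$, where $M(X)$ denotes the right-hand side of \eqref{maincont} built from the $F_k$ of \eqref{fkdef}. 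Here I would use repeatedly that, $(r_n)$ and $(s_n)$ being $\varpi$-periodic, the values $r_\ell$ and $s_\ell$ depend only on $\ell \bmod \varpi$, and that for $\ell \equiv k \pmod{\varpi}$ one has $1/|\ell|_p^{s_\ell} = p^{\,s_k \ord_p(\ell)}$.

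I would then expand the right-hand side of \eqref{maincont} and collect, for each $n \le X$, the coefficient of $\Lambda^{n}/n$. First, the leading term contributes $\sum_{k} r_k^{-1} F_k(\Lambda,X) = \sum_{\ell \le X} \Lambda^{\ell}/(\ell\, r_{\ell})$. Second, for fixed $i \ge 1$ and $k$, a summand of $\sum_{k'\colon p^i k' \equiv k} F_{k'}(\Lambda^{p^i}, \lfloor X/p^i\rfloor)$ corresponds exactly to an integer $m$ with $m \le X/p^i$ and $p^i m \equiv k \pmod{\varpi}$, with value $\Lambda^{p^i m}/m$; writing $n = p^i m$ this equals $p^i \Lambda^{n}/n$, so after reindexing by $n$ the whole double sum over $i$ contributes to the coefficient of $\Lambda^{n}/(n\, r_k)$ (for $n \equiv k \pmod{\varpi}$) the quantity $(1 - p^{-s_k}) \sum_{i=1}^{\ord_p(n)} p^{\,s_k i}$, the restriction $i \le \ord_p(n)$ coming from the condition $p^i \mid n$. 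Third, evaluate this geometric sum: with $u = p^{s_k} \in (0,1]$ one has $(1 - u^{-1})\sum_{i=1}^{v} u^i = u^{v} - 1$ — valid also when $s_k = 0$, where both sides vanish, matching the fact that there is then no inseparability correction. Fourth, add back the leading term: the coefficient of $\Lambda^{n}/n$ for $n \equiv k \pmod{\varpi}$ becomes $r_k^{-1}\bigl(1 + (p^{\,s_k \ord_p(n)} - 1)\bigr) = r_k^{-1} p^{\,s_k \ord_p(n)} = r_n^{-1} |n|_p^{-s_n}$, and summing over $n \le X$ gives precisely $\sum_{\ell \le X} \Lambda^{\ell}/(\ell\, r_{\ell}\, |\ell|_p^{s_{\ell}})$, as required. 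I would also remark that the sum over $i$ in \eqref{maincont} is actually finite, since $F_{k'}(\,\cdot\,,0) = 0$ once $p^i > X$, so no convergence issue arises.

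There is no deep obstacle: beyond Proposition \ref{AsymforPl} this is bookkeeping. The one point to handle with care is the reindexing $\ell = p^i m$ \emph{without} imposing $p \nmid m$: it is exactly this (deliberate) overcounting, compensated by the telescoping geometric sum over $i = 1, \dots, \ord_p(n)$, that regenerates the $p$-adic factor $|n|_p^{-s_n}$. One must also keep track of the fact that $r_n, s_n$ are functions of $n \bmod \varpi$ only (so the constraint $p^i k' \equiv k \pmod{\varpi}$ in \eqref{maincont} is the correct one), and dispose of the trivial case $s_k = 0$, in which the entire $i$-block vanishes. The statement about $\Theta$ itself requires nothing new, being quoted verbatim from Proposition \ref{AsymforPl}.
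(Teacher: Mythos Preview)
Your proposal is correct and follows exactly the paper's approach: cite Proposition~\ref{AsymforPl} for the asymptotic $\pi_\sigma(X)=\sum_{\ell\le X}\Lambda^\ell/(\ell r_\ell|\ell|_p^{s_\ell})+O(\Lambda^{\Theta X})$, then identify this sum with the expression~\eqref{maincont}. The paper states the identity~\eqref{maincont} without verification and simply ``collects the information'' into the proposition, whereas you carry out the reindexing $n=p^i m$ and the telescoping geometric sum explicitly; your bookkeeping is sound, including the $s_k=0$ edge case.
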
 
A worked example is in the introduction.

\noindent \emph{The tame case.} In the tame setting, one similarly finds $ \pi^*_\sigma(X) {=} M^*(X)+ O( \Lambda^{\Theta X})$ with  $$ M^*(X) {=} \sum_{k=0}^{\varpi-1} r_k^{-1} \left( F_k(\Lambda,X) - \frac{1}{p} \sum_{\substack{ 0 \leq k' < \varpi \\ p k' \equiv k\, \mathrm{mod}\, \varpi}} F_{k'} \left(\Lambda^p,\left\lfloor \frac{X}{p} \right\rfloor \right) \right) . $$

\begin{remark}
Due to its exponential growth as a function of a real variable $X$, it is not possible to approximate $M(\lfloor X \rfloor)$ by a continuous function with error $O( \Lambda^{\vartheta X})$ for any $\vartheta<1$. Note that $F_k(\Lambda,X)$ can be evaluated using the Lerch transcendent. 
\end{remark}

\appendix

\section{Adelic perturbation of power series\\ \normalfont \textsc{Robert Royals and Thomas Ward}} \label{ap}

The result in this appendix comes from the thesis~\cite{Royals}
of the first author, and arose there in connection with
the following question about `adelic perturbation' of linear
recurrence sequences.
Write~$\vert m\vert_S=\prod_{\ell\in S}\vert m\vert_{\ell}$
for~$m\in\mathbf{Q}$ and~$S$ a set of primes,
and for an integer sequence~$a=(a_n)$ define a
function~$f_{a,S}$ by~$f_{a,S}(z)=\sum_{n=1}^{\infty}\vert a_n\vert_S\vert a_n\vert
z^n$.   If~$a$ is an integer linear recurrence sequence,
does~$f_{a,S}$ satisfy a P{\'o}lya--Carlson dichotomy?
That is, does~$f_{a,S}$ admit a natural boundary
whenever it does not define a rational function?
This remains open, but for certain classes of
linear recurrence and for~$\vert S\vert<\infty$,
the following theorem 
 is the key step in the argument.

\begin{theorem}\label{appendix:theorem}
Let~$a=(a_n)$ be an integer sequence with the property that
for every prime~$\ell$ there exist
constants~$n_{\ell}$ in~$\mathbf{Z}_{>0}$,~$(c_{\ell,i})_{i=0}^{n_{\ell}-1}$
in~$\mathbf{Q}^{n_{\ell}}$, and~$(e_{\ell,i})_{i=0}^{n_{\ell}-1}$
in~$\mathbf{Z}^{n_{\ell}}_{\geqslant0}$ such
that~$\vert a_n\vert_{\ell}=c_{\ell,k}\vert n\vert_{\ell}^{e_{\ell,k}}$ if ~$n\equiv k\bmod{n_{\ell}}$.
  Let~$S$ be a finite set of primes and
  write~$f(z)=\sum_{n\geqslant1}\vert a_n\vert_Sz^n$.
  If the sequence~$(|a_n|_S)$ takes infinitely many values, then~$f$ admits the unit circle as a natural boundary.
  Otherwise,~$f$ is a rational function.
\end{theorem}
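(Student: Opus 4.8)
The plan is to split off the easy (rational) direction, reduce the general case to a one-prime ``model'' series by a Chinese Remainder argument, and then produce singularities accumulating on the whole unit circle by iterating a functional equation.

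\textbf{Step 1 (the rational case; radius of convergence).} First discard the indices with $a_n=0$: by the hypothesis $|a_n|_\ell=c_{\ell,k}|n|_\ell^{e_{\ell,k}}$ these form a union of residue classes, and they contribute nothing to $f$, so we may assume $a_n\neq 0$, hence all $c_{\ell,k}\neq 0$. Since $|a_n|_\ell\le 1$ for every $\ell$ we get $|a_n|_S\le 1$, so $f$ converges on the open unit disc; taking $n$ in a fixed residue class modulo $\prod_{\ell\in S}n_\ell$ with $n$ coprime to $S$ gives $|a_n|_S=\prod_{\ell\in S}c_{\ell,k}$, a fixed nonzero constant, for infinitely many $n$, so the radius of convergence is exactly $1$. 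Put $N:=\operatorname{lcm}_{\ell\in S}n_\ell$, and enlarge $N$ so that each $\operatorname{ord}_\ell(N)$ is large. On a class $n\equiv k\pmod N$ the Chinese Remainder Theorem gives $|a_n|_S=\gamma_k\prod_{\ell\in S_k}\ell^{-e_{\ell,k}\operatorname{ord}_\ell(n)}$, where $\gamma_k>0$ is constant and $S_k\subseteq S$ is the set of primes whose $\ell$-adic valuation is unbounded on that class. If $e_{\ell,k}=0$ for every such pair (equivalently $(|a_n|_S)$ is eventually periodic), then $f$ is a finite $\mathbf Q$-linear combination of the functions $z^k/(1-z^N)$, hence rational; conversely, if some $e_{\ell_0,k_0}>0$ with $\ell_0\in S_{k_0}$, then letting $\operatorname{ord}_{\ell_0}(n)\to\infty$ along that class shows $(|a_n|_S)$ takes infinitely many values. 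From now on assume the latter.

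\textbf{Step 2 (reduction to a one-prime model).} Fix a bad pair $(p,k_0)$ with $p:=\ell_0\in S_{k_0}$ and $e_{p,k_0}>0$. Refining the modulus to $N':=N\prod_{\ell\in S_{k_0}\setminus\{p\}}\ell$ and using the Chinese Remainder Theorem, choose a residue $\tilde k\bmod N'$ lying over $k_0\bmod N$ on which every prime $\ell\neq p$ of $S$ has constant $\ell$-adic valuation while $\operatorname{ord}_p$ still runs over all integers $\ge b:=\operatorname{ord}_p(N')$ (so $p^b\mid\tilde k$). On this progression $|a_n|_S=C\,p^{-d\operatorname{ord}_p(n)}$ with $C>0$ and $d:=e_{p,\tilde k}\ge 1$; writing $n=p^b t$ turns the corresponding sub-series of $f$ into $C'\,\psi(z^{p^b})$, where $\psi(w):=\sum_{t\equiv k^*\ (M')}|t|_p^{d}\,w^t$ with $p\nmid M':=N'/p^b$ and $k^*:=\tilde k/p^b$. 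Since $f$ minus its rational part is a finite sum of such pieces (pulled back through $z\mapsto z^{p^b}$), and since having $|z|=1$ as a natural boundary is preserved by adding a rational function and by the substitution $z\mapsto z^{p^b}$ (a local biholomorphism at every root of unity), it suffices to prove each such $\psi$ has the unit circle as natural boundary.

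\textbf{Step 3 (functional equation and accumulating poles).} Splitting $t$ by $\operatorname{ord}_p(t)$ gives the Mahler-type relation $\psi(w)=g(w)+p^{-d}\psi'(w^{p})$, where $g(w)=\sum_{p\nmid t,\ t\equiv k^*\ (M')}w^{t}$ is an explicit rational function (a difference of two geometric-type rational functions, with simple poles of nonzero residue only at $M'$th roots of unity, in particular a genuine pole at each primitive $p$th root of unity), and $\psi'$ has the same shape as $\psi$ with residue $k^*p^{-1}\bmod M'$; as this residue cycles under division by $p$, only finitely many functions $\psi=\psi_0,\psi_1,\dots$ and rational functions $g_0,g_1,\dots$ occur. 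Iterating and letting the remainder tend to $0$ on compacta of the disc yields $\psi(w)=\sum_{m\ge 0}p^{-md}g_m\!\bigl(w^{p^{m}}\bigr)$ for $|w|<1$. Because $g_m(v)=O(1/v)$ as $v\to\infty$, uniformly in $m$, this series also converges locally uniformly on $\{|w|>1\}$, so $\psi$ extends to a function analytic on $\{|w|\neq 1\}$ whose only possible poles lie on $|w|=1$. A residue computation now shows the poles are genuine: at a primitive $p^{m}$th root of unity $\omega$ only the summands of index $\ge m-1$ have a pole, each simple with residue comparable to $p^{-m'(d+1)}$ for index $m'$, so the $m'=m-1$ term dominates the geometrically small tail $\sum_{m'\ge m}$ and the total residue at $\omega$ is nonzero for all large $m$. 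Hence $\psi$ has a pole in every neighbourhood of every point of $|w|=1$; since $\psi$ has radius of convergence $1$, the circle $|w|=1$ is a natural boundary, and by Step 2 the same holds for $f$.

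\textbf{Main obstacle.} The delicate point is the non-cancellation in Step 3: one must ensure the principal parts of the infinitely many summands with a pole at a given root of unity do not conspire to cancel. The route chosen — continue the series onto $\{|w|>1\}$, then show the leading residue outweighs the geometrically small tail — handles this, but it hinges on the uniform estimates ``$g_m(v)=O(1/v)$'' and ``residues bounded away from $0$'' over the finitely many shifted models; the other place requiring care is the bookkeeping in Step 2 that isolates a single active prime on one residue class and controls the finite orbit of residues under $w\mapsto w^{p}$.
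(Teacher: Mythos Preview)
Your Step~2 contains a genuine gap, and it is precisely the point where the paper's argument diverges from yours. Two problems compound. First, the decomposition into single-prime pieces is not valid in general: if on some residue class $k\bmod N$ two primes $p_1,p_2\in S$ are simultaneously active (unbounded valuation and $e_{p_i,k}>0$), the piece $\gamma_k\sum_{n\equiv k}|n|_{p_1}^{d_1}|n|_{p_2}^{d_2}z^n$ cannot be written as a finite sum of one-prime $\psi$'s plus a rational function---refining the modulus by a power of $p_2$ only fixes a \emph{lower bound} on $\mathrm{ord}_{p_2}$, so some sub-class still carries both primes. Second, even granting the decomposition, the implication ``each summand has the unit circle as natural boundary, hence so does the sum'' is false: singularities can cancel under addition, and the averaging identity $f_j(z)=\frac{1}{N'}\sum_{\zeta^{N'}=1}\zeta^{-j}f(\zeta z)$ does not rescue this, since continuation of $f$ across one arc only yields continuation of $f(\zeta z)$ across the \emph{rotated} arc. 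You rightly identify non-cancellation as the main obstacle, but you address it only \emph{within} a single $\psi$ in Step~3, not \emph{between} the various pieces of $f$.

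The paper handles this without any single-prime reduction. It works directly with the multi-prime series $F_{S',e,r}(z)=\sum_{n\ge0}|n-r|_{S',e}z^n$ and proves two lemmas: (i) $F_{S',e,r}(\lambda\mu)$ stays bounded as $\lambda\to1^-$ whenever the order of $\mu$ has a prime factor outside $S'$; and (ii) if the order $n$ of $\mu$ is built only from primes of $S'$, then the real part of $(-1)^{j}\mu^{-(r\bmod n)}F_{S',e,r}(\lambda\mu)$ (with $j$ the number of prime factors of $n$) tends to $+\infty$, with imaginary part and negative real part uniformly bounded. One then splits $f=\sum_j f_j$ by residue classes mod $N$, chooses a \emph{maximal} element $S_{j_0}$ among the active-prime sets $S_j$, and probes at roots of unity whose order is a product of high powers of exactly the primes of $S_{j_0}$. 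Pieces with $S_j=S_{j_0}$ all blow up in the \emph{same} real direction (same sign $(-1)^{|S_{j_0}|}$, positive scalar prefactors $d_j|N|_{S_j,e^{(j)}}$), so they add constructively; pieces with $S_j\neq S_{j_0}$ miss some prime of $S_{j_0}$ by maximality and are therefore bounded by (i). This global same-direction control is exactly what your Step~2 lacks. (A smaller issue in Step~3: the tail $\sum_{m'\ge M}p^{-m'd}g_{m'}(w^{p^{m'}})$ is not analytic in any full neighbourhood of a fixed root of unity $\omega$, since its own poles accumulate at $\omega$; so your ``residue at $\omega$'' is not literally a pole computation for $\psi$. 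Showing $|\psi(\lambda\omega)|\to\infty$ directly, as in (ii), is the cleaner route.)
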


The method of proof is reminiscent of Mahler's, in which functional equations allow one to conclude that certain functions have singularities along a dense set of roots of unity (compare \cite{MahlerEq}).

For the proof, it is necessary to consider a slightly more general setup. Assume that~$S$ is a finite set of
primes and for each~$\ell\in S$ there is
an associated positive integer~$e_{\ell}$, write~$e$
for the collection~$(e_{\ell})_{\ell\in S}$, and
write~$F_{S,e,r}(z)=\sum_{n\geqslant 0}\vert n-r\vert_{S,e}z^n$
for some~$r\in\mathbf{Q}$,
where~$\vert n\vert_{S,e}=\prod_{\ell\in S}\vert n\vert_{\ell}^{e_{\ell}}$.
Notice that there is always a bound of the shape$$
\frac{A}{n^B}\ll\vert n-r\vert_{\ell}\leqslant\max\{1,\vert r\vert_{\ell}\}
$$
for constants~$A,B>0$, so the radius of convergence
of~$F_{S,e,r}$ is~$1$.
If~$\vert r\vert_{\ell}>1$ for some~$\ell\in S$
then~$\vert n-r\vert_{\ell}=\vert r\vert_{\ell}$ for all~$n\in\mathbf{N}$,
and so
\[
F_{S,e,r}(z)
=
\vert r\vert_{\ell}^{e_{\ell}}
\sum_{n\geqslant 0}\vert n-r\vert_{S - \{\ell\},e}z^n
=
\vert r\vert_{\ell}^{e_{\ell}}
F_{S - \{\ell\},e,r}(z)
\]
wherever these series are defined. Thus as far as the
question of a natural boundary is concerned, we may
safely assume that~$\vert r\vert_{\ell}\leqslant1$ for all~$\ell\in S$.

Now let~$\ell \in S$ be fixed. Since~$\vert r\vert_{\ell}\leqslant1$,
we can write
\[
r=r_0+r_1\ell+r_2\ell^2+\ldots
\]
with~$r_i \in \{0,1,\ldots,\ell-1\}$
for all~$i\geqslant0$. For~$r\in\mathbf{Q}$ write~$r\bmod \ell^e$
for the positive integer~$r_0+r_1 \ell+\ldots+r_{e-1}\ell^{e-1}$.
In particular,~$r\bmod \ell^e$ is the smallest non-negative integer
with
\[
|r-(r\bmod \ell^e)|_{\ell}\leqslant \ell^{-e}.
\]
If~$n=p_1^{e_1} \cdots p_j^{e_j}$ for distinct primes~$p_i$,
then write~$r \bmod n$
for the smallest non-negative integer satisfying
\[
|r-(r\bmod n)|_{p_i}\leqslant p_i^{-e_i}
\]
for~$i=1,\dots,j$ (which exists by the Chinese remainder theorem).

Next we will obtain some functional equations
for~$F_{S,e, r}$. For~$m\geqslant 0$, we write~$t_m=
\tfrac{r- (r \bmod \ell^{m})}{\ell^m}$. Note that~$\vert t_m\vert_{p}\leqslant1$ for all~$p\in S$ and~$m\geqslant 0$. We claim that for any~$m\geqslant 1$ we have the equality
\begin{equation}
 F_{S,e,t_{m-1}}(z) =  F_{S-\{\ell\},e,t_{m-1}}(z) + \ell^{-e_{\ell}}z^{r_{m-1}}F_{S,e,t_m}(z^{\ell})
 - z^{r_{m-1}}  F_{S-\{\ell\},e,t_{m}}(z^{\ell}).\label{eq:fsr-func-rel}
\end{equation}
Indeed, we compare directly the coefficients at~$z^n$ on both sides of this equation. The coefficient on the left is~$|n-t_{m-1}|_{S,e}$. The coefficient on the right is~$|n-t_{m-1}|_{S-\{\ell\},e}$ if~$\ell{\nmid}(n-t_{m-1})$ and \[|n-t_{m-1}|_{S-\{\ell\},e} + \ell^{-e_{\ell}} \left|\frac{n-r_{m-1}}{\ell}-t_m\right|_{S,e} -\left|\frac{n-r_{m-1}}{\ell}-t_m\right|_{S-\{\ell\},e}\] otherwise. Since~$\frac{n-r_{m-1}}{\ell}-t_m=\frac{n-t_{m-1}}{\ell}$ and~$|\ell|_{S-\{\ell\},e}=1$, after an easy manipulation we see that both these coefficients are equal and hence we get~\eqref{eq:fsr-func-rel}.

Combining formul\ae~\eqref{eq:fsr-func-rel} for~$m=1,\ldots,s$, we obtain the equality 

\begin{align}\label{eq:fsr-func-rel-pre2}
 F_{S,e,r}(z)&= F_{S-\{\ell\},e,r}(z)-
  (\ell^{e_{\ell}}-1)\sum_{k=1}^{s-1}
  \frac{1}{\ell^{ke_{\ell}}}z^{r \bmod \ell^{k}}
  F_{S-\{\ell\},e,t_k}(z^{\ell^k})\nonumber \\&- \ell^{-(s-1)e_{\ell}}z^{r \bmod \ell^s}F_{S-\{\ell\},e,t_s}(z^{\ell^s}) + \ell^{-se_{\ell}}z^{r \bmod \ell^s}F_{S,e,t_s}(z^{\ell^s}) .
\end{align}

Since we have~$\vert t_s\vert_{p}\leqslant1$ for all~$p\in S$ and~$s\geqslant 0$, the coefficients in the power series~$F_{S-\{\ell\},e,t_s}(z^{\ell^s})$ and~$F_{S,e,t_s}(z^{\ell^s})$ are bounded by~$1$, and hence for~$|z|<1$ we can bound the two latter terms in \eqref{eq:fsr-func-rel-pre2} by
 \begin{align*} \left|- \ell^{-(s-1)e_{\ell}}z^{r \bmod \ell^s}F_{S-\{\ell\},e,t_s}(z^{\ell^s}) \right.&+\left. \ell^{-s e_{\ell}}z^{r \bmod \ell^s}F_{S,e,t_s}(z^{\ell^s})\right| \\  &\leqslant  (\ell^{-(s-1)e_{\ell}} +  \ell^{-se_{\ell}}) \sum_{n\geqslant 0} |z|^{n\ell^s}. \end{align*}
Thus by passing in~\eqref{eq:fsr-func-rel-pre2} with~$s$ to infinity, we obtain 

\begin{equation}\label{eq:fsr-relation2}
 F_{S,e,r}(z)= F_{S-\{\ell\},e,r}(z)-
  (\ell^{e_{\ell}}-1)\sum_{k\geqslant1}
  \frac{1}{\ell^{ke_{\ell}}}z^{r \bmod \ell^{k}}
  F_{S-\{\ell\},e,t_k}(z^{\ell^k}).
\end{equation}

\begin{lemma}\label{lem:fsr-bounded}
  Let~$S$ be a finite set of primes,~$e=\{e_{\ell}\mid\ell\in S\}$
  the associated exponents, and~$n>1$
  an integer divisible by some prime~$q\not\in S$.
  Then there is a constant~$c_{n,e,S}>0$ such that for
  any primitive~$n$th root of unity~$\mu$
  and for all~$\lambda\in[0,1)$ we have~$\vert F_{S,e,r}(\lambda\mu)|<c_{n,e,S}$.
\end{lemma}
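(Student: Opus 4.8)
I would prove the lemma by induction on $|S|$, establishing at the same time the stronger uniform statement: \emph{there is a constant $c_{n,e,S}>0$ such that $|F_{S,e,r}(\lambda\mu)|<c_{n,e,S}$ for every $r\in\mathbf{Q}$ with $|r|_\ell\leq 1$ for all $\ell\in S$, every primitive $n$th root of unity $\mu$, and every $\lambda\in[0,1)$.} (By the reduction made just before the lemma, assuming $|r|_\ell\leq 1$ costs nothing; the strengthening to uniformity in $r$ is what makes the induction close, since the functional equation~\eqref{eq:fsr-relation2} introduces the auxiliary parameters $t_k$.) For the base case $S=\emptyset$ one has $F_{\emptyset,e,r}(z)=\sum_{n\geq 0}z^n=(1-z)^{-1}$, which is independent of $r$; since $n>1$, every primitive $n$th root of unity $\mu$ is $\neq 1$, the function $\lambda\mapsto|1-\lambda\mu|$ is continuous and strictly positive on the compact interval $[0,1]$, hence bounded below by a positive constant, and taking the minimum over the finitely many primitive $n$th roots of unity produces $c_{n,e,\emptyset}$.

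For the inductive step, fix any $\ell\in S$ and apply~\eqref{eq:fsr-relation2} at $z=\lambda\mu$ with $\lambda\in[0,1)$, which is legitimate because $|z|<1$ and~\eqref{eq:fsr-relation2} was established on that disk. The first term $F_{S-\{\ell\},e,r}(\lambda\mu)$ is of exactly the form handled by the inductive hypothesis for the set $S-\{\ell\}$, since $\mu$ is still a primitive $n$th root of unity with $n$ divisible by $q\notin S\supseteq S-\{\ell\}$ and $|r|_{\ell'}\leq 1$ for all $\ell'\in S-\{\ell\}$; thus it is bounded by $c_{n,e,S-\{\ell\}}$. In the $k$th summand the argument of $F_{S-\{\ell\},e,t_k}$ is $(\lambda\mu)^{\ell^k}=\lambda^{\ell^k}\cdot\mu^{\ell^k}$, where $\lambda^{\ell^k}\in[0,1)$ and $\mu^{\ell^k}$ is a primitive $m_k$th root of unity with $m_k=n/\gcd(n,\ell^k)$. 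Because $q\mid n$ and $q\neq\ell$ (as $q\notin S$, $\ell\in S$), we have $q\nmid\ell^k$, so $q\mid m_k$; in particular $m_k>1$ and $q\notin S-\{\ell\}$, and since $|t_k|_p\leq 1$ for all $p\in S$ the inductive hypothesis gives $|F_{S-\{\ell\},e,t_k}((\lambda\mu)^{\ell^k})|\leq c_{m_k,e,S-\{\ell\}}$. As $k$ ranges over the positive integers, $\gcd(n,\ell^k)=\ell^{\min(k,\ord_\ell(n))}$ takes only finitely many values, so $\{m_k\}$ is a finite set and $C:=\max_k c_{m_k,e,S-\{\ell\}}<\infty$. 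Using $|z^{r\bmod\ell^k}|=\lambda^{\,r\bmod\ell^k}\leq 1$ we obtain
$$|F_{S,e,r}(\lambda\mu)|\ \leq\ c_{n,e,S-\{\ell\}}\ +\ (\ell^{e_\ell}-1)\,C\sum_{k\geq 1}\ell^{-k e_\ell},$$
and since $e_\ell\geq 1$ this geometric series converges; the right-hand side depends only on $n$, $e$, $S$ (and not on $r$), so we may take it to be $c_{n,e,S}$.

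The one genuinely delicate point is controlling the \emph{infinitely many} summands in~\eqref{eq:fsr-relation2} uniformly. This splits into two observations that I would make carefully: first, that the shifted parameters $t_k$ do not degrade the bound, which is why the induction must be run in its $r$-uniform form, itself resting on the $r$-independence of the base case; and second, that the roots of unity $\mu^{\ell^k}$ have orders $m_k$ lying in a finite set, all still divisible by the ``safe'' prime $q$, so that a single constant $C$ dominates all the inner factors. Once these are in place, the geometric decay $\ell^{-ke_\ell}$ guaranteed by $e_\ell\geq 1$ makes the sum converge and the argument concludes. (This lemma is exactly the input needed to feed the functional equation~\eqref{eq:fsr-relation2} into a Mahler-type argument showing that $F_{S,e,r}$, and hence $f$ in Theorem~\ref{appendix:theorem}, acquires singularities along a dense set of roots of unity on the unit circle.)
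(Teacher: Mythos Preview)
Your proof is correct and follows essentially the same approach as the paper's: induction on $|S|$ with base case $F_{\emptyset,e,r}(z)=(1-z)^{-1}$, and for the inductive step applying the functional equation~\eqref{eq:fsr-relation2}, noting that $\mu^{\ell^k}$ is a primitive $m_k$th root of unity with $q\mid m_k$ and $m_k$ ranging over a finite set, and summing the resulting geometric series. You are, if anything, more careful than the paper in making explicit the uniformity in $r$ (needed because the $t_k$ differ from $r$) and in spelling out why only finitely many orders $m_k$ occur.
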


The constant~$c_{n,e,S}$ does not depend on~$r$
under the assumption that~$|r|_{\ell}\leqslant1$ for all~$\ell\in S$.

\begin{proof}
  We proceed by induction on the
  cardinality of~$S$. For~$S=\emptyset$
  we have
  \[
  F_{S,e,r}(z)=\sum_{m\geqslant0}\vert m-r\vert_{\emptyset,e}z^m
  = \frac{1}{1-z},
  \]
and the existence of the claimed constant is clear.
  Now suppose that~$|S|\geqslant1$, let~$p\in S$ and write 
  \[
    F_{S,e,r}(z)=F_{S-\{p\},e,r}(z)-(p^{e_p}-1)
  \sum_{k\geqslant1}\frac{1}{p^{ke_p}}z^{r \bmod p^k}
  F_{S - \{p\},e,t_k}(z^{p^k}).
  \]
So,
\begin{align*}
  |F_{S,e,r}(z)| &\leqslant |F_{S - \{p\},e,r}(z)|\\
  &\qquad\qquad+(p^{e_p}-1)\sum_{k\geqslant1}\frac{1}{p^{ke_p}}|z^{r \bmod p^k}| |F_{S - \{p\},e,t_k}(z^{p^k})|\\
  &\leqslant (p^{e_p}-1)\sum_{k\geqslant0}\frac{1}{p^{ke_p}}
  |F_{S - \{p\},e,t_k}(z^{p^k})|
\end{align*}
for $|z|\leq 1$. If~$z=\lambda\mu$ for some~$\lambda\in[0,1)$ and~$\mu$ is
  a primitive~$n$th root of unity with~$q\vert n$,
  then~$z^{p^k}=\lambda'\mu'$ where~$\lambda'\in[0,1)$
    and~$\mu'$ is a primitive~$n'$th root of unity
    with~$q\vert n'$, and~$n'$ is one of finitely
    many possible values.
Thus by the inductive hypothesis
there is a constant~$c$ with~$
|F_{S - \{p\},e,t_k}(z^{p^k})| < c
$
for all~$k$,
and hence~$
|F_{S,e,r}(z)|
<
(p^{e_p}-1)c\frac{p^{e_p}}{p^{e_p}-1}.$
Taking this as~$c_{n,e,S}$ gives the lemma.
\end{proof}

\begin{lemma}\label{lemma:fsr}
Let~$S$ be a finite set of primes
and let~$r\in\mathbf{Q}$ be such that~$|r|_p\leqslant1$
for all~$p\in S$. Suppose that~$n\geqslant 1$ is an
integer divisible only by primes in~$S$, and
that~$\mu$ is a primitive~$n$th root of unity.
Writing~$n=p_1^{f_1}\cdots p_{j}^{f_j}$
where~$p_1,\ldots, p_j$ are distinct
primes in~$S$
and~$f_i\geqslant1$
for all~$i=1,\ldots, j$, we have
\[
|F_{S,e,r}(\lambda \mu)|\longrightarrow\infty
\]
as~$\lambda\to 1^-$. More precisely,
\[
\Re\bigl((-1)^j \mu^{-(r \bmod n)}F_{S,e,r}(\lambda \mu)\bigr)
\longrightarrow\infty
\]
as~$\lambda\to 1^-$ and there exists a constant~$c'_{n,e,S}$ (which does not depend on~$r$ and~$\lambda$) such that
\[
|\Im\bigl((-1)^j \mu^{-(r \bmod n)}F_{S,e,r}(\lambda \mu)\bigr)|<c'_{n,e,S}
\]
and 
\[
\Re\bigl((-1)^j \mu^{-(r \bmod n)}F_{S,e,r}(\lambda \mu)\bigr)>-c'_{n,e,S}.
\]
\end{lemma}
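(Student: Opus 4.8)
The plan is to argue by induction on $j$, the number of distinct prime divisors of $n$. In the base case $j=0$ we have $n=1$, $\mu=1$ and $r\bmod n=0$, so the assertion is that $F_{S,e,r}(\lambda)=\sum_{m\geq 0}|m-r|_{S,e}\lambda^m\to\infty$ as $\lambda\to 1^-$. All summands are non-negative, and since $|r|_\ell\leq1$ for $\ell\in S$, one has $|m-r|_{\ell}=1$ unless $m$ lies in one excluded residue class mod $\ell$; hence by the Chinese remainder theorem $|m-r|_{S,e}=1$ for a set of integers $m$ of positive density $\prod_{\ell\in S}(1-\ell^{-1})$, so $\sum_m|m-r|_{S,e}$ diverges and monotone convergence gives the claim. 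Here the imaginary part vanishes and the real part is non-negative, so any positive constant serves as $c'_{1,e,S}$.

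For the inductive step, pick a prime $p\in S$ dividing $n$, and let $f\geq 1$ be such that $p^f\,\|\,n$. Apply the functional equation \eqref{eq:fsr-relation2} for the prime $p$ at $z=\lambda\mu$ and multiply through by the unimodular factor $(-1)^j\mu^{-(r\bmod n)}$. The term $F_{S-\{p\},e,r}(\lambda\mu)$, together with the finitely many terms $F_{S-\{p\},e,t_k}(\lambda^{p^k}\mu^{p^k})$ for $1\leq k<f$, have arguments equal to a non-negative real number times a primitive root of unity whose order ($n$, resp. $n/p^k$) is divisible by the prime $p\notin S-\{p\}$; so Lemma \ref{lem:fsr-bounded}, applied with the set $S-\{p\}$, bounds each of them by a constant depending only on $n,e,S$ and not on $\lambda$ or on the rational parameter (using $|t_k|_\ell\leq1$ for $\ell\neq p$), and their weighted sum is $O(1)$. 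For $k\geq f$, by contrast, $\mu^{p^k}$ is a primitive root of unity of order $n/p^f$, which is divisible only by the $j-1$ primes in $S-\{p\}$ that divide it, so the inductive hypothesis applies to $F_{S-\{p\},e,t_k}(\lambda^{p^k}\mu^{p^k})$.

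The crux is that the unimodular twists match. Since $p^k t_k=r-(r\bmod p^k)$, the Chinese-remainder characterisations of $r\bmod n$ and of $t_k\bmod(n/p^f)$ give $-(r\bmod n)+(r\bmod p^k)+p^k\bigl(t_k\bmod(n/p^f)\bigr)\equiv 0\pmod n$ for every $k\geq f$, hence $(-1)^j\mu^{-(r\bmod n)}\mu^{r\bmod p^k}=(-1)^j(\mu^{p^k})^{-(t_k\bmod(n/p^f))}$; after absorbing a sign into $(-1)^{j-1}$, the $k\geq f$ part of the sum becomes $(p^{e_p}-1)\sum_{k\geq f}p^{-ke_p}\lambda^{r\bmod p^k}\cdot(-1)^{j-1}(\mu^{p^k})^{-(t_k\bmod(n/p^f))}F_{S-\{p\},e,t_k}(\lambda^{p^k}\mu^{p^k})$. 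Taking real and imaginary parts and invoking the inductive hypothesis term by term: every $k\geq f$ summand has real part $\geq -(p^{e_p}-1)p^{-ke_p}c'_{n/p^f,e,S-\{p\}}$ and imaginary part bounded in absolute value by the same (summable) quantity, while the single summand $k=f$ has real part $\to+\infty$ as $\lambda\to 1^-$ (because $\lambda^{p^f}\to 1^-$ and $\lambda^{r\bmod p^f}\to 1$). Combining with the $O(1)$ contribution from the previous paragraph, $\Re\bigl((-1)^j\mu^{-(r\bmod n)}F_{S,e,r}(\lambda\mu)\bigr)\to+\infty$, the imaginary part of this twisted quantity stays bounded, and its real part stays bounded below; reading off a constant $c'_{n,e,S}$ from these estimates — which, crucially, involves neither $r$ nor $\lambda$ — closes the induction, and $|F_{S,e,r}(\lambda\mu)|\geq\Re\bigl((-1)^j\mu^{-(r\bmod n)}F_{S,e,r}(\lambda\mu)\bigr)\to\infty$ follows.

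I expect the main obstacle to be the bookkeeping around uniformity and phases: arranging the recursion so that exactly one term (namely $k=f$) produces the divergence while all remaining terms are controlled uniformly in $r$ and $\lambda$ by the inductive constants, and in particular verifying the congruence $-(r\bmod n)+(r\bmod p^k)+p^k(t_k\bmod(n/p^f))\equiv 0\pmod n$ directly from the Chinese-remainder definition of ``$\bmod$'' for rationals — this is precisely what makes the unimodular twists in the statement propagate correctly through the induction.
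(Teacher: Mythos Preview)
Your proof is correct and follows essentially the same strategy as the paper: induction on the number $j$ of prime divisors of $n$, with the base case $n=1$ handled by positivity and density, and the inductive step driven by the functional equation \eqref{eq:fsr-relation2} together with Lemma~\ref{lem:fsr-bounded} for the bounded pieces. The one organizational difference is that the paper, when $p^f\,\|\,n$ with $f>1$, runs a secondary induction on $f$ using the single-step relation \eqref{eq:fsr-func-rel}, whereas you stay with \eqref{eq:fsr-relation2} throughout and simply split the infinite sum at $k=f$, bounding the finitely many terms $1\le k<f$ directly via Lemma~\ref{lem:fsr-bounded} (since $p$ still divides the order of $\mu^{p^k}$) --- this is a mild streamlining that avoids the inner induction, and your verification of the phase congruence $r\bmod n\equiv (r\bmod p^k)+p^k\bigl(t_k\bmod(n/p^f)\bigr)\pmod n$ for $k\ge f$ is exactly what makes this work.
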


\begin{proof}

We again write~$z=\lambda\mu$ and define the function~$\varphi_{S,e,r,\mu}(\lambda)$ by the formula~$$ \varphi_{S,e,r,\mu}(\lambda) = (-1)^j \mu^{-(r \bmod n)} F_{S,e,r}(\lambda \mu),$$ where~$j$ is the number of prime factors of~$n$.

We proceed by induction on the number of distinct prime factors in~$n$ starting with~$n=1$.
In this case ~$\varphi_{S,e,r,\mu}(\lambda)=\sum_{m\geqslant0}|m-r|_{S,e} \lambda^m$
for each~$m$,~$\lambda^m\to1^-$ as~$\lambda \to 1^-$, and  $|m-r|_{S,e} = 1$ infinitely often. This shows that the real part tends to infinity 
as~$\lambda \to 1^-$ and is bounded from below by $0$. The imaginary part is bounded as~$F_{S,e,r}(\lambda)$
is real for all~$\lambda\in[0,1)$.

Now let~$p_1,\ldots,p_j\in S$ be distinct,
and let~$n=\prod_{i=1}^{j}p_i^{f_i}$ with~$f_i \geqslant 1$ for all~$i$.
Let~$p = p_1$ and use the variables~$r_0,r_1,\ldots$
to indicate the~$p$-adic coefficients of~$r$
and~$t_0,t_1,\dots$ to indicate the values~$
t_k=\frac{r-r \bmod p^k}{p^k}
$
for all~$k$. Assume first that~$f_1=1$. We will apply the functional equation~\eqref{eq:fsr-relation2}. For all~$k\geqslant1$,~$\mu^{p^k}$ is a primitive~$(n/p)$th root of unity and the formula~$t_k=\frac{r-r\bmod p^k}{p^k}$ implies that~$$r\bmod n \equiv r \bmod p^k + p^k (t_k \bmod (n/p)) \pmod{n}.$$
Thus Formula~\eqref{eq:fsr-relation2} after some manipulation gives  \[\varphi_{S,e,r,\mu}(\lambda)=
\varphi_{S - \{p\},e,r,\mu}(\lambda)+(p^{e_p}-1)\sum_{k=1}^\infty \frac{\lambda^{r\bmod p^k}}{p^{ke_p}}\varphi_{S - \{p\},e,t_k,\mu^{p^k}}(\lambda^{p^k}).\]
The leading term in this expression is bounded by Lemma~\ref{lem:fsr-bounded}, and the inductive hypothesis applied to the terms~$\varphi_{S - \{p\},e,r,\mu^{p^k}}(\lambda^{p^k})$ shows that their real part tends to~$+\infty$ as~$\lambda\to1^-$ and is bounded away from~$-\infty$ independently of~$r$ and~$\lambda$. Since 
these terms appear within the geometric progression~$\sum_{k=1}^\infty p^{-{ke_p}}$, we obtain that~$$\varphi_{S,e,r,\mu}(\lambda) \rightarrow \infty$$  as~$\lambda\to1^-$ and the same argument proves the latter claim. This proves the inductive step for the case~$f_1 = 1$.

We will use this as the base case for a
second inductive proof for~$f_1>1$. The argument in this case is similar except that we will use the functional equation~\eqref{eq:fsr-func-rel} instead of~\eqref{eq:fsr-relation2}. As before,~$\mu^p$ is a primitive~$(n/p)$th root of unity and~$$r\bmod n \equiv r \bmod p+ p (t_1 \bmod (n/p)) \pmod{n}.$$  Thus Formula~\eqref{eq:fsr-func-rel} after some manipulation gives  
\[\varphi_{S,e,r,\mu}(\lambda)=
\varphi_{S - \{p\},e,r,\mu}(\lambda)+p^{-e_p}\lambda^{r\bmod p}\varphi_{S,e,t_1,\mu^{p}}(\lambda^{p})-\lambda^{r\bmod p}\varphi_{S - \{p\},e,t_1,\mu^p}(\lambda^{p}).\]
The first and the third terms in this expression are bounded by Lemma~\ref{lem:fsr-bounded}, and hence the claim follows immediately from the inductive hypothesis applied to the term~$\varphi_{S,e,t_1,\mu^{p}}(\lambda^{p})$. This concludes the induction.\end{proof}

\begin{proof}[Proof of Theorem \ref{appendix:theorem}]
  If~$c_{\ell,k}=0$ for some~$\ell\in S$ and~$k$
  we will automatically take~$e_{\ell,k}=0$ as the
  power of~$\vert n\vert_{\ell}$ plays no role.
  Another case we wish to avoid is if for some~$\ell$
  and~$k\in \{0,1,\ldots,n_{\ell}-1\}$,
  the value~$\vert n\vert_{\ell}$ is constant for
  all~$n\equiv k \bmod n_{\ell}$.
  Writing~$v_{\ell}$ for the~$\ell$-adic order,
  this happens exactly when~$v_{\ell}(n_{\ell})>v_{\ell}(k)$,
  and in this case~$\vert n\vert_{\ell}=\vert k\vert_{\ell}$.
  If this is the case and~$e_{\ell,k}\neq0$,
  then we will set~$e_{\ell,k} = 0$ and
  substitute~$c_{\ell,k}\vert k\vert_{\ell}^{e_{\ell,k}}$ for~$c_{\ell,k}$.
Let~$N={\rm{lcm}}\{n_p \,\mid\, p\in S\}$.
For each~$j\in\{0,1,\dots, N-1\}$
consider the value of~$|a_n|_S$ when~$n\equiv j\bmod N$.
For each~$p$,~$n\equiv j\bmod N$
and thus~$n\equiv j\bmod n_p$ as~$n_p\vert N$.
Let~$k_{p,j}$ be the unique element of~$\{0,1,\ldots,n_p-1\}$
such that~$k_{p,j}\equiv j\bmod n_p$. So
\[
|a_n|_S
=
\prod_{p \in S} |a_n|_p
=
\prod_{p \in S} c_{p,k_{p,j}} |n|_p^{e_{p,k_{p,j}}}
\]
as~$n \equiv j \equiv k_{p,j} \bmod n_p$ for all~$p \in S$.
If for any nonzero~$n$ with~$n \equiv j \bmod N$
we have~$|a_n|_S = 0$, or equivalently~$a_n = 0$, we define~$S_j = \emptyset$
and~$d_j = 0.$
If this is the case, then it follows that for this value~$n$ 
\[
0=\prod_{p \in S} c_{p,k_{p,j}}|n|_p^{e_{p,k_{p,j}}}
\]
and~$|n|_p^{e_{p,k_{p,j}}} \neq 0$ implies that~$c_{p,k_{p,j}} = 0$ for some~$p \in S$.
This in turn implies that~$|a_m|_S = 0$ and hence~$a_m = 0$ for any~$m \equiv j \bmod N$.
If, on the other hand,
for some~$n \equiv j \bmod N$ we have~$|a_n|_S \neq 0$
then for all~$m \equiv j \bmod N$
we have~$|a_m|_S \neq 0$ and hence~$c_{p,k_{p,j}} \neq 0$ for all~$p \in S$.
If for a prime~$p \in S$ we
have~$v_p(N)>v_p(j)$, then for all~$n \equiv j \bmod N$ we have~$ |n|_p = |j|_p$.
We will split~$S$ into the disjoint union~$S_j \sqcup S_j' \sqcup S_j''$, where
\[
S_j = \{p \in S\mid v_p(N) \leqslant v_p(j) \mbox{ and } e_{p,k_{p,j}} \neq 0\},
\]
\[
S_j' = \{p \in S ~\mid~ v_p(N) > v_p(j) \mbox{ and } e_{p,k_{p,j}} \neq 0\},
\]
and
\[
S_j'' = \{p \in S ~\mid~ v_p(N) > v_p(j) \text{ and } e_{p,k_{p,j}} = 0\}.
\]
Thus for all~$n \equiv j \bmod N$ we have
\[
|a_n|_S = \prod_{p \in S} c_{p,k_{p,j}}\cdot \prod_{p \in S_j'}|j|_p^{e_{p,k_{p,j}}} \cdot |n|_{S_j,e^{(j)}},
\]
where~$e^{(j)}$ denotes the collection of
exponents~$\{e_{p,k_j}\mid p \in S_j\}$.
Set
\[
d_j = \prod_{p \in S} c_{p,k_{p,j}}\cdot \prod_{p \in S_j'}|j|_p^{e_{p,k_{p,j}}}
\]
and~$|a_n|_S = d_j |n|_{S_j,e^{(j)}} \mbox{ for all } n \equiv j \bmod N$.

Assume that the sequence~$(|a_n|_S)$ takes infinitely many values. This implies that there exists some~$j$ for
which~$S_j$ is non-empty. By our assumption, for such~$j$ we have~$d_j\neq 0$. Consider the family of sets~$\{S_j \mid 0 \leqslant j < N\}$,
partially ordered by inclusion.
Since it  is finite and the~$S_j$ are
not all empty, there is
a non-empty maximal element~$S_{j_0}$.
Write
\[
f(z)
=
\sum_{n=1}^\infty |a_n|_S z^n
=
\sum_{j=0}^{N-1} \sum_{n \equiv j \, (N)} |a_n|_S z^n
=
\sum_{j=0}^{N-1} f_j(z)
\]
where
\begin{align*}
f_j(z) &= \sum_{n \equiv j  \, (N)} |a_n|_S z^n
=
\sum_{n \equiv j  \, (N)} d_j |n|_{S_j,e^{(j)}} z^n
=
\sum_{k = 0}^\infty d_j |kN + j|_{S_j, e^{(j)}} z^{kN + j}\\
&=
d_j |N|_{S_j,e^{(j)}}\sum_{k = 0}^\infty|k + j/N|_{S_j,e^{(j)}} z^{kN + j}
=
d_j |N|_{S_j,e^{(j)}} z^j g_j(z^N)
\end{align*}
with~$g_j(z)=F_{S_j,e^{(j)},-j/N}(z)$. Thus~$f=h_1+h_2$, where~$h_1$ is the sum of the~$f_j$
with~$S_j = S_{j_0}$ and~$h_2$ is the sum of the~$f_j$ with~$S_j \neq S_{j_0}$.
Let~$n=\prod_{q \in S_{j_0}} q^{f_q}$
be an integer divisible by every prime in~$S_{j_0}$ and by no other primes
such that for each~$q \in S_{j_0}$
we have~$f_q > v_q(N)$
and let~$\mu$ be a primitive~$n$th root of unity.
If~$j$ with~$0\leqslant j<N$
has~$S_j\neq S_{j_0}$ then~$
f_j(\lambda \mu)
=
d_j|N|_{S_j,e^{(j)}}(\lambda\mu)^j g_j(\lambda^N \mu^N)
$
is bounded as~$\lambda\to1^-$ by Lemma~\ref{lem:fsr-bounded}
as~$\mu^N$ is an~$\frac{n}{N}$th root of unity and~$\frac{n}{N}$
is divisible by every prime in~$S_{j_0}$ and hence by some prime
not in~$S_j$ by maximality of~$S_{j_0}$. Thus~$|h_2(\lambda \mu)|$
is bounded as~$\lambda\to1^-$.
Suppose instead that~$S_j = S_{j_0}$. By Lemma~\ref{lemma:fsr} we have that
\[
\Re\bigl((-1)^m (\mu^N)^{-(-j/N \bmod n/N)}g_j(z^N)\bigr)
\longrightarrow\infty
\]
as~$\lambda\to1^-$
where~$m=|S_{j_0}|$.
Equivalently,
\[
\Re\bigl((-1)^m \mu^{(j \bmod n)}g_j(z^N)\bigr)
\longrightarrow\infty,
\]
and thus
\begin{align*}
\Re\bigl((-1)^m z^j g_j(z^N)\bigr)
\longrightarrow\infty
\end{align*}
as~$\lambda\to1^-$.
As the real part of every term in~$h_1(z)$ goes to~$\infty$, this means that
\[
\Re\bigl((-1)^m f(\lambda \mu)\bigr)\longrightarrow\infty
\]
as~$\lambda\to1^-$.
Since this is true for any~$\mu$
that is a~$(\prod\limits_{q \in S_{j_0}} q^{f_q})$th root of unity with each~$f_q > v_q(N)$,
these singularities form a dense set on the unit circle.
It follows that~$f$
admits a natural boundary on the unit circle.

For the second part of the theorem,
assume that the sequence~$(|a_n|_S)$ takes only finitely many values. Then~$(|a_n|_S)$ is periodic modulo~$N$, and thus
\[
f(z)
=
\sum_{j=1}^{N} \sum_{n \equiv j\, (N)} |a_j|_S z^n
=
\sum_{j=1}^{N}|a_j|_S \sum_{m=0}^\infty z^{mN + j}
=
\sum_{j=1}^{N}|a_j|_S \frac{z^j}{1-z^N},
\]
completing the proof.
\end{proof}

\medskip

\footnotesize{ 
\indent Appendix author information: 

\medskip

\indent Ziff Building, University of Leeds, Leeds LS2 9JT, UK \\
\indent \emph{E-mail address:} {\tt t.b.ward@leeds.ac.uk} 

\medskip

\indent  School of Mathematics, University of East Anglia, Norwich NR4 7TJ, UK \\
\indent \emph{E-mail address:} {\tt aradesh@gmail.com}
}

\addtocontents{toc}{\SkipTocEntry}
\bibliographystyle{amsplain}

\begin{thebibliography}{10}

\bibitem{AM}
Michael Artin and Barry~C. Mazur, \emph{On periodic points}, Ann. of Math. (2)
  \textbf{81} (1965), 82--99.

\bibitem{Baake}
Michael Baake, Eike Lau, and Vytautas Paskunas, \emph{A note on the dynamical
  zeta function of general toral endomorphisms}, Monatsh. Math. \textbf{161}
  (2010), no.~1, 33--42.

\bibitem{MahlerEq}
Jason~P. Bell, Michael Coons, and Eric Rowland, \emph{The
  rational-transcendental dichotomy of {M}ahler functions}, J. Integer Seq.
  \textbf{16} (2013), no.~2, Article 13.2.10, 11.

\bibitem{BellGerhold}
Jason~P. Bell and Stefan Gerhold, \emph{On the positivity set of a linear
  recurrence sequence}, Israel J. Math. \textbf{157} (2007), 333--345.

\bibitem{BMW}
Jason~P. Bell, Richard Miles, and Thomas Ward, \emph{Towards a
  {P}\'olya-{C}arlson dichotomy for algebraic dynamics}, Indag. Math. (N.S.)
  \textbf{25} (2014), no.~4, 652--668.

\bibitem{BellaghBezivin}
Abdelaziz Bellagh and Jean-Paul B\'ezivin, \emph{Quotients de suites
  holonomes}, Ann. Fac. Sci. Toulouse Math. (6) \textbf{20} (2011), no.~1,
  135--166.

\bibitem{FPU}
Giancarlo Benettin, Andrea Carati, Luigi Galgani, and Antonio Giorgilli,
  \emph{The {F}ermi-{P}asta-{U}lam problem and the metastability perspective},
  The {F}ermi-{P}asta-{U}lam problem, Lecture Notes in Phys., vol. 728,
  Springer, Berlin, 2008, pp.~152--189.

\bibitem{Bridy}
Andrew Bridy, \emph{Transcendence of the {A}rtin-{M}azur zeta function for
  polynomial maps of {$\Bbb A^1(\overline{\Bbb F}_p)$}}, Acta Arith.
  \textbf{156} (2012), no.~3, 293--300.

\bibitem{BridyBordeaux}
Andrew Bridy, \emph{The {A}rtin-{M}azur zeta function of a dynamically affine
  rational map in positive characteristic}, J. Th\'eor. Nombres Bordeaux
  \textbf{28} (2016), no.~2, 301--324.

\bibitem{Deuring}
Max Deuring, \emph{Die {T}ypen der {M}ultiplikatorenringe elliptischer
  {F}unktionenk\"orper}, Abh. Math. Sem. Hansischen Univ. (= Abh. Math. Sem.
  Univ. Hamburg) \textbf{14} (1941), 197--272.

\bibitem{sagemath}
The~Sage Developers, \emph{{S}agemath, the {S}age {M}athematics {S}oftware
  {S}ystem ({V}ersion 8.0)}, 2017, {\tt http://www.sagemath.org}.

\bibitem{Dimitrov}
Vesselin Dimitrov, \emph{A note on a generalization of the {H}adamard quotient
  theorem}, preprint, arxiv:1309.1920 (2013).

\bibitem{Dwork}
Bernard Dwork, \emph{On the rationality of the zeta function of an algebraic
  variety}, Amer. J. Math. \textbf{82} (1960), 631--648.

\bibitem{Everest-et-al}
Graham Everest, Richard Miles, Shaun Stevens, and Thomas Ward,
  \emph{Orbit-counting in non-hyperbolic dynamical systems}, J. Reine Angew.
  Math. \textbf{608} (2007), 155--182.

\bibitem{Stangoe}
Graham Everest, Victoria~S. Stangoe, and Thomas Ward, \emph{Orbit counting with
  an isometric direction}, in: Algebraic and topological dynamics, Contemp. Math.,
  vol. 385, Amer. Math. Soc., Providence, RI, 2005, pp.~293--302.

\bibitem{Flajolet}
Philippe Flajolet, Stefan Gerhold, and Bruno Salvy, \emph{On the non-holonomic
  character of logarithms, powers, and the {$n$}th prime function}, Electron.
  J. Combin. \textbf{11} (2004/06), no.~2, Article 2, 16.

\bibitem{FreemanLauter}
David Freeman and Kristin Lauter, \emph{Computing endomorphism rings of
  {J}acobians of genus 2 curves over finite fields}, in: Algebraic geometry and its
  applications, Ser. Number Theory Appl., vol.~5, World Sci. Publ., Hackensack,
  NJ, 2008, pp.~29--66.

\bibitem{Friedland1991}
Shmuel Friedland, \emph{Entropy of polynomial and rational maps}, Ann. of Math.
  (2) \textbf{133} (1991), no.~2, 359--368.

\bibitem{Gelfond}
Alexander~O. Gel{\cprime}fond, \emph{Transcendental and algebraic numbers},
  Translated from the first Russian edition by Leo F. Boron, Dover
  Publications, Inc., New York, 1960.

\bibitem{Goren}
Eyal~Z. Goren, \emph{Lectures on {H}ilbert modular varieties and modular
  forms}, CRM Monograph Series, vol.~14, American Mathematical Society,
  Providence, RI, 2002.

\bibitem{Grieve}
Nathan Grieve, \emph{Reduced norms and the {R}iemann-{R}och theorem for
  {A}belian varieties}, New York J. Math. \textbf{23} (2017), no.~3,
  1087--1110.

\bibitem{Gr}
Alexander Grothendieck, \emph{Formule de {L}efschetz et rationalit\'e des
  fonctions {$L$}}, S\'eminaire {B}ourbaki, {V}ol.\ 9, Exp.\ No.\ 279, Soc.
  Math. France, Paris, 1995 (1965), pp.~41--55.

\bibitem{HS}
William~A. Harris, Jr. and Yasutaka Sibuya, \emph{The reciprocals of solutions
  of linear ordinary differential equations}, Adv. in Math. \textbf{58} (1985),
  no.~2, 119--132.

\bibitem{Hinkkanen}
Aimo Hinkkanen, \emph{Zeta functions of rational functions are rational}, Ann.
  Acad. Sci. Fenn. Ser. A I Math. \textbf{19} (1994), no.~1, 3--10.

\bibitem{Knieper}
Gerhard Knieper, \emph{On the asymptotic geometry of nonpositively curved
  manifolds}, Geom. Funct. Anal. \textbf{7} (1997), no.~4, 755--782.

\bibitem{Lam}
Tsit~Yuen Lam, \emph{A first course in noncommutative rings}, Graduate Texts
  Math., vol. 131, Springer, New York, 1991.

\bibitem{lmfdb-curve}
The {LMFDB Collaboration}, \emph{{\itshape {The L-functions and Modular Forms
  Database, {\em Home page of the abelian variety isogeny class 2.5.a\_a over
  $\F_5$}}}}, online, accessed 22 Oct 2017,
  {\mbox{\url{www.lmfdb.org/Variety/Abelian/Fq/2/5/a_a}}}.

\bibitem{MilneAV}
James~S. Milne, \emph{Abelian varieties (v2.00)}, 2008, Available at
  www.jmilne.org/math, 172 pp.

\bibitem{MilneLEC}
James~S. Milne, \emph{Lectures on etale cohomology (v2.21)}, 2013, Available at
  www.jmilne.org/math, 202 pp.

\bibitem{Mumford}
David Mumford, \emph{Abelian varieties}, Tata Institute of Fundamental Research
  Studies in Mathematics, vol.~5, corrected reprint of the second edition, Hindustan Book Agency, New Delhi, 2008. 

\bibitem{NO}
Peter Norman and Frans Oort, \emph{Moduli of abelian varieties}, Ann. of Math.
  (2) \textbf{112} (1980), no.~3, 413--439.

\bibitem{PP}
William Parry and Mark Pollicott, \emph{An analogue of the prime number theorem
  for closed orbits of {A}xiom {A} flows}, Ann. of Math. (2) \textbf{118}
  (1983), no.~3, 573--591.

\bibitem{Sharp}
Mark Pollicott and Richard Sharp, \emph{Exponential error terms for growth
  functions on negatively curved surfaces}, Amer. J. Math. \textbf{120} (1998),
  no.~5, 1019--1042.
  
\bibitem{Rosen}
 Michael Rosen, \emph{Number theory in function fields}, 
Graduate Texts Math., vol.\ 210. Springer, New York, 2002.

\bibitem{Royals}
Robert Royals, \emph{Arithmetic and dynamical systems}, 2015, Doctoral thesis,
  University of East Anglia, Available at https://ueaeprints.uea.ac.uk/57191
  (64 pp.).

\bibitem{Rumely}
Robert Rumely, \emph{Notes on van der {P}oorten's proof of the {H}adamard
  quotient theorem. {I}, {II}}, S\'eminaire de {T}h\'eorie des {N}ombres,
  {P}aris 1986--87, Progr. Math., vol.~75, Birkh\"auser Boston, Boston, MA,
  1988, pp.~349--382, 383--409.

\bibitem{Smale}
Stephen Smale, \emph{Differentiable dynamical systems}, Bull. Amer. Math. Soc.
  \textbf{73} (1967), 747--817.

\bibitem{Stanley}
Richard~P. Stanley, \emph{Differentiably finite power series}, European J.
  Combin. \textbf{1} (1980), no.~2, 175--188.

\bibitem{StanleyEC1}
Richard~P. Stanley, \emph{Enumerative combinatorics. {V}olume 1}, second ed., Cambridge
  Studies in Adv.\ Math., vol.~49, Cambridge University Press, Cambridge, 2012.

\bibitem{vdPoorten}
Alfred~J. van~der Poorten, \emph{Solution de la conjecture de {P}isot sur le
  quotient de {H}adamard de deux fractions rationnelles}, C. R. Acad. Sci.
  Paris S\'er. I Math. \textbf{306} (1988), no.~3, 97--102.

\bibitem{Waterhouse}
William~C. Waterhouse, \emph{Abelian varieties over finite fields}, Ann. Sci.
  \'Ecole Norm. Sup. (4) \textbf{2} (1969), 521--560.

\end{thebibliography}

\end{document}